\documentclass[UTF-8,reqno]{amsart}
\usepackage{enumerate}
\usepackage{mhequ}
\usepackage[margin=1.2in]{geometry}
\usepackage{amssymb,url,color, booktabs,nccmath}
\usepackage{mathrsfs}
\usepackage{enumitem}
\usepackage{graphicx}
\usepackage{float}
\usepackage{tikz}
\usetikzlibrary{shapes,snakes}
\usetikzlibrary{calc}
\usetikzlibrary{decorations.shapes}
\usepackage{color}
\usepackage[colorlinks=true]{hyperref}
\hypersetup{
    linkcolor=blue,          % color of internal links
    linktoc=page,            % in the table of contents, link only page numbers
    citecolor=red,        % color of links to bibliography
    filecolor=blue,      % color of file links
    urlcolor=cyan
}

\definecolor{darkergreen}{rgb}{0.0, 0.5, 0.0}

\numberwithin{equation}{section}
\def\theequation{\arabic{section}.\arabic{equation}}
\newcommand{\be}{\begin{eqnarray}}
\newcommand{\ee}{\end{eqnarray}}
\newcommand{\ce}{\begin{eqnarray*}}
\newcommand{\de}{\end{eqnarray*}}
\newtheorem{theorem}{Theorem}[section]
\newtheorem{lemma}[theorem]{Lemma}
\newtheorem{remark}[theorem]{Remark}
\newtheorem{definition}[theorem]{Definition}
\newtheorem{proposition}[theorem]{Proposition}
\newtheorem{Examples}[theorem]{Example}
\newtheorem{corollary}[theorem]{Corollary}

\newenvironment{nouppercase}{%
  \renewcommand{\uppercasenonmath}[1]{}}{}

\def\a{\alpha}

\def\d{\mathop{}\!\mathrm{d}}

\def\[{{\Big[}}
\def\]{{\Big]}}
\def\<{{\langle}}
\def\>{{\rangle}}
\def\({{\Big(}}
\def\){{\Big)}}

\def\bx{{\mathbf{x}}}

\def\min{{\mathord{{\rm min}}}}

\def\={&\!\!=\!\!&}

\def\1{{\mathbf{1}}}

\def\geq{\geqslant}
\def\leq{\leqslant}

\def\a{\alpha}

\def\d{\mathop{}\!\mathrm{d}}

\def\[{{\Big[}}
\def\]{{\Big]}}
\def\<{{\langle}}
\def\>{{\rangle}}
\def\({{\Big(}}
\def\){{\Big)}}

\def\bx{{\mathbf{x}}}

\def\min{{\mathord{{\rm min}}}}

\def\={&\!\!=\!\!&}
\def\bt{\begin{theorem}}
\def\et{\end{theorem}}
\def\bl{\begin{lemma}}
\def\el{\end{lemma}}
\def\br{\begin{remark}}
\def\er{\end{remark}}
\def\bx{\begin{Examples}}
\def\ex{\end{Examples}}
\def\bd{\begin{definition}}
\def\ed{\end{definition}}
\def\bp{\begin{proposition}}
\def\ep{\end{proposition}}
\def\bc{\begin{corollary}}
\def\ec{\end{corollary}}

\def\geq{\geqslant}
\def\leq{\leqslant}

\def\<{\langle} \def\>{\rangle}
\def\x{{\bf x}}
\def\y{{\bf y}}

\allowdisplaybreaks

\tikzset{
        dot/.style={circle,fill=black,inner sep=0pt, outer sep=0.7pt, minimum size=1mm},
        Phi/.style={white!40!red,thick,snake=coil,segment amplitude=0.6pt, segment length=2pt},
         Z/.style={black!40!green,thick,snake=coil,segment amplitude=0.6pt, segment length=2pt},
        C/.style={thick,black!20!blue},
          Cr/.style={thick,black!20!red},
            Cg/.style={thick,black!20!green},
       }

\begin{document}

\title[The Fluctuating Boltzmann equation]{\LARGE Fluctuating Kinetic Theory: A Poissonian Stochastic Boltzmann Equation}

\author[Zhengyan Wu]{\large Zhengyan Wu}
\address[Z. Wu]{Department of Mathematics, Technische Universit\"at M\"unchen, Boltzmannstr. 3, 85748 Garching, Germany}
\email{wuzh@cit.tum.de}

\begin{abstract}
We introduce a nonlinear Poissonian fluctuating Boltzmann equation whose
noise encodes both the fluctuations and the path large-deviation rate
function of the underlying hard-sphere gas.  Unlike the Gaussian-noise
equations commonly studied in fluctuating
hydrodynamics, the present equation raises a new difficulty: a Poisson
random measure produces jumps that may destroy the nonnegativity of the
solution.  To address this problem, we construct a finite-dimensional
coarse-grained jump process and prove its well-posedness and
nonnegativity.  For each fixed mesh, this process satisfies a good path
large-deviation principle.  We then fix a regularized velocity cutoff and
study the asymptotic behavior of the discrete rate functions as the mesh
is refined.  On a class of biased regular paths, their limit is the
corresponding cutoff Boltzmann large-deviation rate function associated
with the hard-sphere gas.  This establishes the consistency of the
coarse-grained fluctuating Boltzmann model with the underlying particle
system at the level of path large deviations.
\end{abstract}

\subjclass[2010]{Primary 35Q20, 60H15; Secondary 60F10, 60J75, 82C40}
\keywords{}

\date{\today}

\begin{nouppercase}
\maketitle
\end{nouppercase}

\setcounter{tocdepth}{1}
\tableofcontents

\section{Introduction}
Classical kinetic theory connects microscopic particle dynamics,
mesoscopic equations for one-particle distributions, and macroscopic fluid
equations.  Under a kinetic scaling, the particle dynamics leads to kinetic
equations, while a subsequent hydrodynamic scaling may lead from these
kinetic equations to fluid equations, as illustrated by the
Boltzmann-to-Navier--Stokes limit
\cite{GolseSaintRaymondNavierStokesLimit}.  Fluctuating hydrodynamics gives
a related mesoscopic description by adding stochastic fluxes to
deterministic balance laws in order to retain finite-system thermal
fluctuations, as in the Landau--Lifshitz theory
\cite{LandauLifshitzFluidMechanics}, while the deterministic fluid equations
are recovered when the fluctuation strength tends to zero.  Although
kinetic equations and fluctuating-hydrodynamic SPDEs are both mesoscopic
models, they retain different information about the underlying particle
dynamics and arise under different scaling regimes, so a comparison of
their scales requires a joint scaling of the particle number and the
kinetic-to-fluid parameters.  This observation motivates the program that
we call \emph{fluctuating kinetic theory}, which seeks to construct
stochastic kinetic equations that retain the fluctuations lost in the
kinetic law of large numbers and, in the longer term, to identify joint
fluctuation and hydrodynamic scalings leading to fluctuating fluid models
such as the Landau--Lifshitz--Navier--Stokes equations.

In this program, the form of the stochastic correction should, whenever
possible, be determined by the underlying particle dynamics, so that its
nonlinear coefficient, conservation laws, covariance, and large-deviation
Hamiltonian agree with the corresponding microscopic fluctuation and
large-deviation statistics.  Recent examples include
Vlasov--Fokker--Planck--Dean--Kawasaki equations
\cite{MullerVonRenesseZimmerVFPDK,HaoWuZimmerVFPDK} and a fluctuating
homogeneous Landau equation \cite{DuongHeWuFluctuatingLandau}, both of
which are associated with continuous diffusion-type mean-field particle
systems and are therefore driven by conservative Gaussian noise of
Dean--Kawasaki type.  Boltzmann dynamics, by contrast, is generated by
isolated binary collisions that produce discontinuous changes in particle
velocities, and the microscopic fluctuation and large-deviation theory of
a dilute hard-sphere gas records the statistics of these collision events
\cite{BodineauGallagherSaintRaymondSimonella}.  This collision structure
naturally leads to a compensated Poisson noise whose jumps carry the signed
pre/post-collisional increments and occur at the corresponding
state-dependent Boltzmann rates, thereby retaining the information needed
to capture both Gaussian fluctuations and nonlinear path large
deviations.

The purpose of this paper is to carry out this program for the Boltzmann
equation through the following three steps.
\begin{enumerate}[label=\textbf{Step \arabic*.},leftmargin=*]
 \item \emph{Identification of the noise.}  We use the known hard-sphere
 fluctuations and large deviations to identify a closed nonlinear
 Poissonian collision noise, matching not only the Gaussian covariance but
 also the microscopic path Hamiltonian.

 \item \emph{Regularized models and solution theory.}  Because the resulting
 continuum jump equation is singular and does not preserve nonnegativity,
 we construct nonnegativity-preserving coarse-grained regularizations and establish their
 well-posedness.

 \item \emph{Large deviations and mesh consistency.}  We prove a path
 large-deviation principle for the coarse process at each fixed mesh.  At a
 fixed velocity cutoff, we then compare the discrete rate functions with the
 continuum Boltzmann action as the mesh is refined.
\end{enumerate}

We first recall the deterministic Boltzmann equation and fix the notation
needed to formulate its fluctuating counterpart.  Let $d\geq2$.  The
one-particle phase space is
$\mathbb T^d\times\mathbb R^d$, and we write its points as $z=(x,v)$.
The associated collision space is
\begin{equation*}
 \mathcal C=\mathbb T^d\times\mathbb R^d\times\mathbb R^d
 \times\mathbb S^{d-1}.
\end{equation*}
We write $c=(x,v,v_*,\sigma)\in\mathcal C$ and
$dc=dx\,dv\,dv_*\,d\sigma$, where $d\sigma$ is surface measure on
$\mathbb S^{d-1}$.  For any phase-space function $r$, we use the standard
abbreviations $r_*=r(x,v_*)$, $r'=r(x,v')$, and
$r_*'=r(x,v_*')$.  The corresponding post-collisional velocities
$(v',v_*')$ are
\begin{equation}\label{collision-map}
 v'=\frac{v+v_*}{2}+\frac{|v-v_*|}{2}\sigma,
 \qquad
 v_*'=\frac{v+v_*}{2}-\frac{|v-v_*|}{2}\sigma.
\end{equation}
A collision with mark $c$ replaces the two incoming velocities $v,v_*$ by
$v',v_*'$.  Hence, for a test function $\varphi$, the change of the
corresponding two-particle observable is
\begin{equation}\label{collision-increment}
 \Delta\varphi(c)
 =\varphi(x,v')+\varphi(x,v_*')
  -\varphi(x,v)-\varphi(x,v_*).
\end{equation}
The inhomogeneous Boltzmann equation is
\begin{equation}\label{intro-boltzmann-equation}
 \partial_tf+v\cdot\nabla_xf=Q(f,f),
\end{equation}
where
\begin{equation}\label{weak-boltzmann-operator}
 \langle Q(f,f),\varphi\rangle
 =\frac12\int_{\mathcal C} B(v-v_*,\sigma)f(x,v)f(x,v_*)
 \Delta\varphi(c)\,dx\,dv\,dv_*\,d\sigma.
\end{equation}

At this stage we impose only the assumptions needed to define the collision
integrals.  The kernel $B$ is nonnegative and measurable and is locally
integrable in the following sense: for every $R<\infty$,
\begin{equation}\label{collision-kernel-assumption}
 \int_{B_R\times B_R\times\mathbb S^{d-1}}
 B(v-v_*,\sigma)\,dv\,dv_*\,d\sigma<\infty.
\end{equation}
We also impose the standard collision symmetries.  In the parametrization
\eqref{collision-map}, exchanging the two incoming particles sends
$(v,v_*,\sigma)$ to $(v_*,v,-\sigma)$, while exchanging the pre- and
post-collisional states sends
\begin{equation*}
 (v,v_*,\sigma)
 \quad\text{to}\quad
 \left(v',v_*',\frac{v-v_*}{|v-v_*|}\right),
 \qquad v\ne v_*.
\end{equation*}
Accordingly, we assume that, for almost every
$(v,v_*,\sigma)$ with $v\ne v_*$,
\begin{align}
 B(v-v_*,\sigma)
 &=B(v_*-v,-\sigma),\notag\\
 B(v-v_*,\sigma)
 &=B\left(v'-v_*',\frac{v-v_*}{|v-v_*|}\right).
 \label{collision-kernel-symmetries}
\end{align}
The first identity is the particle-exchange symmetry, and the second is the
pre/post-collisional symmetry.  Since both transformations preserve
$dv\,dv_*\,d\sigma$, these identities are equivalent to invariance of the
collision measure $B(v-v_*,\sigma)\,dv\,dv_*\,d\sigma$ under the two
transformations.  For the initial profile, we require that
$\int_{\mathbb T^d\times\mathbb R^d}f_0\,dz=1$.

\subsection{A Poissonian fluctuating Boltzmann equation}

To retain the randomness of the collision flux, we represent every binary
collision as a jump.  To distinguish the free variable of the resulting
phase-space measure from the collision mark $c=(x,v,v_*,\sigma)$, write
$\bar z=(\bar x,\bar v)$ for the former.  A collision with mark $c$ removes
one unit of mass from each incoming velocity and adds one unit at each
outgoing velocity.  Its signed phase-space increment is therefore
\begin{equation}\label{collision-jump-measure}
 J_c(d\bar z)=\delta_{(x,v')}(d\bar z)+\delta_{(x,v_*')}(d\bar z)
     -\delta_{(x,v)}(d\bar z)-\delta_{(x,v_*)}(d\bar z).
\end{equation}
If $\varepsilon$ is the mass carried by one effective particle, a single
collision changes the empirical density by $\varepsilon J_c$.  We construct
the collision events from a Poisson random measure.  Let
$\mathfrak N(dt,dc,du)$ be a Poisson random measure on
\begin{equation*}
 [0,\infty)\times\mathcal C\times[0,\infty)
\end{equation*}
with intensity $dt\,dc\,du$, where
$dc=dx\,dv\,dv_*\,d\sigma$, and set
\begin{equation*}
 \widetilde{\mathfrak N}(dt,dc,du)
 =\mathfrak N(dt,dc,du)-dt\,dc\,du.
\end{equation*}
Here $\widetilde{\mathfrak N}$ is the compensated Poisson random measure.
Under the usual integrability conditions, stochastic integrals of
predictable integrands with respect to
$\widetilde{\mathfrak N}$ are martingales.  In addition to the time $t$
and the collision mark $c$, the reference Poisson measure carries the
auxiliary variable $u\in[0,\infty)$.  A point $(t,c,u)$ is accepted as a
collision event when $u$ lies below the state-dependent collision rate.  In
this way, the fixed reference measure generates the required collision
process.
Guided by the jump increment $\varepsilon J_c$, we propose the following
formal Poissonian fluctuating Boltzmann equation, with $\bar z$ as its free
phase-space variable:
\begin{align}
 df_t^\varepsilon+\bar v\cdot\nabla_{\bar x}f_t^\varepsilon\,dt
 &=Q(f_t^\varepsilon,f_t^\varepsilon)\,dt\notag\\
 &\quad+\varepsilon\int_{\mathcal C\times[0,\infty)} J_c
 \mathbf1_{\left\{u\leq
 \frac{1}{2\varepsilon}B(v-v_*,\sigma)
 f_{t-}^\varepsilon(x,v)f_{t-}^\varepsilon(x,v_*)\right\}}
 \widetilde{\mathfrak N}(dt,dc,du).
 \label{poisson-fluctuating-boltzmann-strong}
\end{align}
We conjecture that the stochastic integral in
\eqref{poisson-fluctuating-boltzmann-strong} is the collision noise
associated with the Boltzmann equation.  Its threshold assigns to a
collision with mark $c$ the predictable rate
\begin{equation}\label{intro-collision-event-rate}
 \frac{1}{2\varepsilon}B(v-v_*,\sigma)
 f_{t-}^\varepsilon(x,v)f_{t-}^\varepsilon(x,v_*).
\end{equation}
This normalization is determined by the deterministic Boltzmann equation.
For every test function $\varphi$, one accepted collision changes
$\langle f^\varepsilon,\varphi\rangle$ by
$\langle\varepsilon J_c,\varphi\rangle
=\varepsilon\Delta\varphi(c)$.  Multiplying this jump size by the rate in
\eqref{intro-collision-event-rate} gives
\begin{equation*}
 \varepsilon\Delta\varphi(c)\,
 \frac{1}{2\varepsilon}B(v-v_*,\sigma)
 f^\varepsilon(x,v)f^\varepsilon(x,v_*)
 =\frac12Bf^\varepsilon f_*^\varepsilon\Delta\varphi(c).
\end{equation*}

Consequently, the weak form of 
\eqref{poisson-fluctuating-boltzmann-strong} reads 
\begin{align}
 d\langle f_t^\varepsilon,\varphi\rangle
 &=\langle f_t^\varepsilon,v\cdot\nabla_x\varphi\rangle\,dt
 +\frac12\int_{\mathcal C} Bf_t^\varepsilon f_{t,*}^\varepsilon
 \Delta\varphi\,dx\,dv\,dv_*\,d\sigma\,dt\notag\\
 &\quad+\varepsilon\int_{\mathcal C\times[0,\infty)} \Delta\varphi(c)
 \mathbf1_{\left\{u\leq
 \frac{1}{2\varepsilon}Bf_{t-}^\varepsilon
 f_{t-,*}^\varepsilon\right\}}
 \widetilde{\mathfrak N}(dt,dc,du).
 \label{poisson-fluctuating-boltzmann-weak}
\end{align}
Denote the martingale in the last line by
$M_t^\varepsilon(\varphi)$.  The predictable cross-variation formula for
compensated Poisson integrals then gives
\begin{align}
 d\langle M^\varepsilon(\varphi),M^\varepsilon(\psi)\rangle_t
 =\frac{\varepsilon}{2}\int_{\mathcal C}
 B(v-v_*,\sigma)f_{t-}^\varepsilon(x,v)
 f_{t-}^\varepsilon(x,v_*)
 \Delta\varphi(c)\Delta\psi(c)\,dc\,dt.
 \label{intro-FB-predictable-bracket}
\end{align}
The bracket in \eqref{intro-FB-predictable-bracket} is of order
$\varepsilon$, so the martingale is of order $\sqrt\varepsilon$.  This
suggests the following formal central-limit picture.  As
$\varepsilon\downarrow0$, the unscaled equation should converge to the
deterministic Boltzmann equation, while the centered field at scale
$\varepsilon^{-1/2}$ retains the rescaled collision martingale.  After
testing against a smooth bounded function $\varphi$, each jump of this
rescaled martingale has size $\sqrt\varepsilon\,\Delta\varphi(c)$ and
therefore vanishes in the limit.  If, in addition, the collision
activities $Bf^\varepsilon f_*^\varepsilon\,dt\,dc$ converge to
$Bff_*\,dt\,dc$, the bracket of the rescaled martingale converges to the
Gaussian collision covariance obtained from
\eqref{intro-FB-predictable-bracket} by replacing $f^\varepsilon$ with the
deterministic Boltzmann solution $f$.  A martingale central-limit argument
would then yield a continuous Gaussian noise.  Linearizing the Boltzmann
drift around $f$ gives the same limiting drift and covariance as the
hard-sphere fluctuation theory recalled below, providing a formal
consistency check for the proposed noise.

Beyond central-limit fluctuations, path large deviations provide the more
decisive reason for the Poissonian noise proposed in
\eqref{poisson-fluctuating-boltzmann-strong}.  The Gaussian covariance only
determines the quadratic part of the fluctuation structure and does not
identify the full nonlinear collision noise.  By contrast, a path
large-deviation principle retains the exponential Hamiltonian generated by
the collision counts.  Our guiding requirement is that the Hamiltonian and
rate function induced by the fluctuating equation should agree, at least
formally, with those of the microscopic hard-sphere gas.  The
weak-convergence framework for Poisson-driven equations developed by
Budhiraja, Chen and Dupuis \cite{BudhirajaChenDupuis} suggests the following
path rate function for \eqref{poisson-fluctuating-boltzmann-weak}.  For a
nonnegative density $g$, set
\begin{equation}\label{Boltzmann-accepted-collision-activity}
 a_g(c):=\frac12B(v-v_*,\sigma)g(x,v)g(x,v_*),
 \qquad c=(x,v,v_*,\sigma)\in\mathcal C.
\end{equation}
Given a nonnegative measurable control $h=h(t,c,u)$, its controlled
Boltzmann skeleton is the weak equation
\begin{align}
 \langle g_t,\varphi\rangle-\langle g_0,\varphi\rangle
 ={}&\int_0^t\langle g_s,v\cdot\nabla_x\varphi\rangle\,ds\notag\\
 &+\int_0^t\int_{\mathcal C}
 \left[a_{g_s}(c)+\int_0^{a_{g_s}(c)}
 \bigl(h(s,c,u)-1\bigr)\,du\right]
 \Delta\varphi(c)\,dc\,ds
 \label{BCD-Boltzmann-skeleton}
\end{align}
for every
$\varphi\in C_c^\infty(\mathbb T^d\times\mathbb R^d)$.
For a prescribed initial state $X_0$, define the fluctuating Boltzmann
candidate rate by
\begin{equation}\label{BCD-Boltzmann-rate-before-reduction}
 I_{X_0}^{\rm FB}(g)
 :=\inf_h\left\{
 \int_0^T\int_{\mathcal C}\int_0^\infty
 \ell(h(t,c,u))\,du\,dc\,dt:
 \begin{array}{l}
 h\geq0\text{ is measurable, and }g\text{ satisfies}\\[-1mm]
 \eqref{BCD-Boltzmann-skeleton}\text{ with }g_0=X_0
 \end{array}\right\},
\end{equation}
where $\ell(z)=z\log z-z+1$, $0\log0=0$, and the infimum of the empty
set is $+\infty$.  The
derivation of \eqref{BCD-Boltzmann-rate-before-reduction} from the abstract
Poisson framework is given in
Appendix~\ref{app:poisson-spde-ldp-framework}.
Since $\ell$ is nonnegative and
$\ell(1)=0$, the choice $h\equiv1$ has zero cost; in that case the square
bracket in \eqref{BCD-Boltzmann-skeleton} reduces to $a_{g_s}(c)$, and the
skeleton is the deterministic Boltzmann equation.  A control $h$ changes
the local intensity of accepted collision events, and
\eqref{BCD-Boltzmann-rate-before-reduction} is the relative-entropy cost of
that change of intensity.  The associated collision Hamiltonian contains
the Poisson factor
\begin{equation*}
 a_g(c)\bigl(e^{\Delta p(c)}-1\bigr).
\end{equation*}
Its linear term in $\Delta p$ is the Boltzmann collision drift, its
quadratic term gives the covariance in
\eqref{intro-FB-predictable-bracket}, and the full exponential retains the
non-Gaussian path large deviations.  In the next subsection, we recall the
central-limit fluctuations and the path large-deviation rate function of
the underlying hard-sphere gas.  Appendix~\ref{app:fluctuating-boltzmann-rate}
then compares this microscopic rate function with
\eqref{BCD-Boltzmann-rate-before-reduction} and explains why the two agree
on the regular path class covered by the hard-sphere theory.

\subsection{The underlying hard-sphere system and its asymptotics}
\label{subsec:hard-sphere-comparison}

We next describe the microscopic dynamics guiding the choice of the
Poissonian collision noise.  The benchmark is the dilute hard-sphere
system defined in
\cite[Section~1.1, (1.1.1)--(1.1.6)]{BodineauGallagherSaintRaymondSimonella},
where the deterministic hard-sphere flow and its grand-canonical initial
law are introduced.  We describe the particle dynamics and its random
initial state before recalling the law of
large numbers, central-limit fluctuations, and path large deviations.  Let
$\delta>0$ be the diameter of each sphere.  A configuration of $n$
spheres belongs to
\begin{equation*}
 \mathcal D_n^\delta
 =\left\{(x_i,v_i)_{i=1}^n\in
 (\mathbb T^d\times\mathbb R^d)^n:
 |x_i-x_j|>\delta\text{ for }i\ne j\right\}.
\end{equation*}
Between collisions the particles follow
\begin{equation}\label{hard-sphere-free-flow}
 \dot X_i^\delta=V_i^\delta,
 \qquad \dot V_i^\delta=0.
\end{equation}
When $|X_i^\delta-X_j^\delta|=\delta$, put
$\omega=(X_i^\delta-X_j^\delta)/\delta$. The velocities are reflected by
\begin{equation}\label{hard-sphere-collision-rule}
 (V_i^\delta)'=V_i^\delta-\bigl((V_i^\delta-V_j^\delta)\cdot\omega\bigr)\omega,
 \qquad
 (V_j^\delta)'=V_j^\delta+\bigl((V_i^\delta-V_j^\delta)\cdot\omega\bigr)\omega.
\end{equation}

Equations~\eqref{hard-sphere-free-flow} and
\eqref{hard-sphere-collision-rule} define the deterministic evolution once
the particle number and the initial configuration have been chosen.  We
equip this dynamics with the grand-canonical Poissonian initial ensemble
defined in
\cite[Section~1.1, equation~(1.1.6) and the following display]{BodineauGallagherSaintRaymondSimonella}.
More precisely, $N$ denotes the random number of spheres,
$\mathbb P_\delta$ denotes their initial law, and $\mathbb E_\delta$ is the
corresponding expectation.  This law is obtained from a Poisson point
field with activity $\delta^{-(d-1)}$ and one-particle profile $f_0$ by
imposing the hard-core nonoverlap constraint and renormalizing.
The cited formulas give its precise density and partition function.

All limits in this subsection
are taken in the Boltzmann--Grad regime
\begin{equation}\label{hard-sphere-Boltzmann-Grad-regime}
 \delta\downarrow0,\qquad \delta^{-(d-1)}\uparrow\infty.
\end{equation}
The typical particle number is of order
$\delta^{-(d-1)}$ and therefore diverges as a consequence of
$\delta\downarrow0$.

\medskip
\noindent\emph{Law of large numbers and fluctuations.}
The empirical measure, normalized by the particle-number scale
$\delta^{-(d-1)}$, is
\begin{equation}\label{hard-sphere-empirical-measure}
 \pi_t^\delta
 =\delta^{d-1}\sum_{i=1}^{N}
 \delta_{(X_i^\delta(t),V_i^\delta(t))}.
\end{equation}
On a short kinetic time interval $[0,T_*]$, this empirical measure
converges in probability, when tested against smooth functions, to
$f_t(z)\,dz$, where $f$ solves \eqref{intro-boltzmann-equation}.  The
centered fluctuation field is
\begin{equation}\label{hard-sphere-fluctuation-field}
 \zeta_t^\delta
 =\delta^{-(d-1)/2}
 \left(\pi_t^\delta-\mathbb E_\delta\pi_t^\delta\right).
\end{equation}
In the same Boltzmann--Grad limit, the main fluctuation result of
\cite{BodineauGallagherSaintRaymondSimonella} states that
$\zeta^\delta$ converges in law, in the weak distributional topology used
there, to a centered Gaussian field $Y$.  Its dynamical evolution is
described by the linearized fluctuating Boltzmann equation
\begin{align}
 d\langle Y_t,\varphi\rangle
 &=\langle Y_t,v\cdot\nabla_x\varphi\rangle\,dt
 +\langle Q(f_t,Y_t)+Q(Y_t,f_t),\varphi\rangle\,dt
 +dM_t^\varphi,
 \label{linearized-fluctuating-boltzmann}
\end{align}
where
\begin{align}
 \langle M^\varphi,M^\psi\rangle_t
 =\frac12\int_0^t\int_{\mathcal C}
 B(v-v_*,\sigma)f_s(x,v)f_s(x,v_*)
 \Delta\varphi\Delta\psi
 \,dx\,dv\,dv_*\,d\sigma\,ds.
 \label{boltzmann-fluctuation-covariance}
\end{align}
Here $M$ is the continuous Gaussian collision martingale, and
\eqref{boltzmann-fluctuation-covariance} is the covariance generated by
the microscopic binary collisions.

\medskip
\noindent\emph{Path large deviations.}
For a fixed $r>0$ and a time $T\leq T_*$ allowed by the result cited
below, the probabilities of atypical paths decay at speed
$\delta^{-(d-1)}$ in the Boltzmann--Grad limit.  More precisely,
\cite[Theorem~9, equations~(7.0.10)--(7.0.11)]{BodineauGallagherSaintRaymondSimonella}
gives the following bounds for closed sets $\mathcal F$ and open sets
$\mathcal O$ in the path topology used there.
\begin{align*}
 \limsup_{\delta\downarrow0}\delta^{d-1}
 \log\mathbb P_\delta(\pi^\delta\in\mathcal F)
 &\leq-\inf_{g\in\mathcal F}I_{\rm HS}(g),\\
 \liminf_{\delta\downarrow0}\delta^{d-1}
 \log\mathbb P_\delta(\pi^\delta\in\mathcal O)
 &\geq-\inf_{g\in\mathcal O\cap\mathcal R}I_{\rm HS}(g).
\end{align*}
The upper bound applies without a regularity restriction, whereas the
lower bound is restricted to the class
$\mathcal R:=\mathcal R_{r,T}$ defined in
\cite[Theorem~3 and equation~(7.0.7)]{BodineauGallagherSaintRaymondSimonella}.
This class consists of strong solutions to a biased Boltzmann equation
generated by admissible Lipschitz bias fields.  The result
therefore gives a restricted large-deviation lower bound rather than a
full lower bound over all paths.  The same restriction plays an important
role in Section~\ref{sec:collision-limit-rigorous}, where the continuum and
discrete rate functions are compared on the corresponding class of biased
regular paths.

The hard-sphere rate contains both an initial cost and a dynamical cost.
For the grand-canonical Poissonian initial ensemble, the initial cost is
\begin{equation}\label{hard-sphere-initial-rate}
 I_0(g)=\int_{\mathbb T^d\times\mathbb R^d}
 \left[g\log\frac{g}{f_0}-g+f_0\right]dz,
\end{equation}
when $g(z)\,dz$ is absolutely continuous with respect to $f_0(z)\,dz$;
otherwise $I_0(g)=+\infty$. We use $0\log0=0$ throughout.
The complete hard-sphere rate decomposes as
$I_{\rm HS}(g)=I_0(g_0)+\mathcal I_{\rm Ham}(g)$, where
$\mathcal I_{\rm Ham}$ describes the cost of the dynamics.  We next
introduce its Hamiltonian and control representations.

\medskip
\noindent\emph{The dynamical rate functional.}
We first introduce the Hamiltonian associated with the collision dynamics.
At a fixed time, let $g$ denote the current particle density on
$\mathbb T^d\times\mathbb R^d$.  We assume that $g$ is nonnegative and has
finite mass, finite second velocity moment, and finite collision activity,
which means that
\begin{align*}
 \int_{\mathbb T^d\times\mathbb R^d}(1+|v|^2)g(x,v)\,dx\,dv&<\infty,\\
 \int_{\mathcal C}B(v-v_*,\sigma)g(x,v)g(x,v_*)
 \,dx\,dv\,dv_*\,d\sigma&<\infty.
\end{align*}
For a test function
$p\in C_c^1(\mathbb T^d\times\mathbb R^d;\mathbb R)$, its collision
increment $\Delta p$ is defined by \eqref{collision-increment}.  We define the dynamical
Hamiltonian by
\begin{equation}\label{boltzmann-collision-hamiltonian}
 \mathcal H_{\rm coll}(g,p)
 =\frac12\int_{\mathcal C}B(v-v_*,\sigma)g(x,v)g(x,v_*)
 \bigl(e^{\Delta p}-1\bigr)
 \,dx\,dv\,dv_*\,d\sigma.
\end{equation}
For $g$ in the common effective domain $\mathscr D_T$ of
Definition~\ref{def:common-effective-domain}, define the dynamical
Hamiltonian action by
\begin{align}
 \mathcal I_{\rm Ham}(g)
 :=\sup_{p\in C_c^1([0,T]\times\mathbb T^d\times\mathbb R^d)}\Bigg\{
 &\langle g_T,p_T\rangle-\langle g_0,p_0\rangle
 -\int_0^T\langle g_t,\partial_tp_t+v\cdot\nabla_xp_t\rangle dt
 \notag\\
 &-\int_0^T\mathcal H_{\rm coll}(g_t,p_t)dt\Bigg\}.
 \label{hard-sphere-hamiltonian-action}
\end{align}
The hard-sphere path rate is
\begin{equation}\label{hard-sphere-path-rate}
 I_{\rm HS}(g)=I_0(g_0)+\mathcal I_{\rm Ham}(g).
\end{equation}
The hard-sphere rate is stated in the Hamiltonian form
\eqref{hard-sphere-hamiltonian-action} in
\cite[Section~1.4, equations~(1.4.1)--(1.4.2)]{BodineauGallagherSaintRaymondSimonella}, where the collision Hamiltonian is written in terms of the
hard-sphere collision measure $d\mu(z_1,z_2,\omega)$ defined in
\cite[equation~(1.3.7)]{BodineauGallagherSaintRaymondSimonella}.  After
integrating out the spatial Dirac mass and changing from the collision
normal $\omega$ to the post-collisional direction $\sigma$, their formula
takes the form \eqref{boltzmann-collision-hamiltonian} with the corresponding
hard-sphere kernel.  We use the same Hamiltonian structure for the more
general collision kernel $B$ considered here.  Closely related Hamiltonian
variational formulas also appear in
\cite[Section~4.1, equations~(24)--(27)]{Bouchet} for dilute-gas collision
dynamics and in the rigorous large-deviation result \cite{Rezakhanlou} for
a one-dimensional stochastic kinetic model with discrete velocities.

\cite[equations~(1.4.6)--(1.4.7)]{BodineauGallagherSaintRaymondSimonella} describes regular atypical paths by a biased Boltzmann
equation whose collision rates contain an exponential multiplier. For the Poissonian model considered here, it is useful to express this bias
directly as a change of collision intensity.  The weak-convergence
representation controls each Poisson intensity by a nonnegative multiplier
$q$ and assigns to it the relative-entropy cost $\ell(q)$.  The resulting
control form provides the link between the Poissonian fluctuating Boltzmann
equation and the hard-sphere Hamiltonian rate, and it is also the form that
arises naturally for the coarse-grained model.

To define this control form, set $\mathcal Q(g)=\varnothing$
unless $g_t(dz)=g_t(z)\,dz$ for almost every $t\in[0,T]$. For such a
density path, let $\mathcal Q(g)$ be the set of measurable functions
$q:[0,T]\times\mathcal C\to[0,\infty)$ such that
\begin{equation*}
 \int_0^T\int_{\mathcal C}B(v-v_*,\sigma)g_t(x,v)g_t(x,v_*)
 \ell(q_t(c))\,dc\,dt<\infty
\end{equation*}
and, for every $\varphi\in C_c^\infty(\mathbb T^d\times\mathbb R^d)$ and every $t\in[0,T]$,
\begin{align}
 \langle g_t,\varphi\rangle-\langle g_0,\varphi\rangle
 &=\int_0^t\langle g_s,v\cdot\nabla_x\varphi\rangle\,ds\notag\\
 &\quad+\frac12\int_0^t\int_{\mathcal C}B(v-v_*,\sigma)
 g_s(x,v)g_s(x,v_*)q_s(c)\Delta\varphi(c)\,dc\,ds.
 \label{controlled-boltzmann-equation}
\end{align}
Here $\ell(q)=q\log q-q+1$, with $0\log0=0$. The corresponding
collision-control action is
\begin{equation}\label{boltzmann-path-rate-control}
 \mathcal I_B(g)=\inf_{q\in\mathcal Q(g)}\frac12
 \int_0^T\int_{\mathcal C}B(v-v_*,\sigma)g_t(x,v)g_t(x,v_*)
 \ell(q_t(c))\,dc\,dt,
\end{equation}
with the convention $\inf\varnothing=+\infty$.  Thus the infimum in
\eqref{boltzmann-path-rate-control} is taken precisely over the collision-rate
multipliers that generate $g$ through
\eqref{controlled-boltzmann-equation}.

The three dynamical rate functions introduced above are related by
\begin{align*}
 I_{X_0}^{\rm FB}(g)
 &=\begin{cases}
 \mathcal I_B(g),&g_0=X_0,\\
 +\infty,&g_0\neq X_0,
 \end{cases}\\
 \mathcal I_{\rm Ham}(g)&\leq\mathcal I_B(g).
\end{align*}
On the exponentially biased class $\mathcal R$ of
\cite{BodineauGallagherSaintRaymondSimonella}, the second relation is an
equality.  Consequently,
\begin{equation*}
 I_{\rm HS}(g)=I_0(g_0)+\mathcal I_B(g),
 \qquad g\in\mathcal R.
\end{equation*}
The first identity and the Hamiltonian--control comparison are proved in
Lemmas~\ref{lem:FB-control-rate-equivalence}
and~\ref{lem:Hamiltonian-control-rate-comparison}, respectively, in
Appendix~\ref{app:fluctuating-boltzmann-rate}.

The hard-sphere results therefore provide two precise microscopic targets
for a fluctuating Boltzmann model: the Gaussian covariance
\eqref{boltzmann-fluctuation-covariance} at central-limit scale and the
nonquadratic action \eqref{hard-sphere-path-rate} at large-deviation scale.
The rate-function comparison is proved in
Appendix~\ref{app:fluctuating-boltzmann-rate}.  The formal discussion after
\eqref{intro-FB-predictable-bracket} explains why the Poissonian collision
noise has the same Gaussian covariance at central-limit scale.

\subsection{New challenges and a coarse-grained model}
\label{subsec:coarse-model-intro}

Equation \eqref{poisson-fluctuating-boltzmann-strong} introduces a type of
fluctuating kinetic equation that differs from both the usual models of
fluctuating hydrodynamics and the Gaussian-noise kinetic equations studied
for the fluctuating Vlasov--Fokker--Planck equation
\cite{HaoWuZimmerVFPDK} and the fluctuating Landau equation
\cite{DuongHeWuFluctuatingLandau}.  The Poisson noise reflects the
discontinuous nature of binary collisions, but it also creates a new
obstruction to nonnegativity.  To distinguish the free phase-space variable
from the collision parameters, write the pre-jump state explicitly as the
absolutely continuous measure
$f(\bar x,\bar v)\,d\bar x\,d\bar v$.  An accepted collision with mark
$c=(x,v,v_*,\sigma)$ subtracts the atoms
$\varepsilon\delta_{(x,v)}$ and
$\varepsilon\delta_{(x,v_*)}$, while its intensity is positive whenever
the two incoming density values $f(x,v)$ and $f(x,v_*)$ are positive.
Nevertheless, absolute continuity gives
\begin{equation*}
  \int_{\{(x,v)\}} f(\bar x,\bar v)\,d\bar x\,d\bar v
  =
  \int_{\{(x,v_*)\}} f(\bar x,\bar v)\,d\bar x\,d\bar v
  =0.
\end{equation*}
Hence, for a nondegenerate collision, the post-jump signed measure $f(\bar x,\bar v)\,d\bar x\,d\bar v+\varepsilon J_c$ 
assigns mass $-\varepsilon$ to each departure singleton and is not
nonnegative.  It therefore no longer represents a particle density, and
the pointwise factor $f(x,v)f(x,v_*)$ in the collision intensity is no
longer defined.  Thus the equation cannot be continued in its natural
state space after such a jump.

Mollifying the noise does not remove this obstruction. If
$K_\eta$ is a smooth approximation of the identity, the regularized jump
has the form
\begin{align*}
 \varepsilon\bigl[&K_\eta(\mathord\cdot-(x,v'))
 +K_\eta(\mathord\cdot-(x,v_*'))-K_\eta(\mathord\cdot-(x,v))
 -K_\eta(\mathord\cdot-(x,v_*))\bigr].
\end{align*}
It is still a signed function with negative departure lobes. Preservation of
nonnegativity would require the pre-jump density to dominate these lobes pointwise.
There is no such lower bound near vacuum, in the velocity tails, or for a
general nonnegative initial density. Smoothing the intensity as well does
not help: it changes the rate at which a departure location is selected,
but it does not cap the amount removed by a jump by the mass actually
available there. The missing ingredient is therefore a local capacity
constraint, not spatial regularity of the noise.

The key idea of the coarse-grained model is to discretize the admissible
cell masses as nonnegative integer multiples of $\varepsilon$ and to allow
a jump only when its departure cells contain the required mass quanta.
The jump rates then vanish whenever a proposed event would remove more
mass than is locally available.  This turns the formal continuum dynamics
into a pure-jump process that remains in a nonnegative discrete state
space.  To implement this idea, we truncate the velocity region and
partition the resulting phase-space domain $\mathbb D_h$ into finitely
many cells $(E_a^h)_{a\in\mathcal I_h}$.  We give only a short description
of the resulting process here.  The domain $\mathbb D_h$, the partition,
and all coefficients and counting processes used below are defined
precisely in Subsection~\ref{subsec:def-coarse-grained-model}.  The state
variable is the vector of cell masses
\begin{equation*}
 m^{\varepsilon,h}(t)=(m_a(t))_{a\in\mathcal I_h}
 \in(\varepsilon\mathbb N_0)^{\mathcal I_h},
\end{equation*}
where $\varepsilon$ is one mass quantum.  If
$\chi_a^h=|E_a^h|^{-1}\mathbf1_{E_a^h}$, the reconstructed density
\begin{equation*}
 f_t^{\varepsilon,h}
 =\sum_{a\in\mathcal I_h}m_a(t)\chi_a^h
\end{equation*}
is piecewise constant and has cell mass
$\int_{E_a^h}f_t^{\varepsilon,h}\,dz=m_a(t)$.  Thus the collection of all
cell masses is equivalent to a finite-volume approximation of the
phase-space density.  The precise reconstruction is given in
\eqref{coarse-boltzmann-reconstruction}.

The mass vector evolves through two families of pure jumps.  A transport
event moves one quantum from a cell $a$ to an adjacent spatial cell $b$,
whereas a collision event
$\gamma=(a,b;c,d)$ removes one quantum from each incoming cell $a,b$ and
adds one to each outgoing cell $c,d$.  Their state-dependent intensities
have the form
\begin{equation*}
 \lambda_{ab}^{\varepsilon,h}(m)
 =\varepsilon^{-1}m_aT_{ab}^h,
 \qquad
 \lambda_\gamma^{\varepsilon,h}(m)
 =\frac{K_\gamma^h}{2\varepsilon}
 m_a\bigl(m_b-\varepsilon\mathbf1_{\{a=b\}}\bigr),
\end{equation*}
as defined in \eqref{coarse-transport-rate}--\eqref{coarse-boltzmann-collision-rate}.  The coefficients $T_{ab}^h$ and
$K_\gamma^h$ discretize free transport and the collision kernel and are
defined in \eqref{coarse-upwind-transport-coefficients} and
\eqref{riemann-collision-quadrature}, respectively.  These rates enforce
nonnegativity pathwise.  On one hand, an empty departure cell has zero
transport rate. On the other hand, if $a\ne b$, a collision has zero rate as soon as either
incoming cell is empty; and if $a=b$, the falling-factorial term
$m_a(m_a-\varepsilon)$ permits a collision only when that cell contains
at least two mass quanta.  Consequently no jump can remove more mass from
a cell than is present there.

Let $P_{ab}^{\varepsilon,h}$ and $P_\gamma^{\varepsilon,h}$ be the
corresponding counting processes, with the preceding predictable
intensities, and let their compensated versions be
$\widetilde P_{ab}^{\varepsilon,h}$ and
$\widetilde P_\gamma^{\varepsilon,h}$.  Their random-time-change
construction is given in \eqref{coarse-transport-counts}--
\eqref{coarse-collision-counts}, and the active channel sets
$\mathcal E_h$ and $\Gamma_h$ are specified in
Subsection~\ref{subsec:def-coarse-grained-model}.  After reconstructing the density
and separating each counting process into its compensator and martingale
part, one obtains
\begin{align*}
 df_t^{\varepsilon,h}
 &=\Bigg[\mathcal T_h^*f_t^{\varepsilon,h}
 +\frac12\sum_{\gamma=(a,b;c,d)\in\Gamma_h}K_\gamma^h
 m_a(t-)\bigl(m_b(t-)-\varepsilon\mathbf1_{\{a=b\}}\bigr)
 (\chi_c^h+\chi_d^h-\chi_a^h-\chi_b^h)\Bigg]dt\\
 &\quad+\varepsilon\sum_{(a,b)\in\mathcal E_h}
 (\chi_b^h-\chi_a^h)\,d\widetilde P_{ab}^{\varepsilon,h}(t)\\
 &\quad+\varepsilon\sum_{\gamma=(a,b;c,d)\in\Gamma_h}
 (\chi_c^h+\chi_d^h-\chi_a^h-\chi_b^h)
 \,d\widetilde P_\gamma^{\varepsilon,h}(t),
\end{align*}
where, for $f=\sum_{a\in\mathcal I_h}m_a\chi_a^h$,
\begin{align*}
 \mathcal T_h^*f
 &=\sum_{(a,b)\in\mathcal E_h}
 m_aT_{ab}^h(\chi_b^h-\chi_a^h).
\end{align*}
The corresponding density equation is derived in
Subsection~\ref{subsec:def-coarse-grained-model}, in particular in
\eqref{coarse-boltzmann-density-equation}.  The transport compensator gives
the finite-volume upwind drift.  The leading collision contribution with
coefficient $m_am_b$ is the cell-integrated Boltzmann drift, while the
falling-factorial correction remains part of the model at fixed
$\varepsilon$.  The compensated collision counts describe the corresponding
Poissonian collision fluctuations.  The transport martingale is needed to
move individual mass quanta without violating nonnegativity, but its
quadratic variation vanishes with the spatial mesh, so it creates no
additional transport noise in the continuum model.

The main results of this coarse-grained model can be summarized in the following three aspects: 
\begin{enumerate}[label=\textbf{(\arabic*)},leftmargin=*]
 \item For fixed $(\varepsilon,h)$, the cell-mass equation has a pathwise
 unique nonexplosive strong solution that remains nonnegative and
 preserves total mass; see
 Theorem~\ref{thm:intro-coarse-wellposedness-rigorous}.

 \item For fixed $h$, as $\varepsilon\downarrow0$, the density path
 satisfies a good large-deviation principle with speed
 $\varepsilon^{-1}$ and an explicit Poisson control rate function; see
 Theorem~\ref{thm:intro-fixed-mesh-ldp-rigorous}.

 \item At a fixed velocity cutoff, as $h_x+h_v\downarrow0$, the discrete
 rate functions satisfy the $\Gamma$-liminf and recovery properties on the
 class of biased regular paths.  Thus the coarse model recovers the
 continuum collision action locally in path space; see
 Theorem~\ref{thm:intro-regular-path-rate-convergence}.

\end{enumerate}

\subsection{Related literature}
\label{subsec:related-literature}
We briefly review the developments that are most closely related to the
questions studied in this paper.

\medskip
\noindent\emph{Kinetic theory and the Boltzmann equation.}
Since Boltzmann's derivation of the collision equation for a dilute gas
\cite{Boltzmann1872}, kinetic theory has served as a mesoscopic description
between Newtonian particle dynamics and continuum fluid mechanics.  The
Boltzmann equation combines free transport with local binary collisions,
preserves mass, momentum, and energy, and dissipates the Boltzmann entropy;
standard accounts of its derivation, collision geometry, and analytic
theory can be found in
\cite{CercignaniIllnerPulvirenti,VillaniCollisionalReview}.  Its PDE theory
contains several complementary notions of solution.  For cutoff kernels
and sufficiently regular data, classical solutions can be constructed in
perturbative and spatially homogeneous regimes, while the renormalized
solution theory of DiPerna and Lions gives global existence and weak
stability for large data with finite mass, energy, and entropy
\cite{DiPernaLionsBoltzmann}.  For long-range interactions, the removal of
the angular cutoff reveals fractional and hypoelliptic regularization in
the collision operator, and foundational estimates and existence results
were established in
\cite{AlexandreDesvillettesVillaniWennberg,AlexandreVillaniLongRange,
AlexandreMorimotoUkaiXuYang}.  The broader PDE theory also includes global
classical solutions near Maxwellian equilibria, boundary-value problems,
and quantitative convergence to equilibrium; see, for example,
\cite{UkaiGlobalBoltzmann,GuoWholeSpace,GuoBoundedDomains,
DesvillettesVillaniEquilibrium,MouhotNeumann}.

Hydrodynamic limits connect the deterministic kinetic description to fluid
equations.  The incompressible Navier--Stokes limit in
\cite{GolseSaintRaymondNavierStokesLimit} is particularly close to the
kinetic-to-fluid perspective that motivates fluctuating kinetic models.
Related derivations use the relative-entropy and compactness methods of
Bardos, Golse and Levermore and of Lions and Masmoudi, while later results
cover hard cutoff potentials and the full incompressible
Navier--Stokes--Fourier system; see
\cite{BardosGolseLevermoreFluidLimits,LionsMasmoudiFluidMechanicsII,
GolseSaintRaymondHardCutoff,LevermoreMasmoudiNSF}.  These results provide
the deterministic kinetic background for the drift of our model. 

\medskip
\noindent\emph{Particle systems, the Boltzmann--Grad limit, and
fluctuations.}
The microscopic origin of the Boltzmann equation is most explicit for a
dilute gas of hard spheres.  Lanford proved that, for short kinetic times,
the BBGKY hierarchy converges in the Boltzmann--Grad scaling to the
Boltzmann hierarchy and hence, for chaotic initial data, to the Boltzmann
equation \cite{Lanford1975}; modern presentations and extensions to
short-range potentials are given in
\cite{GallagherSaintRaymondTexier}.  More recently, Deng, Hani and Ma
extended the hard-sphere derivation to every time interval on which the
corresponding Boltzmann equation has a sufficiently regular solution,
thereby removing Lanford's short-time restriction within this regularity
regime \cite{DengHaniMaLongTimeBoltzmann}.  A complementary spatially
homogeneous approach begins with Kac's Markov jump model \cite{Kac1956},
for which propagation of chaos, quantitative stability, and estimates
uniform in the number of particles were developed further in
\cite{MischlerMouhotKac}.  These law-of-large-numbers results explain the
Boltzmann collision drift, but they do not describe the random field
around that limit.

At the next order, fluctuations of collision processes are naturally
Gaussian after central-limit scaling, as already investigated for kinetic
jump models by McKean \cite{McKeanFluctuations}.  Large deviations retain
more information because they record the nonlinear cost of changing the
collision flux rather than only its quadratic covariance.  Kinetic large
deviations were studied, among other works, in
\cite{Rezakhanlou,Bouchet}.  For the conservative Kac process, Heydecker
proved an upper bound and a matching lower bound on a restricted class of
regular paths, but also constructed an explicit counterexample showing
that the candidate rate function does not yield the global large-deviation
lower bound \cite{HeydeckerKacLDP}. This observation is important for the
restricted regular-path comparison carried out in
Section~\ref{sec:collision-limit-rigorous}.

The rate-functional viewpoint has also been developed further for
spatially homogeneous collision models.  Basile, Benedetto, Bertini and
Heydecker studied the large-time behavior of the Kac-walk rate function and
identified a dynamical phase transition in the collision activity
\cite{BasileBenedettoBertiniHeydecker}, while Basile, Benedetto and Orrieri
used the corresponding measure--flux action to characterize the
energy-conserving homogeneous Boltzmann solution and derive it from Kac's
walk under entropic chaoticity \cite{BasileBenedettoOrrieri}.  The work
most directly related to the present paper is that of Bodineau, Gallagher,
Saint-Raymond and Simonella
\cite{BodineauGallagherSaintRaymondSimonella}, which derives, from the
spatial hard-sphere dynamics on a short time interval, both the Gaussian
fluctuating Boltzmann equation and a path large-deviation functional.  The
covariance and collision Hamiltonian obtained there are the two
microscopic consistency tests that guide our choice of the nonlinear
Poisson noise.

\medskip
\noindent\emph{Fluctuating hydrodynamics and conservative SPDEs.}
Fluctuating hydrodynamics adds random fluxes to deterministic hydrodynamic limits, with a covariance determined by the corresponding dissipation.  In
the Landau--Lifshitz theory
\cite{LandauLifshitzFluidMechanics}, the viscous stress and heat flux in the
Navier--Stokes--Fourier system acquire Gaussian fluctuations, and its
incompressible reduction is commonly called the
Landau--Lifshitz--Navier--Stokes equation.  This physical picture is
closely related to macroscopic fluctuation theory, which describes
hydrodynamic fluctuations and current large deviations in conservative
many-particle systems \cite{BertiniEtAlMFT}.  Recent mathematical work has
studied solution concepts and large deviations for the
Landau--Lifshitz--Navier--Stokes equation
\cite{GessHeydeckerWuLLNS}, as well as the incompressible
Navier--Stokes--Fourier system with thermal noise
\cite{GessSauerbreyWuThermalNSF}.

The Dean--Kawasaki equation has so far generated the broadest mathematical
theory among models of fluctuating hydrodynamics.  Dean's formal
calculation rewrites the empirical density of interacting Langevin
particles as a conservative SPDE with square-root noise \cite{Dean1996},
but the equation driven by unregularized space--time white noise is highly
rigid and is ill-posed outside the quantized empirical-measure setting
\cite{KonarovskyiLehmannRenesse}.  This difficulty has motivated the
systematic study of correlated-noise and regularized models.  Basic
analytical results include the kinetic solution theory in
\cite{FehrmanGessDeanKawasaki}, the non-equilibrium large-deviation theory
in \cite{FehrmanGessNonEquilibrium}, and the rigorous connection between
conservative SPDEs and the fluctuations and large deviations of the
symmetric simple exclusion process in \cite{DirrFehrmanGessSSEP}.  Further
developments treat
higher-order fluctuation expansions \cite{GessWuZhangHigherOrder},
singular interactions and nonlinear fluctuations near criticality
\cite{WangWuZhangSingular,WuNonlinearFluctuations}, equations on bounded
domains \cite{PopatBoundedDomains}, and singular Keller--Segel
approximations \cite{MartiniMayorcasKSDK}.  Regularized and discrete models
are compared quantitatively with their underlying particle systems in
\cite{CornalbaShardlowZimmer,CornalbaFischerStructure}, and weak-error
estimates have been obtained for independent particles and for smooth
mean-field interactions in
\cite{DjurdjevacKrempPerkowski,DjurdjevacJiPerkowski}.

Conservative multiplicative noise also appears in stochastic thin-film
equations, where thermal fluctuations act through the mobility of the
film.  Following the physical model in \cite{GrunMeckeRauscher}, positive
or nonnegative martingale solutions have been constructed for several
mobility laws, spatial dimensions, and noise structures; see
\cite{FischerGruenThinFilm,GessGnannThinFilm,DareiotisGessGnannGruen,
MetzgerGruenThinFilm,SauerbreyThinFilm}.

Fluctuating kinetic equations seek an analogous description while
retaining the phase-space variable.  For kinetic diffusions, this leads to
Dean--Kawasaki-type Gaussian noise, as in the
Vlasov--Fokker--Planck model of \cite{HaoWuZimmerVFPDK} and the regularized
homogeneous Landau model of \cite{DuongHeWuFluctuatingLandau}.  The
Boltzmann equation has a different microscopic structure because a binary
collision is a jump rather than a continuous diffusion.  The Gaussian
equation derived in \cite{BodineauGallagherSaintRaymondSimonella} captures
the central-limit regime, while the state-dependent Poisson model proposed
here retains both this Gaussian covariance and the nonquadratic
large-deviation Hamiltonian.  This is the main structural difference
between the present equation and previously studied fluctuating kinetic
diffusions.

\medskip
\noindent\emph{Large deviations for Poisson-driven SPDEs.}
The weak-convergence approach expresses a Laplace principle as a stochastic
control problem; see the monograph \cite{DupuisEllis} and the variational
representation of Budhiraja and Dupuis for infinite-dimensional Brownian
noise \cite{BudhirajaDupuisBrownian}.  For Poisson random measures, the
corresponding representation uses predictable controls to change the
intensity and assigns to an intensity multiplier $q$ the relative-entropy
density $\ell(q)=q\log q-q+1$ \cite{BudhirajaDupuisMaroulas}.  Budhiraja,
Chen and Dupuis used this representation to develop an abstract
large-deviation framework for Poisson-driven SPDEs
\cite{BudhirajaChenDupuis}, in which the main tasks are to establish
convergence of the controlled stochastic equations to a skeleton equation
and compactness of the skeleton trajectories with bounded control cost.

The weak-convergence method has since been used for a wide range of
SPDEs with jump noise.  Yang, Zhai and Zhang treated fully nonlinear
stochastic evolution equations driven jointly by Brownian motion and
Poisson random measures \cite{YangZhaiZhangJumpSPDE}, and Zhai and Zhang
applied the method to two-dimensional stochastic Navier--Stokes equations
with multiplicative L\'evy noise \cite{ZhaiZhangNavierStokesLevy}.  Later
applications include locally monotone SPDEs
\cite{XiongZhaiLocallyMonotone}, two-dimensional Navier--Stokes equations
with locally Lipschitz jump coefficients
\cite{BrzezniakPengZhaiNavierStokes}, and generalized stochastic porous
medium equations \cite{WuZhaiPorousMedia}.  Together, these works show how
the same control representation can be adapted to different SPDE
structures once the controlled dynamics and the required compactness are
understood.

This framework is especially natural for the collision process considered
here because a control acts directly on the intensity of each collision
channel, and its relative-entropy cost is dual to the exponential Poisson
Hamiltonian.  Section~\ref{sec:fixed-mesh-ldp-rigorous} uses this structure
to prove the fixed-mesh large-deviation principle, while
Section~\ref{sec:collision-limit-rigorous} compares the resulting discrete
action with the continuum hard-sphere rate functional.

\subsection{Structure of the paper}
\label{subsec:paper-structure}
Section~\ref{sec:coarse-setup-main} introduces the finite phase-space mesh,
the transport and collision channels, and the random-time-change equation
for the cell masses.  It then reconstructs the piecewise constant density
and states the three main results: fixed-mesh well-posedness and
nonnegativity, the fixed-mesh path LDP, and the regular-path limit of the
rate functions under mesh refinement.
Section~\ref{sec:coarse-wellposedness} proves existence, pathwise
uniqueness, nonexplosion, and the conservation properties of the
coarse-grained jump process.  Section~\ref{sec:fixed-mesh-ldp-rigorous}
establishes the small-mass path LDP for fixed $h$ and identifies its
Poisson control action.  Section~\ref{sec:collision-limit-rigorous} studies
the behavior of this action as the mesh is refined at a fixed velocity
cutoff and proves the $\Gamma$-liminf and recovery properties on biased
regular paths.

The two appendices collect material that clarifies the relation with the
continuum formal model without interrupting the finite-mesh proofs.
Appendix~\ref{app:poisson-spde-ldp-framework} recalls the weak-convergence
framework for Poisson-driven equations and applies it to the continuum
fluctuating Boltzmann equation.  Appendix~\ref{app:fluctuating-boltzmann-rate}
compares the resulting control rate with the collision-control and
Hamiltonian representations of the hard-sphere functional.

\section{Setup of a coarse-grained model and main results}
\label{sec:coarse-setup-main}

Throughout the paper, we write
$\mathbb R_{\geq0}:=[0,\infty)$ for the set of nonnegative real numbers.
We use the following path-space convention throughout this section.  If
$(E,d_E)$ is a Polish space, where the metric may be replaced by an
equivalent bounded metric, then $D([0,T];E)$ denotes the space of
right-continuous paths with left limits, endowed with the Skorokhod
$J_1$ topology.  Let $\Lambda_T$ be the set of strictly increasing
continuous bijections of $[0,T]$ onto itself.  A sequence
$x_n\in D([0,T];E)$ converges to $x\in D([0,T];E)$ in the $J_1$ topology
if and only if there exist $\lambda_n\in\Lambda_T$ such that
\begin{equation}\label{definition-Skorokhod-J1-convergence}
 \sup_{t\in[0,T]}|\lambda_n(t)-t|\longrightarrow0,
 \qquad
 \sup_{t\in[0,T]}d_E\bigl(x_n(\lambda_n(t)),x(t)\bigr)
 \longrightarrow0.
\end{equation}
See \cite[Chapter~3, Section~5]{EthierKurtz} for the definition and the
corresponding Polish-space theory.

Let $\mathcal M_+(\mathbb T^d\times\mathbb R^d)$ be the finite nonnegative Borel measures,
equipped with narrow convergence, and put
\begin{equation}\label{boltzmann-path-space}
 \mathscr X_T=D([0,T];\mathcal M_+(\mathbb T^d\times\mathbb R^d)),
\end{equation}
which is Polish under the preceding $J_1$ topology.  We write
$\mathcal M_{2,+}(\mathbb T^d\times\mathbb R^d)$ for the subspace with finite second velocity
moment and use the stronger moment topology only when it is explicitly
stated. Smooth tests
are taken from $C_c^\infty(\mathbb T^d\times\mathbb R^d)$. For fluctuation fields we use
\begin{equation*}
 \mathcal S(\mathbb T^d\times\mathbb R^d)
 =C^\infty(\mathbb T^d;\mathcal S(\mathbb R^d)),
 \qquad \mathcal S'(\mathbb T^d\times\mathbb R^d)=\mathcal S(\mathbb T^d\times\mathbb R^d)^*.
\end{equation*}
The notation $\Delta\varphi$ always denotes \eqref{collision-increment}.

All random measures below are defined on a filtered probability space
satisfying the usual conditions.  Let $(\mathsf E,\mathcal E)$ be a
measurable space and let $\nu$ be a
sigma-finite measure on it.  A Poisson random measure $\mathcal N$ on
$[0,T]\times\mathsf E$ with intensity $dt\,\nu(de)$ is an independently
scattered integer-valued random measure such that, whenever
$A\in\mathcal E$ and $t\nu(A)<\infty$,
\begin{equation*}
 \mathcal N((0,t]\times A)
 \quad\text{has the Poisson distribution with mean }t\nu(A).
\end{equation*}
Counts on disjoint measurable sets are independent.  Its compensated
Poisson random measure is
\begin{equation*}
 \widetilde{\mathcal N}(dt,de)
 :=\mathcal N(dt,de)-dt\,\nu(de).
\end{equation*}
Here $dt\,\nu(de)$ is the compensator of $\mathcal N$, whereas
$\widetilde{\mathcal N}$ is the compensated random measure.  The displayed
equality first defines a signed random measure on sets of finite intensity.
If $G$ is predictable with respect to the usual augmentation of the
filtration generated by $\mathcal N$ and any initial randomness, and
\begin{equation*}
 \mathbb E\int_0^T\int_{\mathsf E}|G(t,e)|^2\,\nu(de)\,dt<\infty,
\end{equation*}
then the integral
\begin{equation*}
 M_t=\int_0^t\int_{\mathsf E}G(s,e)\,
 \widetilde{\mathcal N}(ds,de)
\end{equation*}
is defined by $L^2$ approximation.  It is a square-integrable martingale and
satisfies
\begin{align*}
 \mathbb E|M_t|^2
 &=\mathbb E\int_0^t\int_{\mathsf E}|G(s,e)|^2\,\nu(de)\,ds,\\
 \langle M\rangle_t
 &=\int_0^t\int_{\mathsf E}|G(s,e)|^2\,\nu(de)\,ds,\\
 [M]_t
 &=\int_0^t\int_{\mathsf E}|G(s,e)|^2\,\mathcal N(ds,de).
\end{align*}

Let $\mathcal N$ now be a
Poisson random measure on
$[0,T]\times\mathsf E\times[0,\infty)$ with intensity
$dt\,\nu(de)\,du$, and let $r_t(e)$ be a nonnegative predictable function.
Then
\begin{equation*}
 \mathcal N^r(dt,de)
 :=\int_0^\infty\mathbf1_{\{u\leq r_t(e)\}}
 \mathcal N(dt,de,du)
\end{equation*}
has predictable compensator $r_t(e)\,dt\,\nu(de)$, and its compensated
measure is
\begin{align}\label{predictable-acceptance-compensation}
 \widetilde{\mathcal N}^r(dt,de)
 &: =\mathcal N^r(dt,de)-r_t(e)\,dt\,\nu(de)\notag\\
 &=\int_0^\infty\mathbf1_{\{u\leq r_t(e)\}}
 \widetilde{\mathcal N}(dt,de,du).
\end{align}
The entropy density for a change of Poisson intensity is
\begin{equation}\label{poisson-entropy-density}
 \ell(q)=q\log q-q+1,
 \qquad
 \ell^*(r)=e^r-1.
\end{equation}

\subsection{Definition of a coarse-grained model}
\label{subsec:def-coarse-grained-model}
We impose the capacity constraint by coarse graining phase space.  Two
parameters enter the construction and play different roles.  We write
$h=(h_x,h_v)$, where $h_x$ is the spatial mesh size and $h_v$ is the maximal
diameter of a velocity cell, whereas
$\varepsilon>0$ is the mass carried by one particle (or, equivalently, one
mass quantum).  When the total mass is normalized to one, we assume that
$\varepsilon^{-1}\in\mathbb N$.  To keep the state space finite,
we also choose a velocity cutoff $R(h)<\infty$, with $R(h)\uparrow\infty$ as
$h\downarrow0$ (that is, as $h_x,h_v\downarrow0$), and set
\begin{equation*}
 \mathbb D_h=\mathbb T^d\times B_{R(h)},\qquad
 B_h(c)=B(c)\mathbf1_{\{|v|\vee|v_*|\vee|v'|\vee|v_*'|\leq R(h)\}}.
\end{equation*}
Here $c=(x,v,v_*,\sigma)\in\mathcal C$ is the collision configuration
introduced above, $(v',v_*')$ are the corresponding post-collisional
velocities, and $B(c)=B(v-v_*,\sigma)$ is the collision kernel.  Thus
$B_h$ suppresses every collision for which at least one pre- or
post-collisional velocity lies outside $B_{R(h)}$.  This truncation
preserves mass and the collision invariants and becomes invisible on every
fixed compact velocity set as $h\downarrow0$.  The local integrability
assumption \eqref{collision-kernel-assumption} guarantees that every
cell-integrated collision coefficient defined below is finite.

For fixed $h$, assume that $h_x=N_x^{-1}$ for some
$N_x\in\mathbb N$.  Identifying $\mathbb T^d$ with
$\mathbb R^d/\mathbb Z^d$, let
$\pi:\mathbb R^d\to\mathbb T^d$ be the quotient map and, for
$i=(i_1,\ldots,i_d)\in\{0,\ldots,N_x-1\}^d$, define the half-open cell
\begin{equation*}
 X_i^h:=\pi\left(\prod_{r=1}^d
 [i_rh_x,(i_r+1)h_x)\right).
\end{equation*}
The family $(X_i^h)_i$ is a periodic spatial grid: its indices are understood
modulo $N_x$ in each coordinate, so a neighbor across one boundary of the
fundamental domain re-enters through the opposite boundary.  Choose
$N_v(h)\in\mathbb N$ and a measurable partition
$(V_j^h)_{j=1}^{N_v(h)}$ of $B_{R(h)}$, up to Lebesgue-null sets, such that
\begin{equation*}
 B_{R(h)}=\mathop{\dot\bigcup}_{j=1}^{N_v(h)}V_j^h,
 \qquad
 0<|V_j^h|<\infty,
 \qquad
 \operatorname{diam}(V_j^h)\leq h_v.
\end{equation*}
The phase-space index set and its associated product cells are
\begin{equation*}
 \mathcal I_h
 :=\{0,\ldots,N_x-1\}^d\times\{1,\ldots,N_v(h)\},
 \qquad
 E_{ij}^h:=X_i^h\times V_j^h,
 \quad (i,j)\in\mathcal I_h.
\end{equation*}
Thus an index $a\in\mathcal I_h$ can be written as $a=(i,j)$ and labels one
phase-space cell $E_a^h=E_{ij}^h$.  For $r\in\{1,\ldots,d\}$, we denote the $r$th
coordinate vector in physical space by
$\mathbf e_r^{\,x}\in\mathbb R^d$.

The state variable is the c\`adl\`ag vector
\begin{equation*}
 m^{\varepsilon,h}(t)=(m_a(t))_{a\in\mathcal I_h}
 \in(\varepsilon\mathbb N_0)^{\mathcal I_h}.
\end{equation*}
For every cell label $q\in\mathcal I_h$, define
$e_q\in\mathbb R^{\mathcal I_h}$ componentwise by
\begin{equation*}
 (e_q)_\alpha:=\mathbf1_{\{\alpha=q\}}
 =
 \begin{cases}
  1,&\alpha=q,\\
  0,&\alpha\neq q,
 \end{cases}
 \qquad \alpha\in\mathcal I_h.
\end{equation*}
Thus $(e_q)_{q\in\mathcal I_h}$ is the canonical basis of the cell-mass
state space.  In particular, the symbols $e_a,e_b,e_c,e_d$ below are the
basis vectors corresponding to the four cell labels $a,b,c,d$.  Their jump combinations are explicitly
\begin{align*}
 \bigl[\varepsilon(e_b-e_a)\bigr]_\alpha
 &=\varepsilon\bigl(\mathbf1_{\{\alpha=b\}}
                    -\mathbf1_{\{\alpha=a\}}\bigr),\\
 \bigl[\varepsilon(e_c+e_d-e_a-e_b)\bigr]_\alpha
 &=\varepsilon\bigl(\mathbf1_{\{\alpha=c\}}
 +\mathbf1_{\{\alpha=d\}}
 -\mathbf1_{\{\alpha=a\}}
 -\mathbf1_{\{\alpha=b\}}\bigr).
\end{align*}
By contrast, $\mathbf e_r^{\,x}\in\mathbb R^d$ moves the spatial grid
index from $i$ to $i\pm\mathbf e_r^{\,x}$.
Here $m_a(t)$ is the mass in $E_a^h$ at time $t$, so that
$m_a(t)/\varepsilon$ is the number of mass quanta in that cell.  Set
\begin{equation}\label{coarse-boltzmann-reconstruction}
 \chi_a^h=|E_a^h|^{-1}\mathbf1_{E_a^h},
 \qquad
 f_t^{\varepsilon,h}=\sum_{a\in\mathcal I_h}m_a(t)\chi_a^h,
\end{equation}
and extend $f_t^{\varepsilon,h}$ by zero outside $\mathbb D_h$.  Since
$\int_{\mathbb D_h}\chi_a^h\,dx\,dv=1$, the coefficients in
\eqref{coarse-boltzmann-reconstruction} are precisely the cell masses:
\begin{equation*}
 \int_{E_a^h}f_t^{\varepsilon,h}(x,v)\,dx\,dv=m_a(t),
 \qquad a\in\mathcal I_h.
\end{equation*}

We first introduce the transport jumps, using the standard conservative
upwind choice throughout the paper.  Write
$q^+:=\max\{q,0\}$ and $q^-:=\max\{-q,0\}$.
Let $v_r$ be the $r$th component of $v=(v_1,\ldots,v_d)$.  With the periodic convention,
$X_{i+\mathbf e_r^{\,x}}^h$ and $X_{i-\mathbf e_r^{\,x}}^h$ are the two
neighbours of $X_i^h$ in the $r$th spatial direction.  Set
\begin{align}
 T_{(i,j),(i+\mathbf e_r^{\,x},j)}^h
 &=\frac{1}{h_x|V_j^h|}\int_{V_j^h}(v_r)^+\,dv,
 &T_{(i,j),(i-\mathbf e_r^{\,x},j)}^h
 &=\frac{1}{h_x|V_j^h|}\int_{V_j^h}(v_r)^-\,dv,
 \qquad r=1,\ldots,d.
 \label{coarse-upwind-transport-coefficients}
\end{align}
These are the only nonzero transport coefficients.  If
$a=(i,j)$ and $b=(i',j')$, then $T_{ab}^h=0$ unless $j'=j$ and, for some
$r\in\{1,\ldots,d\}$,
\begin{equation*}
 i'=i+\mathbf e_r^{\,x}
 \qquad\text{or}\qquad
 i'=i-\mathbf e_r^{\,x}.
\end{equation*}
Here the spatial indices are understood modulo $N_x$.  Thus a transport
jump keeps the velocity cell fixed and moves the mass only to a periodic
nearest-neighbour spatial cell; $T_{ab}^h$ is its single-particle rate.

This upwind choice is natural for a pure-jump realization of free
transport.  In each spatial direction, the decomposition
$v_r=(v_r)^+-(v_r)^-$ represents the signed transport velocity by two
nonnegative jump rates.  A mass quantum with positive $r$th velocity moves
from $X_i^h$ to $X_{i+\mathbf e_r^{\,x}}^h$, while a negative $r$th
velocity moves it to $X_{i-\mathbf e_r^{\,x}}^h$.  The compensator of
these jumps therefore gives the conservative upwind discretization of
$-v\cdot\nabla_xf$, and the nonnegative coefficients in
\eqref{coarse-upwind-transport-coefficients} can serve as Poisson jump
rates.  To see the resulting upwind operator explicitly, consider one
space dimension and set
\begin{equation*}
 \bar v_{j,+}^h
 :=\frac{1}{|V_j^h|}\int_{V_j^h}v^+\,dv,
 \qquad
 \bar v_{j,-}^h
 :=\frac{1}{|V_j^h|}\int_{V_j^h}v^-\,dv.
\end{equation*}
The transport drift in cell $(i,j)$ is then
\begin{equation*}
 \frac{\bar v_{j,+}^h}{h_x}
 \bigl(m_{(i-1,j)}-m_{(i,j)}\bigr)
 +\frac{\bar v_{j,-}^h}{h_x}
 \bigl(m_{(i+1,j)}-m_{(i,j)}\bigr).
\end{equation*}
If $V_j^h\subset(0,\infty)$ and
$\bar v_j^h:=|V_j^h|^{-1}\int_{V_j^h}v\,dv>0$, this becomes
\begin{equation*}
 -\bar v_j^h
 \frac{m_{(i,j)}-m_{(i-1,j)}}{h_x}.
\end{equation*}
If $V_j^h\subset(-\infty,0)$, then $\bar v_j^h<0$, and the drift becomes
\begin{equation*}
 -\bar v_j^h
 \frac{m_{(i+1,j)}-m_{(i,j)}}{h_x}.
\end{equation*}
Thus the value in the upstream cell is used in either direction, and the
general formula above is the usual conservative upwind discretization.

A collision channel is an ordered quadruple $\gamma=(a,b;c,d)$: its
occurrence removes one mass quantum from each of the incoming cells $a,b$
and adds one to each of the outgoing cells $c,d$.  Write its four cell
indices as
\begin{equation*}
 a=(i_a,j),\qquad b=(i_b,k),\qquad
 c=(i_c,l),\qquad d=(i_d,m).
\end{equation*}
Collisions are local in $x$.  If the four spatial indices coincide,
$i_a=i_b=i_c=i_d=:i$, we define
\begin{align}
 K_\gamma^h
 :=\frac{1}{|X_i^h||V_j^h||V_k^h|}
 \int_{V_j^h\times V_k^h\times\mathbb S^{d-1}}
 &B_h(v,v_*,\sigma)
 \mathbf1_{V_l^h}(v')\mathbf1_{V_m^h}(v_*')
 \,dv\,dv_*\,d\sigma.
 \label{riemann-collision-quadrature}
\end{align}
If the four spatial indices are not all equal, we set $K_\gamma^h=0$.
Thus a collision can remove and add mass only within one and the same
spatial cell $X_i^h$.  Hence
$K_\gamma^h$ is explicitly determined by the collision kernel, the cutoff,
and the mesh: it integrates all collisions whose incoming velocities lie in
$V_j^h,V_k^h$ and whose outgoing velocities lie in $V_l^h,V_m^h$.
In particular, \eqref{riemann-collision-quadrature} is an exact cell
integration rather than an additional pointwise quadrature assumption.  It
inherits particle-exchange symmetry from $B_h$, while microreversibility
gives, with
$K_{i;jk\to lm}^h:=K_{((i,j),(i,k);(i,l),(i,m))}^h$,
\begin{align}
 |V_j^h||V_k^h|K_{i;jk\to lm}^h
 &=\frac1{|X_i^h|}
 \int_{\mathbb R^{2d}\times\mathbb S^{d-1}}
 B_h(v,v_*,\sigma)
 \mathbf1_{V_j^h}(v)\mathbf1_{V_k^h}(v_*)
 \mathbf1_{V_l^h}(v')\mathbf1_{V_m^h}(v_*')
 \,dv\,dv_*\,d\sigma\notag\\
 &=\frac1{|X_i^h|}
 \int_{\mathbb R^{2d}\times\mathbb S^{d-1}}
 B_h(\widetilde v,\widetilde v_*,\widetilde\sigma)
 \mathbf1_{V_l^h}(\widetilde v)\mathbf1_{V_m^h}(\widetilde v_*)
 \mathbf1_{V_j^h}(\widetilde v')\mathbf1_{V_k^h}(\widetilde v_*')
 \,d\widetilde v\,d\widetilde v_*\,d\widetilde\sigma\notag\\
 &=|V_l^h||V_m^h|K_{i;lm\to jk}^h.
 \label{volume-weighted-collision-microreversibility}
\end{align}
The first and last equalities follow directly from
\eqref{riemann-collision-quadrature}, after inserting the indicators of the
two incoming velocity cells.  For the middle equality, we use the
pre/post-collisional change of variables
\begin{equation*}
 (\widetilde v,\widetilde v_*,\widetilde\sigma)
 =\left(v',v_*',\frac{v-v_*}{|v-v_*|}\right),
 \qquad v\ne v_*.
\end{equation*}
Under the collision map \eqref{collision-map}, the post-collisional
velocities generated from
$(\widetilde v,\widetilde v_*,\widetilde\sigma)$ are
$(\widetilde v',\widetilde v_*')=(v,v_*)$.  The change of variables
preserves $dv\,dv_*\,d\sigma$, and the second identity in
\eqref{collision-kernel-symmetries} shows that the collision kernel is
unchanged.  The cutoff in the definition of $B_h$ is also unchanged because
it treats the four velocities $v,v_*,v',v_*'$ symmetrically.  This proves
\eqref{volume-weighted-collision-microreversibility}.
Its Riemann sums converge on smooth compactly supported profiles.  Exact
discrete momentum and energy conservation are not required for the
fixed-mesh theory; on compact velocity sets the corresponding consistency
errors vanish as $h_x+h_v\downarrow0$.

Figure~\ref{fig:coarse-transport-collision} summarizes the two kinds of
jumps.  It shows a one-dimensional projection of the phase-space mesh; in
higher dimension the same construction is applied in each spatial direction
and to the multidimensional velocity cells.
\begin{figure}[H]
\centering
\begin{tikzpicture}[x=0.90cm,y=0.82cm,>=stealth,
                    every node/.style={font=\small}]
 % Transport panel
 \node[font=\small\bfseries] at (2.7,4.15) {(a) Free transport};
 \foreach \x in {0,1.8,3.6,5.4}
   \draw[gray!65] (\x,0.5)--(\x,3.5);
 \foreach \y in {0.5,1.25,2,2.75,3.5}
   \draw[gray!65] (0,\y)--(5.4,\y);
 \fill[blue!18] (1.8,2) rectangle (3.6,2.75);
 \fill[green!20] (3.6,2) rectangle (5.4,2.75);
 \draw[gray!65] (1.8,2) rectangle (3.6,2.75);
 \draw[gray!65] (3.6,2) rectangle (5.4,2.75);
 \node at (2.7,2.30) {$a$};
 \node at (4.5,2.30) {$b$};
 \draw[->,very thick,blue!70!black]
   (3.15,2.63)--(4.05,2.63)
   node[midway,above=2pt] {$T_{ab}^h$};
 \node at (0.9,0.18) {$X_{i-\mathbf e_r^{\,x}}^h$};
 \node at (2.7,0.18) {$X_i^h$};
 \node at (4.5,0.18) {$X_{i+\mathbf e_r^{\,x}}^h$};
 \node[rotate=90] at (-0.35,2) {velocity cells};
 \node[align=center] at (2.7,-0.62)
   {$a=(i,j),\quad b=(i+\mathbf e_r^{\,x},j)$\\
    $m\mapsto m+\varepsilon(e_b-e_a)$};

 % Collision panel
 \begin{scope}[xshift=0.5cm]
 \node[font=\small\bfseries] at (10.25,4.15) {(b) A local collision};
 \node at (7.9,3.72) {before};
 \node at (12.6,3.72) {after};
 \foreach \y in {0.5,1.25,2,2.75,3.5}{
   \draw[gray!65] (7.2,\y)--(8.6,\y);
   \draw[gray!65] (11.9,\y)--(13.3,\y);
 }
 \draw[gray!65] (7.2,0.5) rectangle (8.6,3.5);
 \draw[gray!65] (11.9,0.5) rectangle (13.3,3.5);
 \fill[red!20] (7.2,0.5) rectangle (8.6,1.25);
 \fill[orange!25] (7.2,2) rectangle (8.6,2.75);
 \fill[green!22] (11.9,1.25) rectangle (13.3,2);
 \fill[cyan!22] (11.9,2.75) rectangle (13.3,3.5);
 \draw[gray!65] (7.2,0.5) rectangle (8.6,1.25);
 \draw[gray!65] (7.2,2) rectangle (8.6,2.75);
 \draw[gray!65] (11.9,1.25) rectangle (13.3,2);
 \draw[gray!65] (11.9,2.75) rectangle (13.3,3.5);
 \node at (7.9,0.88) {$a=(i,j)$};
 \node at (7.9,2.38) {$b=(i,k)$};
 \node at (12.6,1.63) {$c=(i,l)$};
 \node at (12.6,3.13) {$d=(i,m)$};
 \node[draw,circle,inner sep=2pt] (col) at (10.25,2) {$B_h$};
 \draw[->,thick,red!70!black] (8.6,0.88)--(col);
 \draw[->,thick,orange!80!black] (8.6,2.38)--(col);
 \draw[->,thick,green!60!black] (col)--(11.9,1.63);
 \draw[->,thick,cyan!70!black] (col)--(11.9,3.13);
 \node[fill=white,inner sep=1pt] at (10.25,2.73) {$K_\gamma^h$};
 \node[font=\scriptsize,anchor=east] at (7.08,0.88)
   {$X_i^h\times V_j^h$};
 \node[font=\scriptsize,anchor=east] at (7.08,2.38)
   {$X_i^h\times V_k^h$};
 \node[font=\scriptsize,anchor=west] at (13.42,1.63)
   {$X_i^h\times V_l^h$};
 \node[font=\scriptsize,anchor=west] at (13.42,3.13)
   {$X_i^h\times V_m^h$};
 \node[align=center] at (10.25,-0.55)
   {$\gamma=(a,b;c,d)$,\quad
     $m\mapsto m+\varepsilon(e_c+e_d-e_a-e_b)$};
 \end{scope}
\end{tikzpicture}
\caption{The two elementary jumps of the coarse model.  In (a), for a
positive $r$th velocity component, one mass quantum moves to the neighbouring
spatial cell without changing its velocity cell; the reverse arrow is used
for a negative component.  In (b), both grids have the same fixed spatial
cell $X_i^h$, while each horizontal strip represents one velocity cell.
If $a=b$, the collision rate contains
$m_a(m_a-\varepsilon)$, which is positive only when
$m_a/\varepsilon\geq2$; hence two available mass quanta are removed from
$E_a^h$ when the collision occurs.}
\label{fig:coarse-transport-collision}
\end{figure}

We denote the finite sets of active transport edges and collision channels by
\begin{equation*}
 \mathcal E_h=\{(a,b)\in\mathcal I_h^2:T_{ab}^h>0\},
 \qquad
 \Gamma_h=\{(a,b;c,d)\in\mathcal I_h^4:K_{(a,b;c,d)}^h>0\}
\end{equation*}
respectively.

The fixed-mesh analysis below uses only the finiteness of the channel sets
and the nonnegativity of $T_{ab}^h$ and $K_\gamma^h$.  To define the evolution of the cell masses, we first assign an instantaneous
rate to every active channel as a function of the current state.  For
$m=(m_a)_{a\in\mathcal I_h}$, define
\begin{align}
 \lambda_{ab}^{\varepsilon,h}(m)
 &=\varepsilon^{-1}m_aT_{ab}^h,
 \label{coarse-transport-rate}\\
 \lambda_\gamma^{\varepsilon,h}(m)
 &=\frac{K_\gamma^h}{2\varepsilon}
 m_a\bigl(m_b-\varepsilon\mathbf1_{\{a=b\}}\bigr).
 \label{coarse-boltzmann-collision-rate}
\end{align}
These are nonnegative rate functions on the state space.  They become
stochastic intensities when evaluated along the random path $m(t-)$.  Since
every mass quantum has size
$\varepsilon$, the number of available quanta in cell $a$ is
$m_a/\varepsilon$.  Hence \eqref{coarse-transport-rate} is the
single-quantum transport rate $T_{ab}^h$ multiplied by the number
$m_a/\varepsilon$ of available quanta in the departure cell.  Likewise,
\begin{equation*}
 \frac{m_a}{\varepsilon}
 \left(\frac{m_b}{\varepsilon}-\mathbf1_{\{a=b\}}\right)
\end{equation*}
counts ordered pairs of distinct incoming quanta in cells $a$ and $b$.
Multiplying this number by the rate $\varepsilon K_\gamma^h/2$ assigned to
each ordered pair gives exactly \eqref{coarse-boltzmann-collision-rate}.
The subtraction excludes the collision of a quantum with itself when
$a=b$, while the factor $1/2$ is the usual Boltzmann symmetry factor.

We realize the channel counts by random time changes.  On the filtered
probability space fixed above, let
\begin{equation*}
 (P_{ab}^{\rm tr})_{(a,b)\in\mathcal E_h},
 \qquad (P_\gamma^{\rm col})_{\gamma\in\Gamma_h},
\end{equation*}
be mutually independent unit-rate Poisson processes, independent of the
initial state.  Abbreviating $m(t)=m^{\varepsilon,h}(t)$, define the
physical channel counts jointly with the state process by
\begin{align}
 P_{ab}^{\varepsilon,h}(t)
 &=P_{ab}^{\rm tr}\left(
 \int_0^t\lambda_{ab}^{\varepsilon,h}(m(s-))\,ds\right),
 \label{coarse-transport-counts}\\
 P_\gamma^{\varepsilon,h}(t)
 &=P_\gamma^{\rm col}\left(
 \int_0^t\lambda_\gamma^{\varepsilon,h}(m(s-))\,ds\right).
 \label{coarse-collision-counts}
\end{align}
By the random-time-change theorem,
$P_{ab}^{\varepsilon,h}$ and $P_\gamma^{\varepsilon,h}$ have predictable
intensities
\begin{equation*}
 \lambda_{ab}^{\varepsilon,h}(m(t-))
 \qquad\text{and}\qquad
 \lambda_\gamma^{\varepsilon,h}(m(t-)),
\end{equation*}
respectively.  The left limit makes the intensities predictable.  The
time-changed counts are coupled through their common dependence on $m$,
even though the reference processes are independent.

Together with the time-change equations
\eqref{coarse-transport-counts}--\eqref{coarse-collision-counts}, the
coordinate vectors $(e_a)_{a\in\mathcal I_h}$ and a prescribed initial state
$m^{\varepsilon,h}(0)$ define the cell-mass process through the coupled
random-time-change equation
\begin{align}
 dm^{\varepsilon,h}(t)
 &=\varepsilon\sum_{(a,b)\in\mathcal E_h}(e_b-e_a)
 \,dP_{ab}^{\varepsilon,h}(t)\notag\\
 &\quad+\varepsilon\sum_{\gamma=(a,b;c,d)\in\Gamma_h}
 (e_c+e_d-e_a-e_b)
 \,dP_\gamma^{\varepsilon,h}(t).
 \label{coarse-boltzmann-model}
\end{align}
Each transport jump therefore changes $m$ by
$\varepsilon(e_b-e_a)$, and each collision jump changes it by
$\varepsilon(e_c+e_d-e_a-e_b)$.  Both jump vectors have zero coordinate
sum, so every jump preserves the total mass $\sum_a m_a$.
For $M\in\varepsilon\mathbb N_0$, denote the fixed-mass lattices and their
finite-mass union by
\begin{align}
 \mathsf S_{\varepsilon,h}(M)
 &=\left\{m\in(\varepsilon\mathbb N_0)^{\mathcal I_h}:
 \sum_{a\in\mathcal I_h}m_a=M\right\},\notag\\
 \mathsf S_{\varepsilon,h}^{\rm fin}
 &=\bigcup_{M\in\varepsilon\mathbb N_0}
 \mathsf S_{\varepsilon,h}(M),
 \qquad
 \mathsf S_{\varepsilon,h}=\mathsf S_{\varepsilon,h}(1).
 \label{intro-coarse-state-space}
\end{align}

\begin{definition}[Solution of the coarse-grained system]
\label{def:coarse-grained-solution}
Fix $\varepsilon>0$, $h>0$, and $T>0$, and let
$m_0^{\varepsilon,h}$ be an $\mathcal F_0$-measurable random variable with
values in $\mathsf S_{\varepsilon,h}^{\rm fin}$, independent of the
reference Poisson processes.  A \emph{global strong solution} of the
coarse-grained fluctuating Boltzmann system on $[0,T]$ is an adapted
c\`adl\`ag process
\begin{equation*}
 m^{\varepsilon,h}:[0,T]\times\Omega
 \longrightarrow\mathsf S_{\varepsilon,h}^{\rm fin}
\end{equation*}
which is constructed from $m_0^{\varepsilon,h}$ and the prescribed family
$(P_{ab}^{\rm tr},P_\gamma^{\rm col})$ and has the following properties.
For every $(a,b)\in\mathcal E_h$ and $\gamma\in\Gamma_h$, the cumulative
clocks
\begin{equation*}
 \int_0^t\lambda_{ab}^{\varepsilon,h}
       (m^{\varepsilon,h}(s-))\,ds,
 \qquad
 \int_0^t\lambda_\gamma^{\varepsilon,h}
       (m^{\varepsilon,h}(s-))\,ds
\end{equation*}
are finite almost surely for every $t\in[0,T]$, and, almost surely,
simultaneously for all $t\in[0,T]$,
\begin{align}
 m^{\varepsilon,h}(t)
 ={}&m_0^{\varepsilon,h}
 +\varepsilon\sum_{(a,b)\in\mathcal E_h}(e_b-e_a)
 P_{ab}^{\rm tr}\left(
 \int_0^t\lambda_{ab}^{\varepsilon,h}
       (m^{\varepsilon,h}(s-))\,ds\right)\notag\\
 &+\varepsilon\sum_{\gamma=(a,b;c,d)\in\Gamma_h}
 (e_c+e_d-e_a-e_b)
 P_\gamma^{\rm col}\left(
 \int_0^t\lambda_\gamma^{\varepsilon,h}
       (m^{\varepsilon,h}(s-))\,ds\right).
 \label{coarse-boltzmann-integral-equation}
\end{align}
Here ``strong'' means that the solution is constructed pathwise from the
given initial state and reference Poisson processes, rather than that its
sample paths possess additional regularity.  Pathwise uniqueness means that
two such solutions with the same initial state and the same reference
Poisson processes are indistinguishable.  If
$m_0^{\varepsilon,h}\in\mathsf S_{\varepsilon,h}(M)$ almost surely, then a
solution takes values in that fixed-mass state space.  The associated
piecewise constant density is the reconstruction
$f_t^{\varepsilon,h}=\sum_am_a^{\varepsilon,h}(t)\chi_a^h$ from
\eqref{coarse-boltzmann-reconstruction}; it is not an additional unknown.
\end{definition}

Equation~\eqref{coarse-boltzmann-integral-equation} is the integral form of
\eqref{coarse-transport-counts}--\eqref{coarse-boltzmann-model}.  The
finiteness of the cumulative clocks, together with the finiteness of
$\mathcal E_h$ and $\Gamma_h$, rules out an accumulation of jumps on
$[0,T]$.  Theorem~\ref{thm:intro-coarse-wellposedness-rigorous} below shows
that this global solution exists, is pathwise unique, and remains in the
nonnegative fixed-mass lattice for every finite time horizon.

Define the compensated counting processes by
\begin{align*}
 \widetilde P_{ab}^{\varepsilon,h}(t)
 &=P_{ab}^{\varepsilon,h}(t)
   -\int_0^t\lambda_{ab}^{\varepsilon,h}(m(s-))\,ds,\\
 \widetilde P_\gamma^{\varepsilon,h}(t)
 &=P_\gamma^{\varepsilon,h}(t)
   -\int_0^t\lambda_\gamma^{\varepsilon,h}(m(s-))\,ds.
\end{align*}
Applying the reconstruction map
$m\mapsto\sum_a m_a\chi_a^h$ to
\eqref{coarse-boltzmann-model} sends the coordinate vector $e_a$ to
$\chi_a^h$.  Before compensation, the density process
$f_t^{\varepsilon,h}=\sum_a m_a(t)\chi_a^h$ satisfies
\begin{align*}
 df_t^{\varepsilon,h}
 &=\varepsilon\sum_{(a,b)\in\mathcal E_h}
 (\chi_b^h-\chi_a^h)\,dP_{ab}^{\varepsilon,h}(t)\\
 &\quad+\varepsilon\sum_{\gamma=(a,b;c,d)\in\Gamma_h}
 (\chi_c^h+\chi_d^h-\chi_a^h-\chi_b^h)
 \,dP_\gamma^{\varepsilon,h}(t).
\end{align*}
By the definition of the compensated counts,
\begin{align*}
 dP_{ab}^{\varepsilon,h}(t)
 &=d\widetilde P_{ab}^{\varepsilon,h}(t)
   +\lambda_{ab}^{\varepsilon,h}(m(t-))\,dt,\\
 dP_\gamma^{\varepsilon,h}(t)
 &=d\widetilde P_\gamma^{\varepsilon,h}(t)
   +\lambda_\gamma^{\varepsilon,h}(m(t-))\,dt.
\end{align*}
The transport part of the compensator is therefore
\begin{align*}
 &\varepsilon\sum_{(a,b)\in\mathcal E_h}
 \lambda_{ab}^{\varepsilon,h}(m(t-))
 (\chi_b^h-\chi_a^h)\,dt\\
 &\qquad=\sum_{(a,b)\in\mathcal E_h}
 m_a(t-)T_{ab}^h(\chi_b^h-\chi_a^h)\,dt,
\end{align*}
where we used \eqref{coarse-transport-rate}.  Likewise,
\eqref{coarse-boltzmann-collision-rate} gives the collision compensator
\begin{align*}
 &\varepsilon\sum_{\gamma=(a,b;c,d)\in\Gamma_h}
 \lambda_\gamma^{\varepsilon,h}(m(t-))
 (\chi_c^h+\chi_d^h-\chi_a^h-\chi_b^h)\,dt\\
 &\qquad=\frac12\sum_{\gamma=(a,b;c,d)\in\Gamma_h}K_\gamma^h
 m_a(t-)\bigl(m_b(t-)-\varepsilon\mathbf1_{\{a=b\}}\bigr)
 (\chi_c^h+\chi_d^h-\chi_a^h-\chi_b^h)\,dt.
\end{align*}
Combining these two compensator terms with the two remaining martingale
terms yields the equivalent density equation
\begin{align}
 df_t^{\varepsilon,h}
 &=\Bigg[\mathcal T_h^*f_t^{\varepsilon,h}
 +\frac12\sum_{\gamma=(a,b;c,d)\in\Gamma_h}K_\gamma^h
 m_a(t-)\bigl(m_b(t-)-\varepsilon\mathbf1_{\{a=b\}}\bigr)
 (\chi_c^h+\chi_d^h-\chi_a^h-\chi_b^h)\Bigg]dt
 \notag\\
 &\quad+\varepsilon\sum_{(a,b)\in\mathcal E_h}(\chi_b^h-\chi_a^h)
 \,d\widetilde P_{ab}^{\varepsilon,h}(t)\notag\\
 &\quad+\varepsilon\sum_{\gamma=(a,b;c,d)\in\Gamma_h}
 (\chi_c^h+\chi_d^h-\chi_a^h-\chi_b^h)
 \,d\widetilde P_\gamma^{\varepsilon,h}(t),
 \label{coarse-boltzmann-density-equation}
\end{align}
where, if $f=\sum_{a\in\mathcal I_h}m_a\chi_a^h$, the transport drift is
\begin{equation}
 \mathcal T_h^*f
 =\sum_{(a,b)\in\mathcal E_h}m_aT_{ab}^h(\chi_b^h-\chi_a^h).
 \label{coarse-transport-drift}
\end{equation}

We next give a formal term-by-term comparison between the discrete and
continuum models, whose purpose is only to explain how the discrete drift and
martingale terms correspond to their continuum counterparts, without
assuming well-posedness of the continuum Poissonian equation or claiming
that the discrete process converges to it.  In the formal calculations below,
$h\to0$ means that $h_x,h_v\to0$ and $R(h)\to\infty$.  For a smooth test
function $\varphi$, write
\begin{equation*}
 \varphi_a^h:=\int_{\mathbb D_h}\varphi\chi_a^h\,dx\,dv,
 \qquad a\in\mathcal I_h,
\end{equation*}
and, for every collision channel $\gamma=(a,b;c,d)\in\Gamma_h$, define
\begin{equation*}
 \Delta_h\varphi_\gamma
 :=\varphi_c^h+\varphi_d^h-\varphi_a^h-\varphi_b^h.
\end{equation*}
Testing \eqref{coarse-boltzmann-density-equation} against $\varphi$ gives
the exact identity
\begin{align}
 d\langle f_t^{\varepsilon,h},\varphi\rangle
 &=\Bigg[
 \sum_{(a,b)\in\mathcal E_h}m_aT_{ab}^h
       (\varphi_b^h-\varphi_a^h)
 +\frac12\sum_{\gamma=(a,b;c,d)\in\Gamma_h}
 K_\gamma^hm_a(m_b-\varepsilon\mathbf1_{\{a=b\}})
       \Delta_h\varphi_\gamma\Bigg]dt\notag\\
 &\quad+\varepsilon\sum_{(a,b)\in\mathcal E_h}
 (\varphi_b^h-\varphi_a^h)\,
 d\widetilde P_{ab}^{\varepsilon,h}(t)\notag\\
 &\quad+\varepsilon\sum_{\gamma=(a,b;c,d)\in\Gamma_h}
 \Delta_h\varphi_\gamma\,
 d\widetilde P_\gamma^{\varepsilon,h}(t).
 \label{coarse-boltzmann-weak-comparison}
\end{align}
We denote the transport and collision martingales in the second and third
lines by $M_{\mathrm{tr}}^{\varepsilon,h}(\varphi)$ and
$M_{\mathrm{col}}^{\varepsilon,h}(\varphi)$, respectively.  We compare the
transport drift, collision drift, and martingale terms in this order with
their continuum counterparts.

First, the first sum in the drift is the upwind discretization of free
transport.  For cell masses
$m_a^h=\int_{E_a^h}f(x,v)\,dx\,dv$ obtained from a smooth profile, the
difference quotient in the definition of $T_{ab}^h$ gives formally
\begin{equation}
 \sum_{(a,b)\in\mathcal E_h}m_a^hT_{ab}^h
 (\varphi_b^h-\varphi_a^h)
 \xrightarrow[h\to0]{}
 \int_{\mathbb T^d\times\mathbb R^d}
 f(x,v)v\cdot\nabla_x\varphi(x,v)\,dx\,dv.
 \label{formal-transport-consistency}
\end{equation}
Thus $\mathcal T_h^*$ corresponds to the term
$-v\cdot\nabla_xf$ in the strong equation.

Second, consider the collision drift.  A channel
$\gamma=((i,j),(i,k);(i,l),(i,m))$ is the cell version of
a collision configuration $c=(x,v,v_*,\sigma)$.  By the definition of
$K_\gamma^h$, the second drift sum is a Riemann sum over all collision
configurations.  More precisely, if representative points $z_a\in E_a^h$
are chosen, then
$\varphi_a^h=\langle\chi_a^h,\varphi\rangle\to\varphi(z_a)$ as $h\to0$.
Thus, for a channel representing $c=(x,v,v_*,\sigma)$, $\Delta_h\varphi_\gamma
 \rightarrow
 \Delta\varphi(c)$.
 
For the auxiliary cell masses
$m_a^h=\int_{E_a^h}f(x,v)\,dx\,dv$ associated with a smooth density $f$,
the leading part of the collision drift therefore satisfies the formal
Riemann-sum limit
\begin{align}
 \frac12\sum_{\gamma=(a,b;c,d)\in\Gamma_h}
 K_\gamma^hm_a^hm_b^h
 \Delta_h\varphi_\gamma
 \xrightarrow[h\to0]{}
 \frac12\int_{\mathcal C}B(v-v_*,\sigma)f(x,v)f(x,v_*)
 \Delta\varphi(c)\,dc.
 \label{formal-collision-consistency}
\end{align}
This limit concerns only the term with coefficient $m_a^hm_b^h$.  At fixed
$\varepsilon$, the full collision drift also contains
\begin{equation*}
 -\frac{\varepsilon}{2}
 \sum_{\substack{\gamma=(a,b;c,d)\in\Gamma_h\\a=b}}
 K_\gamma^hm_a^h\Delta_h\varphi_\gamma,
\end{equation*}
which cannot be discarded merely by sending $h\to0$.  However, this term
does not contribute to the large-deviation rate function derived in
Section~\ref{sec:fixed-mesh-ldp-rigorous}, because that result first fixes
$h$ and then lets $\varepsilon\downarrow0$, under which the correction is of
order $\varepsilon$ and vanishes.  Moreover, for fixed $\varepsilon$, the
cell averages $m_a^h$ of a smooth density eventually fail
to belong to $\varepsilon\mathbb N_0$ as the mesh is refined.  Thus
\eqref{formal-collision-consistency} identifies the continuum form of the
leading collision term, but it is not a convergence statement for the full
fixed-$\varepsilon$ collision drift.

Finally, compare the two jump mechanisms and their martingale parts.  A
transport event on the edge $(a,b)$ changes the reconstructed density by
\begin{equation*}
 \varepsilon(\chi_b^h-\chi_a^h).
\end{equation*}
The associated transport martingale has no counterpart in the continuum
Poissonian Boltzmann model, as it is introduced by the randomized upwind
realization of free transport.  Applying the compensated-Poisson bracket
formula stated above, an event in the channel
$(a,b)$ produces the jump
$\varepsilon(\varphi_b^h-\varphi_a^h)$, while its predictable intensity is
$\lambda_{ab}^{\varepsilon,h}(m(t-))
=\varepsilon^{-1}m_a(t-)T_{ab}^h$.  Different channels are driven by
independent reference Poisson processes and have zero predictable
cross-variation.  The bracket formula for compensated counting processes
therefore gives
\begin{align*}
 d\langle M_{\mathrm{tr}}^{\varepsilon,h}(\varphi)\rangle_t
 &=\varepsilon^2\sum_{(a,b)\in\mathcal E_h}
 (\varphi_b^h-\varphi_a^h)^2
 \lambda_{ab}^{\varepsilon,h}(m(t-))\,dt\\
 &=\varepsilon\sum_{(a,b)\in\mathcal E_h}
 m_a(t-)T_{ab}^h(\varphi_b^h-\varphi_a^h)^2\,dt.
\end{align*}
To estimate the last sum, suppose that
$a=(i,j)$ and $b=(i\pm\mathbf e_r^{\,x},j)$.  The two cells then differ only
by a spatial translation of length $h_x$.  Hence, for smooth $\varphi$,
\begin{equation*}
 |\varphi_b^h-\varphi_a^h|
 \leq h_x\|\partial_{x_r}\varphi\|_\infty.
\end{equation*}
Moreover, since $|v|\leq R(h)$ on the truncated velocity domain, the
definition of the upwind rates implies
\begin{align*}
 T_{(i,j),(i+\mathbf e_r^{\,x},j)}^h
 +T_{(i,j),(i-\mathbf e_r^{\,x},j)}^h=\frac{1}{h_x|V_j^h|}
 \int_{V_j^h}|v_r|\,dv
 \leq\frac{R(h)}{h_x}.
\end{align*}
Consequently, on the fixed-mass state space
$\sum_a m_a(t)=M$,
\begin{align*}
 d\langle M_{\mathrm{tr}}^{\varepsilon,h}(\varphi)\rangle_t
 &\leq \varepsilon R(h) h_x
 \left(\sum_{r=1}^d\|\partial_{x_r}\varphi\|_\infty^2\right)
 \sum_{a\in\mathcal I_h}m_a(t-)\,dt\\
 &=\varepsilon M R(h) h_x
 \left(\sum_{r=1}^d\|\partial_{x_r}\varphi\|_\infty^2\right)\,dt
 =O(\varepsilon R(h)h_x)\,dt.
\end{align*}
It therefore disappears under the condition $R(h)h_x\to0$.

A collision event in the channel $\gamma=(a,b;c,d)$ changes the reconstructed
density by
\begin{equation*}
 \varepsilon(\chi_c^h+\chi_d^h-\chi_a^h-\chi_b^h).
\end{equation*}
Since the normalized cell density $\chi_a^h$ is the coarse replacement of a
point mass, this jump is the cell version of $\varepsilon J_c$ in
\eqref{collision-jump-measure}.  After testing against $\varphi$, its size is $\varepsilon\Delta_h\varphi_\gamma$, whose leading continuum form is $\varepsilon\Delta\varphi(c)$.  The
independent reference collision clocks
then play the role of the cell counts of the continuum Poisson random
measure.  The channel intensity is
\begin{equation*}
 \frac{K_\gamma^h}{2\varepsilon}
 m_a(m_b-\varepsilon\mathbf1_{\{a=b\}}).
\end{equation*}
Its leading term, obtained by replacing the last factor by $m_am_b$, has the
same cell-integrated form as the continuum compensator
\begin{equation*}
 \frac{1}{2\varepsilon}B(v-v_*,\sigma)f(x,v)f(x,v_*)
 \,dt\,dx\,dv\,dv_*\,d\sigma.
\end{equation*}
At fixed $\varepsilon$, however, the self-pair correction remains in both the
intensity and the predictable quadratic variation.  The latter is exactly
\begin{align*}
 d\langle M_{\mathrm{col}}^{\varepsilon,h}(\varphi)\rangle_t
 &=\frac{\varepsilon}{2}\sum_{\gamma=(a,b;c,d)\in\Gamma_h}
 K_\gamma^hm_a(m_b-\varepsilon\mathbf1_{\{a=b\}})
 (\Delta_h\varphi_\gamma)^2\,dt.
\end{align*}
The part containing $m_am_b$ has the formal continuum expression
\begin{equation*}
 \frac{\varepsilon}{2}\int_{\mathcal C}
 Bff_*(\Delta\varphi)^2\,dc\,dt,
\end{equation*}
which is the bracket of the collision noise in
\eqref{poisson-fluctuating-boltzmann-weak}; the full fixed-$\varepsilon$
bracket is not identified with this expression by mesh refinement alone.

The preceding calculation identifies the continuum forms of the leading
collision drift and quadratic variation, while keeping the finite-occupancy
correction explicit at fixed $\varepsilon$.
Section~\ref{sec:fixed-mesh-ldp-rigorous}
establishes the path large-deviation principle for the discrete model at a
fixed mesh, while Section~\ref{sec:collision-limit-rigorous} proves that,
at a fixed velocity cutoff and on biased regular paths, its rate function
recovers the corresponding continuum rate function as the mesh is refined.

\subsection{Main results}
\label{subsec:main-results-rigorous}

Fix $h>0$. For a transport edge $\kappa=(a,b)\in\mathcal E_h$ and a collision
channel $\gamma=(a,b;c,d)\in\Gamma_h$, set
\begin{equation}\label{intro-unified-channel-data}
\begin{aligned}
 z_\kappa&=e_b-e_a,
 &\beta_\kappa(m)&=m_aT_{ab}^h,\\
 z_\gamma&=e_c+e_d-e_a-e_b,
 &\beta_\gamma(m)&=\frac12K_\gamma^hm_am_b,
\end{aligned}
\end{equation}
where $(e_a)_{a\in\mathcal I_h}$ is the canonical basis of
$\mathbb R^{\mathcal I_h}$.  Write
$\mathcal K_h:=\mathcal E_h\sqcup\Gamma_h$.  In sums over
$\mathcal K_h$, $\kappa$ denotes a generic channel; when
$\kappa=\gamma\in\Gamma_h$, we set $z_\kappa=z_\gamma$ and
$\beta_\kappa=\beta_\gamma$.  The vector $z_\kappa$ is the change in the
mass vector caused by one transition, and $\beta_\kappa$ is its limiting
nominal intensity after the factor $\varepsilon^{-1}$ has been removed.
If $m_0^h\in(0,\infty)^{\mathcal I_h}$ has total mass one, define, for
$m\in D([0,T];\mathbb R_{\geq0}^{\mathcal I_h})$,
\begin{align}\label{intro-fixed-h-rigorous-rate}
 I_h^{m_0^h}(m)=
 \inf_{q=(q_\kappa)_{\kappa\in\mathcal K_h}}
 \Bigg\{&
 \int_0^T\sum_{\kappa\in\mathcal K_h}
 \beta_\kappa(m(t))\ell(q_\kappa(t))\,dt:
 q:[0,T]\to\mathbb R_{\geq0}^{\mathcal K_h}
 \text{ is measurable},\notag\\
 & (\beta_\kappa(m)q_\kappa)_{\kappa\in\mathcal K_h}
 \in L^1(0,T;\mathbb R^{\mathcal K_h}),\quad
 \eqref{intro-fixed-h-rigorous-continuity}\text{ holds}
 \Bigg\}.
\end{align}
Here $\ell(r)=r\log r-r+1$.  The function $q_\kappa(t)$ is the multiplier of the
nominal intensity $\beta_\kappa(m(t))$ on channel $\kappa$; the uncontrolled
dynamics corresponds to $q_\kappa\equiv1$ and has zero cost.  The entire
vector-valued multiplier
$q=(q_\kappa)_{\kappa\in\mathcal K_h}$ is constrained by the single
skeleton equation
\begin{equation}\label{intro-fixed-h-rigorous-continuity}
 \dot m(t)=\sum_{\kappa\in\mathcal K_h}
 \beta_\kappa(m(t))q_\kappa(t)z_\kappa,
 \quad\text{for a.e. }t\in[0,T],
 \qquad m(0)=m_0^h.
\end{equation}
The infimum over the empty set is $+\infty$; in particular,
$I_h^{m_0^h}(m)=+\infty$ unless $m$ is absolutely continuous and has the
prescribed initial value.
This is the finite-channel form of the Poisson control cost in
\eqref{BCD-control-cost}: integration over the discrete channel variable
becomes the sum in \eqref{intro-fixed-h-rigorous-rate}, while the nominal
channel activity supplies the weight $\beta_\kappa(m(t))$.

\iffalse
The deterministic coarse equation and its Gaussian covariance are described
by
\begin{align}
 F_h(m)
 &=\sum_{\kappa\in\mathcal E_h}\beta_\kappa(m)z_\kappa
   +\sum_{\gamma\in\Gamma_h}\beta_\gamma(m)z_\gamma,
 \label{intro-fixed-mesh-drift}\\
 A_h(m)
 &=\sum_{\kappa\in\mathcal E_h}\beta_\kappa(m)
   z_\kappa z_\kappa^{\mathsf T}
   +\sum_{\gamma\in\Gamma_h}\beta_\gamma(m)
     z_\gamma z_\gamma^{\mathsf T}.
 \label{intro-fixed-mesh-covariance}
\end{align}
Let $\bar m^h$ solve
\begin{equation}\label{intro-fixed-mesh-deterministic-equation}
 \dot{\bar m}^h(t)=F_h(\bar m^h(t)),
 \qquad \bar m^h(0)=\bar m_0^h.
\end{equation}
Set
\begin{equation*}
 \bar f_t^h=\sum_{a\in\mathcal I_h}\bar m_a^h(t)\chi_a^h.
\end{equation*}
The fluctuation vector and the associated density field are
\begin{equation}\label{intro-fixed-mesh-fluctuation-field}
 Z_t^{\varepsilon,h}
 =\varepsilon^{-1/2}\bigl(m^{\varepsilon,h}(t)-\bar m^h(t)\bigr),
 \qquad
 Y_t^{\varepsilon,h}
 =\varepsilon^{-1/2}\bigl(f_t^{\varepsilon,h}-\bar f_t^h\bigr)
 =\sum_{a\in\mathcal I_h}Z_a^{\varepsilon,h}(t)\chi_a^h.
\end{equation}
\fi

\begin{theorem}[Finite-mesh well-posedness]
\label{thm:intro-coarse-wellposedness-rigorous}
Let $h>0$ and $\varepsilon>0$.  Suppose that $m(0)$ is an
$\mathcal F_0$-measurable random variable with values in
$\mathsf S_{\varepsilon,h}^{\rm fin}$ and is independent of the reference
Poisson processes.  Then the coarse equation
\eqref{coarse-boltzmann-model} has a pathwise unique nonexplosive strong
c\`adl\`ag solution.  Almost surely, the solution remains nonnegative and
\begin{equation*}
 \sum_{a\in\mathcal I_h}m_a(t)
 =\sum_{a\in\mathcal I_h}m_a(0),
 \qquad t\geq0.
\end{equation*}
%In particular, if $\varepsilon^{-1}\in\mathbb N$ and
%$m(0)\in\mathsf S_{\varepsilon,h}$ almost surely, then the total mass remains
%equal to one.
\end{theorem}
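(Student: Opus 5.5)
The proof will build the solution pathwise by jump-by-jump recursion on the random time-change equation \eqref{coarse-boltzmann-integral-equation}. The argument has four parts: a preliminary observation about the jumps, the construction, nonexplosion, and uniqueness.

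\emph{Preliminary observation.} Every jump vector $z_\kappa$ has zero coordinate sum, so each fixed-mass lattice $\mathsf S_{\varepsilon,h}(M)$ is invariant under any admissible jump. Moreover, the rates \eqref{coarse-transport-rate}--\eqref{coarse-boltzmann-collision-rate} vanish whenever the jump would leave $(\varepsilon\mathbb N_0)^{\mathcal I_h}$:
\begin{itemize}
\item a transport jump from $a$ has rate proportional to $m_a$, so it can fire only if $m_a\geq\varepsilon$;
\item a collision with $a\neq b$ has rate proportional to $m_am_b$, so it needs $m_a,m_b\geq\varepsilon$;
\item a collision with $a=b$ has rate proportional to $m_a(m_a-\varepsilon)$, so it needs $m_a\geq2\varepsilon$.
\end{itemize}
Consequently, from any $m\in\mathsf S_{\varepsilon,h}(M)$ every jump with positive rate lands again in $\mathsf S_{\varepsilon,h}(M)$. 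Since $\mathcal I_h$ is finite, this lattice is a finite set, and the rates are bounded on it by a constant $\Lambda(M)<\infty$, because $\mathcal E_h$ and $\Gamma_h$ are finite and $T_{ab}^h,K_\gamma^h<\infty$.

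\emph{Construction.} Condition on $m(0)$, which is independent of the clocks, and work on $\{m(0)\in\mathsf S_{\varepsilon,h}(M)\}$ for each $M$. Set $\tau_0=0$ and let $\theta_\kappa(0)=0$ be the cumulative clock of each channel. Given the state $m(\tau_n)$ and the clock values $\theta_\kappa(\tau_n)$, the clock of channel $\kappa$ runs linearly at speed $\lambda_\kappa(m(\tau_n))$. Let $\tau_{n+1}$ be the first time at which some clock $\theta_\kappa$ reaches the next jump time of its reference process $P_\kappa$ after $\theta_\kappa(\tau_n)$. At $\tau_{n+1}$ apply the jump $\varepsilon z_\kappa$. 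Independence and the absence of simultaneous jumps of independent Poisson processes make the minimizing channel unique almost surely. Each $\tau_n$ is a stopping time and the process is adapted and c\`adl\`ag. By construction it satisfies \eqref{coarse-boltzmann-integral-equation} on $[0,\tau_\infty)$, where $\tau_\infty=\lim_n\tau_n$. If all rates vanish, the state is absorbing and $\tau_{n+1}=\infty$, which causes no difficulty.

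\emph{Nonexplosion.} This is the only step that needs care. Since the state stays in $\mathsf S_{\varepsilon,h}(M)$, each clock satisfies $\theta_\kappa(t)\leq\Lambda(M)t$. The number of jumps on $[0,t]$ is therefore at most
\begin{equation*}
 \sum_{\kappa\in\mathcal K_h}P_\kappa\bigl(\Lambda(M)t\bigr),
\end{equation*}
which is finite almost surely. Hence $\tau_\infty=\infty$ almost surely, the cumulative clocks are finite, and the global solution exists with mass conserved and nonnegative entries. Alternatively, one can observe that the embedded chain lives on a finite state space with bounded exit rates.

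\emph{Pathwise uniqueness.} Take any solution $\tilde m$ driven by the same $m(0)$ and the same reference processes, and argue by induction on jump times. Between jumps, the right-hand side of \eqref{coarse-boltzmann-integral-equation} is constant in $t$ until some clock $\int_0^t\lambda_\kappa(\tilde m(s-))\,ds$ reaches a jump time of $P_\kappa$. On any interval where $\tilde m=m$, the clocks coincide. So the first jump time and the jump itself of $\tilde m$ are determined exactly as in the construction, giving $\tilde m=m$ on $[0,\tau_1]$. Iterating yields equality on $[0,\tau_n]$ for all $n$, and hence for all $t$ by nonexplosion. Finally, take the union over the countably many values of $M$ to handle random initial mass.
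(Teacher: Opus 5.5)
Your proposal is correct and follows essentially the same route as the paper: the jump-by-jump random-time-change recursion, invariance of each finite fixed-mass lattice under positive-rate jumps with rates bounded on it (Lemmas~\ref{lem:coarse-total-intensity} and~\ref{lem:coarse-invariant-state-space}), induction on jump times for pathwise uniqueness, and a reduction over the value of the initial mass as in Corollary~\ref{cor:coarse-random-finite-mass}. The only difference is in nonexplosion: you dominate the jump count pathwise by $\sum_{\kappa}P_\kappa(\Lambda(M)t)$, whereas the paper uses the expectation bound $n\,\mathbb P(\tau_n\le T)\le\mathbb E[J_{T\wedge\tau_n}]\le C_{\varepsilon,h}(1)T$, and both arguments are valid.
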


\begin{theorem}[Finite-mesh path LDP]
\label{thm:intro-fixed-mesh-ldp-rigorous}
Fix $h>0$.  Suppose that $m_0^h$ is strictly positive and that
$\sum_{a\in\mathcal I_h}m_{0,a}^h=1$. Suppose further that
$m_0^{\varepsilon,h}\in\mathsf S_{\varepsilon,h}$ and
$m_0^{\varepsilon,h}\to m_0^h$ as $\varepsilon\downarrow0$ along values
satisfying $\varepsilon^{-1}\in\mathbb N$.
Then $m^{\varepsilon,h}$ satisfies on
$D([0,T];\mathbb R_{\geq0}^{\mathcal I_h})$ an LDP with speed
$\varepsilon^{-1}$ and good rate \eqref{intro-fixed-h-rigorous-rate}.
The continuous reconstruction
$m\mapsto\sum_am_a\chi_a^h$ yields the corresponding density-path LDP by
the contraction principle on $D([0,T];L^1(\mathbb D_h))$ with its
Skorokhod $J_1$ topology.
Furthermore, let $m_*^h\in(0,\infty)^{\mathcal I_h}$ and suppose that the
initial cell masses are given by
\begin{equation*}
 m_a^{\varepsilon,h}(0)=\varepsilon Z_a^\varepsilon,
 \qquad
 Z_a^\varepsilon\sim\operatorname{Pois}
 \left(\frac{m_{*,a}^h}{\varepsilon}\right),
 \qquad a\in\mathcal I_h,
\end{equation*}
where the random variables $(Z_a^\varepsilon)_{a\in\mathcal I_h}$ are
independent of one another and of the dynamical Poisson noises.  Theorem
\ref{thm:intro-coarse-wellposedness-rigorous} then gives a well-defined
process with values in $\mathsf S_{\varepsilon,h}^{\rm fin}$, whose laws
satisfy an LDP with the good rate
\begin{equation}\label{intro-fixed-h-poisson-rate}
 \mathcal I_h^{\rm Poi}(m)
 =\sum_{a\in\mathcal I_h}m_{*,a}^h
 \ell\left(\frac{m_a(0)}{m_{*,a}^h}\right)+I_h(m),
\end{equation}
where $I_h(m)$ is defined by the right-hand side of
\eqref{intro-fixed-h-rigorous-rate}, subject to the skeleton equation in
\eqref{intro-fixed-h-rigorous-continuity} but without prescribing $m(0)$.
\end{theorem}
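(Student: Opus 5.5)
The plan is to use the weak-convergence (Budhiraja--Dupuis--Maroulas) representation for Poisson random measures, in the finite-channel form of the Budhiraja--Chen--Dupuis framework recalled in Appendix~\ref{app:poisson-spde-ldp-framework}.

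\emph{Reformulation.} Rewrite \eqref{coarse-boltzmann-integral-equation} as a thinning of independent Poisson random measures $\mathcal N_\kappa$ on $[0,T]\times[0,\infty)$ with intensity $dt\,du$, one for each $\kappa\in\mathcal K_h$. The equation becomes
\begin{equation*}
 m^\varepsilon(t)=m_0^{\varepsilon,h}+\varepsilon\sum_{\kappa}z_\kappa\int_0^t\int_0^\infty\mathbf1_{\{u\leq\varepsilon^{-1}\beta^\varepsilon_\kappa(m^\varepsilon(s-))\}}\mathcal N_\kappa(ds,du),
\end{equation*}
where $\beta^\varepsilon_\kappa$ is $\beta_\kappa$ with the self-pair correction $m_b-\varepsilon\mathbf1_{\{a=b\}}$. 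By Theorem~\ref{thm:intro-coarse-wellposedness-rigorous}, this equation has a unique solution in $\mathsf S_{\varepsilon,h}$, and the solution is a measurable map $\mathcal G^\varepsilon$ of $(m_0^{\varepsilon,h},\varepsilon\mathcal N^{1/\varepsilon})$. We then verify the two sufficient conditions of the framework.

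\emph{Condition (a): compactness of skeleton level sets.} For controls $g$ with $\sum_\kappa\int_0^T\int_0^\infty\ell(g_\kappa)\,du\,dt\leq N$, the skeleton has the form $\dot m=\sum_\kappa z_\kappa\int_0^{\beta_\kappa(m)}g_\kappa\,du$. Mass is conserved and $m\geq0$, so $m$ lies in the simplex and $\beta_\kappa(m)\leq C_h$. The superlinearity of $\ell$ then gives uniform integrability of the drift, hence equicontinuity. The key point is uniqueness of the skeleton given $g$: the map $\beta_\kappa$ is polynomial, hence Lipschitz on the simplex. The difference of two solutions is bounded by $\int|\beta(m^1)-\beta(m^2)|\sup_u g$-type terms, which are not directly Lipschitz. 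I would therefore reduce to $q_\kappa(t)=\beta_\kappa^{-1}\int_0^{\beta_\kappa}g_\kappa\,du$. By Jensen, this reduction gives the cost $\int\beta_\kappa\ell(q_\kappa)$, matching \eqref{intro-fixed-h-rigorous-rate}. This is the same reduction as Lemma~\ref{lem:FB-control-rate-equivalence}. Closedness of the level sets follows from lower semicontinuity of $(m,q\beta)\mapsto\int\beta\ell(q)$, which is a perspective function and hence jointly convex and lsc, together with weak $L^1$ compactness of the fluxes $\beta_\kappa q_\kappa$.

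\emph{Condition (b): convergence of controlled processes.} For predictable controls $g^\varepsilon$ with cost at most $N$, the controlled process $\bar m^\varepsilon$ still lives in the simplex. Its compensated martingale has quadratic variation $O(\varepsilon)$ times the controlled intensity, which is bounded in $L^1$ via $g\leq e^{\,}\!+\ell(g)$-type bounds. Hence it tends to $0$. Tightness in $D$ holds by Aldous' criterion. Limits along subsequences satisfy the skeleton with the weak limit control. Here the main obstacle appears: passing to the limit in $\int_0^{\beta_\kappa(\bar m^\varepsilon)}g^\varepsilon\,du$ when $g^\varepsilon$ converges only weakly (in the sense of the control topology) while the upper limit also moves. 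I would handle this with the standard estimate
\begin{equation*}
 \int_{A}g\,du\,dt\leq \kappa_1(\delta)+\delta^{-1}\ldots
\end{equation*}
that is, with uniform integrability of $g$ over sets of small measure, $\sup_{g}\int_{|A|\leq\delta}g\leq (N+\ldots)/\log(1/\delta)$. This controls $|\beta(\bar m^\varepsilon)-\beta(\bar m)|$ via uniform convergence of $\bar m^\varepsilon$. The $O(\varepsilon)$ self-pair correction is absorbed by the same estimate.

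\emph{Initial conditions and the density-path LDP.} With deterministic $m_0^{\varepsilon,h}\to m_0^h$, the LDP holds with rate \eqref{intro-fixed-h-rigorous-rate}, since uniqueness of the skeleton gives continuity in the initial datum. For Poisson initial data, Cramér's theorem gives an LDP for $\varepsilon Z^\varepsilon$ with rate $\sum_a m_{*,a}^h\ell(m_a/m_{*,a}^h)$. This rate is good, which handles the unbounded total mass. I would combine this with the conditional dynamical LDP, which holds uniformly over initial data in compacts because the above argument allows $m_0^\varepsilon\to m_0$ arbitrary. The combination uses the standard product/mixture lemma, for instance via the Laplace principle conditioned on the initial state. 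Total mass is random but conserved, so condition (a) applies on each mass level. The case of boundary initial states, where $\beta_\kappa$ vanishes, is still covered by the arguments in (a)--(b), since they never used positivity. Finally, $m\mapsto\sum_a m_a\chi_a^h$ is a linear isometry-type continuous map $\mathbb R^{\mathcal I_h}\to L^1(\mathbb D_h)$. It is therefore continuous on Skorokhod space, and the contraction principle yields the density-path LDP.
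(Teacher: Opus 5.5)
Most of your ingredients match the paper's: the thinning representation, the Jensen reduction from mark controls $g_\kappa(t,u)$ to multipliers $q_\kappa(t)$, uniform integrability and weak $L^1$ compactness of the fluxes $J_\kappa=\beta_\kappa q_\kappa$, lower semicontinuity of the perspective integrand, and the contraction principle for the reconstruction. The route through the Budhiraja--Chen--Dupuis sufficient conditions, however, fails at the point you flagged yourself.

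\textbf{The gap: the skeleton is not unique.} Those sufficient conditions require a well-defined skeleton map $g\mapsto\mathcal G^0(g)$ on finite-cost controls, and condition (b) must identify every limit of controlled processes as $\mathcal G^0(g)$. Your ``continuity in the initial datum'' also presupposes uniqueness, and uniqueness is not available. The controlled drift $m\mapsto\int_0^{\beta_\kappa(m)}g_\kappa(t,u)\,du$ is only as regular as $u\mapsto\int_0^u g_\kappa(t,r)\,dr$. For example, take $g_\kappa(t,u)=u^{-1/2}$ for $u\leq1$ and $g_\kappa=1$ for $u>1$. Its cost is $\int_0^1\ell(u^{-1/2})\,du<\infty$ per unit time, yet near vacuum it gives the drift $2\sqrt{\beta_\kappa(m)}$, which is not Lipschitz. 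Through an active channel of the form $(a,b;a,a)$ (jump vector $e_a-e_b$, rate vanishing with $m_a$), this can produce the classical $\dot m_a\approx C\sqrt{m_a}$ non-uniqueness once a path reaches $\{m_a=0\}$.

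The reduction to $q_\kappa=\beta_\kappa^{-1}\int_0^{\beta_\kappa}g_\kappa\,du$ does not repair this, for two reasons:
\begin{enumerate}
 \item $q$ is computed from a given solution, so two solutions with the same $g$ produce different $q$'s.
 \item Even for fixed $q$, the Lipschitz weight $L\,q_\kappa(t)$ need not be integrable. Finite cost only gives $\beta_\kappa(m)q_\kappa\in L^1$; in the example, $\beta_\kappa(m(t))\sim t^2$ gives $q_\kappa\sim 1/t$ while $\int\beta_\kappa\ell(q_\kappa)\,dt<\infty$.
\end{enumerate}
Your remark that boundary initial states are covered because the arguments ``never used positivity'' is therefore exactly the weak spot. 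The Poisson-initial part of the statement forces you to handle vacuum initial data, since the initial entropy is finite there. Your uniform conditional LDP would have to include precisely the points where recovery is delicate.

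\textbf{The missing idea: prove the Laplace principle directly, without a skeleton map.} The paper works from the representation \eqref{fixed-h-exact-variational-representation}.
\begin{itemize}
 \item \emph{Lower bound} (Lemma~\ref{lem:fixed-h-controlled-compactness-liminf}). It suffices that every limit of bounded-cost controlled processes is \emph{some} admissible pair $(m,q)$ whose cost is at most the liminf. No uniqueness is needed, because $I_h^{m_0^h}$ is an infimum over all multipliers generating $m$.
 \item \emph{Upper bound.} Lemma~\ref{lem:fixed-h-vacuum-regularization} replaces a finite-rate path by the buffered path $m^\delta=(1-\delta)m+\delta m_0^h$. Along $m^\delta$ all nominal rates are bounded below, so the rescaled multiplier $q^\delta$ lies in $L^1(0,T)$, and its cost converges as $\delta\downarrow0$. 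The feedback mark control equal to $q^\delta_\kappa(t)$ on the current acceptance interval then gives controlled processes converging to $m^\delta$, by Gronwall with the integrable weight $\sum_\kappa q_\kappa^\delta$. A diagonal choice $\delta(\varepsilon)$ concludes (Lemma~\ref{lem:fixed-h-controlled-recovery}).
 \item \emph{Poisson initial data.} The paper controls the initial Poisson measures jointly with the dynamical one and buffers towards $m_*^h$, which covers vacuum initial states. Your Cram\'er-plus-mixing route could also work, but only once this buffered recovery supplies the uniform upper bound at boundary initial points.
\end{itemize}
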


The final result concerns a fixed velocity cutoff and uses a more regular
kernel than the fixed-mesh theory.  Let $R<\infty$, set
\begin{equation*}
 \mathbb D_R=\mathbb T^d\times B_R,
 \qquad
 \mathcal C_R=\{c=(x,v,v_*,\sigma)\in\mathcal C:
 |v|\vee|v_*|\vee|v'|\vee|v_*'|<R\},
\end{equation*}
We assume that
\begin{equation}\label{intro-fixed-cutoff-kernel-assumption}
 \mathcal B_R\in
 C_b^{0,1}(\mathbb R^d\times\mathbb R^d\times\mathbb S^{d-1}),
 \qquad \mathcal B_R\geq0,
 \qquad
 \operatorname{supp}\mathcal B_R
 \Subset\left\{(v,v_*,\sigma):
 |v|\vee|v_*|\vee|v'|\vee|v_*'|<R\right\}.
\end{equation}
We also require $\mathcal B_R$ to satisfy the two identities in
\eqref{collision-kernel-symmetries}, with
$B(v-v_*,\sigma)$ replaced throughout by
$\mathcal B_R(v,v_*,\sigma)$.  These are
the fixed-cutoff kernel assumptions used in the mesh-limit result.  They are
stronger than \eqref{collision-kernel-assumption}: the latter is sufficient
for the fixed-mesh process, but not for the consistency estimates under mesh
refinement.  For a density path $g$ with endpoint traces define
\begin{align}
 \mathcal I_{\mathcal B_R}^{\rm Ham}(g)
 :=\sup_{r\in C^1([0,T];W^{2,\infty}(\mathbb D_R))}\Bigg\{
 &\langle g_T,r_T\rangle-\langle g_0,r_0\rangle
 -\int_0^T\langle g_t,\partial_tr_t+v\cdot\nabla_xr_t\rangle\,dt
 \notag\\
 &-\frac12\int_0^T\int_{\mathcal C_R}
 \mathcal B_Rg_tg_{t,*}(e^{\Delta r_t}-1)\,dc\,dt\Bigg\},
 \label{intro-fixed-cutoff-dynamical-action}\\
 I_{\mathcal B_R}(g)
 ={}&\int_{\mathbb D_R}
 \left[g_0\log\frac{g_0}{f_0}-g_0+f_0\right]dz
 +\mathcal I_{\mathcal B_R}^{\rm Ham}(g).
 \label{intro-fixed-cutoff-full-action}
\end{align}
The value is $+\infty$ when the displayed quantities or endpoint traces are
not defined.  Here a controlled pair $(g,q)$ is regular if $g$ and $q$ are
continuously differentiable in time, have bounded Lipschitz derivatives on
the truncated domains, satisfy $0<c\leq g\leq C$ and $0\leq q\leq C$, and
$g$ solves \eqref{controlled-boltzmann-equation} with control $q$ and
collision kernel $\mathcal B_R$.  The path $g$ is called biased regular if
there exist a control $q$ and a function
$p\in C^1([0,T];W^{2,\infty}(\mathbb D_R))$ such that $(g,q)$ is a regular
controlled pair and $q(t,c)=e^{\Delta p_t(c)}$ for
$(t,c)\in[0,T]\times\mathcal C_R$. Proposition~\ref{prop:fixed-cutoff-biased-duality} then gives
\begin{equation}\label{intro-fixed-cutoff-biased-action}
 I_{\mathcal B_R}(g)
 =\int_{\mathbb D_R}
 \left[g_0\log\frac{g_0}{f_0}-g_0+f_0\right]dz
 +\frac12\int_0^T\int_{\mathcal C_R}
 \mathcal B_Rgg_*\ell(q)\,dc\,dt.
\end{equation}
For the mesh on $\mathbb D_R$, set
\begin{equation*}
 (\Pi_hf)_a=\int_{E_a^h}f(z)\,dz,
 \qquad
 \mathsf R_hm=\sum_{a\in\mathcal I_h}m_a\chi_a^h.
\end{equation*}
For a discrete path $m$ define
\begin{equation*}
 d_{\infty,R}^h(m,g)
 =\sup_{t\in[0,T]}
 \|\mathsf R_hm(t)-g_t\|_{L^\infty(\mathbb D_R)}.
\end{equation*}

\begin{theorem}[Continuum limit of the rate function]
\label{thm:intro-regular-path-rate-convergence}
Assume that $f_0$ is Lipschitz on $\mathbb D_R$ and satisfies
$0<c_0\leq f_0\leq C_0<\infty$.  Let $g$ be a
biased regular path, use the kernel $\mathcal B_R$ satisfying
\eqref{intro-fixed-cutoff-kernel-assumption} in the cell-integrated mesh
collision rates, and take $m_{*,a}^h=(\Pi_hf_0)_a$ in
\eqref{intro-fixed-h-poisson-rate}.  For every sequence $\widetilde m^h$ such
that $d_{\infty,R}^h(\widetilde m^h,g)\to0$ as
$h_x+h_v\downarrow0$,
\begin{equation}\label{intro-regular-path-rate-limit}
 \liminf_{h_x+h_v\downarrow0}
 \mathcal I_h^{\rm Poi}(\widetilde m^h)
 \geq I_{\mathcal B_R}(g).
\end{equation}
Conversely, there is a sequence $m^h$ such that
$d_{\infty,R}^h(m^h,g)\to0$ and
\begin{equation*}
 \lim_{h_x+h_v\downarrow0}\mathcal I_h^{\rm Poi}(m^h)
 =I_{\mathcal B_R}(g).
\end{equation*}
\end{theorem}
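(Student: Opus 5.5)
We split $\mathcal I_h^{\rm Poi}$ into its initial part and its dynamical part $I_h$, and treat the $\Gamma$-liminf and the recovery sequence separately. For the initial cost, $m_{*,a}^h=(\Pi_hf_0)_a$ and $d_{\infty,R}^h(\widetilde m^h,g)\to0$ give $\widetilde m_a^h(0)=|E_a^h|(g_0(z_a)+o(1))$ uniformly in $a$, and likewise $m_{*,a}^h=|E_a^h|(f_0(z_a)+o(1))$, using that $f_0$ is Lipschitz and $g_0\geq c>0$. Since $f_0\geq c_0$ and $g_0\leq C$, the function $\ell$ is uniformly continuous on the relevant compact range of ratios. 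Hence $\sum_a m_{*,a}^h\ell(\widetilde m_a^h(0)/m_{*,a}^h)$ is a Riemann sum that converges to $\int_{\mathbb D_R}[g_0\log(g_0/f_0)-g_0+f_0]\,dz$, and this initial term actually converges, not merely in the liminf sense.

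\emph{Liminf for the dynamical part.} We use the dual (Hamiltonian) form of the discrete action. For each admissible control $q$ and each test vector $\psi(t)\in\mathbb R^{\mathcal I_h}$, $C^1$ in time, the Fenchel inequality $\beta\ell(q)\geq\beta(q\xi-(e^\xi-1))$ with $\xi=\langle\psi,z_\kappa\rangle$, integrated along \eqref{intro-fixed-h-rigorous-continuity}, gives
\begin{equation*}
 I_h(m)\geq\langle\psi_T,m(T)\rangle-\langle\psi_0,m(0)\rangle-\int_0^T\langle\dot\psi,m\rangle\,dt-\int_0^T\sum_{\kappa}\beta_\kappa(m)\bigl(e^{\langle\psi,z_\kappa\rangle}-1\bigr)dt .
\end{equation*}
Fix $r\in C^1([0,T];W^{2,\infty}(\mathbb D_R))$ and take $\psi_a=r^h_a:=\langle\chi_a^h,r\rangle$, as in the formal comparison \eqref{coarse-boltzmann-weak-comparison}. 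Then pass to the limit term by term, using $\mathsf R_h\widetilde m^h\to g$ uniformly.
\begin{enumerate}[label=(\roman*)]
 \item The boundary and time-derivative terms converge to the corresponding continuum pairings.
 \item For the transport channels, $\langle\psi,z_\kappa\rangle=r_b^h-r_a^h=O(h_x)$. Expanding $e^\xi-1=\xi+O(\xi^2)$ and using $T_{ab}^h\lesssim R/h_x$, the quadratic remainder is $O(h_x)$, and the linear term converges to $\int g\,v\cdot\nabla_x r$ by \eqref{formal-transport-consistency}, proved rigorously for $W^{2,\infty}$ tests by Taylor expansion of the upwind differences.
 \item For the collision channels, $\frac12\sum_\gamma K_\gamma^h m_am_b(e^{\Delta_h r_\gamma}-1)$ converges to $\frac12\int_{\mathcal C_R}\mathcal B_Rgg_*(e^{\Delta r}-1)\,dc$. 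This follows from the exact cell integration \eqref{riemann-collision-quadrature}, the Lipschitz regularity of $\mathcal B_R$ and $r$ (so that $\Delta_hr_\gamma=\Delta r(c)+O(h)$ for every $c$ in the channel), and the uniform convergence $m_a/|E_a|\to g$.
\end{enumerate}
Taking the supremum over $r$ then yields \eqref{intro-regular-path-rate-limit} via \eqref{intro-fixed-cutoff-dynamical-action}.

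\emph{Recovery.} Let $(g,q,p)$ be the biased regular data, so $q=e^{\Delta p}$. We will construct $m^h$ as the solution of the discrete controlled ODE \eqref{intro-fixed-h-rigorous-continuity} with $m^h(0)=\Pi_hg_0$. The transport controls are $q_\kappa\equiv1$, which cost nothing, and the collision controls are $q_\gamma(t)=\exp(\Delta_hp^h_\gamma(t))$ with $p^h=\Pi$-averages of $p$ normalized by $\chi$. The resulting system is a finite-volume discretization of the biased Boltzmann equation. It is Lipschitz in $m$ near the strictly positive range, so the solution exists on $[0,T]$, stays in $[c/2,2C]$ after rescaling by cell volumes, and converges to $g$ in $d_{\infty,R}^h$. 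The cost $\frac12\sum_\gamma\int K_\gamma^h m_am_b\ell(q_\gamma)\,dt$ is then a Riemann sum converging to $\frac12\int\mathcal B_Rgg_*\ell(q)$, which equals the dynamical part of \eqref{intro-fixed-cutoff-biased-action} by Proposition~\ref{prop:fixed-cutoff-biased-duality}. Combining this with the convergence of the initial term gives $\limsup\le I_{\mathcal B_R}(g)$, and the liminf step supplies the reverse inequality.

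\emph{Main obstacle.} The hardest step is the convergence $m^h\to g$ in the uniform $L^\infty$ metric: it is a consistency-plus-stability estimate for the upwind scheme. The consistency error of the upwind transport is $O(h_x)$ in the weak sense, but in $L^\infty$ it requires the $C^1$/Lipschitz regularity of $g$. I would handle it by a discrete Gronwall argument on $e^h_a=m^h_a/|E_a|-g(z_a)$. The upwind operator is monotone and therefore an $\ell^\infty$-contraction. The truncation error is $O(h_x\|\nabla g\|_\infty R+h_v)$, and the collision term is Lipschitz in $\ell^\infty$ on the bounded range. The fixed cutoff $R$ keeps all these constants uniform.
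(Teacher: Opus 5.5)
Your proposal is correct and follows the paper's proof of Theorem~\ref{thm:regular-path-rate-convergence-rigorous} essentially step for step. The liminf uses the Fenchel lower bound for $I_h$ tested with $\mathsf P_hr$, and the initial entropy converges as a Riemann sum. The recovery sequence solves the discrete controlled ODE from $\Pi_hg_0$ with unit transport multipliers, and the error in $\|\cdot\|_{\infty,h}$ is closed by the upwind maximum principle, an $h$-uniform Lipschitz bound on the collision term and Gronwall, exactly as in Lemma~\ref{lem:regular-controlled-path-cost-consistency}.

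The only real variation is your collision multiplier $\exp(\Delta_h(\mathsf P_hp)_\gamma)$ in place of the paper's $\mathcal B_R$-weighted channel average of $q$. It works equally well for biased paths, because $e^{\Delta p}$ depends on $c$ only through the four phase-space points and so is uniformly $O(h_x+h_v)$-close to your multiplier on every channel. Two small corrections: the $h$-uniform gain-term bound \eqref{uniform-collision-coefficient-sum} comes from the measure-preserving collision change of variables \eqref{volume-weighted-collision-microreversibility}, not from the cutoff alone; and the upwind truncation error is controlled by $\nabla_x^2g$ rather than by $\|\nabla g\|_\infty$.
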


The lower bound and recovery sequence identify the continuum action locally
near every biased regular path.  Thus the coarse process reproduces the
candidate large-deviation cost associated with the collision dynamics of
the underlying particles.  Since $R$ is fixed, the limiting functional is
the action of the truncated dynamics; removing the cutoff would require
additional uniform control of the high-energy tails. Theorem~\ref{thm:intro-regular-path-rate-convergence} is proved in
Theorem~\ref{thm:regular-path-rate-convergence-rigorous}.

\section{Well-posedness of the coarse model}
\label{sec:coarse-wellposedness}

Fix $h>0$ and $\varepsilon>0$ satisfying $\varepsilon^{-1}\in\mathbb N$.
Recall from
\eqref{intro-coarse-state-space} the fixed-mass
lattices $\mathsf S_{\varepsilon,h}(M)$,
$M\in\varepsilon\mathbb N_0$, the normalized state space
$\mathsf S_{\varepsilon,h}=\mathsf S_{\varepsilon,h}(1)$, and their union
$\mathsf S_{\varepsilon,h}^{\rm fin}$.  Every fixed-mass layer is finite:
after division by $\varepsilon$, it is
the set of weak compositions of $M/\varepsilon$ into
$|\mathcal I_h|$ parts.  In particular,
$\mathsf S_{\varepsilon,h}^{\rm fin}
=(\varepsilon\mathbb N_0)^{\mathcal I_h}$.

At fixed $(\varepsilon,h)$, the mass equation is a finite-dimensional pure
jump SDE, and its restriction to each fixed-mass layer is a finite-state
continuous-time Markov chain.  Its well-posedness includes two ingredients.  Lemma~\ref{lem:coarse-total-intensity} bounds the total
state-dependent jump intensity uniformly on every fixed-mass layer, which
prevents infinitely many jumps from accumulating in finite time.
Lemma~\ref{lem:coarse-invariant-state-space} shows that a channel can have
positive intensity only when its incoming cells contain enough mass, and
that every permitted jump preserves nonnegativity and total mass.  These
two ingredients yield the pathwise construction, nonexplosion, and pathwise
uniqueness in
Theorem~\ref{thm:coarse-boltzmann-wellposedness}.

Corollary~\ref{cor:coarse-random-finite-mass} extends the construction from
unit total mass to arbitrary finite, possibly random, total mass.  Finally,
Proposition~\ref{prop:coarse-boltzmann-generator} identifies the generator,
the associated martingale decomposition, and the predictable brackets of the
tested density process. 

\begin{lemma}\label{lem:coarse-total-intensity}
For $m\in\mathsf S_{\varepsilon,h}^{\rm fin}$, define the total
instantaneous jump intensity by
\begin{equation}\label{coarse-total-jump-intensity}
 \Lambda_{\varepsilon,h}(m)
 :=\sum_{(a,b)\in\mathcal E_h}
       \lambda_{ab}^{\varepsilon,h}(m)
   +\sum_{\gamma\in\Gamma_h}
       \lambda_\gamma^{\varepsilon,h}(m).
\end{equation}
For every $M\in\varepsilon\mathbb N_0$, there is a finite deterministic
constant $C_{\varepsilon,h}(M)$ such that
\begin{equation}\label{coarse-uniform-total-rate}
 \sup_{m\in\mathsf S_{\varepsilon,h}(M)}
 \Lambda_{\varepsilon,h}(m)
 \leq C_{\varepsilon,h}(M)<\infty.
\end{equation}
One may take
\begin{equation*}
 C_{\varepsilon,h}(M)
 =\frac1\varepsilon\left(MT_h+\frac12M^2K_h\right),
 \qquad
 T_h:=\max_{a\in\mathcal I_h}\sum_{b:(a,b)\in\mathcal E_h}T_{ab}^h,
 \quad
 K_h:=\sum_{\gamma\in\Gamma_h}K_\gamma^h.
\end{equation*}
\end{lemma}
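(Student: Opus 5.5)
The bound follows by estimating the transport and collision contributions to \eqref{coarse-total-jump-intensity} separately on a fixed layer $\mathsf S_{\varepsilon,h}(M)$. The only facts used are that every coordinate satisfies $0\leq m_a\leq M$ with $\sum_a m_a=M$, and that all coefficients $T_{ab}^h$, $K_\gamma^h$ are nonnegative and finite. Finiteness holds because $\mathcal E_h,\Gamma_h$ are finite sets and each $K_\gamma^h$ is finite by the local integrability \eqref{collision-kernel-assumption} applied on $B_{R(h)}\times B_{R(h)}$. Consequently $T_h$ and $K_h$ are finite deterministic numbers depending only on $h$.

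For the transport part, I would regroup the sum over edges by departure cell using \eqref{coarse-transport-rate}:
\begin{equation*}
 \sum_{(a,b)\in\mathcal E_h}\lambda_{ab}^{\varepsilon,h}(m)
 =\frac1\varepsilon\sum_{a\in\mathcal I_h}m_a\sum_{b:(a,b)\in\mathcal E_h}T_{ab}^h
 \leq\frac{T_h}{\varepsilon}\sum_a m_a=\frac{MT_h}{\varepsilon}.
\end{equation*}
Here nonnegativity of $m_a$ lets us bound each inner sum by its maximum $T_h$.

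For the collision part, take $\gamma=(a,b;c,d)$. Nonnegativity of the falling factorial is what I would check first. If $a\neq b$, the factor $m_a m_b$ is at most $M^2$, since $m_a+m_b\leq M$ even gives $m_am_b\leq M^2/4$. If $a=b$, then $m_a(m_a-\varepsilon)\leq m_a^2\leq M^2$. For this to be a genuine bound, the product should also be nonnegative. That holds because $m_a\in\varepsilon\mathbb N_0$, so either $m_a=0$ or $m_a\geq\varepsilon$. Hence $0\leq\lambda_\gamma^{\varepsilon,h}(m)\leq K_\gamma^h M^2/(2\varepsilon)$. Summing over $\Gamma_h$ gives $M^2K_h/(2\varepsilon)$.

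Adding the two estimates yields \eqref{coarse-uniform-total-rate} with the stated $C_{\varepsilon,h}(M)$. Alternatively, one can note directly that $\mathsf S_{\varepsilon,h}(M)$ is a finite set, so the supremum is a finite maximum. I would mention this as a remark, but give the explicit constant as above. There is no real obstacle here. The only point needing care is the lattice property $m_a\in\varepsilon\mathbb N_0$, which ensures the $a=b$ rate is nonnegative rather than merely bounded above.
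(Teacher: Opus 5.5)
Your proposal is correct and follows the paper's proof essentially step for step. You regroup the transport sum by departure cell to get $MT_h/\varepsilon$, and you use the lattice property $m_a\in\varepsilon\mathbb N_0$ to get $0\leq m_a(m_b-\varepsilon\mathbf 1_{\{a=b\}})\leq M^2$, which yields $M^2K_h/(2\varepsilon)$ for the collision part.
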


\begin{proof}
The index sets $\mathcal I_h$, $\mathcal E_h$, and $\Gamma_h$ are finite,
and each coefficient $T_{ab}^h$ and $K_\gamma^h$ is finite.  Therefore the
constants $T_h$ and $K_h$ defined in the statement are finite.  Fix
$M\in\varepsilon\mathbb N_0$ and let
$m\in\mathsf S_{\varepsilon,h}(M)$.  Then
$m_a\geq0$ and $\sum_am_a=M$.  Hence
\begin{align*}
 \sum_{(a,b)\in\mathcal E_h}\lambda_{ab}^{\varepsilon,h}(m)
 &=\frac1\varepsilon\sum_{a\in\mathcal I_h}m_a
   \sum_{b:(a,b)\in\mathcal E_h}T_{ab}^h
 \leq\frac{MT_h}{\varepsilon}.
\end{align*}
Because every $m_a$ is an integer multiple of $\varepsilon$,
\begin{equation*}
 0\leq m_a\bigl(m_b-\varepsilon\mathbf1_{\{a=b\}}\bigr)
 \leq m_am_b\leq M^2.
\end{equation*}
Consequently,
\begin{align*}
 \sum_{\gamma\in\Gamma_h}\lambda_\gamma^{\varepsilon,h}(m)
 &\leq\frac{M^2}{2\varepsilon}
       \sum_{\gamma\in\Gamma_h}K_\gamma^h
 =\frac{M^2K_h}{2\varepsilon}.
\end{align*}
Adding the two estimates proves \eqref{coarse-uniform-total-rate}.
\end{proof}

\begin{lemma}
\label{lem:coarse-invariant-state-space}
Let $M\in\varepsilon\mathbb N_0$ and
$m\in\mathsf S_{\varepsilon,h}(M)$.  If a transport edge $(a,b)$ has
$\lambda_{ab}^{\varepsilon,h}(m)>0$, then
\begin{equation*}
 m+\varepsilon(e_b-e_a)\in\mathsf S_{\varepsilon,h}(M).
\end{equation*}
If a collision channel $\gamma=(a,b;c,d)$ has
$\lambda_\gamma^{\varepsilon,h}(m)>0$, then
\begin{equation*}
 m+\varepsilon(e_c+e_d-e_a-e_b)
 \in\mathsf S_{\varepsilon,h}(M).
\end{equation*}
Thus every jump permitted by the state-dependent intensities preserves
nonnegativity, the mass lattice, and total mass.

Moreover, let $\eta=(\eta_a)_{a\in\mathcal I_h}$ satisfy
\begin{equation}\label{coarse-discrete-linear-invariant}
 \eta_b=\eta_a\quad\text{whenever }T_{ab}^h>0,
 \qquad
 \eta_c+\eta_d=\eta_a+\eta_b
 \quad\text{whenever }K_{(a,b;c,d)}^h>0.
\end{equation}
If $m'$ is obtained from $m$ by any transport or collision jump having
positive intensity at $m$, then
\begin{equation*}
 \sum_{i\in\mathcal I_h}\eta_i m_i'
 =\sum_{i\in\mathcal I_h}\eta_i m_i.
\end{equation*}
\end{lemma}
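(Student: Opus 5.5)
The proof is a direct inspection of the two rate formulas \eqref{coarse-transport-rate} and \eqref{coarse-boltzmann-collision-rate} together with the coordinate sums of the jump vectors.

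\emph{Transport jumps.} Suppose $\lambda_{ab}^{\varepsilon,h}(m)=\varepsilon^{-1}m_aT_{ab}^h>0$. Then $m_a>0$, and since $m_a\in\varepsilon\mathbb N_0$ this forces $m_a\geq\varepsilon$. The new state $m'=m+\varepsilon(e_b-e_a)$ agrees with $m$ off $\{a,b\}$. Its entries satisfy $m'_a=m_a-\varepsilon\in\varepsilon\mathbb N_0$ and $m'_b=m_b+\varepsilon\in\varepsilon\mathbb N_0$; transport edges have $a\neq b$, because $T_{ab}^h$ vanishes unless the spatial indices differ. Finally, $\sum_\alpha m'_\alpha=M$ because $e_b-e_a$ has coordinate sum zero.

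\emph{Collision jumps.} Suppose $\lambda_\gamma^{\varepsilon,h}(m)>0$. Since $K_\gamma^h\geq0$, we must have $K_\gamma^h>0$ and $m_a(m_b-\varepsilon\mathbf1_{\{a=b\}})>0$. I split into two cases.

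If $a\neq b$, then $m_a>0$ and $m_b>0$, so both are at least $\varepsilon$. Subtracting one quantum from each cell therefore leaves nonnegative lattice values.

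If $a=b$, then $m_a>0$ and $m_a-\varepsilon>0$, so $m_a\geq2\varepsilon$ by lattice membership. The jump subtracts $2\varepsilon$ from coordinate $a$, which stays nonnegative.

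In either case, adding $\varepsilon e_c+\varepsilon e_d$ only increases coordinates and keeps lattice membership; this also covers $c=d$, or $c,d$ coinciding with $a$ or $b$. The key point is that nonnegativity is determined by the subtractions alone, with the additions applied afterwards. Mass conservation again follows from the zero coordinate sum of $e_c+e_d-e_a-e_b$.

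\emph{Linear invariants.} For a jump with positive intensity, the computation above shows $T_{ab}^h>0$ (respectively $K_\gamma^h>0$). Then
\[
\sum_i\eta_i m_i'-\sum_i\eta_i m_i=\varepsilon(\eta_b-\eta_a)
\]
for a transport jump, and
\[
\sum_i\eta_i m_i'-\sum_i\eta_i m_i=\varepsilon(\eta_c+\eta_d-\eta_a-\eta_b)
\]
for a collision jump. Both vanish by \eqref{coarse-discrete-linear-invariant}, and this holds regardless of coincidences among indices, by linearity.

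There is no real obstacle. The only point needing care is the diagonal case $a=b$, where the falling-factorial term guarantees two available quanta.
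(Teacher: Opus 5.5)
Your proposal is correct and follows the paper's proof essentially line by line: positivity of the rate forces $m_a\geq\varepsilon$ (or $m_a\geq2\varepsilon$ in the diagonal case $a=b$, via the falling factorial), the zero coordinate sum of the jump vectors gives mass conservation, and the linear invariants follow from $\varepsilon(\eta_b-\eta_a)=0$ and $\varepsilon(\eta_c+\eta_d-\eta_a-\eta_b)=0$. Your side remark that transport edges always have $a\neq b$ is not needed, and it is only guaranteed when $N_x\geq2$. The argument is unaffected, because $e_b-e_a=0$ when $a=b$.
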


\begin{proof}
Consider first a transport jump from $a$ to $b$. Its rate is
$\varepsilon^{-1}m_aT_{ab}^h$, so a positive rate implies
$m_a\geq\varepsilon$.  After adding $\varepsilon(e_b-e_a)$, every coordinate
therefore remains in $\varepsilon\mathbb N_0$.  The sum of the coordinates is
unchanged.

For a collision channel $\gamma=(a,b;c,d)$ with $a\ne b$, positivity of
$\lambda_\gamma^{\varepsilon,h}(m)$ implies
$m_a,m_b\geq\varepsilon$.  Thus one quantum can be removed from both incoming
cells.  If $a=b$, positivity of the falling-factorial rate
\begin{equation*}
 \frac{K_\gamma^h}{2\varepsilon}m_a(m_a-\varepsilon)
\end{equation*}
and the fact that $m_a\in\varepsilon\mathbb N_0$ imply
$m_a\geq2\varepsilon$.  Thus cell $a$ contains at least two mass units,
each of size $\varepsilon$, and subtracting the two incoming units leaves
$m_a-2\varepsilon\geq0$.  The factor $m_a(m_a-\varepsilon)$ therefore
excludes the selection of the same mass unit twice, which is the
coarse-grained analogue of excluding self-collisions.  The same conclusion
holds when some incoming and outgoing cell labels coincide,
after the corresponding basis vectors are cancelled in the jump vector.
The collision jump preserves total mass because
$\sum_{i\in\mathcal I_h}(e_c+e_d-e_a-e_b)_i=0$.  Hence every jump maps
$\mathsf S_{\varepsilon,h}(M)$ into itself.

It remains to verify the last assertion.  If $m'$ is obtained from $m$ by a
transport jump from $a$ to $b$, then
\begin{equation*}
 \sum_{i\in\mathcal I_h}\eta_i(m_i'-m_i)
 =\varepsilon(\eta_b-\eta_a)=0.
\end{equation*}
If $m'$ is obtained from $m$ by the collision channel $(a,b;c,d)$, then
\begin{equation*}
 \sum_{i\in\mathcal I_h}\eta_i(m_i'-m_i)
 =\varepsilon(\eta_c+\eta_d-\eta_a-\eta_b)=0.
\end{equation*}
Thus the corresponding linear functional has the same value immediately
before and after every admissible jump.
\end{proof}

\begin{theorem}[Well-posedness and nonnegativity]
\label{thm:coarse-boltzmann-wellposedness}
For every $h>0$, every $\varepsilon>0$ satisfying
$\varepsilon^{-1}\in\mathbb N$, and every
$m(0)\in\mathsf S_{\varepsilon,h}$, the mass equation
\eqref{coarse-boltzmann-model} has a pathwise unique global strong solution
in the sense of Definition~\ref{def:coarse-grained-solution}.  Its
reconstruction \eqref{coarse-boltzmann-reconstruction} satisfies
\eqref{coarse-boltzmann-density-equation}.  Almost surely,
\begin{equation*}
 f_t^{\varepsilon,h}\geq0,
 \qquad
 \int_{\mathbb T^d\times\mathbb R^d} f_t^{\varepsilon,h}(z)\,dz=1
 \quad\text{for every }t\geq0.
\end{equation*}
More generally, for every weight
$\eta=(\eta_a)_{a\in\mathcal I_h}$ satisfying both conditions in
\eqref{coarse-discrete-linear-invariant},
\begin{equation*}
 \sum_{a\in\mathcal I_h}\eta_am_a(t)
 =\sum_{a\in\mathcal I_h}\eta_am_a(0)
 \qquad\text{for every }t\geq0
\end{equation*}
almost surely.  Thus every weight satisfying
\eqref{coarse-discrete-linear-invariant} defines a pathwise conserved linear
functional of the cell masses.  The constant choice $\eta_a=1$ satisfies
both conditions and gives the total-mass conservation displayed above.
\end{theorem}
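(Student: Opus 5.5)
The plan is to build the solution pathwise by a jump-by-jump recursion driven by the reference clocks, and then read off invariance, nonexplosion and uniqueness from Lemmas~\ref{lem:coarse-total-intensity} and~\ref{lem:coarse-invariant-state-space}. Fix $\omega$ outside a null set on which each reference process $P_{ab}^{\rm tr}$, $P_\gamma^{\rm col}$ has only unit jumps, is finite on compacts, and no two of them jump at the same time. Set $\tau_0=0$ and $m(0)\in\mathsf S_{\varepsilon,h}$. Suppose $\tau_n$, $m(\tau_n)\in\mathsf S_{\varepsilon,h}$ and the consumed clock values $\Theta_\kappa(\tau_n)$ are known for every channel $\kappa\in\mathcal K_h$. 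On $[\tau_n,\tau_{n+1})$ we keep $m$ constant, so that $\Theta_\kappa(t)=\Theta_\kappa(\tau_n)+\lambda_\kappa(m(\tau_n))(t-\tau_n)$. We define $\tau_{n+1}$ as the first time at which some $\Theta_\kappa$ reaches the next jump time of $P_\kappa$ after $\Theta_\kappa(\tau_n)$; only channels with $\lambda_\kappa(m(\tau_n))>0$ can do so. At $\tau_{n+1}$ the state jumps by $\varepsilon z_\kappa$ for that channel. By Lemma~\ref{lem:coarse-invariant-state-space}, the new state lies again in $\mathsf S_{\varepsilon,h}$, so the recursion is well defined. Ties between channels occur with probability zero, because the reference processes are independent and their jump times are continuously distributed, so the next jump time in clock units is a.s.\ unique.

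Nonexplosion comes next. On $\mathsf S_{\varepsilon,h}$, Lemma~\ref{lem:coarse-total-intensity} bounds the total rate by $C:=C_{\varepsilon,h}(1)$. Hence $\sum_\kappa\Theta_\kappa(t)\le Ct$ for every $t$ before the explosion time $\tau_\infty=\lim\tau_n$. The number of jumps before $t$ is at most $\sum_\kappa P_\kappa(Ct)<\infty$ a.s., so $\tau_n\uparrow\infty$. It follows that the clocks $\int_0^t\lambda_\kappa(m(s-))\,ds$ are finite and that \eqref{coarse-boltzmann-integral-equation} holds for all $t$ by construction.

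Adaptedness follows because each $\tau_n$ is a stopping time for the filtration generated by $m(0)$ and the time-changed counts. The standard multiparameter time-change argument (Ethier--Kurtz, Chapter~6) then gives that $P_\kappa(\Theta_\kappa(t))-\Theta_\kappa(t)$ are martingales. I will rely on this only to identify the intensities, and not for existence. Pathwise uniqueness is proved by induction on jumps. Let $\tilde m$ be another solution with the same data. Both solutions are piecewise constant with finitely many jumps on compacts, by the finiteness of the clocks. Suppose they agree on $[0,\tau_n]$. Then their clocks agree up to $\tau_n$ and grow at the same constant rates afterwards, so the first channel to fire and its firing time coincide. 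Hence $\tilde m=m$ on $[0,\tau_{n+1}]$, and the two are indistinguishable.

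Finally, nonnegativity and conservation are immediate from the recursion. Every realized jump has positive intensity at the pre-jump state, so Lemma~\ref{lem:coarse-invariant-state-space} shows that $m(t)\in\mathsf S_{\varepsilon,h}$ and that $\sum_a\eta_am_a$ is unchanged across each jump. Between jumps the path is constant. It follows that $f_t^{\varepsilon,h}=\sum_am_a\chi_a^h\ge0$ with $\int f_t^{\varepsilon,h}=\sum_am_a=1$, the case $\eta\equiv1$. The density equation \eqref{coarse-boltzmann-density-equation} is the image of \eqref{coarse-boltzmann-model} under the linear map $e_a\mapsto\chi_a^h$, after adding and subtracting the compensators, exactly as computed before the statement. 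The main obstacle I expect is not analytic but bookkeeping. The delicate point is showing rigorously that the recursively constructed $\Theta_\kappa$ define a solution in the precise sense of Definition~\ref{def:coarse-grained-solution}, with the left-limit intensities, and that any solution must coincide with this construction. I would handle this by working on the explicit full-measure event described above and arguing deterministically on it.
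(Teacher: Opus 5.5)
Your proposal is correct and follows the paper's proof essentially step for step: the same recursive random-time-change construction, invariance via Lemma~\ref{lem:coarse-invariant-state-space}, uniqueness by induction over firing times, and conservation read off jump by jump. The only difference is nonexplosion, where you argue pathwise through $P_\kappa(\Theta_\kappa(t))\le P_\kappa(C_{\varepsilon,h}(1)t)<\infty$ instead of the paper's compensator bound $n\,\mathbb P(\tau_n\le T)\le C_{\varepsilon,h}(1)T$; this is a slightly more elementary route to the same conclusion. One minor precision on ties: the null event that matters is two channels having the same candidate \emph{physical} firing time, not simultaneous jumps of the reference processes on their own time axes, so the event you fix at the outset must exclude these ties as well. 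Their negligibility comes from the conditional independence of the residual exponential clocks at each $\tau_n$, which the paper also only asserts.
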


\begin{proof}
Starting from $m(0)$, we use the independent reference Poisson processes in
\eqref{coarse-transport-counts}--\eqref{coarse-collision-counts} to define
the jump times and jump channels recursively.  Recall that
$\mathcal K_h=\mathcal E_h\sqcup\Gamma_h$, and write
$\kappa\in\mathcal K_h$ for a generic channel.  To distinguish channel
labels from the canonical basis vectors $(e_a)_{a\in\mathcal I_h}$, define
\begin{equation*}
 (P_\kappa^{\rm ref},\lambda_\kappa,z_\kappa)
 :=
 \begin{cases}
 \bigl(P_{ab}^{\rm tr},\lambda_{ab}^{\varepsilon,h},e_b-e_a\bigr),
 &\kappa=(a,b)\in\mathcal E_h,\\[1mm]
 \bigl(P_\gamma^{\rm col},\lambda_\gamma^{\varepsilon,h},
 e_c+e_d-e_a-e_b\bigr),
 &\kappa=\gamma=(a,b;c,d)\in\Gamma_h.
 \end{cases}
\end{equation*}
Here $P_\kappa^{\rm ref}$ is the independent unit-rate reference process
introduced before
\eqref{coarse-transport-counts}--\eqref{coarse-collision-counts}, whereas
$\lambda_\kappa$ is the state-dependent rate from
\eqref{coarse-transport-rate} or
\eqref{coarse-boltzmann-collision-rate}.  For a constructed portion of the
path, set
\begin{equation*}
 A_\kappa(t):=\int_0^t\lambda_\kappa(m(s-))\,ds
\end{equation*}
on that time interval.  The accumulated internal time $A_\kappa(t)$ is the
cumulative intensity that converts physical time into the time parameter of
the reference process.  Its instantaneous speed is
$\lambda_\kappa(m(t-))$, so the clock stops whenever this rate vanishes.
The corresponding time-changed process
\begin{equation*}
 P_\kappa^{\rm ref}(A_\kappa(t))
\end{equation*}
equals $P_{ab}^{\varepsilon,h}(t)$ for
$\kappa=(a,b)\in\mathcal E_h$ and $P_\gamma^{\varepsilon,h}(t)$ for
$\kappa=\gamma\in\Gamma_h$.  It has compensator $A_\kappa(t)$ and counts
the jumps of channel $\kappa$ up to physical time $t$.  The corresponding
increment of the mass vector is $\varepsilon z_\kappa$.

Suppose that the process has been constructed through its $n$th physical
jump time $\tau_n$, and write $m_n=m(\tau_n)$ for the state immediately
after that jump.  For every channel $\kappa\in\mathcal K_h$, the constructed
path on $[0,\tau_n]$ determines $A_\kappa(\tau_n)$.  Since
$P_\kappa^{\rm ref}$ is fixed independently of $m$, let
$S_{\kappa,n}$ be its smallest jump time strictly larger than
$A_\kappa(\tau_n)$.

Figure~\ref{fig:channel-random-time-change} illustrates the conversion from
internal time to physical time.  The candidate physical time shown in
panel~(b) is computed below.
\begin{figure}[H]
\centering
\begin{tikzpicture}[x=0.78cm,y=0.68cm,>=stealth,
                    every node/.style={font=\footnotesize}]
 % Reference process on the internal-time axis
 \node[anchor=west,font=\footnotesize\bfseries] at (0.2,6.85)
   {(a) Reference Poisson process};
 \draw[->] (0.8,4.70)--(10.7,4.70)
   node[right] {$u$ (internal time)};
 \draw[->] (0.8,4.70)--(0.8,6.55);
 \node[rotate=90] at (-0.72,5.65) {jump count};
 \node[anchor=east] at (0.70,5.38) {$j$};
 \node[anchor=east] at (0.70,6.08) {$j+1$};
 \draw[very thick,blue!70!black]
   (0.8,5.38)--(7.8,5.38)--(7.8,6.08)--(10.25,6.08);
 \draw[dashed,gray!80] (3.2,4.70)--(3.2,5.38);
 \draw[dashed,gray!80] (7.8,4.70)--(7.8,6.08);
 \draw (3.2,4.64)--(3.2,4.76);
 \draw (7.8,4.64)--(7.8,4.76);
 \node[below=2pt] at (3.2,4.70) {$A_\kappa(\tau_n)$};
 \node[below=2pt] at (7.8,4.70) {$S_{\kappa,n}$};
 \draw[<->] (3.2,4.98)--(7.8,4.98)
   node[midway,above=3pt,fill=white,inner sep=1.2pt]
   {$\Delta u_\kappa$};

 % The random time change
 \draw[->,gray!75] (9.35,4.35)--(9.35,3.72)
   node[midway,right] {$u=A_\kappa(t)$};

 % Time-changed count on the physical-time axis
 \node[anchor=west,font=\footnotesize\bfseries] at (0.2,3.38)
   {(b) Time-changed counting process};
 \draw[->] (0.8,1.20)--(10.7,1.20)
   node[right] {$t$ (physical time)};
 \draw[->] (0.8,1.20)--(0.8,3.05);
 \node[rotate=90] at (-0.72,2.15) {jump count};
 \node[anchor=east] at (0.70,1.88) {$j$};
 \node[anchor=east] at (0.70,2.58) {$j+1$};
 \draw[very thick,green!45!black]
   (0.8,1.88)--(7.8,1.88)--(7.8,2.58)--(10.25,2.58);
 \draw[dashed,gray!80] (3.2,1.20)--(3.2,1.88);
 \draw[dashed,gray!80] (7.8,1.20)--(7.8,2.58);
 \draw (3.2,1.14)--(3.2,1.26);
 \draw (7.8,1.14)--(7.8,1.26);
 \node[below=2pt] at (3.2,1.20) {$\tau_n$};
 \node[below=2pt] at (7.8,1.20) {$\tau_{\kappa,n+1}$};
 \draw[<->] (3.2,1.48)--(7.8,1.48)
   node[midway,above=3pt,fill=white,inner sep=1.2pt]
   {$\Delta t_\kappa$};
\end{tikzpicture}
\caption{Random time change for channel $\kappa$ while the state remains
$m_n$.  The upper and lower panels use internal time $u$ and physical time
$t$, respectively.  With
$\Delta u_\kappa=S_{\kappa,n}-A_\kappa(\tau_n)$ and
$\Delta t_\kappa=\tau_{\kappa,n+1}-\tau_n$, the time change gives
$\Delta u_\kappa=\lambda_\kappa(m_n)\Delta t_\kappa$.  }
\label{fig:channel-random-time-change}
\end{figure}

To determine the next channel, suppose that no channel jumps
between $\tau_n$ and a later physical time $t$.  The state is
then equal to $m_n$ throughout this interval, and hence the intensity of
channel $\kappa$ is the constant $\lambda_\kappa(m_n)$.  Its internal time
at physical time $t$ would therefore be
\begin{equation*}
 A_\kappa(\tau_n)+\lambda_\kappa(m_n)(t-\tau_n).
\end{equation*}
Channel $\kappa$ reaches its next reference Poisson point when this quantity
equals $S_{\kappa,n}$.  If $\lambda_\kappa(m_n)>0$, solving
\begin{equation*}
 A_\kappa(\tau_n)
 +\lambda_\kappa(m_n)(\tau_{\kappa,n+1}-\tau_n)
 =S_{\kappa,n}
\end{equation*}
gives its candidate physical jump time
\begin{equation*}
 \tau_{\kappa,n+1}
 =\tau_n+
 \frac{S_{\kappa,n}-A_\kappa(\tau_n)}{\lambda_\kappa(m_n)}.
\end{equation*}
When $\lambda_\kappa(m_n)=0$, no jump of this channel can occur while the
state is $m_n$, and we set $\tau_{\kappa,n+1}=+\infty$.

The next actual jump is the first among these competing candidate jumps:
\begin{equation*}
 \tau_{n+1}:=\min_{\kappa\in\mathcal K_h}\tau_{\kappa,n+1}.
\end{equation*}
For every $t<\tau_{n+1}$, no channel has reached its next reference Poisson
point, so the state remains equal to $m_n$.  If $\kappa_{n+1}$ attains the
minimum, then a jump
of channel $\kappa_{n+1}$ occurs at $\tau_{n+1}$ and the state is updated to
\begin{equation*}
 m_{n+1}=m_n+\varepsilon z_{\kappa_{n+1}}.
\end{equation*}
The independence of the reference Poisson processes implies that ties have
probability zero.  This recursion is the random-time-change construction of
the integral equation \eqref{coarse-boltzmann-integral-equation}.  It is
defined until
\begin{equation*}
 \tau_\infty:=\lim_{n\to\infty}\tau_n.
\end{equation*}
The event $\{\tau_\infty<\infty\}$ would mean that infinitely many jumps
accumulate in a finite time interval and is called explosion.

Lemma~\ref{lem:coarse-invariant-state-space} shows inductively that every
state produced by this construction belongs to
$\mathsf S_{\varepsilon,h}$.  Lemma~\ref{lem:coarse-total-intensity}, with
$M=1$, therefore bounds its total jump intensity by
$C_{\varepsilon,h}(1)$ at every step.  Let $J_t$ be the total number of
jumps by time $t$.  For $n\geq1$, the stopped counting process
$J_{t\wedge\tau_n}$ has compensator
\begin{equation*}
 \int_0^{t\wedge\tau_n}
 \Lambda_{\varepsilon,h}(m(s-))\,ds
 \leq C_{\varepsilon,h}(1)t.
\end{equation*}
It follows that
\begin{equation*}
 n\,\mathbb P(\tau_n\leq T)
 \leq\mathbb E[J_{T\wedge\tau_n}]
 \leq C_{\varepsilon,h}(1)T.
\end{equation*}
Letting $n\to\infty$ yields
$\mathbb P(\tau_\infty\leq T)=0$ for every $T<\infty$.  Thus
$\tau_\infty=\infty$ almost surely, and the construction gives a global
adapted c\`adl\`ag solution.

The same recursion gives pathwise uniqueness.  If two solutions
have the same initial state and the same reference Poisson processes, then
at each common jump time they have the same state, the same accumulated
internal clocks, and the same next unused jump time $S_{\kappa,n}$ for every
channel $\kappa\in\mathcal K_h$.  Hence their candidate times
$\tau_{\kappa,n+1}$ agree, and they make
the same next jump.  Induction shows that their paths coincide for all
times.  The recursive construction is a measurable adapted functional of
the initial state and the reference Poisson processes, and is therefore a
strong solution.

Finally, $\chi_a^h\geq0$ and $m_a(t)\geq0$ imply
$f_t^{\varepsilon,h}\geq0$. Since
$\int_{\mathbb T^d\times\mathbb R^d}\chi_a^h(z)\,dz=1$,
\begin{equation*}
 \int_{\mathbb T^d\times\mathbb R^d}f_t^{\varepsilon,h}(z)\,dz
 =\sum_{a\in\mathcal I_h}m_a(t)=1.
\end{equation*}
More generally, suppose that $\eta$ satisfies
\eqref{coarse-discrete-linear-invariant}.  The process is constant between
successive jump times, and Lemma~\ref{lem:coarse-invariant-state-space}
shows that the quantity
\begin{equation*}
 L_\eta(m(t)):=\sum_{a\in\mathcal I_h}\eta_am_a(t)
\end{equation*}
is unchanged at every jump.  Since nonexplosion gives only finitely many
jumps on each bounded time interval,
\begin{equation*}
 L_\eta(m(t))=L_\eta(m(0))
 \qquad\text{for every }t\geq0
\end{equation*}
almost surely.  Taking $\eta_a=1$ gives mass conservation.  For an
arbitrary admissible weight $\eta$, the same argument proves the linear
conservation law stated in the theorem.

Decomposing each accepted counting process into its compensator and
compensated part transforms
\eqref{coarse-boltzmann-model} into
\eqref{coarse-boltzmann-density-equation}, so the two displayed forms define
the same path.
\end{proof}

\begin{corollary}
\label{cor:coarse-random-finite-mass}
Fix $\varepsilon>0$ and $h>0$.  For every
$M\in\varepsilon\mathbb N_0$ and every deterministic initial state
$m_0\in\mathsf S_{\varepsilon,h}(M)$, the coupled random-time-change equation
\eqref{coarse-boltzmann-integral-equation}, driven by the reference Poisson
processes $(P_{ab}^{\rm tr})_{(a,b)\in\mathcal E_h}$ and
$(P_\gamma^{\rm col})_{\gamma\in\Gamma_h}$, has a pathwise unique global
strong solution.  Almost surely,
\begin{equation*}
 m(t)\in\mathsf S_{\varepsilon,h}(M)
 \qquad\text{for every }t\geq0.
\end{equation*}

More generally, let $m_0$ be an $\mathcal F_0$-measurable random variable
with values in $\mathsf S_{\varepsilon,h}^{\rm fin}$, independent of all
the reference Poisson processes, and define its random total mass by
\begin{equation*}
 M_0:=\sum_{a\in\mathcal I_h}m_{0,a}.
\end{equation*}
Then the same equation has a pathwise unique global strong solution and,
almost surely,
\begin{equation*}
 m(t)\in\mathsf S_{\varepsilon,h}(M_0)
 \qquad\text{for every }t\geq0.
\end{equation*}
In particular, this applies when
$m_{0,a}=\varepsilon N_a$, where $(N_a)_{a\in\mathcal I_h}$ are independent
Poisson random variables with finite means and are independent of the driving
Poisson processes.
\end{corollary}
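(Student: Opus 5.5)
The plan is to reuse the recursive random-time-change construction from the proof of Theorem~\ref{thm:coarse-boltzmann-wellposedness}, replacing the unit-mass layer by an arbitrary fixed-mass layer $\mathsf S_{\varepsilon,h}(M)$. Nothing in that construction depends on $M=1$ except the choice of intensity bound.

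\textbf{Deterministic initial state.} Fix $M\in\varepsilon\mathbb N_0$ and $m_0\in\mathsf S_{\varepsilon,h}(M)$. Build the jump times $\tau_n$ and channels $\kappa_n$ exactly as before, from the reference processes $(P_\kappa^{\rm ref})_{\kappa\in\mathcal K_h}$.

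1. Lemma~\ref{lem:coarse-invariant-state-space} applies for every $M$. By induction, every state produced lies in $\mathsf S_{\varepsilon,h}(M)$.
2. Lemma~\ref{lem:coarse-total-intensity} then bounds the total intensity along the path by $C_{\varepsilon,h}(M)$.
3. The compensator estimate gives $n\,\mathbb P(\tau_n\leq T)\leq C_{\varepsilon,h}(M)T$, hence $\tau_\infty=\infty$ almost surely.
4. The induction over common jump times gives pathwise uniqueness, as before.
5. The case $M=0$ is trivial: all intensities vanish and $m\equiv0$.

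\textbf{Random initial state.} Let $m_0$ be $\mathcal F_0$-measurable, with values in $\mathsf S_{\varepsilon,h}^{\rm fin}$, and independent of the reference processes. Denote by $\Phi(m_0,P^{\rm ref})$ the solution map from the deterministic case. The construction is an explicit measurable recursion: each $\tau_{n+1}$ and $\kappa_{n+1}$ is a Borel function of $m_n$, the clocks $A_\kappa(\tau_n)$, and the jump times of $P^{\rm ref}$. Hence $\Phi$ is jointly measurable.

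Since $\mathsf S_{\varepsilon,h}^{\rm fin}=(\varepsilon\mathbb N_0)^{\mathcal I_h}$ is countable, define $m(t)=\sum_{x}\mathbf1_{\{m_0=x\}}\Phi(x,P^{\rm ref})(t)$.

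1. On each event $\{m_0=x\}$, the integral identity \eqref{coarse-boltzmann-integral-equation} holds almost surely, because it holds for the deterministic solution $\Phi(x,\cdot)$. Countable additivity over $x$ gives it almost surely overall.
2. The same decomposition over $x$ gives nonexplosion and $m(t)\in\mathsf S_{\varepsilon,h}(M_0)$.
3. For uniqueness, two solutions agree on each $\{m_0=x\}$ by the pathwise argument, since that argument uses only the realized initial value and the reference paths. It therefore never needs independence.

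\textbf{Adaptedness.} This is the point needing most care, though it is routine. I would take the filtration generated by $\mathcal F_0$ and the time-changed counts, as in the usual Ethier--Kurtz multiparameter filtration for random time changes. On $\{m_0=x\}$, the process coincides with an adapted process, and $\{m_0=x\}\in\mathcal F_0$. Independence of $m_0$ from the reference processes ensures the compensators are still $\int_0^t\lambda_\kappa(m(s-))\,ds$ with respect to this filtration. That is what Definition~\ref{def:coarse-grained-solution} requires.

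\textbf{Poisson initial data.} Independent Poisson $N_a$ with finite means are almost surely finite. So $m_{0,a}=\varepsilon N_a$ takes values in $\mathsf S_{\varepsilon,h}^{\rm fin}$ and is independent of the driving noise, and the previous case applies.
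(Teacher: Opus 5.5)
Your proposal is correct and follows the paper's proof. The deterministic case reruns the recursive construction of Theorem~\ref{thm:coarse-boltzmann-wellposedness} on $\mathsf S_{\varepsilon,h}(M)$, using Lemma~\ref{lem:coarse-invariant-state-space} and the bound $C_{\varepsilon,h}(M)$ from Lemma~\ref{lem:coarse-total-intensity}, and the random case reduces to the countably many deterministic values of $m_0$. Your patching over the events $\{m_0=x\}$ is a slightly sharper version of the paper's ``condition on its value'' step: it correctly shows that independence is needed only to identify the compensators in a filtration containing $\mathcal F_0$, and not for existence, uniqueness, nonexplosion, or mass conservation.
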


\begin{proof}
For deterministic $m_0\in\mathsf S_{\varepsilon,h}(M)$, the proof of
Theorem~\ref{thm:coarse-boltzmann-wellposedness} applies without change on
the finite state space $\mathsf S_{\varepsilon,h}(M)$.  Lemma~\ref{lem:coarse-invariant-state-space}
keeps every successive state in this space, and
Lemma~\ref{lem:coarse-total-intensity} bounds the total jump intensity there
by
\begin{equation*}
 C_{\varepsilon,h}(M)
 =\varepsilon^{-1}\left(MT_h+\frac12M^2K_h\right)<\infty.
\end{equation*}
The same stopped-counting-process argument rules out explosion and gives a
global pathwise unique solution.

For random $m_0$, condition on its value.  Independence of $m_0$ from the
reference Poisson processes leaves their laws unchanged under this
conditioning, and every realization of $m_0$ belongs to one of the finite
state spaces $\mathsf S_{\varepsilon,h}(M_0)$.  Applying the deterministic
conclusion conditionally and then removing the conditioning proves the
claim.  Finally, because $\mathcal I_h$ is finite, a finite family of
Poisson random variables with finite means has finite sum almost surely, so
the last example satisfies the hypotheses.
\end{proof}

\begin{proposition}
\label{prop:coarse-boltzmann-generator}
For $M\in\varepsilon\mathbb N_0$ and bounded
$F$ on $\mathsf S_{\varepsilon,h}(M)$, the generator is
\begin{align}
 \mathcal A_{\varepsilon,h}F(m)
 &=\frac1\varepsilon\sum_{(a,b)\in\mathcal E_h}m_aT_{ab}^h
 [F(m+\varepsilon(e_b-e_a))-F(m)]\notag\\
 &\quad+\sum_{\gamma=(a,b;c,d)\in\Gamma_h}
 \lambda_\gamma^{\varepsilon,h}(m)
 [F(m+\varepsilon(e_c+e_d-e_a-e_b))-F(m)].
 \label{coarse-boltzmann-generator}
\end{align}
By
Lemma~\ref{lem:coarse-invariant-state-space}, every channel with positive
rate has its endpoint in $\mathsf S_{\varepsilon,h}(M)$, so all the displayed
values of $F$ are then well defined.
Moreover, for the solution $m$ taking values in
$\mathsf S_{\varepsilon,h}(M)$, the process
\begin{equation}
 M_F(t):=F(m(t))-F(m(0))
 -\int_0^t\mathcal A_{\varepsilon,h}F(m(r))\,dr
 \label{coarse-generator-martingale}
\end{equation}
is a c\`adl\`ag square-integrable martingale.

More explicitly, let $\varphi$ be a smooth test function and set
\begin{equation*}
 \varphi_a^h:=\int_{\mathbb T^d\times\mathbb R^d}
 \chi_a^h(z)\varphi(z)\,dz,
 \qquad
 \Delta_\gamma^h\varphi
 :=\varphi_c^h+\varphi_d^h-\varphi_a^h-\varphi_b^h,
 \quad \gamma=(a,b;c,d).
\end{equation*}
Then the reconstructed density
$f_t^{\varepsilon,h}=\sum_{a\in\mathcal I_h}m_a(t)\chi_a^h$ satisfies
\begin{align}
 \langle f_t^{\varepsilon,h},\varphi\rangle
 &=\langle f_0^{\varepsilon,h},\varphi\rangle
 +\int_0^t\sum_{(a,b)\in\mathcal E_h}
 m_a(r)T_{ab}^h(\varphi_b^h-\varphi_a^h)\,dr\notag\\
 &\quad+\frac12\int_0^t
 \sum_{\gamma=(a,b;c,d)\in\Gamma_h}K_\gamma^h
 m_a(r)\bigl(m_b(r)-\varepsilon\mathbf1_{\{a=b\}}\bigr)
 \Delta_\gamma^h\varphi\,dr
 +M_\varphi^{\varepsilon,h}(t),
 \label{coarse-tested-density-martingale-decomposition}
\end{align}
where
\begin{align}
 M_\varphi^{\varepsilon,h}(t)
 :=\varepsilon\sum_{(a,b)\in\mathcal E_h}\int_0^t
 (\varphi_b^h-\varphi_a^h)\,
 d\widetilde P_{ab}^{\varepsilon,h}(r)+\varepsilon\sum_{\gamma\in\Gamma_h}\int_0^t
 \Delta_\gamma^h\varphi\,
 d\widetilde P_\gamma^{\varepsilon,h}(r)
 \label{coarse-tested-density-martingale}
\end{align}
is a square-integrable martingale.  For two smooth test functions
$\varphi$ and $\psi$, its predictable quadratic covariation is
\begin{align}
 \big\langle M_\varphi^{\varepsilon,h},
 M_\psi^{\varepsilon,h}\big\rangle_t
 &=\varepsilon\int_0^t\sum_{(a,b)\in\mathcal E_h}
 m_a(r)T_{ab}^h
 (\varphi_b^h-\varphi_a^h)(\psi_b^h-\psi_a^h)\,dr\notag\\
 &\quad+\frac\varepsilon2\int_0^t
 \sum_{\gamma=(a,b;c,d)\in\Gamma_h}K_\gamma^h
 m_a(r)\bigl(m_b(r)-\varepsilon\mathbf1_{\{a=b\}}\bigr)
 \Delta_\gamma^h\varphi\,\Delta_\gamma^h\psi\,dr.
 \label{coarse-tested-density-bracket}
\end{align}
\end{proposition}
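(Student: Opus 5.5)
Since the solution constructed in Theorem~\ref{thm:coarse-boltzmann-wellposedness} and Corollary~\ref{cor:coarse-random-finite-mass} lives on the finite set $\mathsf S_{\varepsilon,h}(M)$, the plan is to reduce everything to the standard theory of time-changed Poisson processes with bounded intensities. Write $\mathcal K_h=\mathcal E_h\sqcup\Gamma_h$ and, for $\kappa\in\mathcal K_h$, let $N_\kappa(t)=P_\kappa^{\rm ref}(A_\kappa(t))$ be the accepted counts, with $A_\kappa(t)=\int_0^t\lambda_\kappa(m(s-))\,ds$. The first step is to record that each $\widetilde N_\kappa=N_\kappa-A_\kappa$ is a square-integrable martingale, that $\langle\widetilde N_\kappa\rangle=A_\kappa$, and that $\langle\widetilde N_\kappa,\widetilde N_{\kappa'}\rangle=0$ for $\kappa\ne\kappa'$. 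The compensator claim follows from the random-time-change theorem (e.g.\ Ethier--Kurtz, Ch.~6), using that $A_\kappa(t)$ is a stopping time for the multiparameter filtration of the reference processes. Square-integrability holds because Lemma~\ref{lem:coarse-total-intensity} gives $A_\kappa(t)\le C_{\varepsilon,h}(M)t$. The orthogonality holds because ties have probability zero, so $[\widetilde N_\kappa,\widetilde N_{\kappa'}]=\sum\Delta N_\kappa\Delta N_{\kappa'}=0$, and hence the predictable bracket vanishes.

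For the generator, write, using that the path has finitely many jumps on $[0,t]$ (nonexplosion),
\begin{equation*}
F(m(t))-F(m(0))=\sum_{\kappa\in\mathcal K_h}\int_0^t\bigl[F(m(s-)+\varepsilon z_\kappa)-F(m(s-))\bigr]\,dN_\kappa(s).
\end{equation*}
Lemma~\ref{lem:coarse-invariant-state-space} guarantees that the integrand only needs evaluation at points of $\mathsf S_{\varepsilon,h}(M)$ whenever the intensity is positive. The integrand is predictable and bounded by $2\|F\|_\infty$, so we replace $dN_\kappa$ by $d\widetilde N_\kappa+\lambda_\kappa(m(s-))\,ds$. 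The compensator part equals $\int_0^t\mathcal A_{\varepsilon,h}F(m(r))\,dr$, since $m(r-)=m(r)$ for a.e.\ $r$. The remainder is a stochastic integral of a bounded predictable process against the $L^2$ martingales $\widetilde N_\kappa$, hence a square-integrable martingale with $\mathbb E M_F(t)^2\le 4\|F\|_\infty^2C_{\varepsilon,h}(M)t$. This proves \eqref{coarse-generator-martingale}.

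For the tested density, apply the same decomposition to the linear function $F(m)=\sum_a m_a\varphi_a^h=\langle\mathsf R_hm,\varphi\rangle$. This $F$ is bounded on the finite set $\mathsf S_{\varepsilon,h}(M)$, and its increments are $\varepsilon(\varphi_b^h-\varphi_a^h)$ and $\varepsilon\Delta^h_\gamma\varphi$. Multiplying by the rates \eqref{coarse-transport-rate}--\eqref{coarse-boltzmann-collision-rate} cancels the factor $\varepsilon^{-1}$ and yields exactly the drift in \eqref{coarse-tested-density-martingale-decomposition}, with martingale \eqref{coarse-tested-density-martingale}. For the bracket, use bilinearity together with the orthogonality of distinct channels, so that only diagonal terms survive. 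These give $\varepsilon^2(\varphi_b^h-\varphi_a^h)(\psi_b^h-\psi_a^h)\lambda_{ab}\,dr$ and the analogous collision term; inserting the rates produces \eqref{coarse-tested-density-bracket}.

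I expect the only delicate point to be the martingale property of the time-changed compensated counts and the vanishing of cross brackets, since the physical clocks are coupled through $m$. I would handle this by the multiparameter stopping argument, or alternatively by identifying the solution as the finite-state Markov chain with the stated $Q$-matrix via the construction in Theorem~\ref{thm:coarse-boltzmann-wellposedness}, and then invoking Dynkin's formula for bounded $F$. Everything else is bookkeeping.
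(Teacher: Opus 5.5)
Your proposal is correct and follows essentially the same route as the paper. Both arguments expand $F(m(t))-F(m(0))$ over the (a.s.\ non-simultaneous) channel jumps, compensate each count, and combine the bound $2\|F\|_\infty$ on the predictable integrand with Lemma~\ref{lem:coarse-total-intensity} to get square integrability; both then specialise to the linear functional $\sum_a m_a\varphi_a^h$ and use orthogonality of distinct channels for the bracket. The one difference is how $\mathcal A_{\varepsilon,h}$ is identified: the paper uses a separate short-time $o(s)$ expansion, dominating the jump count by a Poisson variable, whereas you obtain it from the martingale identity itself (take $\mathbb E_m$, divide by $s$, and use right-continuity on the finite state space, a step worth writing out); your multiparameter time-change justification of the compensators and of the vanishing cross-brackets is also more explicit than the paper's bare appeal to the random-time-change theorem.
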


\begin{proof}
Recall the notation introduced at the beginning of
Subsection~\ref{subsec:main-results-rigorous}: for a transport edge
$(a,b)\in\mathcal E_h$ the jump vector is $e_b-e_a$, whereas for a
collision channel $\gamma=(a,b;c,d)\in\Gamma_h$ it is
\begin{equation*}
 z_\gamma=e_c+e_d-e_a-e_b.
\end{equation*}
Conditional on $m(t)=m\in\mathsf S_{\varepsilon,h}(M)$, during a time interval
of length $s$ a transport jump $(a,b)\in\mathcal E_h$ occurs with probability
$s\lambda_{ab}^{\varepsilon,h}(m)+o(s)$, a jump of type $\gamma$ occurs with
probability $s\lambda_\gamma^{\varepsilon,h}(m)+o(s)$, and the probability
of two or more jumps is $o(s)$.  Indeed,
Lemma~\ref{lem:coarse-total-intensity} bounds the total jump intensity on
$\mathsf S_{\varepsilon,h}(M)$ by $C_{\varepsilon,h}(M)$.  Hence the number
of jumps during an interval of length $s$ is stochastically dominated by a
random variable $Y_s$ with Poisson distribution of mean
$C_{\varepsilon,h}(M)s$.  Writing $C=C_{\varepsilon,h}(M)$, we obtain
\begin{equation*}
 \mathbb P(Y_s\geq2)
 =1-\mathbb P(Y_s=0)-\mathbb P(Y_s=1)
 =1-e^{-Cs}(1+Cs)=O(s^2)=o(s),
\end{equation*}
which proves the estimate for two or more jumps.
Let $\mathbb E_m$ be the expectation of the process started from the
deterministic state $m(0)=m$.  The preceding short-time probabilities give
\begin{align*}
 \mathbb E_m\bigl[F(m(s))-F(m)\bigr]
 &=s\sum_{(a,b)\in\mathcal E_h}\lambda_{ab}^{\varepsilon,h}(m)
   [F(m+\varepsilon(e_b-e_a))-F(m)]\\
 &\quad+s\sum_{\gamma=(a,b;c,d)\in\Gamma_h}
   \lambda_\gamma^{\varepsilon,h}(m)
   [F(m+\varepsilon(e_c+e_d-e_a-e_b))-F(m)]+o(s).
\end{align*}
Consequently, dividing by $s$ and letting $s\downarrow0$ yields
\begin{align*}
 \lim_{s\downarrow0}\frac{\mathbb E_mF(m(s))-F(m)}s
 &=\sum_{(a,b)\in\mathcal E_h}\lambda_{ab}^{\varepsilon,h}(m)
 [F(m+\varepsilon(e_b-e_a))-F(m)]\\
 &\quad+\sum_{\gamma=(a,b;c,d)\in\Gamma_h}
 \lambda_\gamma^{\varepsilon,h}(m)
 [F(m+\varepsilon(e_c+e_d-e_a-e_b))-F(m)],
\end{align*}
which is \eqref{coarse-boltzmann-generator}.  Since the state space is finite
and the total rates are bounded, every bounded $F$ belongs to the generator
domain.  Applying the jump-chain rule to
\eqref{coarse-boltzmann-model} and subtracting the predictable compensator
of each channel count gives
\begin{align*}
 M_F(t)
 &=\sum_{(a,b)\in\mathcal E_h}\int_0^t
 \bigl[F(m(r-)+\varepsilon(e_b-e_a))-F(m(r-))\bigr]
 \,d\widetilde P_{ab}^{\varepsilon,h}(r)\\
 &\quad+\sum_{\gamma=(a,b;c,d)\in\Gamma_h}\int_0^t
 \bigl[F(m(r-)+\varepsilon(e_c+e_d-e_a-e_b))-F(m(r-))\bigr]
 \,d\widetilde P_\gamma^{\varepsilon,h}(r).
\end{align*}
The integrands are predictable and bounded by $2\lVert F\rVert_\infty$.
Moreover, Lemma~\ref{lem:coarse-total-intensity} gives
\begin{equation*}
 \int_0^t\left(
 \sum_{(a,b)\in\mathcal E_h}\lambda_{ab}^{\varepsilon,h}(m(r-))
 +\sum_{\gamma\in\Gamma_h}\lambda_\gamma^{\varepsilon,h}(m(r-))
 \right)dr
 \leq C_{\varepsilon,h}(M)t.
\end{equation*}
Hence both stochastic sums above are square-integrable martingales, and so
is $M_F$ in \eqref{coarse-generator-martingale}.

For the final assertions, take
\begin{equation*}
 F_\varphi(m):=\sum_{a\in\mathcal I_h}m_a\varphi_a^h
 =\langle f^{\varepsilon,h},\varphi\rangle.
\end{equation*}
This function is bounded on $\mathsf S_{\varepsilon,h}(M)$, and its
increments along the two types of jumps are
\begin{align*}
 F_\varphi(m+\varepsilon(e_b-e_a))-F_\varphi(m)
 &=\varepsilon(\varphi_b^h-\varphi_a^h),\\
 F_\varphi(m+\varepsilon(e_c+e_d-e_a-e_b))-F_\varphi(m)
 &=\varepsilon\Delta_\gamma^h\varphi,
 \qquad \gamma=(a,b;c,d).
\end{align*}
Substituting these identities and the definitions of the rates into
\eqref{coarse-boltzmann-generator} gives
\begin{align*}
 \mathcal A_{\varepsilon,h}F_\varphi(m)
 &=\sum_{(a,b)\in\mathcal E_h}m_aT_{ab}^h
 (\varphi_b^h-\varphi_a^h)\\
 &\quad+\frac12\sum_{\gamma=(a,b;c,d)\in\Gamma_h}K_\gamma^h
 m_a(m_b-\varepsilon\mathbf1_{\{a=b\}})
 \Delta_\gamma^h\varphi.
\end{align*}
The general martingale identity \eqref{coarse-generator-martingale} now
gives \eqref{coarse-tested-density-martingale-decomposition}, while the
stochastic-integral representation of $M_F$ above gives
\eqref{coarse-tested-density-martingale}.

It remains to compute the predictable covariation.  Distinct channel counts
have zero predictable cross-covariation, whereas
\begin{align*}
 d\big\langle\widetilde P_{ab}^{\varepsilon,h}\big\rangle_r
 &=\lambda_{ab}^{\varepsilon,h}(m(r-))\,dr,\\
 d\big\langle\widetilde P_\gamma^{\varepsilon,h}\big\rangle_r
 &=\lambda_\gamma^{\varepsilon,h}(m(r-))\,dr.
\end{align*}
Applying the predictable-covariation formula to
\eqref{coarse-tested-density-martingale}, and then inserting
\eqref{coarse-transport-rate}--\eqref{coarse-boltzmann-collision-rate},
yields \eqref{coarse-tested-density-bracket}.
\end{proof}

\section{The finite-mesh path large-deviation principle}
\label{sec:fixed-mesh-ldp-rigorous}

This section establishes the path large-deviation principle for the
coarse-grained process at a fixed mesh size $h$.  We first study the
deterministic control problem defining the rate function, including its
stability, behavior near the boundary of the mass simplex, compactness, and
dual representation.  We then represent the jump process by a marked
Poisson random measure and prove the Laplace principle through controlled
compactness and recovery.  Finally, we allow independent Poisson initial
occupations and show that their initial entropy is added to the dynamical
action.

\subsection{Analysis of rate functions}

We begin with the analytic properties of the finite-mesh action.  The main
tasks are to relate the finite-$\varepsilon$ jump rates to their limiting
nominal rates, establish stability of the controlled skeleton equation,
regularize controlled paths that meet the boundary of the mass simplex,
and prove that the resulting rate function is good.  We also record a dual
formula for the scalar Poisson cost that will be used in the lower-bound
argument.

Throughout this section $h>0$ is fixed.  We use the unified channel notation
from \eqref{intro-unified-channel-data}: recall that
$\mathcal K_h=\mathcal E_h\sqcup\Gamma_h$, and let $\kappa$ denote a
generic element of $\mathcal K_h$, while $z_\kappa$ and $\beta_\kappa$
denote its jump vector and limiting nominal rate.  We also recall the rate function
$I_h^{m_0^h}$ from \eqref{intro-fixed-h-rigorous-rate}, subject to the
skeleton equation \eqref{intro-fixed-h-rigorous-continuity}.
The control multiplier and its cost are
$q_\kappa$ and $\beta_\kappa(m)\ell(q_\kappa)$ from
\eqref{intro-fixed-h-rigorous-rate}.

To connect the finite-$\varepsilon$ process with its limiting action, we
also use the finite-$\varepsilon$ nominal rates.  For
$m\in\mathbb R_{\geq0}^{\mathcal I_h}$ and
$\kappa\in\mathcal K_h$, define
\begin{equation}\label{finite-particle-beta}
 \beta_\kappa^\varepsilon(m)
 =\begin{cases}
 m_aT_{ab}^h,
 &\kappa=(a,b)\in\mathcal E_h,\\[1mm]
 \displaystyle\frac12K_\kappa^hm_a
 \bigl(m_b-\varepsilon\mathbf1_{\{a=b\}}\bigr)_+,
 &\kappa=(a,b;c,d)\in\Gamma_h.
 \end{cases}
\end{equation}
If $m$ belongs to a lattice state space
$\mathsf S_{\varepsilon,h}(M)$, the positive part in the second line is
redundant: when $a=b$, either $m_a=0$ or $m_a\geq\varepsilon$.
Consequently, the stochastic intensities defined in
\eqref{coarse-transport-rate}--\eqref{coarse-boltzmann-collision-rate}
satisfy
\begin{equation*}
 \lambda_{ab}^{\varepsilon,h}(m)
 =\varepsilon^{-1}\beta_{(a,b)}^\varepsilon(m),
 \qquad
 \lambda_\gamma^{\varepsilon,h}(m)
 =\varepsilon^{-1}\beta_\gamma^\varepsilon(m).
\end{equation*}
In the large-deviation argument, the controlled paths take values
in the full continuous state space $\mathbb R_{\geq0}^{\mathcal I_h}$.  Without
the positive part, a diagonal collision channel with $a=b$ would have a
negative nominal rate whenever $0<m_a<\varepsilon$.  Formula
\eqref{finite-particle-beta} therefore provides the nonnegative continuous
extension needed for the Poisson acceptance rate.  On each fixed-mass sublevel, and in
particular on the unit-mass simplex,
\begin{equation}\label{uniform-beta-epsilon-limit}
 \max_{\kappa\in\mathcal K_h}
 \sup_{\substack{m\geq0\\\sum_am_a\leq M}}
 |\beta_\kappa^\varepsilon(m)-\beta_\kappa(m)|
 \longrightarrow0
 \qquad(\varepsilon\downarrow0),
\end{equation}
for every fixed $M<\infty$.
The equality $\beta_\kappa^\varepsilon=\beta_\kappa$ holds for every transport
channel and every collision channel with $a\neq b$.  If
$\kappa=(a,a;c,d)\in\Gamma_h$, then
\begin{align*}
 0
 &\leq \beta_\kappa(m)-\beta_\kappa^\varepsilon(m)\\
 &=\frac12K_\kappa^hm_a
 \bigl[m_a-(m_a-\varepsilon)_+\bigr]
 \leq\frac12K_\kappa^h\varepsilon m_a
 \leq\frac12K_\kappa^h\varepsilon M.
\end{align*}
Since $\Gamma_h$ is finite for fixed $h$, taking the supremum over the
mass sublevel and then the maximum over the channels proves
\eqref{uniform-beta-epsilon-limit}.

\begin{lemma}
\label{lem:bounded-control-skeleton-stability}
Fix $n\in\mathbb N$.  Suppose that
$q=(q_\kappa)_{\kappa\in\mathcal K_h}$ is measurable and
$0\leq q_\kappa\leq n$ for every $\kappa\in\mathcal K_h$.  Then the skeleton equation
\eqref{intro-fixed-h-rigorous-continuity} has a unique solution in
$AC([0,T];\mathbb R_{\geq0}^{\mathcal I_h})$, and that solution remains in
the unit-mass simplex.

Let $(q^r)_{r\geq1}$ be a sequence with the same bound.  If
\begin{equation}\label{bounded-control-weak-convergence}
 q_\kappa^r\rightharpoonup q_\kappa
 \quad\text{weakly in }L^1(0,T)
 \text{ for every }\kappa\in\mathcal K_h,
\end{equation}
then the corresponding skeleton paths satisfy
\begin{equation}\label{bounded-skeleton-uniform-stability}
 \sup_{t\in[0,T]}|m^{q^r}(t)-m^q(t)|
 \longrightarrow0.
\end{equation}
\end{lemma}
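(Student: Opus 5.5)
The skeleton equation \eqref{intro-fixed-h-rigorous-continuity} is a Carath\'eodory ODE $\dot m=G(t,m):=\sum_\kappa q_\kappa(t)\beta_\kappa(m)z_\kappa$. Its vector field is measurable in $t$ and a polynomial (of degree at most two) in $m$. The plan is to obtain well-posedness by localizing to a compact invariant set, and stability by a Gronwall argument combined with weak convergence tested against fixed continuous functions.

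\emph{Step 1 (existence, uniqueness, invariance).} Extend $\beta_\kappa$ outside $\mathbb R_{\geq0}^{\mathcal I_h}$ by $\tilde\beta_\kappa(m):=\beta_\kappa(m_+)$, where $m_+$ is the componentwise positive part. Then cut off $|m|$ at a large radius, so the field is globally Lipschitz in $m$ with constant $nL$, uniformly in $t$. Carath\'eodory theory then gives a unique absolutely continuous solution of the modified equation. Every $z_\kappa$ has coordinate sum zero, so $\sum_a m_a(t)=1$. For nonnegativity, observe that coordinate $a$ decreases only through channels whose incoming cells include $a$. The rates of these channels contain the factor $(m_a)_+$: this is $m_aT_{ab}^h$ for transport and $m_am_b$ (or $m_a^2$) for collisions. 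Hence $\dot m_a\geq -C n (m_a)_+$ on $\{m_a\le 0\}$ as long as $|m|$ stays bounded. Differentiating $(m_a)_-^2$ gives $\frac{d}{dt}(m_a)_-^2=2(m_a)_-\cdot(-\dot m_a)\le0$ wherever $m_a<0$, because there $(m_a)_+=0$ and the remaining loss-free terms are nonnegative. Since $(m_a)_-(0)=0$, the solution stays in $\mathbb R_{\geq0}^{\mathcal I_h}$. Mass one and nonnegativity together bound $|m|\le1$, so the cutoff is inactive and the modified solution solves the original equation. Uniqueness among nonnegative $AC$ solutions follows because any such solution lies in the simplex, where the fields coincide and are Lipschitz.

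\emph{Step 2 (stability).} Write $m^r=m^{q^r}$ and $m=m^q$, all in the simplex $\Delta$. Split the difference as
\[
m^r(t)-m(t)=\int_0^t\sum_\kappa q^r_\kappa\bigl(\beta_\kappa(m^r)-\beta_\kappa(m)\bigr)z_\kappa\,ds+\int_0^t\sum_\kappa (q^r_\kappa-q_\kappa)\beta_\kappa(m(s))z_\kappa\,ds .
\]
The first term is bounded by $nL\int_0^t|m^r-m|ds$. Call the second term $\rho^r(t)$. For fixed $t$, $\rho^r(t)\to0$ by weak $L^1$ convergence, since $\mathbf 1_{[0,t]}\beta_\kappa(m(\cdot))\in L^\infty$. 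The family $\{\rho^r\}$ is equi-Lipschitz, with constant $2n\max_\kappa\sup_\Delta\beta_\kappa|z_\kappa|$. Pointwise convergence therefore upgrades to $\sup_t|\rho^r(t)|\to0$ (Arzel\`a--Ascoli, or a finite grid argument). Gronwall then gives $\sup_t|m^r-m|\le e^{nLT}\sup_t|\rho^r|\to0$.

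The main obstacle is the invariance step. It requires checking carefully that every channel with $a$ among its incoming cells carries a factor of $m_a$, including the diagonal case $a=b$ and the cases where an incoming label coincides with an outgoing one after cancellation. In those cancelled cases the net coefficient of $e_a$ may be zero or positive, which is harmless. The stability step is routine once the second term is compared against the fixed limit path $m$ rather than against $m^r$, so that weak convergence only ever meets a fixed bounded test function.
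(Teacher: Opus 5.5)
Your proof is correct, and it differs from the paper's mainly in the stability step. The paper argues by compactness. The paths $m^{q^r}$ share a Lipschitz constant, so Arzel\`a--Ascoli gives uniformly convergent subsequences. Passing to the limit in the integral skeleton equation then identifies every subsequential limit $\widetilde m$ as $m^q$; there the paper splits $\beta_\kappa(m^{q^r})q_\kappa^r-\beta_\kappa(\widetilde m)q_\kappa$ exactly as you do, but around the unknown limit $\widetilde m$. Uniqueness and the subsequence principle finish the argument. You instead center the splitting at the known limit $m^q$, upgrade pointwise convergence of the defect $\rho^r$ to uniform convergence by equicontinuity, and close with Gronwall. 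This yields the quantitative bound $\sup_t|m^{q^r}-m^q|\le e^{nLT}\sup_t|\rho^r(t)|$ and avoids both the subsequence argument and a second appeal to uniqueness. The price is that you need the Lipschitz property of $\beta_\kappa$ inside the stability estimate itself, whereas the paper's identification step only uses continuity; there, Lipschitz enters only through uniqueness. For well-posedness, the paper invokes Carath\'eodory on the simplex together with a brief inward-pointing argument on the faces of the orthant. Your extension $\beta_\kappa(m_+)$ with a cutoff, combined with the monotonicity of $(m_a)_-^2$, justifies the same invariance more explicitly. It also makes transparent why uniqueness holds in the full class of nonnegative absolutely continuous solutions: any such solution has unit mass and therefore solves the modified equation. (Your equi-Lipschitz constant for $\rho^r$ should carry a sum over channels rather than a maximum, which is immaterial.)
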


\begin{proof}
For fixed $q$, the right-hand side of the skeleton equation is
measurable in time.  Since every $\beta_\kappa$ is Lipschitz on the
unit-mass simplex and $0\leq q_\kappa\leq n$, it is Lipschitz in $m$,
with a constant depending only on $h$ and $n$.  The Carath\'eodory theorem
therefore gives local existence and uniqueness.
Every jump vector has zero coordinate sum, so the total mass is constant.
Moreover, if $m_a=0$, every channel having cell $a$ as an incoming cell
has zero nominal rate, whereas channels having $a$ as an outgoing cell
contribute a nonnegative amount to the $a$-th coordinate.  The vector
field is therefore inward pointing on every boundary face of the
nonnegative orthant.  Hence the solution stays nonnegative and has total
mass one.  Compactness of the unit-mass simplex extends it to $[0,T]$.

For stability, the paths $m^{q^r}$ are uniformly bounded and have a common
Lipschitz constant, hence are relatively compact in
$C([0,T];\mathbb R^{\mathcal I_h})$.  If a subsequence converges uniformly
to $\widetilde m$, then, for every $\kappa\in\mathcal K_h$, continuity of
$\beta_\kappa$ gives
\begin{align*}
 &\int_0^t\beta_\kappa(m^{q^r}(s))q_\kappa^r(s)\,ds
 -\int_0^t\beta_\kappa(\widetilde m(s))q_\kappa(s)\,ds\\
 &\quad=\int_0^t
 \bigl[\beta_\kappa(m^{q^r}(s))-\beta_\kappa(\widetilde m(s))\bigr]
 q_\kappa^r(s)\,ds
 +\int_0^t\beta_\kappa(\widetilde m(s))
 \bigl(q_\kappa^r(s)-q_\kappa(s)\bigr)\,ds
 \longrightarrow0.
\end{align*}
Here the first term tends to zero uniformly in $t$ by uniform convergence
of the paths and boundedness of $q^r$, while the second tends to zero by
\eqref{bounded-control-weak-convergence}.  Passing to the limit in the
integral skeleton equation shows that $\widetilde m=m^q$ by uniqueness.
Since every convergent subsequence has the
same limit, the whole sequence converges as in
\eqref{bounded-skeleton-uniform-stability}.
\end{proof}

The collision part of the finite-mesh action
\eqref{intro-fixed-h-rigorous-rate} has the same relative-entropy structure
as the continuum functional $\mathcal I_B$ in
\eqref{boltzmann-path-rate-control}.  For a collision channel
$\gamma=(a,b;c,d)\in\Gamma_h$,
\begin{equation*}
 \beta_\gamma(m)\ell(q_\gamma)
 =\frac12K_\gamma^hm_am_b\ell(q_\gamma).
\end{equation*}
This is the cellwise analogue of the integrand
$\frac12Bgg_*\ell(q)$ in \eqref{boltzmann-path-rate-control}: the collision
measure $B\,dc$ is replaced by the discrete weight $K_\gamma^h$, the product
$gg_*$ by $m_am_b$, and the relative collision activity by the channel
multiplier $q_\gamma$.  The precise equivalence between $\mathcal I_B$ and
the continuum Poissonian rate
$I_{X_0}^{\rm FB}$ is proved in
Lemma~\ref{lem:FB-control-rate-equivalence} of
Appendix~\ref{app:fluctuating-boltzmann-rate}.

\begin{lemma}
\label{lem:fixed-h-vacuum-regularization}
Suppose that $(m,q)$ satisfies the skeleton equation
\eqref{intro-fixed-h-rigorous-continuity}, without necessarily prescribing
its initial point, and has finite action. Let
$\bar m\in(0,\infty)^{\mathcal I_h}$ be fixed. For $\delta\in(0,1)$ set
\begin{equation}\label{fixed-h-buffer-path}
 c_\delta=1-\delta,
 \qquad m^\delta(\cdot)=c_\delta m(\cdot)+\delta\bar m.
\end{equation}
For every $\kappa\in\mathcal K_h$ define
\begin{equation}\label{fixed-h-buffer-control}
 q_\kappa^\delta(t):=
 \begin{cases}
 \displaystyle
 \frac{c_\delta\beta_\kappa(m(t))q_\kappa(t)}
      {\beta_\kappa(m^\delta(t))},
 &\beta_\kappa(m^\delta(t))>0,\\[3mm]
 1,&\beta_\kappa(m^\delta(t))=0.
 \end{cases}
\end{equation}
Then $m^\delta(0)=c_\delta m(0)+\delta\bar m$,
\begin{equation*}
 \inf_{t\in[0,T]}m_a^\delta(t)
 \geq\delta\bar m_a>0
 \qquad\text{for every } a\in\mathcal I_h,
\end{equation*}
and $(m^\delta,q^\delta)$ satisfies the skeleton equation
\eqref{intro-fixed-h-rigorous-continuity}.  Moreover,
\begin{equation}\label{fixed-h-buffer-action-limit}
 \limsup_{\delta\downarrow0}
 \int_0^T\sum_{\kappa\in\mathcal K_h}
 \beta_\kappa(m^\delta)\ell(q_\kappa^\delta)\,dt
 \leq
 \int_0^T\sum_{\kappa\in\mathcal K_h}
 \beta_\kappa(m)\ell(q_\kappa)\,dt .
\end{equation}
\end{lemma}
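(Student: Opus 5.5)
The plan is to verify the three assertions in order: the positivity bound, the skeleton equation, and then the action estimate, which carries the only real content. Throughout, $m$ is the given finite-action path. Since every jump vector has zero coordinate sum and the skeleton field points inward on the faces of the orthant (as in Lemma~\ref{lem:bounded-control-skeleton-stability}), $m$ stays nonnegative with constant total mass $M$.

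\emph{Positivity.} The identity $m^\delta(0)=c_\delta m(0)+\delta\bar m$ and the bound $m_a^\delta\geq\delta\bar m_a$ are immediate from \eqref{fixed-h-buffer-path} and $m\geq0$.

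\emph{Skeleton equation.} Every active channel has $T_{ab}^h>0$ or $K_\gamma^h>0$. Since all coordinates of $m^\delta$ are strictly positive, $\beta_\kappa(m^\delta(t))>0$ for every $\kappa\in\mathcal K_h$ and every $t$. Hence the second case in \eqref{fixed-h-buffer-control} never occurs, and $\beta_\kappa(m^\delta)q^\delta_\kappa=c_\delta\beta_\kappa(m)q_\kappa$. Then $\dot m^\delta=c_\delta\dot m=\sum_\kappa\beta_\kappa(m^\delta)q_\kappa^\delta z_\kappa$, and the $L^1$ requirement is inherited from that of $(\beta_\kappa(m)q_\kappa)_\kappa$.

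\emph{Action estimate.} The key tool is the perspective function
\begin{equation*}
 \Phi(\beta,x):=\beta\,\ell(x/\beta)=x\log\frac{x}{\beta}-x+\beta,
\end{equation*}
which is jointly convex and positively $1$-homogeneous on $[0,\infty)\times[0,\infty)$, and hence subadditive. The cost of channel $\kappa$ is $\Phi(\beta_\kappa(m^\delta),c_\delta\beta_\kappa(m)q_\kappa)$. I would decompose the nominal rate explicitly. For transport channels, $\beta_\kappa(m^\delta)=c_\delta\beta_\kappa(m)+\rho^\delta_\kappa$ with $\rho_\kappa^\delta=\delta\beta_\kappa(\bar m)$. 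For collision channels, expanding the product gives $\beta_\gamma(m^\delta)=c_\delta^2\beta_\gamma(m)+\rho_\gamma^\delta$, where $\rho_\gamma^\delta\geq0$ and $\sup_t\rho^\delta_\gamma\leq C(h,M,\bar m)\delta$.

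Subadditivity applied with the second argument split as $c_\delta\beta q+0$, together with $\Phi(\rho,0)=\rho$, gives the following bounds. For transport channels,
\begin{equation*}
 \Phi\leq c_\delta\beta_\kappa(m)\ell(q_\kappa)+\rho_\kappa^\delta.
\end{equation*}
For collision channels,
\begin{equation*}
 \Phi\leq c_\delta^2\beta_\gamma(m)\ell(q_\gamma/c_\delta)+\rho_\gamma^\delta.
\end{equation*}
The remainders contribute $O(\delta)$ after integration over $[0,T]$ and summation over the finite set $\mathcal K_h$. For the transport term, $c_\delta<1$ gives the desired inequality directly.

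For the collision term, I would expand
\begin{equation*}
 c^2\beta\,\ell(q/c)=c\,\beta q\log q-c\log c\,\beta q-c\,\beta q+c^2\beta .
\end{equation*}
This converges pointwise to $\beta\ell(q)$ as $c\to1$, and each term is dominated uniformly in $c\in[1/2,1]$ by an integrable function. The domination uses three facts:
\begin{itemize}
 \item $\beta q\leq\beta\ell(q)+(e-1)\beta$, which follows from Young's inequality $x\leq\ell(x)+e-1$;
 \item $|\beta q\log q|\leq\beta\ell(q)+\beta q+\beta$;
 \item $\beta$ is bounded on the mass sublevel $\{m\geq0,\ \sum_am_a=M\}$.
 \end{itemize}
Together with the finite-action assumption, these show that all terms are integrable. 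Dominated convergence then gives \eqref{fixed-h-buffer-action-limit}, in fact as a limit with equality.

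The main obstacle is the collision channels. The nominal rate is quadratic, so $m\mapsto m^\delta$ does not scale it linearly, and a naive comparison of $\beta_\gamma(m^\delta)$ with $\beta_\gamma(m)$ fails near vacuum, where $\beta_\gamma(m)$ can vanish while $q_\gamma$ is large. The perspective-subadditivity step is designed to avoid any lower bound on $\beta_\gamma(m)$: the only comparison needed is $\beta_\gamma(m^\delta)\geq c_\delta^2\beta_\gamma(m)$, and the integrability of $\beta q\log q$ is handled by the Young-inequality bounds above.
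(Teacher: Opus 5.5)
Your argument is correct, and it arrives at exactly the paper's pointwise estimate by a different derivation.

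\textbf{The paper's route.} It expands $\beta_\kappa(m^\delta)\ell(q_\kappa^\delta)$ directly in terms of $J_\kappa=\beta_\kappa(m)q_\kappa$. This produces the term $c_\delta J_\kappa\log\bigl(c_\delta\beta_\kappa(m)/\beta_\kappa(m^\delta)\bigr)$, which it bounds using $\beta_\kappa(m^\delta)\geq c_\delta^{d_\kappa}\beta_\kappa(m)$, with $d_\kappa=1$ for transport and $d_\kappa=2$ for collisions.

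\textbf{Your route.} You instead peel off the buffer rate $\rho_\kappa^\delta$ by subadditivity of the perspective $\Phi$. Expanding your collision bound gives
\begin{equation*}
 c_\delta^2\beta_\gamma(m)\ell(q_\gamma/c_\delta)+\rho_\gamma^\delta
 =c_\delta\beta_\gamma(m)\ell(q_\gamma)
 +c_\delta|\log c_\delta|\,J_\gamma
 +\beta_\gamma(m^\delta)-c_\delta\beta_\gamma(m).
\end{equation*}
This is exactly the paper's bound with $d_\kappa=2$, and your transport bound is the case $d_\kappa=1$.

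\textbf{What each buys.} Your derivation explains the estimate structurally, through joint convexity and homogeneity. The paper's explicit form makes the limit step trivial: $c_\delta\beta_\kappa(m)\ell(q_\kappa)\leq\beta_\kappa(m)\ell(q_\kappa)$, and the remaining terms are $O(\delta)$ in $L^1(0,T)$ because $J_\kappa\in L^1(0,T)$ and the nominal rates are bounded on the mass sublevel. So your dominated-convergence step is valid but can be replaced by this rearrangement, which also gives an explicit $O(\delta)$ rate.

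Your observation that the second case of \eqref{fixed-h-buffer-control} never occurs is correct. Every active channel has a positive coefficient and $m^\delta$ has strictly positive coordinates, so this is slightly sharper than what the paper uses.

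One small overstatement: your closing claim that the limit holds ``with equality'' is not supported by your argument, which only controls the upper side. The claim is true, since the pointwise convergence $\beta_\kappa(m^\delta)\ell(q_\kappa^\delta)\to\beta_\kappa(m)\ell(q_\kappa)$ together with Fatou's lemma gives the matching lower bound. It is not needed for the lemma.
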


\begin{proof}
The lower bound follows directly from
\eqref{fixed-h-buffer-path}.  Since
\begin{equation*}
 m_a^\delta=c_\delta m_a+\delta\bar m_a
 \geq c_\delta m_a
 \qquad (a\in\mathcal I_h),
\end{equation*}
the two types of nominal rates satisfy
\begin{align*}
 \beta_{(a,b)}(m^\delta)
 &=T_{ab}^h m_a^\delta
 \geq c_\delta T_{ab}^h m_a
 =c_\delta\beta_{(a,b)}(m),\\[2mm]
 \beta_{(a,b;c,d)}(m^\delta)
 &=\frac12K_{(a,b;c,d)}^h m_a^\delta m_b^\delta
 \geq c_\delta^2\frac12K_{(a,b;c,d)}^h m_am_b
 =c_\delta^2\beta_{(a,b;c,d)}(m).
\end{align*}
Thus, for $\kappa\in\mathcal K_h$, set $d_\kappa=1$ when
$\kappa\in\mathcal E_h$ and $d_\kappa=2$ when
$\kappa\in\Gamma_h$.  Then
\begin{equation}\label{buffer-nominal-lower-bound}
 \beta_\kappa(m^\delta)\geq
 c_\delta^{d_\kappa}\beta_\kappa(m).
\end{equation}
In particular, if $\beta_\kappa(m)q_\kappa>0$, then
$\beta_\kappa(m^\delta)>0$.  Thus \eqref{fixed-h-buffer-control} gives
\begin{equation*}
 \beta_\kappa(m^\delta)q_\kappa^\delta
 =c_\delta\beta_\kappa(m)q_\kappa
 \quad\text{a.e. on }[0,T].
\end{equation*}
Consequently,
\begin{equation*}
 \sum_{\kappa\in\mathcal K_h}
 \beta_\kappa(m^\delta)q_\kappa^\delta z_\kappa
 =c_\delta\sum_{\kappa\in\mathcal K_h}
 \beta_\kappa(m)q_\kappa z_\kappa
 =\dot m^\delta,
\end{equation*}
which proves the skeleton equation for $(m^\delta,q^\delta)$.

For $\kappa\in\mathcal K_h$, set
$J_\kappa:=\beta_\kappa(m)q_\kappa$.  If
$\beta_\kappa(m)>0$, a direct calculation gives
\begin{align*}
 \beta_\kappa(m^\delta)\ell(q_\kappa^\delta)
 &=c_\delta\beta_\kappa(m)\ell(q_\kappa)
 +c_\delta J_\kappa
 \log\frac{c_\delta\beta_\kappa(m)}{\beta_\kappa(m^\delta)}\\
 &\quad+\beta_\kappa(m^\delta)-c_\delta\beta_\kappa(m)\\
 &\leq c_\delta\beta_\kappa(m)\ell(q_\kappa)
 +c_\delta(d_\kappa-1)|\log c_\delta|J_\kappa\\
 &\quad+\beta_\kappa(m^\delta)-c_\delta\beta_\kappa(m).
\end{align*}
If $\beta_\kappa(m)=0$, then $J_\kappa=0$, and the same inequality follows
directly from $q_\kappa^\delta=0$ when
$\beta_\kappa(m^\delta)>0$ (the value assigned when both rates vanish is
irrelevant).  The scalar inequality
\begin{equation}\label{fixed-h-flux-fenchel}
 s\beta_\kappa(m(t))q_\kappa(t)
 \leq\beta_\kappa(m(t))\ell(q_\kappa(t))
 +\beta_\kappa(m(t))(e^s-1),
 \qquad s>0,
\end{equation}
follows from $sq\leq\ell(q)+e^s-1$.  The finite-action assumption implies
that the first term on the right-hand side is integrable.  Moreover, the nominal
rates are bounded on the compact mass simplex containing $m$ and $\bar m$.
Integrating \eqref{fixed-h-flux-fenchel} over $[0,T]$ therefore gives
$J_\kappa\in L^1(0,T)$.  The last three terms in the preceding bound consequently
vanish in $L^1(0,T)$ as $\delta\downarrow0$.  Summation over the finitely
many channels proves \eqref{fixed-h-buffer-action-limit}.
\end{proof}

In the following, we show that the rate function is a good rate function. 
\begin{lemma}
\label{lem:fixed-h-action-good-rigorous}
For fixed $m_0^h$, the functional $I_h^{m_0^h}$ defined by
\eqref{intro-fixed-h-rigorous-rate} is lower semicontinuous on
$D([0,T];\mathbb R_{\geq0}^{\mathcal I_h})$ and has compact sublevel sets.
Every finite-rate path is absolutely continuous, and hence continuous.
\end{lemma}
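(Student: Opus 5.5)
The last assertion is immediate from the definition: if $I_h^{m_0^h}(m)<\infty$, the infimum in \eqref{intro-fixed-h-rigorous-rate} is over a nonempty set. Hence some admissible $q$ exists with $(\beta_\kappa(m)q_\kappa)\in L^1$, and \eqref{intro-fixed-h-rigorous-continuity} gives $m(t)=m_0^h+\int_0^t\sum_\kappa\beta_\kappa(m)q_\kappa z_\kappa\,ds$. So $m$ is absolutely continuous, in particular continuous, and has total mass one. By the inward-pointing argument of Lemma~\ref{lem:bounded-control-skeleton-stability}, or simply because $m$ takes values in $\mathbb R_{\geq0}^{\mathcal I_h}$ by assumption, it stays in the unit-mass simplex $\Delta_h$. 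On $\Delta_h$ all nominal rates are bounded by a constant $\bar\beta_h$. It therefore suffices to prove that every sublevel set $L_A=\{I_h^{m_0^h}\leq A\}$ is compact in $D([0,T];\mathbb R_{\geq0}^{\mathcal I_h})$. Lower semicontinuity is equivalent to closedness of sublevel sets, and since the $J_1$ topology restricted to continuous limits coincides with the uniform topology, it is enough to work with uniform convergence.

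\textbf{Equicontinuity.} Take $m\in L_A$ with an admissible $q$ whose cost is at most $A+\eta$. For an interval $[s,t]$ and $\theta>0$, the Fenchel inequality $\beta q\leq \theta^{-1}[\beta\ell(q)+\beta(e^\theta-1)]$, used as in \eqref{fixed-h-flux-fenchel}, gives
\[
|m(t)-m(s)|\leq C_z\sum_\kappa\int_s^t\beta_\kappa q_\kappa\,dr\leq \frac{C_z}{\theta}\bigl(A+1+|\mathcal K_h|\bar\beta_h(e^\theta-1)(t-s)\bigr).
\]
Choosing $\theta$ large and then $t-s$ small yields a modulus of continuity uniform over $L_A$. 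Together with the uniform bound (all paths lie in the compact set $\Delta_h$), Arzel\`a--Ascoli shows that $L_A$ is relatively compact in $C([0,T])$, hence in $D$.

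\textbf{Closedness.} This is the main obstacle, because the controls $q^r$ are unbounded and $\beta_\kappa(m^r)$ can vanish at the boundary of $\Delta_h$. Let $m^r\in L_A$ converge in $D$ to $m$; by the previous step the convergence is uniform and $m$ is continuous. I would not pass to the limit in $q^r$ directly but in the fluxes $J_\kappa^r:=\beta_\kappa(m^r)q_\kappa^r$. The superlinearity of $\ell$, through the same Fenchel bound, makes $\{J_\kappa^r\}$ uniformly integrable, so by Dunford--Pettis $J_\kappa^r\rightharpoonup J_\kappa$ weakly in $L^1(0,T)$ along a subsequence. Passing to the limit in the integral skeleton equation gives $m(t)=m_0^h+\int_0^t\sum_\kappa J_\kappa z_\kappa$. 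Next, write the cost as $\Phi(\beta,J)=J\log(J/\beta)-J+\beta$, which is jointly convex and lower semicontinuous on $[0,\infty)^2$, with the conventions $\Phi(0,0)=0$ and $\Phi(0,J)=\infty$ for $J>0$. Since $\beta_\kappa(m^r)\to\beta_\kappa(m)$ uniformly and $J^r\rightharpoonup J$ weakly, the standard lower semicontinuity of convex integral functionals (Ioffe's theorem, or a Mazur-combination argument) gives
\[
\int_0^T\sum_\kappa\Phi(\beta_\kappa(m),J_\kappa)\,dt\leq\liminf_{r}\int_0^T\sum_\kappa\Phi(\beta_\kappa(m^r),J_\kappa^r)\,dt\leq A.
\]
Finiteness of the left side forces $J_\kappa=0$ almost everywhere on the set $\{\beta_\kappa(m)=0\}$. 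Define $q_\kappa=J_\kappa/\beta_\kappa(m)$ where $\beta_\kappa(m)>0$ and $q_\kappa=1$ elsewhere. Then $q$ is admissible for $m$ with cost equal to the left side, so $I_h^{m_0^h}(m)\leq A$. This proves that $L_A$ is closed, hence compact, and therefore $I_h^{m_0^h}$ is a good rate function.
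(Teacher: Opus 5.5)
Your proposal is correct and follows essentially the same route as the paper: Fenchel-based uniform integrability of the fluxes $J_\kappa=\beta_\kappa(m)q_\kappa$, then Arzel\`a--Ascoli and Dunford--Pettis, passage to the limit in the integrated skeleton equation, and recovery of a control from $J$ using $J_\kappa=0$ a.e.\ on $\{\beta_\kappa(m)=0\}$. The only difference is in the lower-semicontinuity step for $\int\beta\,\ell(J/\beta)\,dt$: you use joint convexity of the perspective function (Ioffe or Mazur), whereas the paper uses its dual representation in Lemma~\ref{lem:integral-relative-flux-duality}, which writes the cost as a supremum over continuous $\theta$ of functionals that are continuous under uniform convergence of $\beta$ and weak $L^1$ convergence of $J$.
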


\begin{proof}
Fix $L<\infty$, and take a sequence
\begin{equation*}
 (m^n)_{n\geq1}\subset
 D([0,T];\mathbb R_{\geq0}^{\mathcal I_h})
 \qquad\text{such that}\qquad
 \sup_{n\geq1}I_h^{m_0^h}(m^n)\leq L.
\end{equation*}
By the definition of the infimum in
\eqref{intro-fixed-h-rigorous-rate}, for each $n$ there is an admissible
control $q^n=(q_\kappa^n)_{\kappa\in\mathcal K_h}$ such that
\begin{equation}\label{near-minimizing-control-action}
 \int_0^T\sum_{\kappa\in\mathcal K_h}
 \beta_\kappa(m^n(t))\ell(q_\kappa^n(t))\,dt
 \leq I_h^{m_0^h}(m^n)+\frac1n
 \leq L+\frac1n.
\end{equation}
Introduce, only for this compactness argument, the effective channel fluxes
\begin{equation}\label{near-minimizing-effective-flux}
 J_\kappa^n(t):=\beta_\kappa(m^n(t))q_\kappa^n(t),
 \qquad \kappa\in\mathcal K_h.
\end{equation}
The skeleton equation then reads
$\dot m^n=\sum_{\kappa\in\mathcal K_h}J_\kappa^nz_\kappa$.
Since every jump vector has zero total mass,
\eqref{intro-fixed-h-rigorous-continuity} and
$\sum_am_{0,a}^h=1$ imply
\begin{equation*}
 \sum_{a\in\mathcal I_h}m_a^n(t)=1,
 \qquad t\in[0,T].
\end{equation*}
Consequently, $0\leq m_a^n(t)\leq1$ for every $a$, $n$, and $t$, and the
nominal rates are uniformly bounded by
\begin{equation*}
 A_h:=
 \max_{\kappa\in\mathcal K_h}
 \sup_{\substack{m\in\mathbb R_{\geq0}^{\mathcal I_h}\\\sum_am_a=1}}
 \beta_\kappa(m)<\infty.
\end{equation*}
Applying
\eqref{fixed-h-flux-fenchel} on a measurable set $E\subset[0,T]$ gives
\begin{equation*}
 \int_EJ_\kappa^n(t)\,dt
 \leq\frac1s\int_E
 \beta_\kappa(m^n(t))\ell(q_\kappa^n(t))\,dt
 +\frac{A_h(e^s-1)}s|E|.
\end{equation*}
By \eqref{near-minimizing-control-action}, for every $s>0$,
\begin{equation}\label{uniform-integrability-flux-bound}
 \sup_{n\geq1}\int_EJ_\kappa^n(t)\,dt
 \leq\frac{L+1}{s}
 +\frac{A_h(e^s-1)}s|E|.
\end{equation}
Taking $E=[0,T]$ and, for instance, $s=1$ shows that
\begin{equation*}
 \sup_{n\geq1}\|J_\kappa^n\|_{L^1(0,T)}
 \leq L+1+A_h(e-1)T<\infty.
\end{equation*}
Moreover, given $\eta>0$, first choose $s>0$ so that
$(L+1)/s<\eta/2$, and then choose $\delta>0$ so that
\begin{equation*}
 \frac{A_h(e^s-1)}s\,\delta<\frac\eta2.
\end{equation*}
It follows from \eqref{uniform-integrability-flux-bound} that
\begin{equation*}
 |E|<\delta
 \quad\Longrightarrow\quad
 \sup_{n\geq1}\int_EJ_\kappa^n(t)\,\d t<\eta.
\end{equation*}
Thus $\{J_\kappa^n:n\geq1\}$ is uniformly integrable in $L^1(0,T)$ for
every $\kappa\in\mathcal K_h$.  The skeleton equation
\eqref{intro-fixed-h-rigorous-continuity} gives, for
$0\leq s\leq t\leq T$,
\begin{align*}
 \|m^n(t)-m^n(s)\|_{\ell^1(\mathcal I_h)}
 &\leq
 \sum_{\kappa\in\mathcal K_h}
 \|z_\kappa\|_{\ell^1(\mathcal I_h)}
 \int_s^tJ_\kappa^n(r)\,\d r.
\end{align*}
Define
\begin{equation*}
 \omega_h(\delta):=
 \sum_{\kappa\in\mathcal K_h}
 \|z_\kappa\|_{\ell^1(\mathcal I_h)}
 \sup_{\substack{n\geq1,\ E\subset[0,T]\ {\rm measurable}\\|E|\leq\delta}}
 \int_EJ_\kappa^n(r)\,\d r .
\end{equation*}
There are only finitely many channels, so the uniform integrability proved
above implies $\omega_h(\delta)\to0$ as $\delta\downarrow0$.  Hence
\begin{equation*}
 \sup_{n\geq1}\|m^n(t)-m^n(s)\|_{\ell^1(\mathcal I_h)}
 \leq\omega_h(|t-s|),
\end{equation*}
which is a common modulus of continuity for the paths.  Arzel\`a--Ascoli and
Dunford--Pettis yield, along a subsequence,
\begin{equation*}
 m^n\longrightarrow m\quad\text{uniformly},
 \qquad J_\kappa^n\rightharpoonup J_\kappa
 \quad\text{weakly in }L^1(0,T)
 \quad\text{for every }\kappa\in\mathcal K_h.
\end{equation*}

We first pass to the limit in the skeleton equation.  Its integral form is
\begin{equation}\label{near-minimizer-integrated-skeleton}
 m^n(t)=m_0^h+
 \sum_{\kappa\in\mathcal K_h}z_\kappa
 \int_0^tJ_\kappa^n(r)\,\d r,
 \qquad t\in[0,T].
\end{equation}
For each fixed $t$, the indicator
$\boldsymbol1_{[0,t]}$ belongs to $L^\infty(0,T)$.  Hence the weak
$L^1$ convergence of $J_\kappa^n$ implies
\begin{equation*}
 \int_0^tJ_\kappa^n(r)\,\d r
 \longrightarrow
 \int_0^tJ_\kappa(r)\,\d r.
\end{equation*}
Letting $n\to\infty$ in
\eqref{near-minimizer-integrated-skeleton} therefore gives
\begin{equation}\label{limit-integrated-skeleton}
 m(t)=m_0^h+
 \sum_{\kappa\in\mathcal K_h}z_\kappa
 \int_0^tJ_\kappa(r)\,\d r.
\end{equation}
It remains to recover a control for the limiting path and compare the
actions.  For every $\kappa\in\mathcal K_h$, write
\begin{equation*}
 a_\kappa^n(t):=\beta_\kappa(m^n(t)),
 \qquad a_\kappa(t):=\beta_\kappa(m(t)).
\end{equation*}
Since $m^n\to m$ uniformly and each $\beta_\kappa$ is continuous,
$a_\kappa^n\to a_\kappa$ uniformly on $[0,T]$.  Applying
Lemma~\ref{lem:integral-relative-flux-duality} to each
$\kappa\in\mathcal K_h$, using the
weak $L^1$ convergence of $J_\kappa^n$ and the uniform convergence of
$a_\kappa^n$, gives
\begin{align}\label{rate-lower-semicontinuity-chain}
 &\int_0^T\sum_{\kappa\in\mathcal K_h}
 a_\kappa(t)\ell\left(\frac{J_\kappa(t)}{a_\kappa(t)}\right)\,dt
 \notag\\
 &\qquad\leq\liminf_{n\to\infty}
 \int_0^T\sum_{\kappa\in\mathcal K_h}
 a_\kappa^n(t)\ell\left(\frac{J_\kappa^n(t)}{a_\kappa^n(t)}\right)\,dt
 \leq L.
\end{align}
Here and below the integrand is defined to be $0$ when $a=J=0$ and
$+\infty$ when $a=0<J$.  In particular, \eqref{rate-lower-semicontinuity-chain}
implies $J_\kappa=0$ almost everywhere on $\{a_\kappa=0\}$.  For every
$\kappa\in\mathcal K_h$, define
\begin{equation}\label{limit-control-from-effective-flux}
 q_\kappa(t):=
 \begin{cases}
 J_\kappa(t)/a_\kappa(t),&a_\kappa(t)>0,\\
 1,&a_\kappa(t)=0.
 \end{cases}
\end{equation}
Then $J_\kappa=a_\kappa q_\kappa$ almost everywhere, so
\eqref{limit-integrated-skeleton} is precisely the integral form of the
skeleton equation for $(m,q)$.  Therefore
\begin{equation*}
 I_h^{m_0^h}(m)
 \leq\int_0^T\sum_{\kappa\in\mathcal K_h}
 \beta_\kappa(m(t))\ell(q_\kappa(t))\,dt
 \leq\liminf_{n\to\infty}I_h^{m_0^h}(m^n)
 \leq L.
\end{equation*}
Thus every sequence in the level set
$\{I_h^{m_0^h}\leq L\}$ has a uniformly convergent subsequence whose limit
belongs to the same level set.  The level set is therefore compact in the
uniform topology, and hence also in the weaker Skorokhod $J_1$ topology.
In particular, every level set is $J_1$-closed, which proves lower
semicontinuity.

Finally, fix $m$ with $I_h^{m_0^h}(m)<\infty$ and choose admissible
controls $q^n$ whose actions decrease to $I_h^{m_0^h}(m)$.  Applying the
same compactness argument to
$J_\kappa^n=\beta_\kappa(m)q_\kappa^n$ yields a weak limit $J$ and then,
by \eqref{limit-control-from-effective-flux}, an admissible control $q$
whose action is no larger than the limiting actions.  Hence the infimum
in \eqref{intro-fixed-h-rigorous-rate} is attained.  Since there are
finitely many channels and
$\beta_\kappa(m)q_\kappa\in L^1(0,T)$, the skeleton equation gives
\begin{equation*}
 \dot m=\sum_{\kappa\in\mathcal K_h}
 \beta_\kappa(m)q_\kappa z_\kappa
 \in L^1(0,T;\mathbb R^{\mathcal I_h}).
\end{equation*}
Thus every finite-rate path is absolutely continuous, and hence continuous.
\end{proof}

For later use, we record the following dual representation.

\begin{lemma}\label{lem:integral-relative-flux-duality}
Let $a,J\in L^1(0,T)$ be nonnegative.  Then
\begin{align}
 \int_0^T a(t)\ell\left(\frac{J(t)}{a(t)}\right)\,dt
 =\sup_{\theta\in C([0,T])}\int_0^T
 \bigl\{\theta(t)J(t)-a(t)(e^{\theta(t)}-1)\bigr\}\,dt.
 \label{integral-relative-flux-duality}
\end{align}
Both sides are allowed to take the value $+\infty$, and the integrand on
the left is defined to be $0$ when $a=J=0$ and $+\infty$ when $a=0<J$.
\end{lemma}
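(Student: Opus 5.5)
The plan is to reduce \eqref{integral-relative-flux-duality} to the pointwise Legendre duality between $\ell$ and $\ell^*(r)=e^r-1$ from \eqref{poisson-entropy-density}. Define the perspective function $\Phi(a,J)=a\ell(J/a)$ with the stated conventions at $a=0$. The first step is the scalar identity
\begin{equation*}
 \Phi(a,J)=\sup_{\theta\in\mathbb R}\bigl\{\theta J-a(e^\theta-1)\bigr\},
 \qquad a,J\geq0.
\end{equation*}
For $a>0$ this is the convex conjugate computation: the maximizer is $\theta=\log(J/a)$ when $J>0$, and the supremum is $a$ (approached as $\theta\to-\infty$) when $J=0$. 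For $a=0<J$, let $\theta\to\infty$ to get $+\infty$. For $a=J=0$ the supremum is $0$. The inequality ``$\geq$'' in \eqref{integral-relative-flux-duality} then follows by integrating the pointwise bound $\theta J-a(e^\theta-1)\leq\Phi(a,J)$. Note that the right-hand integrand is integrable for continuous $\theta$, because $a,J\in L^1$ and $\theta$ is bounded.

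For ``$\leq$'' I will approximate the pointwise optimizer by continuous functions. First I truncate. For $N\in\mathbb N$ set
\begin{equation*}
 \theta_N(t)=
 \begin{cases}
 \max\{-N,\min\{N,\log(J/a)\}\}, & a>0,\ J>0,\\
 -N, & J=0,\\
 N, & a=0<J.
 \end{cases}
\end{equation*}
This $\theta_N$ is measurable and bounded by $N$. On each region, the function $\theta\mapsto\theta J-a(e^\theta-1)$ is concave, so clipping its maximizer to $[-N,N]$ gives the maximum over $[-N,N]$. Hence $F_N:=\theta_NJ-a(e^{\theta_N}-1)$ increases pointwise to $\Phi(a,J)$. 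Moreover $F_N\geq F_1$, since $\theta_1$ is admissible in $[-N,N]$, and $F_1\geq -|J|-a(e-1)$ is integrable. Monotone convergence then gives $\int F_N\to\int\Phi(a,J)$, including the case where the limit is $+\infty$.

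The remaining step, and the only mildly delicate one, is to replace the bounded measurable $\theta_N$ by continuous functions. By Lusin's theorem, or by density of $C([0,T])$ in $L^1$ followed by a.e.\ convergence along a subsequence and clipping to $[-N,N]$, I obtain continuous $\theta_{N,k}$ with $|\theta_{N,k}|\leq N$ and $\theta_{N,k}\to\theta_N$ a.e. The integrands $\theta_{N,k}J-a(e^{\theta_{N,k}}-1)$ are dominated by $N|J|+a(e^N+1)\in L^1$, so dominated convergence yields convergence of their integrals to $\int F_N$. The right-hand supremum is therefore at least $\int F_N$ for every $N$, and letting $N\to\infty$ completes the proof. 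I expect no real obstacle beyond handling the sets $\{a=0\}$ and $\{J=0\}$ correctly, which the case split in $\theta_N$ does.
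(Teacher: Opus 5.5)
Your proposal is correct and follows essentially the same route as the paper: the pointwise Fenchel inequality gives ``$\geq$'', and for the reverse you take clipped measurable maximizers $\theta_N\in[-N,N]$, apply monotone convergence, and then approximate $\theta_N$ by continuous functions. The only cosmetic difference is in that last approximation step. The paper approximates $\theta_M$ in $L^1((J+a)\,dt)$ and uses the $e^M$-Lipschitz bound for the exponential on $[-M,M]$, whereas you use a.e.\ approximation with clipping plus dominated convergence; both work.
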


\begin{proof}
For every $j,a\geq0$ and $s\in\mathbb R$, direct differentiation in $s$
gives
\begin{equation*}
 sj-a(e^s-1)\leq a\ell(j/a),
\end{equation*}
with the same convention at $a=0$.
Applying the preceding inequality with
$(j,a,s)=(J(t),a(t),\theta(t))$ for almost every $t$ and then integrating
over $[0,T]$ gives the ``$\geq$''
direction of \eqref{integral-relative-flux-duality}. 

We prove the reverse inequality.  For $M\in\mathbb N$, choose a bounded
measurable function $\theta_M:[0,T]\to[-M,M]$ such that, for almost every
$t\in [0,T]$,
\begin{equation*}
 \theta_M(t)\in\operatorname*{arg\,max}_{|s|\leq M}
 \bigl\{sJ(t)-a(t)(e^s-1)\bigr\}.
\end{equation*}
It may be chosen explicitly as
\begin{equation*}
 \theta_M(t)=
 \begin{cases}
  \max\{-M,\min\{M,\log(J(t)/a(t))\}\},&J(t)>0, a(t)>0,\\
  -M,&J(t)=0, a(t)>0,\\
  M,&J(t)>0, a(t)=0,\\
  0,&J(t)=a(t)=0.
 \end{cases}
\end{equation*}
Consequently,
\begin{equation*}
 G_M(t):=\theta_M(t)J(t)-a(t)(e^{\theta_M(t)}-1)
 =\max_{|s|\leq M}\bigl\{sJ(t)-a(t)(e^s-1)\bigr\}.
\end{equation*}
The sequence $(G_M)_{M\in\mathbb{N}}$ is nonnegative and nondecreasing in $M\in\mathbb{N}$, and the preceding
scalar maximization gives
\begin{equation*}
 G_M(t)\uparrow a(t)\ell\left(\frac{J(t)}{a(t)}\right)
 \qquad\text{for a.e. }t\in[0,T].
\end{equation*}
Hence the monotone convergence theorem yields
\begin{equation*}
 \int_0^T a(t)\ell\left(\frac{J(t)}{a(t)}\right)\,dt
 =\lim_{M\to\infty}\int_0^T G_M(t)\,dt.
\end{equation*}

It remains to replace the measurable function $\theta_M$ by continuous
functions.  For fixed $M$, the measure
\begin{equation*}
 \mu(dt):=(J(t)+a(t))\,dt
\end{equation*}
is a finite Borel measure on $[0,T]$.  By the density of continuous
functions in $L^1([0,T],\mu)$, followed by truncation to $[-M,M]$, there
exist $\theta_{M,k}\in C([0,T])$, with
$|\theta_{M,k}|\leq M$, such that
\begin{equation*}
 \int_0^T|\theta_{M,k}(t)-\theta_M(t)|(J(t)+a(t))\,dt
 \longrightarrow0.
\end{equation*}
Since the exponential function is $e^M$-Lipschitz on $[-M,M]$,
\begin{align*}
 &\left|\int_0^T
 \bigl\{\theta_{M,k}J-a(e^{\theta_{M,k}}-1)\bigr\}\,dt
 -\int_0^T G_M(t)\,dt\right|\\
 &\qquad\leq
 \int_0^T|\theta_{M,k}-\theta_M|J\,dt
 +e^M\int_0^T|\theta_{M,k}-\theta_M|a\,dt
 \longrightarrow0.
\end{align*}
Therefore the supremum over $\theta\in C([0,T])$ is at least
$\int_0^T G_M(t)\,dt$ for every $M$.  Letting $M\to\infty$ in the
preceding monotone-convergence identity proves the reverse inequality and
completes the proof.
\end{proof}

We also use the dynamical action without fixing the initial point:
for $m\in D([0,T];\mathbb R_{\geq0}^{\mathcal I_h})$, set
\begin{align}\label{fixed-h-free-initial-action}
 I_h(m)
 &=\inf_{\substack{
 q=(q_\kappa)_{\kappa\in\mathcal K_h}:
 [0,T]\to\mathbb R_{\geq0}^{\mathcal K_h}
 \text{ measurable},\\
 (\beta_\kappa(m)q_\kappa)_{\kappa\in\mathcal K_h}
 \in L^1(0,T;\mathbb R^{\mathcal K_h})}}
 \left\{
 \int_0^T\sum_{\kappa\in\mathcal K_h}
 \beta_\kappa(m(t))\ell(q_\kappa(t))\,dt:
 \right.\notag\\
 &\hspace{28mm}\left.
 m\in AC([0,T];\mathbb R_{\geq0}^{\mathcal I_h}),\quad
 \dot m(t)=\sum_{\kappa\in\mathcal K_h}
 \beta_\kappa(m(t))q_\kappa(t)z_\kappa
 \ \text{for a.e. }t\in[0,T]
 \right\}.
\end{align}
As in \eqref{intro-fixed-h-rigorous-rate}, the infimum is taken over one
vector-valued control subject to the skeleton equation.  Here the initial
point $m(0)$ is free.  The functional $I_h$ need
not be good by itself, since the total
mass is not fixed; the initial entropy in
Theorem~\ref{thm:fixed-h-poisson-initial-ldp} restores coercivity.

\subsection{Large deviations}

We now prove the finite-mesh path LDP for deterministic initial data.  After
stating the result, we construct a marked-Poisson realization of the coarse
process and identify its law with that of the original random-time-change
model.  We then analyze entropy-bounded controlled processes: the first
control lemma supplies compactness and the liminf inequality, while the
second constructs recovery controls.  These ingredients are combined with
the Poisson variational representation in the proof of the theorem.

\begin{theorem}
\label{thm:fixed-h-rigorous-ldp}
Fix $h>0$ and suppose that
$m_0^h\in(0,\infty)^{\mathcal I_h}$ has total mass one.  Let
$m_0^{\varepsilon,h}\in\mathsf S_{\varepsilon,h}$ satisfy
$m_0^{\varepsilon,h}\to m_0^h$ as $\varepsilon\downarrow0$ along values
satisfying $\varepsilon^{-1}\in\mathbb N$, and let
$m^{\varepsilon,h}$ denote the coarse process with initial condition
\begin{equation*}
 m^{\varepsilon,h}(0)=m_0^{\varepsilon,h}.
\end{equation*}
Then the laws of $m^{\varepsilon,h}$ satisfy on
$D([0,T];\mathbb R_{\geq0}^{\mathcal I_h})$, with the Skorokhod $J_1$ topology,
an LDP with speed $\varepsilon^{-1}$ and good rate
$I_h^{m_0^h}$ defined by \eqref{intro-fixed-h-rigorous-rate}.
\end{theorem}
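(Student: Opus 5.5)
I will prove the LDP through the Laplace principle, using the Budhiraja--Dupuis--Maroulas variational representation for Poisson random measures. Since the rate is already known to be good by Lemma~\ref{lem:fixed-h-action-good-rigorous}, Laplace and LDP are equivalent here. The first step is a marked-Poisson realization of the coarse process. Take a Poisson random measure $\mathcal N^{\varepsilon}$ on $[0,T]\times\mathcal K_h\times[0,\infty)$ with intensity $\varepsilon^{-1}dt\,\#(d\kappa)\,du$. Then define $m^{\varepsilon}$ as the solution of
\[
 m^\varepsilon(t)=m_0^{\varepsilon,h}+\varepsilon\int_0^t\int_{\mathcal K_h\times[0,\infty)} z_\kappa\mathbf 1_{\{u\leq\beta_\kappa^\varepsilon(m^\varepsilon(s-))\}}\,\mathcal N^\varepsilon(ds,d\kappa,du).
\]
Each accepted channel $\kappa$ has intensity $\varepsilon^{-1}\beta^\varepsilon_\kappa(m)=\lambda_\kappa(m)$. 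Hence this process solves the same martingale problem as the random-time-change model, with generator given by Proposition~\ref{prop:coarse-boltzmann-generator}. Uniqueness for a bounded-rate chain on the finite layer $\mathsf S_{\varepsilon,h}$ then identifies the laws.

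The representation gives
\[
 -\varepsilon\log\mathbb E e^{-F(m^\varepsilon)/\varepsilon}=\inf_\varphi\mathbb E\Big[\textstyle\int_0^T\sum_\kappa\int_0^\infty\ell(\varphi_\kappa(t,u))\,du\,dt+F(\bar m^{\varepsilon,\varphi})\Big],
\]
for predictable controls $\varphi\geq0$. The controlled process $\bar m^{\varepsilon,\varphi}$ is driven by $\mathcal N^{\varepsilon\varphi}$ with intensity $\varepsilon^{-1}\varphi$. Here $F$ is bounded and continuous on $D([0,T];\mathbb R_{\ge0}^{\mathcal I_h})$.

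For the upper bound on $-\varepsilon\log\mathbb E e^{-F/\varepsilon}$, i.e.\ the Laplace lower bound, take a near-optimal $\tilde m$ with finite $I_h^{m_0^h}(\tilde m)$. Apply Lemma~\ref{lem:fixed-h-vacuum-regularization} with $\bar m=m_0^h$. This replaces $\tilde m$ by $m^\delta$ with $m^\delta_a\geq\delta m^h_{0,a}$, and the action is almost unchanged. Note that $m^\delta(0)=m_0^h$ because $\bar m=m_0^h$. Next truncate and mollify the control to continuous, bounded $q$. Once $m^\delta$ is bounded away from the boundary, $\beta_\kappa(m^\delta)$ is bounded below. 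The fluxes $J_\kappa=\beta_\kappa q_\kappa$ can therefore be approximated in $L^1$ by bounded continuous fluxes with convergent entropy. By Lemma~\ref{lem:bounded-control-skeleton-stability}, the associated skeletons converge uniformly.

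Then use the feedback control $\varphi_\kappa(t,u)=q_\kappa(t)$ for $u\le\beta_\kappa^\varepsilon(\bar m(t-))$ and $1$ otherwise. Its cost is $\int\sum_\kappa\beta^\varepsilon_\kappa(\bar m)\ell(q_\kappa)dt$. The controlled process is then a jump chain with rates $\varepsilon^{-1}\beta^\varepsilon_\kappa q_\kappa$. By a law-of-large-numbers argument it converges in probability, uniformly, to the skeleton $m^q$. The argument uses martingale bracket $O(\varepsilon)$, the estimate \eqref{uniform-beta-epsilon-limit}, the Lipschitz property of $\beta_\kappa$ on the simplex, Gronwall, and $m_0^{\varepsilon,h}\to m_0^h$. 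Bounded convergence of the cost completes this direction.

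For the lower bound on $-\varepsilon\log\mathbb E e^{-F/\varepsilon}$, i.e.\ the Laplace upper bound, take controls with cost at most $\|F\|_\infty+1$. Introduce the random effective fluxes $J^\varepsilon_\kappa(t)=\int_0^{\beta^\varepsilon_\kappa(\bar m(t-))}\varphi_\kappa(t,u)\,du$. Jensen in $u$ gives $\beta^\varepsilon_\kappa\ell(J^\varepsilon_\kappa/\beta^\varepsilon_\kappa)\le\int_0^{\beta^\varepsilon_\kappa}\ell(\varphi_\kappa)du$. The superlinear bound \eqref{fixed-h-flux-fenchel} then yields uniform integrability of $J^\varepsilon$, exactly as in the proof of Lemma~\ref{lem:fixed-h-action-good-rigorous}. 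Consequently $(\bar m^\varepsilon,J^\varepsilon)$ is tight in $C\times L^1_w$. The martingale part $\varepsilon\int z_\kappa\,\widetilde{\mathcal N}$ has bracket $\varepsilon\int J^\varepsilon\,dt$, so it vanishes. The Skorokhod limits therefore satisfy $m(t)=m_0^h+\sum_\kappa z_\kappa\int_0^tJ_\kappa$.

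Lemma~\ref{lem:integral-relative-flux-duality}, combined with uniform convergence $\beta^\varepsilon_\kappa(\bar m^\varepsilon)\to\beta_\kappa(m)$, gives lower semicontinuity of the cost along the subsequence. Fatou then yields $\liminf\ge\mathbb E[I_h^{m_0^h}(m)+F(m)]\ge\inf\{I+F\}$. Continuity of $F$ and $J_1$-convergence to a continuous limit are used here.

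I expect the main obstacle to be the boundary behaviour in the lower-bound/recovery step. Controls that are optimal for $I_h^{m_0^h}$ may push mass to zero, where $\beta_\kappa$ degenerates and the feedback control has unbounded $q$. This is exactly why the recovery is first routed through Lemma~\ref{lem:fixed-h-vacuum-regularization}, so that everything is done on paths uniformly inside the simplex. A second delicate point is handling $\beta^\varepsilon$ versus $\beta$ for diagonal collisions, where the positive part in \eqref{finite-particle-beta} matters. The uniform estimate \eqref{uniform-beta-epsilon-limit} should absorb it.
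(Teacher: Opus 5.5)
Your proposal is correct and follows essentially the paper's route. The shared steps are the marked Poisson realization, the Budhiraja--Dupuis--Maroulas representation, and compactness and the liminf via Jensen in $u$, uniform integrability of the effective fluxes, Lemma~\ref{lem:integral-relative-flux-duality} and Fatou. Recovery likewise goes through Lemma~\ref{lem:fixed-h-vacuum-regularization} with $\bar m=m_0^h$ and the feedback control equal to $q$ on the acceptance interval and $1$ outside.

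The deviations are cosmetic. You identify the law through the martingale problem rather than the competing-exponentials recursion. You also smooth $q$ to a bounded continuous control before the law-of-large-numbers step, which the paper avoids by running Gronwall with the integrable weight $\sum_\kappa q_\kappa^\delta$. The truncation part of that smoothing is not covered by Lemma~\ref{lem:bounded-control-skeleton-stability}, which needs a bound uniform along the sequence, so it requires the same $L^1$-weighted Gronwall estimate.
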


We first prepare the Poisson-random-measure representation used in the
proof.  Recall the notation
$\mathcal K_h:=\mathcal E_h\sqcup\Gamma_h$.  For every lattice state
$m\in\mathsf S_{\varepsilon,h}$, the rates of the coarse model from
Section~\ref{sec:coarse-setup-main}, defined in
\eqref{coarse-transport-rate}--\eqref{coarse-boltzmann-collision-rate},
satisfy
\begin{equation*}
 \lambda_\kappa^{\varepsilon,h}(m)
 =\varepsilon^{-1}\beta_\kappa^\varepsilon(m),
 \qquad \kappa\in\mathcal K_h,
\end{equation*}
where $\beta_\kappa^\varepsilon$ is defined in
\eqref{finite-particle-beta}.  For a diagonal collision channel, the
positive part in \eqref{finite-particle-beta} is redundant on the lattice
state space, so this identity also holds when the two incoming cells
coincide.

For the variational representation, we use a marked Poisson random measure
that includes the auxiliary mark $u$.  We construct it on a possibly
different probability space.  Lemma~\ref{lem:fixed-h-equivalent-prm-realization}
shows that the resulting mass process has the same law as the original
coarse process.  Set
\begin{align*}
 \nu_h(d\kappa,du)
 &:=\sum_{\kappa'\in\mathcal K_h}
 \delta_{\kappa'}(d\kappa)\,du,\\
 F_h^\varepsilon(m)
 &:=\sum_{\kappa\in\mathcal K_h}
 \beta_\kappa^\varepsilon(m)z_\kappa,
 &G_h^\varepsilon(m;\kappa,u)
 &:=z_\kappa
 \boldsymbol1_{\{0\leq u\leq\beta_\kappa^\varepsilon(m)\}}.
\end{align*}
Let $N_h^{\varepsilon^{-1}}$ be a Poisson random measure on
$[0,T]\times\mathcal K_h\times[0,\infty)$ with intensity
$\varepsilon^{-1}ds\,\nu_h(d\kappa,du)$.  For each
$\kappa\in\mathcal K_h$, denote the restriction of this random measure to
the channel slice $\{\kappa\}$ by
\begin{equation*}
 N_{h,\kappa}^{\varepsilon^{-1}}(A\times B)
 :=N_h^{\varepsilon^{-1}}(A\times\{\kappa\}\times B),
 \qquad
 A\in\mathcal B([0,T]),\quad B\in\mathcal B([0,\infty)).
\end{equation*}
Then $N_{h,\kappa}^{\varepsilon^{-1}}$ is a Poisson random measure on
$[0,T]\times[0,\infty)$ with intensity
$\varepsilon^{-1}ds\,du$, and these restrictions are independent for
distinct channels.  We define a c\`adl\`ag mass process
$\widehat m^{\varepsilon,h}$ and its accepted channel counts
$(\widehat P_\kappa^{\varepsilon,h})_{\kappa\in\mathcal K_h}$ recursively by
\begin{equation*}
 \widehat P_\kappa^{\varepsilon,h}(t)
 :=\int_{(0,t]\times[0,\infty)}
 \boldsymbol1_{\{0\leq u\leq
 \beta_\kappa^\varepsilon(\widehat m^{\varepsilon,h}(s-))\}}
 N_{h,\kappa}^{\varepsilon^{-1}}(ds,du),
 \qquad \kappa\in\mathcal K_h.
\end{equation*}
Together with these counts, set
\begin{equation}\label{fixed-h-prm-mass-process}
 \widehat m^{\varepsilon,h}(t)
 =m_0^{\varepsilon,h}
 +\varepsilon\sum_{\kappa\in\mathcal K_h}
 z_\kappa\widehat P_\kappa^{\varepsilon,h}(t).
\end{equation}
Between successive accepted marks the state is constant, so these equations
determine the next accepted mark and the resulting state recursively.  Since
there are finitely many channels and their total rate is bounded on the
unit-mass state space, the construction is nonexplosive and determines
$\widehat m^{\varepsilon,h}$ on all of $[0,T]$.  The predictable compensator
of $\widehat P_\kappa^{\varepsilon,h}$ is
\begin{align*}
 &\varepsilon^{-1}\int_0^t\int_0^\infty
 \boldsymbol1_{\{0\leq u\leq
 \beta_\kappa^\varepsilon(\widehat m^{\varepsilon,h}(s-))\}}\,du\,ds\\
 &\qquad=\varepsilon^{-1}\int_0^t
 \beta_\kappa^\varepsilon(\widehat m^{\varepsilon,h}(s-))\,ds
 =\int_0^t\lambda_\kappa^{\varepsilon,h}
 (\widehat m^{\varepsilon,h}(s-))\,ds.
\end{align*}

\begin{lemma}
\label{lem:fixed-h-equivalent-prm-realization}
The process $\widehat m^{\varepsilon,h}$ constructed in
\eqref{fixed-h-prm-mass-process} has the same law on
$D([0,T];\mathbb R_{\geq0}^{\mathcal I_h})$ as the coarse process
$m^{\varepsilon,h}$ defined by
\eqref{coarse-transport-counts}--\eqref{coarse-boltzmann-integral-equation}.
\end{lemma}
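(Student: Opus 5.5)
Both $m^{\varepsilon,h}$ and $\widehat m^{\varepsilon,h}$ are nonexplosive pure-jump processes on the finite state space $\mathsf S_{\varepsilon,h}$, starting from the same deterministic point $m_0^{\varepsilon,h}$. The plan is to show that both are continuous-time Markov chains with the same $Q$-matrix. Equality of laws on $D([0,T];\mathbb R_{\geq0}^{\mathcal I_h})$ then follows, because a finite-state chain with bounded rates is determined in law by its initial state and its generator.

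\textbf{Step 1: the original process.} The recursive construction in the proof of Theorem~\ref{thm:coarse-boltzmann-wellposedness} gives the jump structure of $m^{\varepsilon,h}$. At the $n$th jump time, the residual internal times $S_{\kappa,n}-A_\kappa(\tau_n)$ are independent $\mathrm{Exp}(1)$ variables, independent of $\mathcal F_{\tau_n}$. This is the strong Markov property and memorylessness of the independent unit-rate reference processes, applied at the stopping times $A_\kappa(\tau_n)$ of their respective filtrations. Dividing by $\lambda_\kappa(m_n)$, the holding time in $m_n$ is therefore exponential with rate $\Lambda_{\varepsilon,h}(m_n)$. The channel selected is $\kappa$ with probability $\lambda_\kappa(m_n)/\Lambda_{\varepsilon,h}(m_n)$, independently of the holding time. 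Proposition~\ref{prop:coarse-boltzmann-generator} records the resulting generator \eqref{coarse-boltzmann-generator}.

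\textbf{Step 2: the marked realization.} I would do the same for $\widehat m^{\varepsilon,h}$. Conditionally on $\mathcal F_{\hat\tau_n}$ with state $m_n$, the restrictions of $N_h^{\varepsilon^{-1}}$ to $(\hat\tau_n,\infty)\times\{\kappa\}\times[0,\beta^\varepsilon_\kappa(m_n)]$ are independent Poisson processes in time with rates $\varepsilon^{-1}\beta_\kappa^\varepsilon(m_n)=\lambda_\kappa^{\varepsilon,h}(m_n)$. This uses the strong Markov (independent-increments) property of a Poisson random measure at the stopping time $\hat\tau_n$. The first accepted mark therefore gives the same holding-time and channel-selection law as in Step 1, with the jump $\varepsilon z_\kappa$. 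Equivalently, and perhaps more cleanly, the martingale problem route works. For every $F$ on $\mathsf S_{\varepsilon,h}$,
\[
F(\widehat m(t))-F(\widehat m(0))-\int_0^t\sum_\kappa \varepsilon^{-1}\beta_\kappa^\varepsilon(\widehat m(s))\bigl[F(\widehat m(s)+\varepsilon z_\kappa)-F(\widehat m(s))\bigr]\,ds
\]
is a martingale. This follows from the compensator computation displayed just before the lemma, since the integrands are bounded and the total rate is bounded by Lemma~\ref{lem:coarse-total-intensity}. The same holds for $m^{\varepsilon,h}$ by Proposition~\ref{prop:coarse-boltzmann-generator}. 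Uniqueness for the martingale problem of a bounded generator on a finite state space (Ethier--Kurtz, Ch.~4) then identifies the finite-dimensional distributions, hence the laws on path space.

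\textbf{Main obstacle.} The step needing most care is justifying the strong Markov/restart property used in Steps 1--2. For the time-changed construction, one must check that the fresh exponential clocks at $\tau_n$ are independent of the past across channels. I would handle this via the multiparameter stopping-time version of the strong Markov property for independent Poisson processes, in Ethier--Kurtz, Ch.~6, Thm.~4.1, where the random-time-change representation is shown to solve the martingale problem. With that cited, the martingale-problem uniqueness argument avoids this bookkeeping entirely, and I would present that version as the main proof.
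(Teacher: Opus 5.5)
Your proposal is correct, and your preferred main argument takes a different route from the paper's.

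\textbf{What the paper does.} It runs only the jump-by-jump comparison of your Steps 1--2. Conditionally on the history up to the $n$th jump, both constructions have independent exponential candidate times per channel, with the same rates $\lambda_\kappa^{\varepsilon,h}(m_n)=\varepsilon^{-1}\beta_\kappa^\varepsilon(m_n)$. Hence the next jump time and channel have the same conditional joint law. The recursion then extends to $[0,T]$ by the nonexplosion bound of Lemma~\ref{lem:coarse-total-intensity}. The paper asserts the conditional exponential and independence property without justification.

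\textbf{Your Steps 1--2.} You correctly see that this property needs a memoryless restart of the reference clocks at $(A_\kappa(\tau_n))_{\kappa\in\mathcal K_h}$. One correction: this vector is a multiparameter stopping time for the joint filtration of the reference processes. It is not a family of stopping times for each channel's own filtration. So the phrase ``stopping times of their respective filtrations'' in Step 1 should be replaced by the multiparameter statement from your last paragraph.

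\textbf{Your martingale-problem route.} You show that both processes solve the martingale problem for the bounded generator \eqref{coarse-boltzmann-generator} on the finite set $\mathsf S_{\varepsilon,h}$, and then invoke well-posedness. This replaces the hand-made recursion on jump times with a standard uniqueness theorem. It gives equality of finite-dimensional distributions, and hence of laws on $D$, in one step, and yields the Markov property of both processes as a byproduct.

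It does not remove the delicate step; it moves it into a citation. For $m^{\varepsilon,h}$, you need the martingale property with compensator $\int_0^t\lambda_\kappa^{\varepsilon,h}(m(s-))\,ds$, which underlies Proposition~\ref{prop:coarse-boltzmann-generator}. That is exactly the multiparameter-stopping argument for random time changes in \cite{EthierKurtz}. For $\widehat m^{\varepsilon,h}$, only the elementary predictable-acceptance compensator computation displayed before the lemma is needed.

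\textbf{Comparison.} The paper's argument is more elementary and self-contained but leaves the restart property implicit. Yours is more robust and makes the dependence on the random-time-change theorem explicit.
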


\begin{proof}
Suppose that the state has been constructed
up to a jump time $\tau_n$ and equals $m_n$ there.  Until the next jump, it
remains equal to $m_n$.  For each channel
$\kappa\in\mathcal K_h$, the original
random-time-change construction uses a unit-rate Poisson process whose
internal clock advances at speed
$\lambda_\kappa^{\varepsilon,h}(m_n)$.  Hence its candidate waiting time is
exponential with rate $\lambda_\kappa^{\varepsilon,h}(m_n)$.  In the present
Poisson-random-measure construction, the accepted region on that channel has
intensity
\begin{equation*}
 \varepsilon^{-1}\int_0^{\beta_\kappa^\varepsilon(m_n)}du
 =\varepsilon^{-1}\beta_\kappa^\varepsilon(m_n)
 =\lambda_\kappa^{\varepsilon,h}(m_n),
\end{equation*}
so its candidate waiting time has the same exponential law.  In both
constructions these candidate times are independent over the finitely many
channels.  Consequently, conditional on the history up to $\tau_n$, the
time and channel of the next jump have the same joint law.  These conditional
transition laws, together with the common initial law, recursively determine
the joint law of the channel counts and of the mass process.  The uniform
bound on the total rate from Lemma~\ref{lem:coarse-total-intensity} rules out
accumulation of jump times, so the recursion continues on the whole interval
$[0,T]$.  The Poisson-random-measure representation and the original
random-time-change representation are therefore equivalent in law.
\end{proof}

From now on, we work with the Poisson-random-measure realization and denote it
again by $m^{\varepsilon,h}$.  Using the accepted counts, the integral form
\eqref{coarse-boltzmann-integral-equation} becomes

\begin{align*}
 m_t^{\varepsilon,h}
 &=m_0^{\varepsilon,h}
 +\varepsilon\sum_{\kappa\in\mathcal K_h}
 z_\kappa\widehat P_\kappa^{\varepsilon,h}(t)\\
 &=m_0^{\varepsilon,h}
 +\varepsilon\int_0^t
 \int_{(\mathcal E_h\sqcup\Gamma_h)\times[0,\infty)}
 G_h^\varepsilon(m_{s-}^{\varepsilon,h};\kappa,u)
 N_h^{\varepsilon^{-1}}(ds,d\kappa,du).
\end{align*}
Now set
\begin{equation*}
 \widetilde N_h^{\varepsilon^{-1}}(dt,d\kappa,du)
 :=N_h^{\varepsilon^{-1}}(dt,d\kappa,du)
 -\varepsilon^{-1}dt\,\nu_h(d\kappa,du).
\end{equation*}
Since
\begin{equation*}
 \int_{(\mathcal E_h\sqcup\Gamma_h)\times[0,\infty)}
 G_h^\varepsilon(m;\kappa,u)
 \nu_h(d\kappa,du)
 =\sum_{\kappa\in\mathcal K_h}
 \beta_\kappa^\varepsilon(m)z_\kappa
 =F_h^\varepsilon(m),
\end{equation*}
compensating the preceding Poisson integral gives
\begin{align*}
 &\varepsilon\int_0^t
 \int_{(\mathcal E_h\sqcup\Gamma_h)\times[0,\infty)}
 G_h^\varepsilon(m_{s-}^{\varepsilon,h};\kappa,u)
 N_h^{\varepsilon^{-1}}(ds,d\kappa,du)\\
 &\quad=\varepsilon\int_0^t
 \int_{(\mathcal E_h\sqcup\Gamma_h)\times[0,\infty)}
 G_h^\varepsilon(m_{s-}^{\varepsilon,h};\kappa,u)
 \widetilde N_h^{\varepsilon^{-1}}(ds,d\kappa,du)
 +\int_0^tF_h^\varepsilon(m_s^{\varepsilon,h})\,ds.
\end{align*}
Here $m_{s-}^{\varepsilon,h}$ may be replaced by
$m_s^{\varepsilon,h}$ in the Lebesgue integral because the two differ only
at jump times.  We have therefore derived the exact compensated
representation
\begin{align}\label{fixed-h-BCD-representation}
 m_t^{\varepsilon,h}
 ={}&m_0^{\varepsilon,h}
 +\int_0^tF_h^\varepsilon(m_s^{\varepsilon,h})\,ds\notag\\
 &+\varepsilon\int_0^t
 \int_{(\mathcal E_h\sqcup\Gamma_h)\times[0,\infty)}
 G_h^\varepsilon(m_{s-}^{\varepsilon,h};\kappa,u)
 \widetilde N_h^{\varepsilon^{-1}}(ds,d\kappa,du).
\end{align}
This is the finite-dimensional analogue of
\eqref{BCD-abstract-Poisson-SPDE}.  Its coefficients depend on
$\varepsilon$ through the falling-factorial correction; consequently, we
will use \eqref{uniform-beta-epsilon-limit} when passing to the limit in the
controlled equations.

We next introduce the controlled Poisson random measures used in the
compactness and recovery arguments.  We use the controlled-Poisson construction
\eqref{BCD-controlled-PRM} with mark space
$\mathcal K_h\times[0,\infty)$ and intensity measure $\nu_h$.  Thus, on the
canonical augmented Poisson space of
Appendix~\ref{app:poisson-spde-ldp-framework}, a
control changes the intensity of the mark $(\kappa,u)$ without changing the
graphical jump rule.  Let $\mathcal P_h$ denote the predictable sigma-field
of the filtration on this augmented space.  Recall from
\eqref{poisson-entropy-density} that $\ell(r)=r\log r-r+1$, $r\geq0$,
with the convention $0\log0=0$.  Define the admissible control class by
\begin{align*}
 \mathcal A_h:=\Bigg\{&\varphi=(\varphi_\kappa)_{\kappa\in\mathcal K_h}:
 \varphi_\kappa:\Omega\times[0,T]\times[0,\infty)\to[0,\infty)
 \text{ is }\mathcal P_h\otimes\mathcal B([0,\infty))\text{-measurable},\\
 &\mathbb E\int_0^T\sum_{\kappa\in\mathcal K_h}\int_0^\infty
 \ell(\varphi_\kappa(t,u))\,du\,dt<\infty\Bigg\}.
\end{align*}
An element of $\mathcal A_h$ is therefore a nonnegative predictable mark
control with one component for each channel.  For
$\varphi\in\mathcal A_h$, let $N_h^{\varepsilon^{-1}\varphi}$ denote the
controlled Poisson random measure defined by
\eqref{BCD-controlled-PRM}.  In the coordinates $(t,\kappa,u)$, its
predictable compensator is
\begin{equation*}
 \varepsilon^{-1}\varphi_\kappa(t,u)dt\,\nu_h(d\kappa,du).
\end{equation*}

The recursive construction in the proof of
Theorem~\ref{thm:coarse-boltzmann-wellposedness} defines a measurable
graphical solution map $\mathcal G_{\varepsilon,h}$.  For a
locally finite marked measure $n$ on
$[0,T]\times\mathcal K_h\times[0,\infty)$, the path
$g=\mathcal G_{\varepsilon,h}(n)$ is the unique c\`adl\`ag solution of
\begin{align}
 g_t
 ={}&m_0^{\varepsilon,h}
 +\varepsilon\int_{(0,t]\times\mathcal K_h\times[0,\infty)}
 z_\kappa
 \boldsymbol1_{\{0\leq u\leq
 \beta_\kappa^\varepsilon(g_{s-})\}}
 n(ds,d\kappa,du).
 \label{fixed-h-graphical-solution-map}
\end{align}
The original and controlled mass processes are therefore represented by
\begin{align}
 m^{\varepsilon,h}
 &\stackrel{\mathrm{law}}=
 \mathcal G_{\varepsilon,h}(N_h^{\varepsilon^{-1}}),
 &m^{\varepsilon,h,\varphi}
 &:=\mathcal G_{\varepsilon,h}
 (N_h^{\varepsilon^{-1}\varphi}).
 \label{fixed-h-original-controlled-solution-maps}
\end{align}
For a fixed $\kappa\in\mathcal K_h$, the control $\varphi_\kappa$ changes
the intensity of the available marks on channel $\kappa$, while the indicator in
\eqref{fixed-h-graphical-solution-map} retains only the marks accepted by
the state-dependent rate.  In particular, the accepted channel-$\kappa$
marked count has predictable compensator
\begin{equation*}
 \varepsilon^{-1}
 \boldsymbol1_{\{0\leq u\leq
 \beta_\kappa^\varepsilon(m_{t-}^{\varepsilon,h,\varphi})\}}
 \varphi_\kappa(t,u)dt\,du,
\end{equation*}
and hence total predictable jump rate
\begin{equation*}
 \varepsilon^{-1}\int_0^{\beta_\kappa^\varepsilon(
 m_{t-}^{\varepsilon,h,\varphi})}
 \varphi_\kappa(t,u)\,du.
\end{equation*}
The finite-cost condition defining $\mathcal A_h$ makes this rate locally
integrable.  The control changes only the frequency of the proposed jumps,
not their jump vectors or acceptance rules.  Therefore
Lemma~\ref{lem:coarse-invariant-state-space} shows that the controlled path
remains in the unit-mass state space.  The elementary inequality
$r\leq\ell(r)+e-1$ for $r\geq0$, together with the uniform boundedness of
$\beta_\kappa^\varepsilon$ on the unit-mass simplex, bounds the expected
total accepted intensity on $[0,T]$.  Therefore the controlled graphical
equation is nonexplosive.
For $\varphi_\kappa\equiv1$ for every $\kappa\in\mathcal K_h$,
the two representations in
\eqref{fixed-h-original-controlled-solution-maps} have the same law.

The proof of Theorem~\ref{thm:fixed-h-rigorous-ldp} will apply the Poisson
variational representation to this graphical solution map.  Before doing
so, we establish the two estimates needed to pass to the small-noise limit:
first, compactness and a lower bound for arbitrary sequences of controls
with bounded entropy cost; second, a recovery construction for any path of
finite rate.  For a predictable mark control
$\varphi\in\mathcal A_h$, define its effective
channel flux and multiplier by
\begin{align}
 J_\kappa^{\varepsilon,\varphi}(t)
 &:=\int_0^{\beta_\kappa^\varepsilon(
 m^{\varepsilon,h,\varphi}(t))}
 \varphi_\kappa(t,u)\,du,
 \label{effective-mark-control-flux}\\
 q_\kappa^{\varepsilon,\varphi}(t)
 &:=
 \begin{cases}
 \displaystyle
 \frac{J_\kappa^{\varepsilon,\varphi}(t)}
 {\beta_\kappa^\varepsilon(m^{\varepsilon,h,\varphi}(t))},
 &\beta_\kappa^\varepsilon(m^{\varepsilon,h,\varphi}(t))>0,\\[3mm]
 1,&\beta_\kappa^\varepsilon(m^{\varepsilon,h,\varphi}(t))=0,
 \end{cases}
 \qquad \kappa\in\mathcal K_h.
 \label{effective-mark-control-multiplier}
\end{align}
Here $J_\kappa^{\varepsilon,\varphi}$ is the controlled channel intensity
after integration over the acceptance mark $u$ and removal of the common
factor $\varepsilon^{-1}$, while
$q_\kappa^{\varepsilon,\varphi}$ is its ratio to the nominal intensity.

The next lemma identifies every subsequential limit of controlled processes
whose expected entropy costs remain bounded.  It provides the compactness
and lower-bound ingredient in the Laplace principle.

\begin{lemma}
\label{lem:fixed-h-controlled-compactness-liminf}
Let $\varepsilon_n\downarrow0$ and let
$\varphi^n\in\mathcal A_h$ satisfy
\begin{equation}\label{fixed-h-uniform-controlled-entropy-bound}
 \sup_{n\geq1}\mathbb E\left[
 \int_0^T\sum_{\kappa\in\mathcal K_h}\int_0^\infty
 \ell(\varphi_\kappa^n(t,u))\,du\,dt\right]<\infty.
\end{equation}
Then the laws of
\begin{equation*}
 \left(m^{\varepsilon_n,h,\varphi^n},
 \bigl(J_\kappa^{\varepsilon_n,\varphi^n}(t)\,dt
 \bigr)_{\kappa\in\mathcal K_h}\right)
\end{equation*}
are tight.  If a subsequence converges in distribution to
$\bigl(m,(J_\kappa(t)dt)_{\kappa\in\mathcal K_h}\bigr)$, then, with
\begin{equation}\label{fixed-h-limit-control-defined}
 q_\kappa(t):=
 \begin{cases}
 J_\kappa(t)/\beta_\kappa(m(t)),&\beta_\kappa(m(t))>0,\\
 1,&\beta_\kappa(m(t))=0,
 \end{cases}
 \qquad \kappa\in\mathcal K_h,
\end{equation}
the limit path $m$ satisfies the controlled equation
\eqref{intro-fixed-h-rigorous-continuity} with control $(q_\kappa)_{\kappa\in\mathcal K_h}$; moreover, the limiting
control cost satisfies
\begin{align}
 \mathbb E\int_0^T\sum_{\kappa\in\mathcal K_h}
 \beta_\kappa(m(t))\ell(q_\kappa(t))\,dt
 \leq\liminf_{n\to\infty}
 \mathbb E\int_0^T\sum_{\kappa\in\mathcal K_h}\int_0^\infty
 \ell(\varphi_\kappa^n(t,u))\,du\,dt.
 \label{fixed-h-controlled-condition-cost-liminf}
\end{align}
\end{lemma}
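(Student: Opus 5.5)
The plan is to reduce the statement to the deterministic compactness argument of Lemma~\ref{lem:fixed-h-action-good-rigorous}, applied pathwise after a Skorokhod representation, with two additional ingredients. First, the martingale part of the controlled equation must vanish. Second, the mark-level entropy must dominate the channel-level cost $\beta\ell(q)$.

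\emph{Reduction of the cost.} For fixed $(t,\kappa)$, I would apply Jensen's inequality to the convex function $\ell$ on the interval $[0,\beta_\kappa^\varepsilon(m^{\varepsilon,h,\varphi}(t))]$. Since $\ell\geq0$, this gives
\begin{equation*}
 \int_0^\infty\ell(\varphi_\kappa(t,u))\,du
 \geq\int_0^{\beta_\kappa^\varepsilon}\ell(\varphi_\kappa(t,u))\,du
 \geq\beta_\kappa^\varepsilon\,\ell\bigl(q_\kappa^{\varepsilon,\varphi}(t)\bigr),
\end{equation*}
which is the pathwise bound $\beta^\varepsilon_\kappa\ell(q^{\varepsilon,\varphi}_\kappa)\leq$ mark entropy. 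Combined with \eqref{fixed-h-flux-fenchel} (with $\beta^\varepsilon$ in place of $\beta$) and the uniform bound $A_h$ on rates over the simplex, this yields
\begin{equation*}
 \int_E J_\kappa^{\varepsilon,\varphi}\,dt\leq s^{-1}\,(\text{entropy})+s^{-1}A_h(e^s-1)|E|.
\end{equation*}
Taking expectations together with \eqref{fixed-h-uniform-controlled-entropy-bound} gives tightness of $J_\kappa^{\varepsilon_n,\varphi^n}dt$ in the space of finite measures on $[0,T]$ with the weak topology, because the total masses have bounded first moments. Moreover, for any limit point the measure is absolutely continuous. To see this, I would apply the deterministic uniform-integrability argument on the event where the entropy is at most $L$, using Markov's inequality to control the complement, and then use lower semicontinuity of $\mu\mapsto\mu(E)$ on open sets.

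\emph{Martingale and tightness of paths.} I would write
\begin{equation*}
 m^{\varepsilon,h,\varphi}_t=m_0^{\varepsilon,h}+\sum_\kappa z_\kappa\int_0^tJ_\kappa^{\varepsilon,\varphi}ds+\varepsilon\sum_\kappa z_\kappa\widetilde P^{\varphi}_\kappa(t),
\end{equation*}
where the compensated accepted counts have predictable bracket $\varepsilon^{-1}\int_0^tJ_\kappa\,ds$. Doob's inequality then bounds $\mathbb E\sup_t|\text{martingale}|^2$ by $\varepsilon\,\mathbb E\int_0^TJ_\kappa\,dt\to0$. The drift part has the uniform-integrability modulus established above. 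Hence the paths are tight in $D$ with continuous limit points, via the Aldous criterion or directly since the jumps are of size $O(\varepsilon)$. I would pass to an a.s.\ convergent Skorokhod representation $(m^n,J^n)\to(m,J)$, with uniform convergence of the paths because the limit is continuous. Testing the integral equation against $\mathbf1_{[0,t]}$ at continuity times $t$ (all $t$, as $J\,dt$ is absolutely continuous) gives
\begin{equation*}
 m_t=m_0^h+\sum_\kappa z_\kappa\int_0^tJ_\kappa\,ds.
\end{equation*}
Here I use $m_0^{\varepsilon,h}\to m_0^h$.

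\emph{Liminf.} Set $a^n_\kappa=\beta^{\varepsilon_n}_\kappa(m^n)$. By \eqref{uniform-beta-epsilon-limit} and uniform convergence, $a^n_\kappa\to\beta_\kappa(m)$ uniformly. The duality in Lemma~\ref{lem:integral-relative-flux-duality} expresses $\int a\ell(J/a)$ as a supremum over continuous $\theta$ of functionals $\int\theta\,dJ-\int a(e^\theta-1)$, each of which is continuous under weak convergence of $J\,dt$ and uniform convergence of $a$. Hence the cost is pathwise lower semicontinuous:
\begin{equation*}
 \int\beta(m)\ell(J/\beta(m))\leq\liminf\int a^n\ell(J^n/a^n).
\end{equation*}
This forces $J=0$ on $\{\beta(m)=0\}$, so $q$ from \eqref{fixed-h-limit-control-defined} satisfies $J=\beta q$ and $(m,q)$ solves \eqref{intro-fixed-h-rigorous-continuity}. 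Fatou's lemma, followed by the Jensen bound, gives \eqref{fixed-h-controlled-condition-cost-liminf}, after passing to a subsequence realizing the $\liminf$ of the expectations.

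The main obstacle is the combination of weak-measure convergence of $J^n$ with the absolute continuity and the uniform integrability needed to identify the limiting skeleton. Weak convergence alone does not rule out atoms in $J\,dt$. I would handle this as sketched, by conditioning on entropy level sets, and if necessary by including the entropy itself as a tight real component of the tuple in the Skorokhod representation.
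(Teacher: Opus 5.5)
Your proposal is correct and follows essentially the same route as the paper: Jensen's inequality to dominate $\beta_\kappa^\varepsilon\ell(q_\kappa^{\varepsilon,\varphi})$ by the mark entropy, uniform integrability of the effective fluxes, vanishing of the compensated martingale via Doob/BDG, a Skorokhod representation to identify the limiting skeleton, and pathwise lower semicontinuity through Lemma~\ref{lem:integral-relative-flux-duality} followed by Fatou. The differences are minor: you skip the paper's normalization of the control to $1$ outside the acceptance interval (harmless since $\ell\geq0$), you use the Fenchel bound instead of the $\log(R/B_{\kappa,h})$ bound for uniform integrability, and you make the absolute continuity of the limiting flux measure explicit via closed level sets and Markov's inequality, where the paper only asserts it.
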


\begin{proof}
Fix $\varphi\in\mathcal A_h$.  For each $\kappa\in\mathcal K_h$, define
\begin{equation}\label{control-restricted-to-acceptance-interval}
 \widehat\varphi_\kappa(t,u)
 :=\begin{cases}
 \varphi_\kappa(t,u),
 &0\leq u\leq
 \beta_\kappa^\varepsilon(m^{\varepsilon,h,\varphi}(t-)),\\
 1,&u>\beta_\kappa^\varepsilon
 (m^{\varepsilon,h,\varphi}(t-)).
 \end{cases}
\end{equation}
The process $m^{\varepsilon,h,\varphi}(t-)$ is predictable, so
$\widehat\varphi\in\mathcal A_h$.  Moreover,
\begin{equation*}
 \boldsymbol1_{\{0\leq u\leq
 \beta_\kappa^\varepsilon(m^{\varepsilon,h,\varphi}(t-))\}}
 \widehat\varphi_\kappa(t,u)
 =
 \boldsymbol1_{\{0\leq u\leq
 \beta_\kappa^\varepsilon(m^{\varepsilon,h,\varphi}(t-))\}}
 \varphi_\kappa(t,u).
\end{equation*}
Thus the original controlled path also solves the graphical equation
\eqref{fixed-h-graphical-solution-map} with control
$\widehat\varphi$; uniqueness of the recursive graphical
construction gives
$m^{\varepsilon,h,\widehat\varphi}
=m^{\varepsilon,h,\varphi}$ almost surely.  Since $\ell\geq0$ and
$\ell(1)=0$,
\begin{align*}
 \int_0^\infty\ell(\widehat\varphi_\kappa(t,u))\,du
 &=\int_0^{\beta_\kappa^\varepsilon(
 m^{\varepsilon,h,\varphi}(t-))}
 \ell(\varphi_\kappa(t,u))\,du\\
 &\leq\int_0^\infty\ell(\varphi_\kappa(t,u))\,du.
\end{align*}
Consequently, we may assume without loss of generality that the control
equals one outside the current acceptance interval.
By \eqref{effective-mark-control-flux}--\eqref{effective-mark-control-multiplier},
$q_\kappa^{\varepsilon,\varphi}(t)$ is the average of
$\varphi_\kappa(t,\cdot)$ over the current acceptance interval whenever
that interval has positive length.  Jensen's inequality for the convex
function $\ell$, applied with respect to normalized Lebesgue measure on
that interval, therefore gives
\begin{equation}\label{mark-control-jensen}
 \int_0^\infty\ell(\varphi_\kappa(t,u))\,du
 \geq \beta_\kappa^\varepsilon(m^{\varepsilon,h,\varphi}(t))
 \ell(q_\kappa^{\varepsilon,\varphi}(t)).
\end{equation}
When the acceptance interval has length zero,
$q_\kappa^{\varepsilon,\varphi}(t)=1$ by definition, and both sides of
\eqref{mark-control-jensen} are zero after the preceding normalization of
the control.

For the sequence of controls satisfying
\eqref{fixed-h-uniform-controlled-entropy-bound},
\eqref{mark-control-jensen} gives uniform integrability of the effective
fluxes $J_\kappa^{\varepsilon_n,\varphi^n}$ on
$\Omega\times[0,T]$.  Indeed, for every fixed
$\kappa\in\mathcal K_h$, the nominal rates satisfy
\begin{equation*}
 B_{\kappa,h}:=
 \sup_{0<\varepsilon\leq1}\ 
 \sup_{\substack{m\in\mathbb R_{\geq0}^{\mathcal I_h}\\
                  \sum_{a\in\mathcal I_h}m_a=1}}
 \beta_\kappa^\varepsilon(m)<\infty.
\end{equation*}
Since
$J_\kappa^{\varepsilon_n,\varphi^n}
=\beta_\kappa^{\varepsilon_n}(m^{\varepsilon_n,h,\varphi^n})
q_\kappa^{\varepsilon_n,\varphi^n}$, whenever the nominal rate is
positive we have
\begin{align*}
 &\beta_\kappa^{\varepsilon_n}(m^{\varepsilon_n,h,\varphi^n})
 \ell(q_\kappa^{\varepsilon_n,\varphi^n})\\
 &\quad=
 J_\kappa^{\varepsilon_n,\varphi^n}
 \log\left(\frac{J_\kappa^{\varepsilon_n,\varphi^n}}
 {\beta_\kappa^{\varepsilon_n}(m^{\varepsilon_n,h,\varphi^n})}\right)
 -J_\kappa^{\varepsilon_n,\varphi^n}
 +\beta_\kappa^{\varepsilon_n}(m^{\varepsilon_n,h,\varphi^n}).
\end{align*}
Therefore, on the set
$\{J_\kappa^{\varepsilon_n,\varphi^n}(t)>R\}$, with
$R>eB_{\kappa,h}$, we have
\begin{align*}
 \beta_\kappa^{\varepsilon_n}(m^{\varepsilon_n,h,\varphi^n}(t))
 \ell(q_\kappa^{\varepsilon_n,\varphi^n}(t))\geq
 \left(\log\frac{R}{B_{\kappa,h}}-1\right)
 J_\kappa^{\varepsilon_n,\varphi^n}(t).
\end{align*}
Consequently, \eqref{mark-control-jensen} and
\eqref{fixed-h-uniform-controlled-entropy-bound} imply
\begin{align*}
 &\left(\log\frac{R}{B_{\kappa,h}}-1\right)
 \sup_{n\geq1}\mathbb E\int_0^T
 J_\kappa^{\varepsilon_n,\varphi^n}(t)
 \boldsymbol1_{\{J_\kappa^{\varepsilon_n,\varphi^n}(t)>R\}}\,dt\\
 &\quad\leq
 \sup_{n\geq1}\mathbb E\int_0^T
 \int_0^\infty\ell(\varphi_\kappa^n(t,u))\,du\,dt<\infty.
\end{align*}
Because the coefficient on the left tends to infinity with $R$, this
proves
\begin{equation*}
 \lim_{R\to\infty}\sup_{n\geq1}\mathbb E\int_0^T
 J_\kappa^{\varepsilon_n,\varphi^n}(t)
 \boldsymbol1_{\{J_\kappa^{\varepsilon_n,\varphi^n}(t)>R\}}\,dt=0.
\end{equation*}
The controlled paths have the decomposition
\begin{equation}\label{fixed-h-controlled-semimartingale}
 m^{\varepsilon_n,h,\varphi^n}(t)=m_0^{\varepsilon_n,h}
 +\int_0^t\sum_{\kappa\in\mathcal K_h}
 J_\kappa^{\varepsilon_n,\varphi^n}(s)z_\kappa\,ds
 +R^{\varepsilon_n,\varphi^n}(t).
\end{equation}
The martingale $R^{\varepsilon_n,\varphi^n}$ is given by the following
compensated integral.  Set
\begin{align*}
 \widetilde N_h^{\varepsilon_n^{-1}\varphi^n}
 (ds,d\kappa,du):=N_h^{\varepsilon_n^{-1}\varphi^n}(ds,d\kappa,du)
 -\varepsilon_n^{-1}\varphi_\kappa^n(s,u)
 \,ds\,\nu_h(d\kappa,du).
\end{align*}
Then
\begin{align}
 R^{\varepsilon_n,\varphi^n}(t)
 :={}&\varepsilon_n
 \int_{(0,t]\times\mathcal K_h\times[0,\infty)}
 z_\kappa
 \boldsymbol1_{\{0\leq u\leq
 \beta_\kappa^{\varepsilon_n}(
 m^{\varepsilon_n,h,\varphi^n}(s-))\}}
 \widetilde N_h^{\varepsilon_n^{-1}\varphi^n}
 (ds,d\kappa,du).
 \label{fixed-h-controlled-martingale-explicit}
\end{align}
It is an $\mathbb R^{\mathcal I_h}$-valued square-integrable martingale,
and its predictable quadratic-covariation matrix is
\begin{align*}
 \big\langle R^{\varepsilon_n,\varphi^n}\big\rangle_t
 =\varepsilon_n\int_0^t\sum_{\kappa\in\mathcal K_h}
 J_\kappa^{\varepsilon_n,\varphi^n}(s)
 z_\kappa z_\kappa^{\mathsf T}\,ds.
\end{align*}
Since the jump vectors are fixed and finite in number, the
Burkholder--Davis--Gundy inequality and the last identity give
\begin{equation}\label{fixed-h-controlled-martingale-vanishing}
 \mathbb E\sup_{t\leq T}|R^{\varepsilon_n,\varphi^n}(t)|^2
 \leq C_h\varepsilon_n\,
 \mathbb E\int_0^T\sum_{\kappa\in\mathcal K_h}
 J_\kappa^{\varepsilon_n,\varphi^n}(s)\,ds
 \longrightarrow0
 \qquad\text{as }n\to\infty.
\end{equation}
The expectation-valued version of the uniform-integrability compactness
argument in the proof of
Lemma~\ref{lem:fixed-h-action-good-rigorous}, together with
\eqref{fixed-h-controlled-martingale-vanishing}, shows that the joint laws
of
\begin{equation*}
 \left(m^{\varepsilon_n,h,\varphi^n},
 \bigl(J_\kappa^{\varepsilon_n,\varphi^n}(t)\,dt
 \bigr)_{\kappa\in\mathcal K_h}\right)
\end{equation*}
are tight on
\begin{equation*}
 D([0,T];\mathbb R^{\mathcal I_h})
 \times\mathcal M_+([0,T])^{\mathcal K_h}.
\end{equation*}
Here the path space carries the Skorokhod $J_1$ topology and the measure
space carries the topology of weak convergence.  The same argument shows
that the path laws are $C$-tight and that every limiting flux measure is
almost surely absolutely continuous with respect to $dt$.  Hence, after
passing to a subsequence and using a Skorokhod representation, the limit
has the form
$(m,(J_\kappa(t)dt)_{\kappa\in\mathcal K_h})$, where $m$ is continuous,
and the path convergence is uniform.  We may therefore pass to the limit
in \eqref{fixed-h-controlled-semimartingale}; using
\eqref{fixed-h-controlled-martingale-vanishing} gives
\begin{equation}\label{controlled-limit-effective-flux-equation}
 m(t)=m_0^h+\sum_{\kappa\in\mathcal K_h}z_\kappa
 \int_0^tJ_\kappa(s)\,ds.
\end{equation}
The limiting equation is first written in terms of the effective fluxes
$J_\kappa$.  Define the corresponding multiplier $q_\kappa$ from
$(m,J_\kappa)$ by
\eqref{fixed-h-limit-control-defined}.  It remains to verify that
$J_\kappa=0$ almost everywhere on
$\{t:\beta_\kappa(m(t))=0\}$; the entropy lower bound below gives
this fact and hence shows that
$J_\kappa=\beta_\kappa(m)q_\kappa$ almost everywhere.
For $\kappa\in\mathcal K_h$, set
\begin{equation*}
 a_{\kappa,n}(t):=
 \beta_\kappa^{\varepsilon_n}
 (m^{\varepsilon_n,h,\varphi^n}(t)),
 \qquad
 a_\kappa(t):=\beta_\kappa(m(t)).
\end{equation*}
The uniform convergence of the paths, continuity of $\beta_\kappa$, and
\eqref{uniform-beta-epsilon-limit} imply
\begin{equation*}
 \sup_{t\in[0,T]}|a_{\kappa,n}(t)-a_\kappa(t)|
 \longrightarrow0
 \qquad\text{almost surely}.
\end{equation*}
Moreover, weak convergence of the flux measures gives, for every
$\theta\in C([0,T])$,
\begin{equation*}
 \int_0^T\theta(t)J_\kappa^{\varepsilon_n,\varphi^n}(t)\,dt
 \longrightarrow
 \int_0^T\theta(t)J_\kappa(t)\,dt
 \qquad\text{almost surely}.
\end{equation*}
Consequently,
\begin{align*}
 &\int_0^T\left\{
 \theta(t)J_\kappa^{\varepsilon_n,\varphi^n}(t)
 -a_{\kappa,n}(t)(e^{\theta(t)}-1)\right\}\,dt\\
 &\quad\longrightarrow
 \int_0^T\left\{
 \theta(t)J_\kappa(t)
 -a_\kappa(t)(e^{\theta(t)}-1)\right\}\,dt
 \qquad\text{almost surely}.
\end{align*}
Taking the supremum over $\theta$ in the dual representation
\eqref{integral-relative-flux-duality}, and then summing over the finitely
many channels, yields the pathwise lower-semicontinuity estimate
\begin{align*}
 \int_0^T\sum_{\kappa\in\mathcal K_h}
 a_\kappa(t)\ell\left(\frac{J_\kappa(t)}{a_\kappa(t)}\right)\,dt\leq\liminf_{n\to\infty}
 \int_0^T\sum_{\kappa\in\mathcal K_h}
 a_{\kappa,n}(t)
 \ell\left(\frac{J_\kappa^{\varepsilon_n,\varphi^n}(t)}
 {a_{\kappa,n}(t)}\right)\,dt.
\end{align*}
Here the extended-value convention in
Lemma~\ref{lem:integral-relative-flux-duality} is used when a nominal rate
vanishes.  By \eqref{effective-mark-control-multiplier}, the integrand on
the right is
$a_{\kappa,n}\ell(q_\kappa^{\varepsilon_n,\varphi^n})$.
Fatou's lemma, followed by \eqref{mark-control-jensen}, therefore gives
\begin{align*}
 \mathbb E\int_0^T\sum_{\kappa\in\mathcal K_h}
 a_\kappa(t)\ell\left(\frac{J_\kappa(t)}{a_\kappa(t)}\right)\,dt&\leq\liminf_{n\to\infty}
 \mathbb E\int_0^T\sum_{\kappa\in\mathcal K_h}
 a_{\kappa,n}(t)
 \ell(q_\kappa^{\varepsilon_n,\varphi^n}(t))\,dt\\
 &\leq\liminf_{n\to\infty}
 \mathbb E\int_0^T\sum_{\kappa\in\mathcal K_h}\int_0^\infty
 \ell(\varphi_\kappa^n(t,u))\,du\,dt.
\end{align*}
The last quantity is finite by
\eqref{fixed-h-uniform-controlled-entropy-bound}; hence
$J_\kappa=0$ almost everywhere on $\{a_\kappa=0\}$.  For the limiting
multiplier $q$ defined in \eqref{fixed-h-limit-control-defined}, we thus
have $J_\kappa=a_\kappa q_\kappa$ almost everywhere, and the preceding
inequality becomes
\begin{equation}\label{fixed-h-expected-action-liminf}
 \mathbb E\int_0^T\sum_{\kappa\in\mathcal K_h}
 \beta_\kappa(m(t))\ell(q_\kappa(t))\,dt
 \leq\liminf_{n\to\infty}
 \mathbb E\int_0^T\sum_{\kappa\in\mathcal K_h}\int_0^\infty
 \ell(\varphi_\kappa^n(t,u))\,du\,dt.
\end{equation}
Substituting
$J_\kappa=\beta_\kappa(m)q_\kappa$ into
\eqref{controlled-limit-effective-flux-equation} gives
\begin{equation*}
 m(t)=m_0^h+\sum_{\kappa\in\mathcal K_h}z_\kappa
 \int_0^t\beta_\kappa(m(s))q_\kappa(s)\,ds,
 \qquad t\in[0,T],
\end{equation*}
which is the integral form of
\eqref{intro-fixed-h-rigorous-continuity} with control
$(q_\kappa)_{\kappa\in\mathcal K_h}$.  Hence the limit is a solution of the
deterministic skeleton equation obtained after the martingale part vanishes
as $\varepsilon\to0$.  Every limit of a
bounded-cost sequence is an admissible controlled path in the rate
function \eqref{intro-fixed-h-rigorous-rate} and satisfies the required
cost lower bound.
\end{proof}

The preceding lemma provides the compactness and liminf estimates.  For the
opposite Laplace bound, we construct predictable mark controls whose
paths and entropy costs approximate any prescribed path of finite rate.

\begin{lemma}
\label{lem:fixed-h-controlled-recovery}
For every $m$ with $I_h^{m_0^h}(m)<\infty$ and every $\eta>0$, there is a
family $(\varphi^\varepsilon)_\varepsilon\subset\mathcal A_h$, indexed by
$\varepsilon\downarrow0$ with $\varepsilon^{-1}\in\mathbb N$, such that
\begin{equation*}
 m^{\varepsilon,h,\varphi^\varepsilon}\Longrightarrow m
 \quad\text{in }
 D([0,T];\mathbb R_{\geq0}^{\mathcal I_h})
 \text{ endowed with the Skorokhod $J_1$ topology},
\end{equation*}
and
\begin{equation*}
 \limsup_{\varepsilon\downarrow0}\mathbb E\left[
 \int_0^T\sum_{\kappa\in\mathcal K_h}\int_0^\infty
 \ell(\varphi_\kappa^\varepsilon(t,u))\,du\,dt\right]
 \leq I_h^{m_0^h}(m)+\eta.
\end{equation*}
\end{lemma}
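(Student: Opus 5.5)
The recovery construction has three stages: first regularize the target path, then build a feedback mark control that follows the regularized skeleton, and finally pass $\varepsilon\downarrow0$ using Lemma~\ref{lem:fixed-h-controlled-compactness-liminf}. Fix $m$ with $I_h^{m_0^h}(m)<\infty$ and let $q$ be an optimal control, which exists by Lemma~\ref{lem:fixed-h-action-good-rigorous}.

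\emph{Regularizing the path.} We apply Lemma~\ref{lem:fixed-h-vacuum-regularization} with $\bar m=m_0^h$. Since $m_0^h$ has unit mass, the buffered path $m^\delta=(1-\delta)m+\delta m_0^h$ starts at $m_0^h$, stays in the unit simplex, and satisfies $m_a^\delta\geq\delta m_{0,a}^h>0$. Its cost is within $\eta/3$ of $I_h^{m_0^h}(m)$ for small $\delta$. With a uniform positive lower bound on the cell masses, every nominal rate is bounded below on the support of the channels; transport rates and off-diagonal collision rates are then bounded below by positive constants.

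Next we truncate and smooth the control. Set $q^{\delta,N}:=(q^\delta\wedge N)\vee N^{-1}$ and mollify it in time to obtain a continuous control $\tilde q$ with $N^{-1}\leq\tilde q\leq N$, and let $\tilde m$ solve the skeleton equation driven by $\tilde q$. Lemma~\ref{lem:bounded-control-skeleton-stability} gives $\tilde m\to m^\delta$ uniformly as $N\to\infty$ and the mollifier shrinks. This convergence follows because $q^{\delta,N}\to q^\delta$ and the fluxes $\beta_\kappa(m^\delta)q^{\delta,N}_\kappa$ are uniformly integrable. The bounds $\beta\leq A_h$ and $\ell(r)\leq \ell(N)+\ell(1/N)$ then give convergence of the costs $\int\beta_\kappa(\tilde m)\ell(\tilde q_\kappa)$ to the cost of $(m^\delta,q^\delta)$ by dominated convergence. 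The tail $\{q^\delta>N\}$ is handled by noting that $\ell$ is monotone above $1$ and that $\beta_\kappa(m^\delta)\ell(q^\delta_\kappa)$ is integrable. For small $\delta$ the path $\tilde m$ is still strictly positive, with a uniform lower bound.

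\emph{Feedback control.} We define the feedback mark control
\begin{equation*}
 \varphi^\varepsilon_\kappa(t,u):=\tilde q_\kappa(t)\quad\text{for }u\leq\beta^\varepsilon_\kappa(m^{\varepsilon,h,\varphi^\varepsilon}(t-)),\qquad
 \varphi^\varepsilon_\kappa(t,u):=1\quad\text{otherwise}.
\end{equation*}
This control is predictable and bounded. Its entropy cost is
\begin{equation*}
 \int_0^T\sum_\kappa\beta^\varepsilon_\kappa(m^{\varepsilon,h,\varphi^\varepsilon})\ell(\tilde q_\kappa)\,dt,
\end{equation*}
which is bounded uniformly because $\ell(\tilde q)\leq C_N$; so $\varphi^\varepsilon\in\mathcal A_h$.

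\emph{Passing to the limit.} The effective fluxes are $J_\kappa=\beta^\varepsilon_\kappa(m^{\varepsilon,h,\varphi^\varepsilon})\tilde q_\kappa$. The controlled process therefore solves the semimartingale equation \eqref{fixed-h-controlled-semimartingale}, whose martingale part vanishes by \eqref{fixed-h-controlled-martingale-vanishing}. By Lemma~\ref{lem:fixed-h-controlled-compactness-liminf}, any subsequential limit $\hat m$ satisfies $\dot{\hat m}=\sum_\kappa\beta_\kappa(\hat m)\tilde q_\kappa z_\kappa$ with $\hat m(0)=m_0^h$. Here we use $m_0^{\varepsilon,h}\to m_0^h$, together with \eqref{uniform-beta-epsilon-limit} to remove the falling-factorial correction and continuity of $\beta_\kappa$. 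This limiting equation has Lipschitz coefficients, so its solution is unique and $\hat m=\tilde m$. Hence $m^{\varepsilon,h,\varphi^\varepsilon}\Rightarrow\tilde m$ uniformly, so in $J_1$. The expected costs converge to $\int\sum_\kappa\beta_\kappa(\tilde m)\ell(\tilde q_\kappa)\,dt$ by bounded convergence, since the integrand is continuous in the path and bounded by $A_hC_N$.

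The claim as stated asks for convergence to $m$ itself, not merely to a nearby path. We therefore take a diagonal sequence: choose $\delta$, $N$ and the mollification scale as functions of $\varepsilon$, tending to their limits slowly enough that the weak convergence (via a metric for convergence in law) and the cost bound both hold along the sequence. The cost is then at most $I_h^{m_0^h}(m)+\eta$, and the path converges to $m$ because $\tilde m\to m^\delta\to m$ uniformly. The main obstacle is the truncation step. One must ensure that replacing $q^\delta$ by a bounded, strictly positive, continuous control changes the path only slightly in the uniform norm while the cost does not increase beyond $\eta$. Strict positivity of $m^\delta$ is what makes the rates nondegenerate, so that the uniqueness argument and the cost estimate both go through.
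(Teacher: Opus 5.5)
Your proposal is correct and follows the paper's architecture. Both use the buffer of Lemma~\ref{lem:fixed-h-vacuum-regularization} with $\bar m=m_0^h$, the feedback mark control equal to the multiplier on the acceptance interval $[0,\beta_\kappa^\varepsilon(m(t-))]$ and to $1$ above it, and a diagonal choice of parameters.

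The difference is in the passage $\varepsilon\downarrow0$. The paper neither truncates nor mollifies. Once $\beta_\kappa(m^\delta)$ is bounded below, $\ell(q_\kappa^\delta)\in L^1(0,T)$, hence $q_\kappa^\delta\in L^1(0,T)$ by $r\leq\ell(r)+e-1$. The paper then compares the controlled process with $m^\delta$ by a pathwise Gronwall estimate whose Lipschitz weight $\sum_\kappa q_\kappa^\delta$ is merely integrable. It combines this with $\mathbb E\sup_t|R^{\varepsilon,\delta}(t)|^2\leq C_h\varepsilon\int_0^T\sum_\kappa q_\kappa^\delta\,dt$. This gives $\mathbb E\sup_t|m^{\varepsilon,h,\delta}(t)-m^\delta(t)|\to0$ quantitatively. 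The cost error is then at most $C_h\bigl(\varepsilon+\mathbb E\sup_t|m^{\varepsilon,h,\delta}(t)-m^\delta(t)|\bigr)\int_0^T\sum_\kappa\ell(q_\kappa^\delta)\,dt$.

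So the truncation step you call the main obstacle is avoidable. Your route buys a soft identification of the limit: the compactness Lemma~\ref{lem:fixed-h-controlled-compactness-liminf}, uniqueness for a Lipschitz ODE, and bounded convergence of costs. The price is an extra approximation layer and no rate.

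If you keep the extra layer, two justifications need repair.
\begin{itemize}
\item Lemma~\ref{lem:bounded-control-skeleton-stability} requires a single bound $n$ for all controls, so it does not give $\tilde m\to m^\delta$ as $N\to\infty$. Use instead a direct Gronwall argument driven by $\tilde q\to q^\delta$ in $L^1(0,T)$, which holds because $q^\delta\in L^1$.
\item $\ell(N)+\ell(1/N)$ cannot serve as a dominating function, since it depends on $N$. The right bound is $\ell\bigl((q\wedge N)\vee N^{-1}\bigr)\leq\ell(q)$ pointwise, because $\ell$ decreases on $[0,1]$ and increases on $[1,\infty)$; your tail remark essentially contains this. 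After mollification, extend the control by $1$ outside $[0,T]$. Jensen's inequality then gives $\int_0^T\ell(\tilde q)\,dt\leq\int_0^T\ell(q^\delta)\,dt$, and Fatou plus Scheff\'e's lemma yield $\ell(\tilde q)\to\ell(q^\delta)$ in $L^1(0,T)$.
\end{itemize}

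Finally, positivity of $\tilde m$ is not used in your argument. In any case it holds for large $N$ at fixed $\delta$, not for small $\delta$.
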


\begin{proof}
By the definition of $I_h^{m_0^h}(m)$, choose an admissible multiplier
$q=(q_\kappa)_{\kappa\in\mathcal K_h}$ such that
\begin{equation*}
 \int_0^T\sum_{\kappa\in\mathcal K_h}
 \beta_\kappa(m(t))\ell(q_\kappa(t))\,dt
 \leq I_h^{m_0^h}(m)+\eta.
\end{equation*}
Apply
Lemma~\ref{lem:fixed-h-vacuum-regularization} with
$\bar m=m_0^h$, and recall that the regularized path $m^\delta$ is defined
by \eqref{fixed-h-buffer-path}.  For fixed $\delta>0$ all
coordinates of $m^\delta$ are bounded away from zero.  The regularized
control $q^\delta$ is defined by \eqref{fixed-h-buffer-control}.  Since
$m_a^\delta(t)\geq\delta m_{0,a}^h$ for every
$a\in\mathcal I_h$ and every $t\in[0,T]$, each active channel has a
strictly positive nominal rate along $m^\delta$.  More precisely,
\begin{equation*}
 \inf_{t\in[0,T]}\beta_\kappa(m^\delta(t))
 \geq
 \begin{cases}
  \delta m_{0,a}^hT_{ab}^h,
  &\kappa=(a,b)\in\mathcal E_h,\\[1mm]
  \displaystyle\frac{\delta^2}{2}K_\kappa^h
  m_{0,a}^hm_{0,b}^h,
  &\kappa=(a,b;c,d)\in\Gamma_h,
 \end{cases}
 \quad >0.
\end{equation*}
The estimate proved in
Lemma~\ref{lem:fixed-h-vacuum-regularization} gives
\begin{equation*}
 \int_0^T\beta_\kappa(m^\delta(t))
 \ell(q_\kappa^\delta(t))\,dt<\infty,
\end{equation*}
and hence the preceding lower bound implies
$\ell(q_\kappa^\delta)\in L^1(0,T)$.  Finally, the scalar inequality
$r\leq\ell(r)+e-1$ for $r\geq0$ yields
\begin{equation*}
 q_\kappa^\delta,\ \ell(q_\kappa^\delta)\in L^1(0,T),
 \qquad \kappa\in\mathcal K_h.
\end{equation*}
Moreover, since the nominal rates are bounded on the unit simplex,
\begin{equation*}
 \int_0^T\sup_{m:\,\sum_am_a=1}
 \beta_\kappa^\varepsilon(m)q_\kappa^\delta(t)\,dt<\infty,
 \qquad \kappa\in\mathcal K_h.
\end{equation*}
Therefore the controlled graphical equation is nonexplosive.

For fixed $\varepsilon,\delta>0$, define a feedback mark control jointly
with its controlled path $m^{\varepsilon,h,\delta}$ by
\begin{align}
 \varphi_\kappa^{\varepsilon,\delta}(t,u)
 :=q_\kappa^\delta(t)
 \boldsymbol1_{\{0\leq u\leq
 \beta_\kappa^\varepsilon(m^{\varepsilon,h,\delta}(t-))\}}
 +\boldsymbol1_{\{u>
 \beta_\kappa^\varepsilon(m^{\varepsilon,h,\delta}(t-))\}},
 \qquad \kappa\in\mathcal K_h,
 \label{fixed-h-recovery-mark-control}
\end{align}
where
$m^{\varepsilon,h,\delta}:=
m^{\varepsilon,h,\varphi^{\varepsilon,\delta}}$ is the solution of the
graphical equation \eqref{fixed-h-graphical-solution-map}.  This control is
predictable because the left-limit process
$m^{\varepsilon,h,\delta}(t-)$ is predictable.  Moreover,
\begin{align}
 \int_0^{\beta_\kappa^\varepsilon(
 m^{\varepsilon,h,\delta}(t-))}
 \varphi_\kappa^{\varepsilon,\delta}(t,u)\,du
 &=\beta_\kappa^\varepsilon(m^{\varepsilon,h,\delta}(t-))
 q_\kappa^\delta(t),
 \label{fixed-h-recovery-effective-flux}\\
 \int_0^\infty
 \ell(\varphi_\kappa^{\varepsilon,\delta}(t,u))\,du
 &=\beta_\kappa^\varepsilon(m^{\varepsilon,h,\delta}(t-))
 \ell(q_\kappa^\delta(t)),
 \label{fixed-h-recovery-mark-cost}
\end{align}
where the second identity uses $\ell(1)=0$.  The boundedness of the nominal
rates on the unit-mass simplex and
$\ell(q_\kappa^\delta)\in L^1(0,T)$ show that
$\varphi^{\varepsilon,\delta}\in\mathcal A_h$.

To derive the equation for the controlled path explicitly, write
\begin{align*}
 \widetilde N_h^{\varepsilon^{-1}\varphi^{\varepsilon,\delta}}
 (ds,d\kappa,du)
 :={}&N_h^{\varepsilon^{-1}\varphi^{\varepsilon,\delta}}
 (ds,d\kappa,du)\\
 &-\varepsilon^{-1}
 \varphi_\kappa^{\varepsilon,\delta}(s,u)
 \,ds\,\nu_h(d\kappa,du),
\end{align*}
and set
\begin{align*}
 R^{\varepsilon,\delta}(t)
 :={}&\varepsilon
 \int_{(0,t]\times\mathcal K_h\times[0,\infty)}
 z_\kappa
 \boldsymbol1_{\{0\leq u\leq
 \beta_\kappa^\varepsilon(
 m^{\varepsilon,h,\delta}(s-))\}}
 \widetilde N_h^{\varepsilon^{-1}\varphi^{\varepsilon,\delta}}
 (ds,d\kappa,du).
\end{align*}
This is the compensated martingale part of the graphical equation
\eqref{fixed-h-graphical-solution-map}.  Separating the controlled Poisson
random measure into its compensated part and its predictable compensator
gives
\begin{align*}
 m^{\varepsilon,h,\delta}(t)
 ={}&m_0^{\varepsilon,h}+R^{\varepsilon,\delta}(t)\\
 &+\int_0^t\sum_{\kappa\in\mathcal K_h}
 z_\kappa\left[
 \int_0^{\beta_\kappa^\varepsilon(
 m^{\varepsilon,h,\delta}(s-))}
 \varphi_\kappa^{\varepsilon,\delta}(s,u)\,du
 \right]ds.
\end{align*}
The factor $\varepsilon$ in each jump cancels the factor
$\varepsilon^{-1}$ in the compensator.  By
\eqref{fixed-h-recovery-effective-flux}, the expression in square brackets
is
\begin{equation*}
 \beta_\kappa^\varepsilon(m^{\varepsilon,h,\delta}(s-))
 q_\kappa^\delta(s).
\end{equation*}
Since a c\`adl\`ag path and its left-limit path differ only at its jump
times, the left limit may be replaced by the value at $s$ in the Lebesgue
integral.  We therefore obtain
\begin{align}
 m^{\varepsilon,h,\delta}(t)
 ={}&m_0^{\varepsilon,h}
 +\int_0^t\sum_{\kappa\in\mathcal K_h}
 \beta_\kappa^\varepsilon(m^{\varepsilon,h,\delta}(s))
 q_\kappa^\delta(s)z_\kappa\,ds
 +R^{\varepsilon,\delta}(t),
 \label{fixed-h-recovery-controlled-equation}
\end{align}
which is \eqref{fixed-h-controlled-semimartingale} specialized to the
recovery control.  The predictable quadratic covariation of
$R^{\varepsilon,\delta}$ gives
\begin{equation}
 \mathbb E\sup_{t\leq T}|R^{\varepsilon,\delta}(t)|^2
 \leq C_h\varepsilon\int_0^T
 \sum_{\kappa\in\mathcal K_h}q_\kappa^\delta(s)\,ds
 \longrightarrow0.
 \label{fixed-h-recovery-martingale-bound}
\end{equation}
On the other hand, the skeleton equation for $(m^\delta,q^\delta)$ reads
\begin{equation}
 m^\delta(t)=m_0^h+\int_0^t
 \sum_{\kappa\in\mathcal K_h}
 \beta_\kappa(m^\delta(s))q_\kappa^\delta(s)z_\kappa\,ds.
 \label{fixed-h-regularized-skeleton-integral}
\end{equation}
The nominal rates are Lipschitz on the unit-mass simplex, and
\eqref{uniform-beta-epsilon-limit} gives, uniformly on that simplex,
\begin{equation*}
 |\beta_\kappa^\varepsilon(x)-\beta_\kappa(y)|
 \leq C_h\bigl(\varepsilon+|x-y|\bigr),
 \qquad \kappa\in\mathcal K_h.
\end{equation*}
Subtracting \eqref{fixed-h-regularized-skeleton-integral} from
\eqref{fixed-h-recovery-controlled-equation} therefore yields
\begin{align*}
 \sup_{r\leq t}|m^{\varepsilon,h,\delta}(r)-m^\delta(r)|
 &\leq |m_0^{\varepsilon,h}-m_0^h|
 +\sup_{r\leq t}|R^{\varepsilon,\delta}(r)|\\
 &\quad+C_h\int_0^t
 \bigl(\varepsilon+
 \sup_{u\leq s}|m^{\varepsilon,h,\delta}(u)-m^\delta(u)|\bigr)
 \sum_{\kappa\in\mathcal K_h}q_\kappa^\delta(s)\,ds.
\end{align*}
Since $\sum_{\kappa\in\mathcal K_h}q_\kappa^\delta\in L^1(0,T)$,
the integral form of Gronwall's inequality and
\eqref{fixed-h-recovery-martingale-bound} imply
\begin{align}
 &\mathbb E\sup_{t\leq T}
 |m^{\varepsilon,h,\delta}(t)-m^\delta(t)|\notag\\
 &\quad\leq
 \left(|m_0^{\varepsilon,h}-m_0^h|
 +\bigl(\mathbb E\sup_{t\leq T}
 |R^{\varepsilon,\delta}(t)|^2\bigr)^{1/2}
 +C_h\varepsilon\int_0^T\sum_{\kappa\in\mathcal K_h}
 q_\kappa^\delta(s)\,ds\right)\notag\\
 &\qquad\times
 \exp\left\{C_h\int_0^T\sum_{\kappa\in\mathcal K_h}
 q_\kappa^\delta(s)\,ds\right\}
 \longrightarrow0.
 \label{fixed-h-recovery-path-convergence}
\end{align}
Finally, \eqref{fixed-h-recovery-mark-cost}, the Lipschitz estimate for the
nominal rates, and \eqref{fixed-h-recovery-path-convergence} give
\begin{align*}
 &\left|\mathbb E\int_0^T\sum_{\kappa\in\mathcal K_h}
 \int_0^\infty\ell(\varphi_\kappa^{\varepsilon,\delta}(t,u))\,du\,dt
 -\int_0^T\sum_{\kappa\in\mathcal K_h}
 \beta_\kappa(m^\delta(t))\ell(q_\kappa^\delta(t))\,dt\right|\\
 &\quad\leq C_h\left(\varepsilon+
 \mathbb E\sup_{t\leq T}
 |m^{\varepsilon,h,\delta}(t)-m^\delta(t)|\right)
 \int_0^T\sum_{\kappa\in\mathcal K_h}
 \ell(q_\kappa^\delta(t))\,dt
 \longrightarrow0.
\end{align*}
Thus, for every fixed $\delta>0$, the controlled paths converge to
$m^\delta$ and their entropy costs converge to the action of
$(m^\delta,q^\delta)$.
Let $\delta\downarrow0$ and use
\eqref{fixed-h-buffer-action-limit}.  Since $m^\delta\to m$ uniformly,
a diagonal choice $\delta=\delta(\varepsilon)\downarrow0$, with
$\varphi^\varepsilon:=
\varphi^{\varepsilon,\delta(\varepsilon)}$, proves the lemma.
\end{proof}

The Poisson-random-measure realization and the two controlled-limit lemmas
now give the main large-deviation result.

\begin{proof}[Proof of Theorem~\ref{thm:fixed-h-rigorous-ldp}]
Fix $F\in C_b(D([0,T];\mathbb R_{\geq0}^{\mathcal I_h}))$.
By Lemma~\ref{lem:fixed-h-equivalent-prm-realization} and
\eqref{fixed-h-original-controlled-solution-maps}, the law of
$m^{\varepsilon,h}$ is the image of
$N_h^{\varepsilon^{-1}}$ under the graphical solution map
$\mathcal G_{\varepsilon,h}$.  We may therefore apply
\cite[Theorem~2.1 and Remark~2.2, p.~728]{BudhirajaDupuisMaroulas}
with mark space $\mathcal K_h\times[0,\infty)$, intensity measure $\nu_h$,
intensity parameter $\varepsilon^{-1}$, and functional
$\varepsilon^{-1}F\circ\mathcal G_{\varepsilon,h}$.  It gives the exact
variational representation
\begin{align}
 -\varepsilon\log\mathbb E
 e^{-F(m^{\varepsilon,h})/\varepsilon}
 =\inf_{\varphi\in\mathcal A_h}\mathbb E\Bigg[
 F(m^{\varepsilon,h,\varphi})
 +\int_0^T\sum_{\kappa\in\mathcal K_h}\int_0^\infty
 \ell(\varphi_\kappa(t,u))\,du\,dt\Bigg].
 \label{fixed-h-exact-variational-representation}
\end{align}

We first prove the lower Laplace bound.  Choose a sequence
$\varepsilon_n\downarrow0$
such that
\begin{align*}
 \lim_{n\to\infty}
 \left[-\varepsilon_n\log\mathbb E
 \exp\left\{-\frac{F(m^{\varepsilon_n,h})}{\varepsilon_n}\right\}\right]=
 \liminf_{\varepsilon\downarrow0}
 \left[-\varepsilon\log\mathbb E
 \exp\left\{-\frac{F(m^{\varepsilon,h})}{\varepsilon}\right\}\right].
\end{align*}
Choose controls
$\varphi^n\in\mathcal A_h$ whose values in
\eqref{fixed-h-exact-variational-representation} are within
$\varepsilon_n$ of the infimum; explicitly,
\begin{align*}
 &-\varepsilon_n\log\mathbb E
 \exp\left\{-\frac{F(m^{\varepsilon_n,h})}{\varepsilon_n}\right\}
 +\varepsilon_n
 \geq\mathbb E\Bigg[F(m^{\varepsilon_n,h,\varphi^n})
 +\int_0^T\sum_{\kappa\in\mathcal K_h}\int_0^\infty
 \ell(\varphi_\kappa^n(t,u))\,du\,dt\Bigg].
\end{align*}
For the uncontrolled choice $\varphi_\kappa\equiv1$ for every
$\kappa\in\mathcal K_h$, the entropy cost vanishes because $\ell(1)=0$,
and $m^{\varepsilon_n,h,\varphi}$ has the same law as
$m^{\varepsilon_n,h}$.  Taking this choice in
\eqref{fixed-h-exact-variational-representation} gives
\begin{align*}
 -\varepsilon_n\log\mathbb E
 \exp\left\{-\frac{F(m^{\varepsilon_n,h})}{\varepsilon_n}\right\}
 \leq \mathbb E F(m^{\varepsilon_n,h})
 \leq \|F\|_\infty.
\end{align*}
Combining this bound with the fact that $\mathbb E F(m^{\varepsilon_n,h,\varphi^n})\geq-\|F\|_\infty$ gives
\begin{align*}
 \mathbb E\int_0^T\sum_{\kappa\in\mathcal K_h}\int_0^\infty
 \ell(\varphi_\kappa^n(t,u))\,du\,dt\leq
 2\|F\|_\infty+\varepsilon_n.
\end{align*}
Thus the controls $(\varphi^n)_{n\geq1}$ have uniformly bounded expected
entropy cost.  By
Lemma~\ref{lem:fixed-h-controlled-compactness-liminf}, after passing to a
subsequence, the controlled paths converge in distribution to a path $m$
satisfying
 \eqref{intro-fixed-h-rigorous-continuity} for some multiplier $q$.
Using bounded continuity of $F$ and
\eqref{fixed-h-controlled-condition-cost-liminf}, we obtain
\begin{align*}
 \liminf_{\varepsilon\downarrow0}
 \left[-\varepsilon\log\mathbb E
 \exp\left\{-\frac{F(m^{\varepsilon,h})}{\varepsilon}\right\}\right]&\geq
 \mathbb E\left[F(m)+
 \int_0^T\sum_{\kappa\in\mathcal K_h}
 \beta_\kappa(m(t))\ell(q_\kappa(t))\,dt\right]\\
 &\geq \mathbb E\bigl[F(m)+I_h^{m_0^h}(m)\bigr]\\
 &\geq \inf_{g\in D([0,T];\mathbb R_{\geq0}^{\mathcal I_h})}
 \bigl\{F(g)+I_h^{m_0^h}(g)\bigr\}.
\end{align*}

For the reverse bound, fix a path $m$ with
$I_h^{m_0^h}(m)<\infty$ and $\eta>0$.  Let
$(\varphi^\varepsilon)_\varepsilon$ be the controls supplied by
Lemma~\ref{lem:fixed-h-controlled-recovery}.  Substituting
$\varphi^\varepsilon$ into
\eqref{fixed-h-exact-variational-representation} gives, for every
$\varepsilon>0$,
\begin{align*}
 -\varepsilon\log\mathbb E
 \exp\left\{-\frac{F(m^{\varepsilon,h})}{\varepsilon}\right\}\leq
 \mathbb E F(m^{\varepsilon,h,\varphi^\varepsilon})
 +\mathbb E\int_0^T\sum_{\kappa\in\mathcal K_h}\int_0^\infty
 \ell(\varphi_\kappa^\varepsilon(t,u))\,du\,dt.
\end{align*}
The recovery lemma gives
\begin{align*}
 m^{\varepsilon,h,\varphi^\varepsilon}
 &\Longrightarrow m,\\
 \limsup_{\varepsilon\downarrow0}
 \mathbb E\int_0^T\sum_{\kappa\in\mathcal K_h}\int_0^\infty
 \ell(\varphi_\kappa^\varepsilon(t,u))\,du\,dt
 &\leq I_h^{m_0^h}(m)+\eta.
\end{align*}
Since the limiting path $m$ is deterministic and $F$ is bounded and
continuous on the Skorokhod space, the first convergence implies
\begin{equation*}
 \mathbb E F(m^{\varepsilon,h,\varphi^\varepsilon})
 \longrightarrow F(m).
\end{equation*}
Taking the upper limit in the preceding variational bound therefore yields
\begin{align*}
 &\limsup_{\varepsilon\downarrow0}
 \left[-\varepsilon\log\mathbb E
 \exp\left\{-\frac{F(m^{\varepsilon,h})}{\varepsilon}\right\}\right]
 \leq F(m)+I_h^{m_0^h}(m)+\eta.
\end{align*}
Letting $\eta\downarrow0$ yields
\begin{align*}
 &\limsup_{\varepsilon\downarrow0}
 \left[-\varepsilon\log\mathbb E
 \exp\left\{-\frac{F(m^{\varepsilon,h})}{\varepsilon}\right\}\right]
 \leq F(m)+I_h^{m_0^h}(m).
\end{align*}
Taking the infimum over $m$ proves the opposite Laplace bound.  Therefore
$m^{\varepsilon,h}$ satisfies the Laplace principle with rate
$I_h^{m_0^h}$.  Lemma~\ref{lem:fixed-h-action-good-rigorous} shows that
this rate is good.  Since
$D([0,T];\mathbb R_{\geq0}^{\mathcal I_h})$ with the Skorokhod $J_1$
topology is Polish, the good-rate Laplace principle is equivalent to the
stated LDP.
\end{proof}

\subsection{Poissonian initial data}

The preceding theorem starts from a deterministic approximation of a fixed
initial mass profile.  We now replace it by independent Poisson
occupations in the cells.  Controlling the initial Poisson variables jointly
with the dynamical Poisson random measure adds the initial relative-entropy
cost to the dynamical action, leading to the following extension.

\begin{theorem}
\label{thm:fixed-h-poisson-initial-ldp}
Let $m_*^h\in(0,\infty)^{\mathcal I_h}$. For each $\varepsilon>0$, set
\begin{equation*}
 m_a^{\varepsilon,h}(0)=\varepsilon Z_a^\varepsilon,
 \qquad
 Z_a^\varepsilon\sim\operatorname{Pois}(m_{*,a}^h/\varepsilon),
 \qquad a\in\mathcal I_h,
\end{equation*}
independently over $a$ and independently of the dynamical noises.  Then the
extended process on $\mathsf S_{\varepsilon,h}^{\rm fin}$ given by
Corollary~\ref{cor:coarse-random-finite-mass} has path laws on
$D([0,T];\mathbb R_{\geq0}^{\mathcal I_h})$ satisfying an LDP with speed
$\varepsilon^{-1}$ and good rate
\begin{equation}\label{fixed-h-poisson-full-rate-rigorous}
 \mathcal I_h^{\rm Poi}(m)
 =\sum_{a\in\mathcal I_h}
 m_{*,a}^h\ell\left(\frac{m_a(0)}{m_{*,a}^h}\right)+I_h(m),
\end{equation}
where $I_h$ is the control action without a prescribed initial value.
\end{theorem}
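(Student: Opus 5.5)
We would prove this along the lines of Theorem~\ref{thm:fixed-h-rigorous-ldp}, using the weak-convergence method. The only change is that the initial occupations are now random as well, so we control them jointly with the dynamics. The first step is to represent each initial variable through a Poisson random measure. We realize $Z_a^\varepsilon=N_a^{0,\varepsilon}([0,m_{*,a}^h])$, where $(N_a^{0,\varepsilon})_a$ are independent Poisson random measures on $[0,\infty)$ with intensity $\varepsilon^{-1}du$, chosen independent of $N_h^{\varepsilon^{-1}}$. Equivalently, we append an extra time-zero mark space $\mathcal I_h\times[0,\infty)$ to the dynamical mark space. The whole path is then a measurable function of the combined Poisson random measure. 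Concretely, we first read off $m^{\varepsilon,h}(0)$ and then apply the graphical map $\mathcal G_{\varepsilon,h}$ of \eqref{fixed-h-graphical-solution-map}, which Corollary~\ref{cor:coarse-random-finite-mass} extends to arbitrary finite initial mass. Applying the variational representation of \cite{BudhirajaDupuisMaroulas} to the product Poisson random measure gives a formula like \eqref{fixed-h-exact-variational-representation}, with an added initial cost $\sum_a\int_0^\infty\ell(\psi_a(u))\,du$ for $\mathcal F_0$-measurable controls $\psi_a$.

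The lower Laplace bound comes next. As in the proof of Theorem~\ref{thm:fixed-h-rigorous-ldp}, nearly optimal controls have uniformly bounded expected cost. By Jensen, exactly as in \eqref{mark-control-jensen}, the initial cost dominates $\sum_a m_{*,a}^h\ell(\bar\psi_a)$, where $\bar\psi_a$ is the average of $\psi_a$ over $[0,m_{*,a}^h]$. The controlled initial mass is $m_a(0)=\varepsilon\cdot(\text{count})$, whose conditional mean is $m_{*,a}^h\bar\psi_a$. The entropy bound gives uniform integrability, and the martingale part vanishes as $\varepsilon\to0$. Together these show that $m(0)$ is tight and that every limit satisfies $m_a(0)=m_{*,a}^h\bar\psi_a$ in the limit sense. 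Lower semicontinuity then gives the cost lower bound $\sum_a m_{*,a}^h\ell(m_a(0)/m_{*,a}^h)$.

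We then need Lemma~\ref{lem:fixed-h-controlled-compactness-liminf} with a random initial point and random total mass $M$ in place of $1$. Its proof carries over once we note two things. First, the rates are bounded on each sublevel $\{\sum_a m_a\le M\}$, by \eqref{uniform-beta-epsilon-limit}. Second, tightness of $M$ follows from the bounded initial entropy, since $\ell$ grows superlinearly. We then argue conditionally on $m(0)$, or by localizing on $\{M\le K\}$, and apply Fatou. This yields $\liminf\ge\mathbb E[F(m)+\mathcal I_h^{\rm Poi}(m)]$.

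For the upper bound we fix $m$ with finite $\mathcal I_h^{\rm Poi}$ and $\eta>0$. If some $m_a(0)=0$, or some coordinate hits zero, we first apply Lemma~\ref{lem:fixed-h-vacuum-regularization} with $\bar m=m_*^h$ and $c_\delta$. This scales the initial entropy continuously, since $\ell$ is convex and continuous, and it makes $m^\delta(0)$ strictly positive. We then take deterministic initial controls $\psi_a\equiv m^\delta_a(0)/m_{*,a}^h$ on $[0,m_{*,a}^h]$, and $1$ elsewhere. Under these controls $Z_a^\varepsilon$ is Poisson with mean $m^\delta_a(0)/\varepsilon$, so the controlled initial data converge to $m^\delta(0)$ and the initial cost is exactly $\sum_a m_{*,a}^h\ell(m^\delta_a(0)/m_{*,a}^h)$. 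For the dynamics we use the feedback controls of Lemma~\ref{lem:fixed-h-controlled-recovery}. The Gronwall argument there tolerates a random initial error that tends to zero in $L^1$, because the total mass is bounded in $L^2$. A diagonal choice of $\delta$ finishes the upper bound.

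Finally we prove goodness, which I expect to be the main obstacle because $I_h$ alone is not good. On a sublevel set the initial entropy bounds $\sum_a m_a(0)$, since $\ell(x)\ge x\log x - x$ grows superlinearly. Mass conservation then bounds the whole path. With mass bounded, the compactness argument of Lemma~\ref{lem:fixed-h-action-good-rigorous} applies with $A_h$ replaced by the rate bound at mass $M$, and the initial term is lower semicontinuous because $m\mapsto m(0)$ is continuous on the limiting continuous paths. Together with the Laplace principle this gives the LDP, as in the proof of Theorem~\ref{thm:fixed-h-rigorous-ldp}. The delicate points are the localization in $M$ inside the controlled compactness lemma and the proof that the Skorokhod space is still the right one once the state space is unbounded.
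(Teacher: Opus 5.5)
Your overall route is the paper's. You realize the initial occupations as $\varepsilon N_{0,a}^{\varepsilon^{-1}}([0,m_{*,a}^h])$ and control them jointly with the dynamical Poisson random measure. You use Jensen plus Fatou for the initial entropy, and localize the controlled compactness lemma on total-mass sublevels, with tightness of the conserved mass coming from the bounded expected initial cost. You recover with the buffer $(1-\delta)m+\delta m_*^h$ and constant initial tilts $m_a^\delta(0)/m_{*,a}^h$, and you get goodness because the initial entropy bounds the conserved mass. That last step is routine; your sketch is exactly the paper's argument. The space $D([0,T];\mathbb R_{\geq0}^{\mathcal I_h})$ is Polish, so neither point you flag as delicate is an obstacle.

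The step that does not work as written is the class of initial controls. The representation of \cite{BudhirajaDupuisMaroulas} needs controls that are predictable along an ordering of the marks, and a ``time-zero mark space'' provides no such ordering.
\begin{itemize}
\item If your $\mathcal F_0$ contains the initial marks, an $\mathcal F_0$-measurable $\psi_a$ anticipates the very points it thins. The controlled count then no longer has compensator $\varepsilon^{-1}\psi_a(u)\,du$, and its cost is no longer a relative entropy.
\item If $\mathcal F_0$ does not contain them, the controlled count is Poisson conditionally on $\psi$. The achievable initial laws are then only Poisson mixtures, and the infimum is only an upper bound for $-\varepsilon\log\mathbb E e^{-F/\varepsilon}$. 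For example, take $\varepsilon=1$, $Z\sim\operatorname{Pois}(1)$ and $G=L\mathbf 1_{\{Z\neq1\}}$. The true value is at most $1$, while every Poisson-mixture control gives at least $L(1-e^{-1})$.
\end{itemize}
In either case the nearly optimal controls that your lower Laplace bound starts from are unavailable; you only get the recovery direction. The fix is the one the paper uses. Order the initial marks by $u$, let $\mathcal F_{0,u}^\varepsilon$ reveal the initial counts on $[0,u]$ progressively, and take $\psi_a$ predictable for this filtration. Equivalently, run the initial Poisson random measure on an artificial time interval preceding $[0,T]$. Your ``conditional mean'' statement should then be replaced by the compensator decomposition $m_a(0)=\rho_a+R_{0,a}$, where $\rho_a=\int_0^{m_{*,a}^h}\psi_a(u)\,du$ and $\mathbb E|R_{0,a}|^2=\varepsilon\,\mathbb E\rho_a$. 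After that your Jensen/Fatou argument goes through unchanged.

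A smaller point concerns the recovery Gronwall step. On $\{\sum_a m_a\leq M\}$ the Lipschitz constant of the collision rates $\beta_\gamma$ grows like $M$, so the Gronwall factor is of the form $\exp(CM\int_0^T\sum_\kappa q_\kappa^\delta\,dt)$. An $L^2$ bound on the mass does not control this. You can localize on a total-mass sublevel, as the paper does, or use that $\varepsilon\operatorname{Pois}(\cdot/\varepsilon)$ has exponential moments bounded uniformly in $\varepsilon$. Convergence in probability of the paths is enough here. For convergence of the costs, add uniform integrability of the squared total mass.
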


\begin{proof}
Write
\begin{equation*}
 I_{0,h}(g)=\sum_{a\in\mathcal I_h}
 m_{*,a}^h\ell\left(\frac{g_a}{m_{*,a}^h}\right).
\end{equation*}
For each $a\in\mathcal I_h$, let $N_{0,a}^{\varepsilon^{-1}}$ be a Poisson
random measure on $[0,m_{*,a}^h]$ with intensity
$\varepsilon^{-1}du$.  Take these random measures to be mutually
independent and independent of the dynamical Poisson random measures, and
define
\begin{equation}\label{poisson-initial-prm-realization}
 \widehat m_a^{\varepsilon,h}(0)
 :=\varepsilon
 N_{0,a}^{\varepsilon^{-1}}([0,m_{*,a}^h]),
 \qquad a\in\mathcal I_h.
\end{equation}
Since
$N_{0,a}^{\varepsilon^{-1}}([0,m_{*,a}^h])$ is Poisson distributed with
mean $m_{*,a}^h/\varepsilon$, independence gives
\begin{equation*}
 \bigl(\widehat m_a^{\varepsilon,h}(0)\bigr)_{a\in\mathcal I_h}
 \stackrel{\mathrm{law}}=
 \bigl(m_a^{\varepsilon,h}(0)\bigr)_{a\in\mathcal I_h}.
\end{equation*}
We henceforth use the realization
\eqref{poisson-initial-prm-realization}.

Set $U_h:=\max_{a\in\mathcal I_h}m_{*,a}^h$.  For
$u\in[0,U_h]$, let $\mathcal F_{0,u}^\varepsilon$ be the usual augmentation
of the sigma-field generated by
\begin{equation*}
 \left\{
 N_{0,a}^{\varepsilon^{-1}}
 ([0,r]\cap[0,m_{*,a}^h]):
 a\in\mathcal I_h,\ 0\leq r\leq u
 \right\}.
\end{equation*}
Thus $(\mathcal F_{0,u}^\varepsilon)_{0\leq u\leq U_h}$ is the filtration
obtained by revealing all initial Poisson counts progressively in the
auxiliary parameter $u$.  Let $\mathcal P_0^\varepsilon$ be the predictable
sigma-field on $\Omega\times[0,U_h]$ associated with this filtration. 
An initial control is a vector
$\varphi_0=(\varphi_{0,a})_{a\in\mathcal I_h}$ such that, for every
$a\in\mathcal I_h$, the map
$\varphi_{0,a}:\Omega\times[0,m_{*,a}^h]\to\mathbb R_{\geq0}$ is measurable
with respect to the restriction of $\mathcal P_0^\varepsilon$ to this
domain.  The admissible class is
\begin{equation*}
 \mathcal A_{0,h}^\varepsilon
 :=\left\{\varphi_0\text{ as above}:
 \mathbb E\sum_{a\in\mathcal I_h}
 \int_0^{m_{*,a}^h}\ell(\varphi_{0,a}(u))\,du<\infty\right\}.
\end{equation*}
For
$\varphi_0^\varepsilon
=(\varphi_{0,a}^\varepsilon)_{a\in\mathcal I_h}
\in\mathcal A_{0,h}^\varepsilon$, let
$N_{0,a}^{\varepsilon^{-1}\varphi_{0,a}^\varepsilon}$ be the controlled
counting measure given by the auxiliary-variable construction.  Its
predictable compensator is
$\varepsilon^{-1}\varphi_{0,a}^\varepsilon(u)\,du$.  Define
\begin{align}
 m_a^{\varepsilon,h,\varphi_0^\varepsilon}(0)
 &:=\varepsilon
 N_{0,a}^{\varepsilon^{-1}\varphi_{0,a}^\varepsilon}
 ([0,m_{*,a}^h]),
 \label{controlled-poisson-initial-mass}\\
 \rho_a^\varepsilon
 &:=\int_0^{m_{*,a}^h}\varphi_{0,a}^\varepsilon(u)\,du.
 \label{effective-initial-poisson-intensity}
\end{align}
On this controlled Poisson realization, compensation gives the pathwise
identity
\begin{equation}\label{controlled-initial-poisson-decomposition}
 m_a^{\varepsilon,h,\varphi_0^\varepsilon}(0)
 =\rho_a^\varepsilon+R_{0,a}^\varepsilon,
 \qquad
 R_{0,a}^\varepsilon
 =\varepsilon\int_0^{m_{*,a}^h}
 \left(
 N_{0,a}^{\varepsilon^{-1}\varphi_{0,a}^\varepsilon}(du)
 -\varepsilon^{-1}\varphi_{0,a}^\varepsilon(u)\,du
 \right),
\end{equation}
and the compensated-Poisson isometry yields
\begin{equation*}
 \mathbb E|R_{0,a}^\varepsilon|^2
 =\varepsilon\,\mathbb E\rho_a^\varepsilon.
\end{equation*}
Jensen's inequality on $[0,m_{*,a}^h]$ gives
\begin{equation}\label{initial-poisson-jensen}
 \int_0^{m_{*,a}^h}\ell(\varphi_{0,a}^\varepsilon(u))\,du
 \geq
 m_{*,a}^h\ell\left(\frac{\rho_a^\varepsilon}{m_{*,a}^h}\right).
\end{equation}
For every $s>0$, Fenchel's inequality also gives
\begin{equation*}
 s\,\mathbb E\sum_{a\in\mathcal I_h}\rho_a^\varepsilon
 \leq
 \mathbb E\sum_{a\in\mathcal I_h}\int_0^{m_{*,a}^h}
 \ell(\varphi_{0,a}^\varepsilon(u))\,du
 +(e^s-1)\sum_{a\in\mathcal I_h}m_{*,a}^h.
\end{equation*}
Suppose that the expected initial-control costs are uniformly bounded, and
write
\begin{equation}\label{uniform-initial-poisson-control-cost}
 C_0:=\sup_{0<\varepsilon\leq1}
 \mathbb E\sum_{a\in\mathcal I_h}\int_0^{m_{*,a}^h}
 \ell(\varphi_{0,a}^\varepsilon(u))\,du<\infty.
\end{equation}
Taking $s=1$ in the preceding estimate yields
\begin{equation}\label{uniform-initial-effective-mass-bound}
 \sup_{0<\varepsilon\leq1}
 \mathbb E\sum_{a\in\mathcal I_h}\rho_a^\varepsilon
 \leq C_0+(e-1)\sum_{a\in\mathcal I_h}m_{*,a}^h<\infty.
\end{equation}
Consequently, for every $L>0$, Markov's inequality gives
\begin{align*}
 \sup_{0<\varepsilon\leq1}
 \mathbb P\!\left(\sum_{a\in\mathcal I_h}\rho_a^\varepsilon>L\right)
 &\leq \frac{1}{L}\sup_{0<\varepsilon\leq1}
 \mathbb E\sum_{a\in\mathcal I_h}\rho_a^\varepsilon\\
 &\leq \frac{C_0+(e-1)\sum_{a\in\mathcal I_h}m_{*,a}^h}{L}
 \longrightarrow0
 \qquad\text{as }L\uparrow\infty.
\end{align*}
Since $\mathcal I_h$ is finite and $\rho^\varepsilon$ is nonnegative,
this proves tightness of $(\rho^\varepsilon)_{0<\varepsilon\leq1}$ in
$\mathbb R_{\geq0}^{\mathcal I_h}$.

Set $R_0^\varepsilon=(R_{0,a}^\varepsilon)_{a\in\mathcal I_h}$.  The
compensated-Poisson isometry and
\eqref{uniform-initial-effective-mass-bound} imply
\begin{align*}
 \mathbb E|R_0^\varepsilon|^2
 &=\sum_{a\in\mathcal I_h}\mathbb E|R_{0,a}^\varepsilon|^2
 =\varepsilon\,\mathbb E\sum_{a\in\mathcal I_h}\rho_a^\varepsilon\\
 &\leq\varepsilon\left(
 C_0+(e-1)\sum_{a\in\mathcal I_h}m_{*,a}^h\right)
 \longrightarrow0.
\end{align*}
Thus $R_0^\varepsilon\to0$ in $L^2$ and hence in probability.  In view of
\eqref{controlled-initial-poisson-decomposition},
\begin{equation*}
 m^{\varepsilon,h,\varphi_0^\varepsilon}(0)
 =\rho^\varepsilon+R_0^\varepsilon,
\end{equation*}
so Slutsky's theorem shows that the two families
$(m^{\varepsilon,h,\varphi_0^\varepsilon}(0))_\varepsilon$ and
$(\rho^\varepsilon)_\varepsilon$ have the same subsequential weak limits.

We now prove the entropy lower bound.  Choose a sequence
$\varepsilon_n\downarrow0$ such that
\begin{align*}
 &\lim_{n\to\infty}
 \mathbb E\sum_{a\in\mathcal I_h}\int_0^{m_{*,a}^h}
 \ell(\varphi_{0,a}^{\varepsilon_n}(u))\,du\\
 &\qquad=
 \liminf_{\varepsilon\downarrow0}
 \mathbb E\sum_{a\in\mathcal I_h}\int_0^{m_{*,a}^h}
 \ell(\varphi_{0,a}^{\varepsilon}(u))\,du.
\end{align*}
By tightness, after passing to a further subsequence,
there exists an $\mathbb R_{\geq0}^{\mathcal I_h}$-valued random vector
$g=(g_a)_{a\in\mathcal I_h}$ such that
\begin{equation*}
 \rho^{\varepsilon_n}\Longrightarrow g,
 \qquad
 m^{\varepsilon_n,h,\varphi_0^{\varepsilon_n}}(0)
 \Longrightarrow g.
\end{equation*}
On a Skorokhod representation we may suppose that
$\rho^{\varepsilon_n}\to g$ almost surely.  Since
$r\mapsto m_{*,a}^h\ell(r/m_{*,a}^h)$ is nonnegative and lower
semicontinuous on $[0,\infty)$, Fatou's lemma and
\eqref{initial-poisson-jensen} yield
\begin{align}
 \mathbb E\sum_{a\in\mathcal I_h}m_{*,a}^h
 \ell\left(\frac{g_a}{m_{*,a}^h}\right)
 &\leq\liminf_{n\to\infty}
 \mathbb E\sum_{a\in\mathcal I_h}m_{*,a}^h
 \ell\left(\frac{\rho_a^{\varepsilon_n}}{m_{*,a}^h}\right)\notag\\
 &\leq\liminf_{n\to\infty}
 \mathbb E\sum_{a\in\mathcal I_h}\int_0^{m_{*,a}^h}
 \ell(\varphi_{0,a}^{\varepsilon_n}(u))\,du\notag\\
 &=\liminf_{\varepsilon\downarrow0}
 \mathbb E\sum_{a\in\mathcal I_h}\int_0^{m_{*,a}^h}
 \ell(\varphi_{0,a}^\varepsilon(u))\,du.
 \label{initial-poisson-entropy-liminf}
\end{align}

We next combine the initial and dynamical controls.  Let
$\varepsilon_n\downarrow0$, let
$\varphi_0^n\in\mathcal A_{0,h}^{\varepsilon_n}$ and
$\varphi^n\in\mathcal A_h$, and assume that their joint expected entropy
cost is uniformly bounded:
\begin{align}
 \sup_{n\geq1}\mathbb E\Bigg[&
 \sum_{a\in\mathcal I_h}\int_0^{m_{*,a}^h}
 \ell(\varphi_{0,a}^n(u))\,du
 +\int_0^T\sum_{\kappa\in\mathcal K_h}\int_0^\infty
 \ell(\varphi_\kappa^n(t,u))\,du\,dt\Bigg]<\infty.
 \label{joint-initial-dynamical-control-bound}
\end{align}
Let $m^n$ denote the controlled graphical path whose initial value is
$m^{\varepsilon_n,h,\varphi_0^n}(0)$ from
\eqref{controlled-initial-poisson-decomposition} and whose dynamical mark
control is $\varphi^n$.  Since both terms in
\eqref{joint-initial-dynamical-control-bound} are nonnegative, the first
term satisfies \eqref{uniform-initial-poisson-control-cost}, while the
second satisfies the hypothesis
\eqref{fixed-h-uniform-controlled-entropy-bound} of
Lemma~\ref{lem:fixed-h-controlled-compactness-liminf}.

The compensated integral $R_{0,a}^{\varepsilon_n}$ has mean zero, and
hence
\begin{align*}
 \mathbb E\sum_{a\in\mathcal I_h}m_a^n(0)
 &=\mathbb E\sum_{a\in\mathcal I_h}\rho_a^{\varepsilon_n}
 \leq C_h,
\end{align*}
where $C_h<\infty$ follows from
\eqref{uniform-initial-effective-mass-bound}.  Therefore
\begin{align}
 \sup_{n\geq1}\mathbb P\!\left(
 \sum_{a\in\mathcal I_h}m_a^n(0)>L\right)
 \leq\frac{C_h}{L}\longrightarrow0
 \qquad\text{as }L\uparrow\infty.
 \label{controlled-random-initial-mass-tightness}
\end{align}
For fixed $L>0$, set
\begin{equation*}
 \mathsf K_{h,L}:=\left\{x\in\mathbb R_{\geq0}^{\mathcal I_h}:
 \sum_{a\in\mathcal I_h}x_a\leq L\right\}.
\end{equation*}
On the event $\{m^n(0)\in\mathsf K_{h,L}\}$, pathwise conservation of
total mass gives
\begin{equation*}
 m^n(t)\in\mathsf K_{h,L},
 \qquad 0\leq t\leq T.
\end{equation*}
The set $\mathsf K_{h,L}$ is compact, and the finite family of nominal
rates satisfies
\begin{equation*}
 C_{h,L}:=\sup_{0<\varepsilon\leq1}
 \sup_{x\in\mathsf K_{h,L}}
 \sum_{\kappa\in\mathcal K_h}\beta_\kappa^\varepsilon(x)<\infty.
\end{equation*}
On this event, the proof of
Lemma~\ref{lem:fixed-h-controlled-compactness-liminf} applies with
$\mathsf K_{h,L}$ in place of the unit-mass simplex.  Combining this
localized conclusion with
\eqref{controlled-random-initial-mass-tightness} and then letting
$L\uparrow\infty$ proves tightness of
\begin{equation*}
 \left(
 m^n,
 m^n(0),
 \bigl(J_\kappa^n(t)\,dt\bigr)_{\kappa\in\mathcal K_h}
 \right)_{n\geq1},
 \qquad
 J_\kappa^n(t):=
 \int_0^{\beta_\kappa^{\varepsilon_n}(m^n(t))}
 \varphi_\kappa^n(t,u)\,du.
\end{equation*}

After passing to a subsequence, suppose that
\begin{equation*}
 \left(
 m^n,m^n(0),
 \bigl(J_\kappa^n(t)\,dt\bigr)_{\kappa\in\mathcal K_h}
 \right)
 \Longrightarrow
 \left(
 m,g,
 \bigl(J_\kappa(t)\,dt\bigr)_{\kappa\in\mathcal K_h}
 \right).
\end{equation*}
The initial-value component gives $m(0)=g$ almost surely.  The
identification argument in
Lemma~\ref{lem:fixed-h-controlled-compactness-liminf} gives
\begin{equation*}
 m(t)=g+\sum_{\kappa\in\mathcal K_h}
 \int_0^tJ_\kappa(s)z_\kappa\,ds,
 \qquad 0\leq t\leq T,
\end{equation*}
and, with $q_\kappa$ defined by
\eqref{fixed-h-limit-control-defined}, its dynamical entropy bound is
\begin{align*}
 &\mathbb E\int_0^T\sum_{\kappa\in\mathcal K_h}
 \beta_\kappa(m(t))\ell(q_\kappa(t))\,dt\\
 &\qquad\leq\liminf_{n\to\infty}
 \mathbb E\int_0^T\sum_{\kappa\in\mathcal K_h}\int_0^\infty
 \ell(\varphi_\kappa^n(t,u))\,du\,dt.
\end{align*}
Applying \eqref{initial-poisson-entropy-liminf} to the same subsequence and
using the superadditivity of $\liminf$ for nonnegative sequences, we obtain
the joint lower bound
\begin{align}
 &\mathbb E\Bigg[
 \sum_{a\in\mathcal I_h}m_{*,a}^h
 \ell\left(\frac{m_a(0)}{m_{*,a}^h}\right)
 +\int_0^T\sum_{\kappa\in\mathcal K_h}
 \beta_\kappa(m(t))\ell(q_\kappa(t))\,dt\Bigg]\notag\\
 &\quad\leq\liminf_{n\to\infty}\mathbb E\Bigg[
 \sum_{a\in\mathcal I_h}\int_0^{m_{*,a}^h}
 \ell(\varphi_{0,a}^n(u))\,du
 +\int_0^T\sum_{\kappa\in\mathcal K_h}\int_0^\infty
 \ell(\varphi_\kappa^n(t,u))\,du\,dt\Bigg].
 \label{joint-initial-dynamical-entropy-liminf}
\end{align}
Thus the path $m$ is admissible for the rate
$I_{0,h}(m(0))+I_h(m)$, and
\eqref{joint-initial-dynamical-entropy-liminf} gives the compactness,
identification, and cost lower bound for the joint initial--dynamical
variational problem.

At the deterministic rate-function level, on a sublevel of
$I_{0,h}(m(0))+I_h(m)$ the vectors $m(0)$ range over a compact set and
$\sup_t\sum_{a\in\mathcal I_h}m_a(t)
=\sum_{a\in\mathcal I_h}m_a(0)$ is uniformly bounded. The nominal rates
are therefore uniformly bounded, the effective fluxes are uniformly integrable by
\eqref{fixed-h-flux-fenchel}, and the Arzel\`a--Ascoli and entropy-duality
argument of Lemma~\ref{lem:fixed-h-action-good-rigorous} proves compactness
and lower semicontinuity. Hence the displayed full rate is good.

To verify the recovery condition, if $g=m(0)$, put
$g^\delta=(1-\delta)g+\delta m_*^h$ and use the same buffer for the whole
path and the associated effective dynamical fluxes, using
Lemma~\ref{lem:fixed-h-vacuum-regularization} with $\bar m=m_*^h$.
Joint convexity of relative entropy gives
\begin{equation*}
 I_{0,h}(g^\delta)\leq(1-\delta)I_{0,h}(g),
\end{equation*}
while lower semicontinuity gives the opposite limiting inequality. The
dynamical costs converge from above by
Lemma~\ref{lem:fixed-h-vacuum-regularization}. In the initial Poisson
variational representation, the constant control
$g_a^\delta/m_{*,a}^h$ changes the mean in cell $a$ from
$m_{*,a}^h/\varepsilon$ to $g_a^\delta/\varepsilon$ and has cost
$m_{*,a}^h\ell(g_a^\delta/m_{*,a}^h)$.  Denote the resulting controlled
initial mass by $m^{\varepsilon,h,\delta}(0)$.  Then
\begin{equation*}
 \mathbb E\left|m^{\varepsilon,h,\delta}(0)-g^\delta\right|^2
 =\varepsilon\sum_{a\in\mathcal I_h}g_a^\delta\longrightarrow0.
\end{equation*}
After stopping on a fixed total-mass sublevel, the Gronwall and martingale
estimates in the deterministic recovery apply with this random initial
condition. The controlled initial total masses have moments bounded
uniformly in $\varepsilon$, so the stopping level can then be sent to
infinity.

For $F\in C_b(D([0,T];\mathbb R_{\geq0}^{\mathcal I_h}))$, applying the
Poisson variational representation simultaneously to the independent
initial Poisson random measures and to the dynamical Poisson random measure
gives
\begin{align}
 -\varepsilon\log\mathbb E
 \exp\left\{-\frac{F(m^{\varepsilon,h})}{\varepsilon}\right\}&=\inf_{\substack{
       \varphi_0\in\mathcal A_{0,h}^{\varepsilon}\\
       \varphi\in\mathcal A_h}}
 \mathbb E\Bigg[
 F\bigl(m^{\varepsilon,h,\varphi_0,\varphi}\bigr)
 +\sum_{a\in\mathcal I_h}\int_0^{m_{*,a}^h}
 \ell(\varphi_{0,a}(u))\,du\notag\\
 &\hspace{48mm}
 +\int_0^T\sum_{\kappa\in\mathcal K_h}\int_0^\infty
 \ell(\varphi_\kappa(t,u))\,du\,dt\Bigg].
 \label{joint-initial-dynamical-variational-representation}
\end{align}
Here $m^{\varepsilon,h,\varphi_0,\varphi}$ denotes the graphical solution
whose initial mass is defined by
\eqref{controlled-poisson-initial-mass} with control $\varphi_0$ and whose
dynamical Poisson random measure is controlled by $\varphi$ as in
\eqref{fixed-h-graphical-solution-map}.

The compactness and liminf estimates above give the lower bound in
\eqref{joint-initial-dynamical-variational-representation}, while the
preceding recovery construction gives the reverse bound.  Repeating the
final variational argument in the proof
of Theorem~\ref{thm:fixed-h-rigorous-ldp} proves the joint Laplace
principle with rate \eqref{fixed-h-poisson-full-rate-rigorous}.  Since this
rate is good and the path space is Polish, the corresponding LDP follows.
\end{proof}

\section{The continuum collision limit of the rate function}
\label{sec:collision-limit-rigorous}

The fixed-mesh dynamical rate function $I_h$ is defined in
\eqref{fixed-h-free-initial-action}, and the full rate function for
independent Poisson initial occupations is defined in
\eqref{fixed-h-poisson-full-rate-rigorous}.  We recall both formulas.  With
$\ell(r)=r\log r-r+1$ and
$\mathcal K_h=\mathcal E_h\sqcup\Gamma_h$,
\begin{align*}
 I_h(m)=\inf_q\Bigg\{&
 \int_0^T\sum_{\kappa\in\mathcal K_h}
 \beta_\kappa(m(t))\ell(q_\kappa(t))\,dt:\
 q:[0,T]\to\mathbb R_{\geq0}^{\mathcal K_h}
 \text{ is measurable},\\
 &m\in AC([0,T];\mathbb R_{\geq0}^{\mathcal I_h}),\qquad
 (\beta_\kappa(m)q_\kappa)_{\kappa\in\mathcal K_h}
 \in L^1(0,T;\mathbb R^{\mathcal K_h}),\\
 &\dot m(t)=\sum_{\kappa\in\mathcal K_h}
 \beta_\kappa(m(t))q_\kappa(t)z_\kappa
 \quad\text{for a.e. }t\in[0,T]\Bigg\},
\end{align*}
and
\begin{equation*}
 \mathcal I_h^{\rm Poi}(m)
 =\sum_{a\in\mathcal I_h}m_{*,a}^h
 \ell\left(\frac{m_a(0)}{m_{*,a}^h}\right)+I_h(m).
\end{equation*}
For a prescribed initial value $m_0^h$, the dynamical rate is the
restriction of $I_h$ to paths with $m(0)=m_0^h$.  Our goal is to identify
the limit of these functionals through their channel-control
representation as the mesh is refined.

We work with a fixed, regularized velocity cutoff of the collision kernel
$B$.  Fix $R>0$ and write
\begin{equation*}
 \mathbb D_R=\mathbb T^d\times B_R,
 \qquad
 \mathcal C_R=\{c\in\mathcal C:
 |v|\vee|v_*|\vee|v'|\vee|v_*'|<R\}.
\end{equation*}
The kernel $\mathcal B_R$ is obtained by restricting $B$ to collision
configurations strictly inside $\mathcal C_R$, applying a nonnegative
bounded Lipschitz regularization, and preserving the particle-exchange and
pre/post-collisional symmetries.  We assume that
\begin{align*}
 \mathcal B_R&\quad\text{satisfies
 \eqref{intro-fixed-cutoff-kernel-assumption} and
 \eqref{collision-kernel-symmetries}, with $B$ replaced by $\mathcal B_R$}.
\end{align*}
Both the discrete collision coefficients and the limiting continuum action
are therefore built from the same fixed kernel $\mathcal B_R$.  The
regularization and the velocity cutoff remain fixed throughout the mesh
limit.

Take $R(h)\equiv R$, put
$B_h=\mathcal B_R$ in \eqref{riemann-collision-quadrature}, and use a
quasi-uniform product mesh with $h_x+h_v\downarrow0$.

We use separate notation for cell masses, cell averages of test functions,
and piecewise-constant reconstruction:
\begin{equation}\label{mass-test-projections}
 \begin{gathered}
  \begin{aligned}
   (\Pi_hf)_a&=\int_{E_a^h}f(z)\,dz
   \quad(a\in\mathcal I_h),
   &\qquad
   (\mathsf P_hp)_a&=\frac1{|E_a^h|}\int_{E_a^h}p(z)\,dz
   \quad(a\in\mathcal I_h),
  \end{aligned}\\[2pt]
  \mathsf R_hm=\sum_{a\in\mathcal I_h}m_a\chi_a^h.
 \end{gathered}
\end{equation}
For a cell-mass vector we use the density norm
\begin{equation}\label{discrete-density-supnorm}
 \|m\|_{\infty,h}=\max_{a\in\mathcal I_h}
 \frac{|m_a|}{|E_a^h|}.
\end{equation}
\begin{definition}[Regular controlled pair]
\label{def:fixed-cutoff-regular-controlled-pair}
A pair $(f,q)$, with
$f:[0,T]\times\mathbb D_R\to\mathbb R_{\geq0}$ and
$q:[0,T]\times\mathcal C_R\to\mathbb R_{\geq0}$, is called a
regular controlled pair if $f$ and $q$ are continuously differentiable in
time, have bounded Lipschitz derivatives in all their variables on their
respective truncated domains, satisfy
$0<c\leq f\leq C$ and $0\leq q\leq C$ for some constants $c,C<\infty$,
and $f$ solves the controlled Boltzmann equation
\eqref{controlled-boltzmann-equation} with control $q$ and collision kernel
$\mathcal B_R$.
\end{definition}

For a density path $g$ with endpoint traces, set
\begin{align}
 \mathcal I_{\mathcal B_R}^{\rm Ham}(g)
 :=\sup_{r\in C^1([0,T];W^{2,\infty}(\mathbb D_R))}\Bigg\{
 &\langle g_T,r_T\rangle-\langle g_0,r_0\rangle
 -\int_0^T\langle g_t,\partial_tr_t+v\cdot\nabla_xr_t\rangle\,dt
 \notag\\
 &-\frac12\int_0^T\int_{\mathcal C_R}
 \mathcal B_Rg_tg_{t,*}(e^{\Delta r_t}-1)\,dc\,dt\Bigg\},
 \label{fixed-cutoff-hamiltonian-action}\\
 I_{0,R}(g_0)
 &=\int_{\mathbb D_R}
 \left[g_0\log\frac{g_0}{f_0}-g_0+f_0\right]dz,
 \label{fixed-cutoff-initial-action}\\
 I_{\mathcal B_R}(g)
 &=I_{0,R}(g_0)+\mathcal I_{\mathcal B_R}^{\rm Ham}(g).
 \label{fixed-cutoff-full-action}
\end{align}
The effective domains are specified by setting
$\mathcal I_{\mathcal B_R}^{\rm Ham}(g)=+\infty$ unless, for almost every
$t\in(0,T)$, the measure $g_t$ has a nonnegative density
$g_t(z)$ on $\mathbb D_R$, the map $(t,z)\mapsto g_t(z)$ belongs to
$L^1((0,T)\times\mathbb D_R)$, the finite endpoint traces
$g_0,g_T\in\mathcal M_+(\mathbb D_R)$ exist, and
\begin{equation*}
 \int_0^T\int_{\mathcal C_R}
 \mathcal B_R(v,v_*,\sigma)g_t(x,v)g_t(x,v_*)\,dc\,dt<\infty.
\end{equation*}
Similarly, set $I_{0,R}(g_0)=+\infty$ unless $g_0$ has a nonnegative density,
still denoted by $g_0$, and
\begin{equation*}
 \int_{\mathbb D_R}
 \left[g_0\log\frac{g_0}{f_0}-g_0+f_0\right]dz<\infty,
\end{equation*}
where $0\log0=0$.  Finally,
$I_{\mathcal B_R}(g)=+\infty$ whenever either of its two terms is infinite.

\begin{definition}[Biased regular path]
\label{def:fixed-cutoff-biased-regular-path}
A density path $g$ is called a biased regular path if there exist a
nonnegative control $q:[0,T]\times\mathcal C_R\to\mathbb R_{\geq0}$ and a
function $p\in C^1([0,T];W^{2,\infty}(\mathbb D_R))$ such that $(g,q)$ is a
regular controlled pair in the sense of
Definition~\ref{def:fixed-cutoff-regular-controlled-pair} and
\begin{equation}\label{fixed-cutoff-biased-control}
 q(t,c)=e^{\Delta p_t(c)},
 \qquad (t,c)\in[0,T]\times\mathcal C_R.
\end{equation}
Such a function $q$ is called an associated regular control of $g$.
\end{definition}

\begin{proposition}
\label{prop:fixed-cutoff-biased-duality}
If $(g,q)$ is a regular controlled pair, then
\begin{equation}\label{fixed-cutoff-fenchel-bound}
 \mathcal I_{\mathcal B_R}^{\rm Ham}(g)
 \leq\frac12\int_0^T\int_{\mathcal C_R}
 \mathcal B_Rgg_*\ell(q)\,dc\,dt.
\end{equation}
If, in addition, the control $q$ has the form
\eqref{fixed-cutoff-biased-control}, equality holds in
\eqref{fixed-cutoff-fenchel-bound}.
\end{proposition}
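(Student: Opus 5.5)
The plan is to test the controlled equation against an arbitrary admissible $r$ and then apply the pointwise Fenchel--Young inequality for the Poisson entropy. Fix $r\in C^1([0,T];W^{2,\infty}(\mathbb D_R))$. Since $(g,q)$ is a regular controlled pair, $g$ is $C^1$ in time and Lipschitz. The weak equation \eqref{controlled-boltzmann-equation}, stated for $\varphi\in C_c^\infty$, therefore extends by density to time-independent tests in $W^{2,\infty}(\mathbb D_R)$. The collision integral is supported in $\mathcal C_R$ because $\operatorname{supp}\mathcal B_R$ is compactly inside the cutoff region, so boundary behaviour in $v$ causes no problem. A standard product-rule argument, applying the weak equation on small time increments and using the $C^1$ regularity of $r$ in time, then gives the space--time identity
\begin{equation*}
 \langle g_T,r_T\rangle-\langle g_0,r_0\rangle
 -\int_0^T\langle g_t,\partial_tr_t+v\cdot\nabla_xr_t\rangle\,dt
 =\frac12\int_0^T\int_{\mathcal C_R}\mathcal B_Rg_tg_{t,*}\,q_t\,\Delta r_t\,dc\,dt .
\end{equation*}
The main technical step is justifying this time-dependent test for the kinetic transport term on $\mathbb T^d\times B_R$. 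I would handle it by mollifying $r$ in $z$ and passing to the limit with dominated convergence, using boundedness of $g$ and of $\mathcal B_R$.

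With the identity in hand, the functional inside the supremum in \eqref{fixed-cutoff-hamiltonian-action} equals
\begin{equation*}
 \frac12\int_0^T\int_{\mathcal C_R}\mathcal B_Rgg_*\bigl[q\,\Delta r-(e^{\Delta r}-1)\bigr]\,dc\,dt .
\end{equation*}
The scalar inequality $sq-(e^s-1)\le\ell(q)$ holds for $s\in\mathbb R$ and $q\ge0$; it is the duality $\ell^*(s)=e^s-1$ from \eqref{poisson-entropy-density}, already used in Lemma~\ref{lem:integral-relative-flux-duality}. Applying it pointwise with $s=\Delta r_t(c)$ and using $\mathcal B_Rgg_*\ge0$ bounds this expression by $\frac12\int\!\!\int\mathcal B_Rgg_*\ell(q)$. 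Taking the supremum over $r$ gives \eqref{fixed-cutoff-fenchel-bound}. The right-hand side is finite because $q\le C$ and $g\le C$. The effective-domain requirements (densities, endpoint traces, finite collision activity) hold trivially for a regular pair.

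For equality when $q=e^{\Delta p}$, choose $r=p$, which is admissible since $p\in C^1([0,T];W^{2,\infty}(\mathbb D_R))$. Equality in Fenchel--Young holds exactly when $s=\log q$, that is, when $q=e^s$. Pointwise this gives $q\Delta p-(e^{\Delta p}-1)=q\log q-q+1=\ell(q)$, because $q>0$ everywhere. Hence the value at $r=p$ already equals $\frac12\int\!\!\int\mathcal B_Rgg_*\ell(q)$, and the supremum is at least this. Combined with the first part, equality follows. Adding $I_{0,R}(g_0)$ then gives \eqref{intro-fixed-cutoff-biased-action}.
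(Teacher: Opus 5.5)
Your proposal is correct and follows the paper's proof. You extend the controlled equation to time-dependent tests $r\in C^1([0,T];W^{2,\infty}(\mathbb D_R))$ by approximation, rewrite the functional as $\frac12\int_0^T\int_{\mathcal C_R}\mathcal B_Rgg_*[q\Delta r-(e^{\Delta r}-1)]\,dc\,dt$, apply $qs-(e^s-1)\leq\ell(q)$ pointwise, take the supremum, and obtain equality by choosing $r=p$; the only difference is cosmetic, in that the paper justifies the time-dependent test through tensor-product sums $\sum_k\eta_k(t)\varphi_k(z)$ plus mollification, whereas you use the product rule directly (legitimate since $g$ is $C^1$ in time) together with spatial mollification.
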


\begin{proof}
Fix $r\in C^1([0,T];W^{2,\infty}(\mathbb D_R))$.  We extend the weak
controlled equation to this time-dependent test function.  If
$r(t,z)=\sum_{k=1}^N\eta_k(t)\varphi_k(z)$, apply the controlled Boltzmann
equation \eqref{controlled-boltzmann-equation} to each fixed spatial test
$\varphi_k$, multiply the resulting identity by $\eta_k(t)$, sum over $k$,
and use the product rule for
$\eta_k(t)\langle g_t,\varphi_k\rangle$.  This proves the identity below for
finite sums.  For a general $r$, first extend it from
$\mathbb D_R$, multiply the extension by a compactly supported velocity
cutoff, and mollify it in time and space.  A subsequent tensor-product
approximation gives finite sums for which $r$, $\partial_t r$, and
$\nabla_xr$ converge uniformly on $[0,T]\times\mathbb D_R$.
Since $g$ and $q$ are bounded and $\mathcal B_R$ has compact support in
$\mathcal C_R$, dominated convergence yields
\begin{align*}
 &\langle g_T,r_T\rangle-\langle g_0,r_0\rangle
 -\int_0^T\langle g_t,\partial_tr_t+v\cdot\nabla_xr_t\rangle\,dt\\
 &\qquad=\frac12\int_0^T\int_{\mathcal C_R}
 \mathcal B_Rgg_*q\,\Delta r\,dc\,dt.
\end{align*}
Combining this identity with the expression in
\eqref{fixed-cutoff-hamiltonian-action} gives
\begin{align*}
 &\langle g_T,r_T\rangle-\langle g_0,r_0\rangle
 -\int_0^T\langle g_t,\partial_tr_t+v\cdot\nabla_xr_t\rangle\,dt\\
 &\quad-\frac12\int_0^T\int_{\mathcal C_R}
 \mathcal B_Rgg_*(e^{\Delta r}-1)\,dc\,dt
 =\frac12\int_0^T\int_{\mathcal C_R}
 \mathcal B_Rgg_*\bigl[q\Delta r-(e^{\Delta r}-1)\bigr]\,dc\,dt.
\end{align*}
Differentiation in $s$ gives
$qs-(e^s-1)\leq q\log q-q+1=\ell(q)$ for every $q\geq0$ and
$s\in\mathbb R$.
Applying this inequality pointwise with $s=\Delta r(t,c)$ and taking the
supremum over $r$ proves \eqref{fixed-cutoff-fenchel-bound}.  If
$q=e^{\Delta p}$, equality holds at $s=\Delta p$.  Choosing $r=p$ then
proves equality in \eqref{fixed-cutoff-fenchel-bound}.
\end{proof}

For a cell-mass vector $m$ and a cell test vector $p$, define
\begin{align}
 \mathcal H_h(m,p)
 &=\sum_{(a,b)\in\mathcal E_h}m_aT_{ab}^h
 (e^{p_b-p_a}-1)\notag\\
 &\quad+\frac12\sum_{\gamma=(a,b;c,d)\in\Gamma_h}
 K_\gamma^hm_am_b(e^{p_c+p_d-p_a-p_b}-1).
 \label{discrete-regular-hamiltonian}
\end{align}

\begin{lemma}
\label{lem:regular-hamiltonian-consistency}
Let $T_{ab}^h$ be the conservative upwind transport coefficients introduced
in Section~\ref{sec:coarse-setup-main}.  Assume that the collision
coefficients $K_\gamma^h$, $\gamma\in\Gamma_h$, are defined by the exact
cell-integration formula \eqref{riemann-collision-quadrature}, with
$B_h=\mathcal B_R$.  Then
\begin{equation}\label{regular-continuum-hamiltonian}
 \mathcal H_h(\Pi_hf,\mathsf P_hp)\longrightarrow
 \langle f,v\cdot\nabla_xp\rangle
 +\frac12\int_{\mathcal C_R}\mathcal B_Rff_*(e^{\Delta p}-1)\,dc
 \qquad\text{as }h_x+h_v\downarrow0,
\end{equation}
for every nonnegative $f\in W^{1,\infty}(\mathbb D_R)$ and every
$p\in W^{2,\infty}(\mathbb D_R)$.
\end{lemma}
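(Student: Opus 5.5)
The transport and collision parts of $\mathcal H_h$ will be treated separately, each as a consistency estimate. Throughout, write $m=\Pi_hf$ and $p^h=\mathsf P_hp$. Since $p\in W^{2,\infty}$, $f\in W^{1,\infty}$, and the velocity domain is bounded, all quantities below are uniformly bounded.

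\emph{Transport part.} For an edge $a=(i,j)\to b=(i\pm\mathbf e_r^{\,x},j)$, Taylor expansion gives
\[
e^{p^h_b-p^h_a}-1=(p^h_b-p^h_a)+O(h_x^2).
\]
We also have $p^h_b-p^h_a=\pm h_x\,\mathsf P_h(\partial_{x_r}p)_a+O(h_x^2)$. This uses the $W^{2,\infty}$ bound: the averaged difference quotient across one cell equals the cell average of $\partial_{x_r}p$, shifted by half a cell, with an error $O(h_x\|\nabla^2p\|_\infty)$. Since $T^h_{ab}\leq R/h_x$ and $\sum_am_a\leq\|f\|_{L^1}$, the quadratic remainders contribute $O(Rh_x)\to0$. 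The leading term is
\[
\sum_a m_a\,\frac{1}{|V_j^h|}\int_{V_j^h}\bigl((v_r)^+-(v_r)^-\bigr)dv\;\mathsf P_h(\partial_{x_r}p)_a
=\sum_a m_a\,\bar v^h_{j,r}\,\mathsf P_h(\partial_{x_r}p)_a .
\]
This is a Riemann sum for $\int f\,v\cdot\nabla_xp$. The error in replacing $m_a\bar v_{j,r}^h\mathsf P_h(\partial_rp)_a$ by $\int_{E_a}f v_r\partial_rp$ is $|E_a|\,O(h_x+h_v)$, using the Lipschitz continuity of $f$, $v$ and $\nabla_xp$. Summing gives $O(h_x+h_v)$. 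The one point needing care is that the upwind sign decomposition recombines exactly into $v_r$ before averaging, and it does.

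\emph{Collision part.} Substitute the definition \eqref{riemann-collision-quadrature} of $K^h_\gamma$ and sum over the outgoing cells $l,m$. Since the velocity cells partition $B_R$ and $\mathcal B_R$ is supported inside $\mathcal C_R$, the indicators $\mathbf1_{V_l}(v')\mathbf1_{V_m}(v_*')$ sum to one. The collision sum then becomes the exact integral
\[
\frac12\int_{\mathcal C_R}\mathcal B_R(v,v_*,\sigma)\,f^h(x,v)f^h(x,v_*)\bigl(e^{\Delta^h p(c)}-1\bigr)\,dc .
\]
Here $f^h=\mathsf R_h\Pi_hf$ is the piecewise-constant reconstruction. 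For $c$ with $v\in V_j$, $v_*\in V_k$, $v'\in V_l$, $v_*'\in V_m$ and $x\in X_i$, we set $\Delta^hp(c):=p^h_{(i,l)}+p^h_{(i,m)}-p^h_{(i,j)}-p^h_{(i,k)}$. This step is the key observation: the exact cell integration turns the channel sum into an integral with piecewise-constant integrands, with no quadrature error in the kernel.

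Two estimates remain. First, $|f^h-f|\leq\|f\|_{W^{1,\infty}}(h_x+h_v)$ pointwise. Second, $|p^h_a-p(z)|\leq\|\nabla p\|_\infty\operatorname{diam}E_a\leq C(h_x+h_v)$ for $z\in E_a$, so $|\Delta^hp(c)-\Delta p(c)|\leq 4C(h_x+h_v)$. Combining these with the boundedness of $\mathcal B_R$ and of its support, and the local Lipschitz property of $s\mapsto e^s$ on bounded sets, the difference from the continuum Hamiltonian is $O(h_x+h_v)$. Only $W^{1,\infty}$ regularity of $p$ is needed for this part.

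The main obstacle I expect is the transport part. The upwind rates are $O(h_x^{-1})$, so one must check that the second-order Taylor remainder and the averaging errors, after multiplication by these rates, still vanish. This is where the $W^{2,\infty}$ hypothesis on $p$ and the fixed cutoff $R$ enter, giving an $O(Rh_x)$ error. Everything else is dominated convergence with explicit rates.
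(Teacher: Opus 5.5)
Your proposal is correct and follows essentially the same route as the paper. For transport, you expand $e^{p_b^h-p_a^h}-1$ to first order and kill the quadratic remainder using $T_{ab}^h\leq R/h_x$ and $|p_b^h-p_a^h|\leq h_x\|\nabla_xp\|_\infty$; the upwind rates recombine into $v_r$, and a Riemann-sum argument finishes. For collisions, the exact cell integration rewrites the channel sum as $\frac12\int_{\mathcal C_R}\mathcal B_R f_h(x,v)f_h(x,v_*)\bigl(e^{p_h(x,v')+p_h(x,v_*')-p_h(x,v)-p_h(x,v_*)}-1\bigr)\,dc$ with piecewise-constant reconstructions $f_h,p_h$.

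The differences are cosmetic. You compare against cell averages of $\partial_{x_r}p$ rather than point values at cell representatives. You also extract an explicit $O(h_x+h_v)$ rate in the collision part, where the paper simply invokes dominated convergence.
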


\begin{proof}
For the transport part, write
$p_{ij}^h=(\mathsf P_hp)_{(i,j)}$.  For each spatial cell $X_i^h$ and
velocity cell $V_j^h$, choose representatives $x_i^h\in X_i^h$ and
$v_j^h\in V_j^h$, and set
\begin{equation*}
 z_{(i,j)}^h=(x_i^h,v_j^h).
\end{equation*}
For $r\in\{1,\ldots,d\}$, also write
\begin{equation*}
 \overline v_{j,r}^{h,+}
 =\frac1{|V_j^h|}\int_{V_j^h}(v_r)^+\,dv,
 \qquad
 \overline v_{j,r}^{h,-}
 =\frac1{|V_j^h|}\int_{V_j^h}(v_r)^-\,dv.
\end{equation*}
The definition of the upwind coefficients then gives, for
$a=(i,j)\in\mathcal I_h$,
\begin{align}
 \sum_{\substack{b\in\mathcal I_h\\(a,b)\in\mathcal E_h}}
 T_{ab}^h(p_b^h-p_a^h)=\sum_{r=1}^d\left\{
 \overline v_{j,r}^{h,+}
 \frac{p_{i+\mathbf e_r^{\,x},j}^h-p_{ij}^h}{h_x}
 +\overline v_{j,r}^{h,-}
 \frac{p_{i-\mathbf e_r^{\,x},j}^h-p_{ij}^h}{h_x}
 \right\}.
 \label{upwind-transport-hamiltonian-identity}
\end{align}
Translation of the spatial cells and the fundamental theorem of calculus
give
\begin{align*}
 \frac{p_{i+\mathbf e_r^{\,x},j}^h-p_{ij}^h}{h_x}
 &=\frac1{|E_{ij}^h|}\int_{E_{ij}^h}\int_0^1
 \partial_{x_r}p(x+\theta h_x\mathbf e_r^{\,x},v)
 \,d\theta\,dx\,dv,\\
 \frac{p_{i-\mathbf e_r^{\,x},j}^h-p_{ij}^h}{h_x}
 &=-\frac1{|E_{ij}^h|}\int_{E_{ij}^h}\int_0^1
 \partial_{x_r}p(x-\theta h_x\mathbf e_r^{\,x},v)
 \,d\theta\,dx\,dv.
\end{align*}
Every point occurring in the first integral is at distance at most
$C_dh_x+h_v$ from $z_{(i,j)}^h$; the same statement holds for the second
integral.  Since $p\in W^{2,\infty}(\mathbb D_R)$, it follows that
\begin{align*}
 \left|
 \frac{p_{i+\mathbf e_r^{\,x},j}^h-p_{ij}^h}{h_x}
 -\partial_{x_r}p(z_{(i,j)}^h)
 \right|
 &\leq C_d\|p\|_{W^{2,\infty}}(h_x+h_v),\\
 \left|
 \frac{p_{i-\mathbf e_r^{\,x},j}^h-p_{ij}^h}{h_x}
 +\partial_{x_r}p(z_{(i,j)}^h)
 \right|
 &\leq C_d\|p\|_{W^{2,\infty}}(h_x+h_v).
\end{align*}
The maps $s\mapsto s^+$ and $s\mapsto s^-$ are $1$-Lipschitz.  Hence,
because $v_j^h\in V_j^h$ and
$\operatorname{diam}(V_j^h)\leq h_v$,
\begin{equation*}
 \left|\overline v_{j,r}^{h,+}-(v_{j,r}^h)^+\right|
 +\left|\overline v_{j,r}^{h,-}-(v_{j,r}^h)^-\right|
 \leq2h_v.
\end{equation*}
Substituting these estimates into
\eqref{upwind-transport-hamiltonian-identity}, using
$(v_{j,r}^h)^+-(v_{j,r}^h)^-=v_{j,r}^h$ and $|v_j^h|\leq R$, and then
summing over $r=1,\ldots,d$ gives
\begin{align}
 \sup_{a=(i,j)\in\mathcal I_h}
 \left|
 \sum_{\substack{b\in\mathcal I_h\\(a,b)\in\mathcal E_h}}
 T_{ab}^h(p_b^h-p_a^h)
 -v_j^h\cdot\nabla_xp(z_a^h)
 \right|
 \leq C_{R,d}\|p\|_{W^{2,\infty}(\mathbb D_R)}(h_x+h_v).
 \label{upwind-first-order-consistency}
\end{align}
Moreover, the two difference quotients above show that
$|p_b^h-p_a^h|\leq h_x\|\nabla_xp\|_\infty$ whenever
$(a,b)\in\mathcal E_h$.  Hence
\begin{align}
 \sup_{a\in\mathcal I_h}
 \sum_{\substack{b\in\mathcal I_h\\(a,b)\in\mathcal E_h}}
 T_{ab}^h|p_b^h-p_a^h|^2
 \leq C_dR h_x\|\nabla_xp\|_\infty^2.
 \label{upwind-second-order-vanishing}
\end{align}

To pass from the linear upwind operator to the transport Hamiltonian,
Taylor's formula gives
\begin{equation*}
 e^{p_b^h-p_a^h}-1=p_b^h-p_a^h+\rho_{ab}^h,
 \qquad\text{where }\rho_{ab}^h\text{ satisfies }
 |\rho_{ab}^h|
 \leq\frac12e^{|p_b^h-p_a^h|}|p_b^h-p_a^h|^2.
\end{equation*}
Therefore, by \eqref{upwind-second-order-vanishing},
\begin{align*}
 \left|\sum_{a\in\mathcal I_h}(\Pi_hf)_a
 \sum_{\substack{b\in\mathcal I_h\\(a,b)\in\mathcal E_h}}
 T_{ab}^h\rho_{ab}^h\right|\leq
 \frac12e^{h_x\|\nabla_xp\|_\infty}
 C_dRh_x\|\nabla_xp\|_\infty^2
 \sum_{a\in\mathcal I_h}(\Pi_hf)_a
 \longrightarrow0.
\end{align*}
On the other hand, \eqref{upwind-first-order-consistency} and the
Riemann-sum approximation on the product mesh yield
\begin{equation*}
 \sum_{a\in\mathcal I_h}(\Pi_hf)_a
 \sum_{\substack{b\in\mathcal I_h\\(a,b)\in\mathcal E_h}}
 T_{ab}^h(p_b^h-p_a^h)
 \longrightarrow
 \int_{\mathbb D_R}f(x,v)v\cdot\nabla_xp(x,v)\,dx\,dv.
\end{equation*}
Combining the last two displays proves convergence of the transport part
of the discrete Hamiltonian.

For the collision part, set
\begin{equation*}
 f_{ij}^h=\frac{(\Pi_hf)_{(i,j)}}{|E_{ij}^h|},
 \qquad
 p_{ij}^h=(\mathsf P_hp)_{(i,j)}.
\end{equation*}
Thus $f_{ij}^h$ and $p_{ij}^h$ are the cell averages of $f$ and $p$ on
$E_{ij}^h$.  For a local collision channel
\begin{equation*}
 \gamma=((i,j),(i,k);(i,l),(i,m)),
\end{equation*}
the definition \eqref{riemann-collision-quadrature} and
$|E_{ij}^h|=|X_i^h||V_j^h|$ give
\begin{align*}
 K_\gamma^h(\Pi_hf)_{(i,j)}(\Pi_hf)_{(i,k)}=|X_i^h|f_{ij}^hf_{ik}^h
 \int_{V_j^h\times V_k^h\times\mathbb S^{d-1}}
 \mathcal B_R(v,v_*,\sigma)
 \mathbf1_{V_l^h}(v')\mathbf1_{V_m^h}(v_*')
 \,dv\,dv_*\,d\sigma.
\end{align*}
Consequently, the collision part of the discrete Hamiltonian is
\begin{align}
 &\frac12\sum_{\gamma=(a,b;c,d)\in\Gamma_h}
 K_\gamma^h(\Pi_hf)_a(\Pi_hf)_b
 \bigl(e^{(\mathsf P_hp)_c+(\mathsf P_hp)_d
              -(\mathsf P_hp)_a-(\mathsf P_hp)_b}-1\bigr)\notag\\
 &\quad=\frac12
 \sum_{i\in\{0,\ldots,N_x-1\}^d}|X_i^h|
 \sum_{j,k,l,m=1}^{N_v(h)}f_{ij}^hf_{ik}^h
 \int_{V_j^h\times V_k^h\times\mathbb S^{d-1}}
 \mathcal B_R(v,v_*,\sigma)
 \mathbf1_{V_l^h}(v')\mathbf1_{V_m^h}(v_*')\notag\\
 &\hspace{47mm}\times
 \bigl(e^{p_{il}^h+p_{im}^h-p_{ij}^h-p_{ik}^h}-1\bigr)
 \,dv\,dv_*\,d\sigma.
 \label{expanded-discrete-collision-hamiltonian}
\end{align}
Terms with $K_\gamma^h=0$ may be included in the sum without changing its
value.  Since $(V_l^h)_l$ is a partition of $B_R$, the sums over $l$ and
$m$ select the unique cells containing $v'$ and $v_*'$, respectively, up
to cell boundaries of measure zero.

To write this observation as an integral, define the piecewise-constant
reconstructions
\begin{equation*}
 f_h=\mathsf R_h\Pi_hf
 =\sum_{(i,j)\in\mathcal I_h}f_{ij}^h\mathbf1_{E_{ij}^h},
 \qquad
 p_h=\sum_{(i,j)\in\mathcal I_h}p_{ij}^h\mathbf1_{E_{ij}^h}.
\end{equation*}
If $x\in X_i^h$, $v\in V_j^h$, $v_*\in V_k^h$,
$v'\in V_l^h$, and $v_*'\in V_m^h$, then
\begin{equation*}
 f_h(x,v)f_h(x,v_*)=f_{ij}^hf_{ik}^h
\end{equation*}
and
\begin{equation*}
 p_h(x,v')+p_h(x,v_*')-p_h(x,v)-p_h(x,v_*)
 =p_{il}^h+p_{im}^h-p_{ij}^h-p_{ik}^h.
\end{equation*}
It follows that, for each fixed $i,j,k,l,m$,
\begin{align*}
 &\int_{X_i^h\times V_j^h\times V_k^h\times\mathbb S^{d-1}}
 \mathcal B_R f_h(x,v)f_h(x,v_*)
 \mathbf1_{V_l^h}(v')\mathbf1_{V_m^h}(v_*')\\
 &\qquad\times\left[
 e^{p_h(x,v')+p_h(x,v_*')-p_h(x,v)-p_h(x,v_*)}-1
 \right]\,dx\,dv\,dv_*\,d\sigma\\
 &=|X_i^h|f_{ij}^hf_{ik}^h
 \int_{V_j^h\times V_k^h\times\mathbb S^{d-1}}
 \mathcal B_R
 \mathbf1_{V_l^h}(v')\mathbf1_{V_m^h}(v_*')
 \bigl(e^{p_{il}^h+p_{im}^h-p_{ij}^h-p_{ik}^h}-1\bigr)
 \,dv\,dv_*\,d\sigma.
\end{align*}
Because $\operatorname{supp}\mathcal B_R$ lies strictly inside the set on
which $v,v_*,v',v_*'\in B_R$, summing this identity over
$i,j,k,l,m$ and using the velocity partition shows that the right-hand side
of \eqref{expanded-discrete-collision-hamiltonian} is exactly
\begin{align*}
 \frac12\int_{\mathcal C_R}
 \mathcal B_R(v,v_*,\sigma)f_h(x,v)f_h(x,v_*)\left[
 e^{p_h(x,v')+p_h(x,v_*')-p_h(x,v)-p_h(x,v_*)}-1
 \right]\,dc.
\end{align*}
The cell-average property and the diameter bounds on the product mesh imply
\begin{equation*}
 \|f_h-f\|_{L^\infty(\mathbb D_R)}
 +\|p_h-p\|_{L^\infty(\mathbb D_R)}
 \leq C_d(h_x+h_v)
 \bigl(\|f\|_{W^{1,\infty}}+\|p\|_{W^{1,\infty}}\bigr).
\end{equation*}
In particular, the integrand above converges pointwise on $\mathcal C_R$ to
\begin{equation*}
 \mathcal B_R(v,v_*,\sigma)f(x,v)f(x,v_*)
 \bigl(e^{\Delta p(x,v,v_*,\sigma)}-1\bigr).
\end{equation*}
Moreover, since cell averaging does not increase the $L^\infty$ norm, its
absolute value is bounded by
\begin{equation*}
 \|\mathcal B_R\|_\infty\|f\|_\infty^2
 \bigl(e^{4\|p\|_\infty}+1\bigr)
 \mathbf1_{\operatorname{supp}\mathcal B_R}(v,v_*,\sigma),
\end{equation*}
which is integrable in $dc$.  Dominated convergence therefore yields
\begin{align*}
 &\frac12\sum_{\gamma=(a,b;c,d)\in\Gamma_h}
 K_\gamma^h(\Pi_hf)_a(\Pi_hf)_b
 \bigl(e^{(\mathsf P_hp)_c+(\mathsf P_hp)_d
              -(\mathsf P_hp)_a-(\mathsf P_hp)_b}-1\bigr)\\
 &\qquad\longrightarrow
 \frac12\int_{\mathcal C_R}
 \mathcal B_Rff_*\bigl(e^{\Delta p}-1\bigr)\,dc.
\end{align*}
Together with the transport convergence proved above, this establishes
\eqref{regular-continuum-hamiltonian}.
\end{proof}

We next construct discrete controls and paths that approximate a regular
controlled pair and recover its collision cost.

\begin{lemma}
\label{lem:regular-controlled-path-cost-consistency}
Let $T_{ab}^h$, $(a,b)\in\mathcal E_h$, be the conservative upwind
transport coefficients, and let $K_\gamma^h$, $\gamma\in\Gamma_h$, be
defined by \eqref{riemann-collision-quadrature} with
$B_h=\mathcal B_R$.  Recall from \eqref{mass-test-projections} that
$\Pi_h$ maps a density to its vector of cell masses and that $\mathsf R_h$
reconstructs a piecewise-constant density from a cell-mass vector.  Let
$(f,q)$ be a regular controlled pair
in the sense of Definition~\ref{def:fixed-cutoff-regular-controlled-pair}.
Then there exist measurable collision controls
\begin{equation*}
 q^h=(q_\gamma^h)_{\gamma\in\Gamma_h}:[0,T]\longrightarrow
 \mathbb R_{\geq0}^{\Gamma_h}
\end{equation*}
and a path
$m^h=(m_a^h)_{a\in\mathcal I_h}\in
AC([0,T];\mathbb R_{\geq0}^{\mathcal I_h})$ satisfying
\begin{align}
 \dot m^h(t)
 &={}
 \sum_{(a,b)\in\mathcal E_h}
 m_a^h(t)T_{ab}^h(e_b-e_a)\notag\\
 &\quad+\frac12
 \sum_{\gamma=(a,b;c,d)\in\Gamma_h}
 K_\gamma^hm_a^h(t)m_b^h(t)q_\gamma^h(t)
 (e_c+e_d-e_a-e_b),
 \quad\text{for a.e. }t\in[0,T],\notag\\
 m^h(0)&=\Pi_hf(0).
 \label{regular-controlled-discrete-skeleton}
\end{align}
Thus the multiplier on each transport channel is one, whereas
$q_\gamma^h$ is the multiplier on the collision channel
$\gamma\in\Gamma_h$.  The controls and paths can be chosen so that, as
$h_x+h_v\downarrow0$,
\begin{align}
 \sup_{t\in[0,T]}
 \left\|\mathsf R_hm^h(t)-f_t\right\|_{L^1(\mathbb D_R)}
 &\longrightarrow0,
 \label{regular-controlled-path-convergence}\\
 \int_0^T\frac12\sum_{\gamma=(a,b;c,d)\in\Gamma_h}
 K_\gamma^hm_a^h(t)m_b^h(t)\ell(q_\gamma^h(t))\,dt
 &\longrightarrow
 \frac12\int_0^T\int_{\mathcal C_R}\mathcal B_Rff_*\ell(q)\,dc\,dt.
 \label{regular-controlled-action-convergence}
\end{align}
In particular, the discrete collision-control cost converges to the
continuum collision-control cost of $(f,q)$.
\end{lemma}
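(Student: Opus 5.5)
The plan is to build the discrete control by \emph{exact cell-integration against the continuum collision flux}, so that the collision part of the discrete skeleton evaluated at $\Pi_hf$ reproduces $\Pi_h$ of the continuum collision term exactly. The only consistency error then comes from upwind transport, and the path $m^h$ is controlled by a Gronwall argument in the density norm $\|\cdot\|_{\infty,h}$ of \eqref{discrete-density-supnorm}.

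\emph{Step 1 (choice of control).} For a local channel $\gamma=((i,j),(i,k);(i,l),(i,m))$, write $\mathcal C_\gamma$ for the set of $c\in\mathcal C_R$ with $x\in X_i^h$, $v\in V_j^h$, $v_*\in V_k^h$, $v'\in V_l^h$, $v_*'\in V_m^h$. Set
\begin{equation*}
 q_\gamma^h(t):=\frac{\int_{\mathcal C_\gamma}\mathcal B_Rf_tf_{t,*}q_t\,dc}{K_\gamma^h(\Pi_hf_t)_{(i,j)}(\Pi_hf_t)_{(i,k)}}.
\end{equation*}
By \eqref{riemann-collision-quadrature} and $|E_{ij}^h|=|X_i^h||V_j^h|$, this quantity is a weighted average of $q_t\,f_tf_{t,*}/(f_{ij}^hf_{ik}^h)$ over $\mathcal C_\gamma$ with weight $\mathcal B_R$. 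Since $c\leq f\leq C$ and $q\leq C$, we get $0\leq q_\gamma^h\leq C'$. Because $f$ and $q$ are Lipschitz and the cells have diameter $\lesssim h_x+h_v$, we also get $|q_\gamma^h(t)-q_t(c)|\leq C(h_x+h_v)$ on $\mathcal C_\gamma$. With this choice,
\begin{equation*}
 \tfrac12K_\gamma^h(\Pi_hf)_a(\Pi_hf)_bq_\gamma^h=\tfrac12\int_{\mathcal C_\gamma}\mathcal B_Rff_*q\,dc .
\end{equation*}
Summing the jump vectors over channels and testing \eqref{controlled-boltzmann-equation} against $\mathbf 1_{E_a^h}$ (approximated as in the proof of Proposition~\ref{prop:fixed-cutoff-biased-duality}) shows that $\Pi_hf$ satisfies \eqref{regular-controlled-discrete-skeleton} up to a residual
\begin{equation*}
 r^h(t)=\Pi_h(-v\cdot\nabla_xf_t)-\sum_{(a,b)\in\mathcal E_h}(\Pi_hf_t)_aT^h_{ab}(e_b-e_a).
\end{equation*}
Since $\nabla_xf$ is Lipschitz, the first-order estimates of Lemma~\ref{lem:regular-hamiltonian-consistency}, transposed to cell masses, give $\|r^h(t)\|_{\infty,h}\leq C(h_x+h_v)$ uniformly in $t$.

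\emph{Step 2 (stability).} Let $m^h$ solve \eqref{regular-controlled-discrete-skeleton} with $m^h(0)=\Pi_hf(0)$; it exists and stays nonnegative by the argument of Lemma~\ref{lem:bounded-control-skeleton-stability}. Put $e=m^h-\Pi_hf$. Then $\dot e$ is the upwind operator applied to $e$, plus the collision difference, minus $r^h$. I expect this step to be the main obstacle, because the collision map is quadratic and requires a sup-density bound on $m^h$ that is not available a priori.
\begin{itemize}
 \item The upwind part is a monotone, conservative scheme on translates of equal volume within one velocity cell. It therefore satisfies a discrete maximum principle and is nonexpansive in $\|\cdot\|_{\infty,h}$.
 \item The collision difference has the form $K_\gamma^hq_\gamma^h(m_am_b-\bar m_a\bar m_b)$ with $\bar m=\Pi_hf$. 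Using $\|\mathcal B_R\|_\infty$, $q^h\leq C'$ and the partition of $B_R$, its $\|\cdot\|_{\infty,h}$ norm is bounded by $C\|e\|_{\infty,h}(\|\bar m\|_{\infty,h}+\|e\|_{\infty,h})$.
\end{itemize}
To close the estimate, I would introduce the stopping time $\tau_h=\inf\{t:\|e(t)\|_{\infty,h}\geq1\}$ and apply Gronwall on $[0,\tau_h]$. This gives $\sup_{t\leq\tau_h}\|e\|_{\infty,h}\leq C(h_x+h_v)e^{CT}$, which is below $1$ for small $h$, so $\tau_h=T$. Consequently $\sup_t\|\mathsf R_hm^h-\mathsf R_h\Pi_hf\|_{L^\infty}\to0$. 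Combined with $\|\mathsf R_h\Pi_hf-f\|_{L^\infty}\leq C(h_x+h_v)$ on the bounded set $\mathbb D_R$, this yields \eqref{regular-controlled-path-convergence}. It also shows that $m^h$ has densities bounded below by $c/2$ for small $h$.

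\emph{Step 3 (cost).} First replace $m_a^hm_b^h$ by $(\Pi_hf)_a(\Pi_hf)_b$ in the discrete cost. The resulting error is at most $C\sup_t\|e\|_{\infty,h}\sum_\gamma K_\gamma^h|X_i^h|\cdots\,\ell(C')$, which tends to $0$. Then rewrite $\tfrac12\sum_\gamma K^h_\gamma(\Pi_hf)_a(\Pi_hf)_b\ell(q^h_\gamma)$ as the integral $\tfrac12\int_{\mathcal C_R}\mathcal B_Rf_hf_{h,*}\ell(q^h_{\gamma(c)})\,dc$, exactly as in the proof of Lemma~\ref{lem:regular-hamiltonian-consistency}. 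On the support of $\mathcal B_R$ the integrand converges pointwise to $\mathcal B_Rff_*\ell(q)$, by the uniform estimate on $q^h-q$ and the continuity of $\ell$ on $[0,C']$. It is dominated by $\|\mathcal B_R\|_\infty C^2\sup_{[0,C']}\ell\,\mathbf 1_{\operatorname{supp}\mathcal B_R}$, so dominated convergence in $(t,c)$ gives \eqref{regular-controlled-action-convergence}.
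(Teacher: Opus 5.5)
Your proposal is correct in structure but takes a different route from the paper in one place: the choice of discrete control.

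\textbf{Comparison with the paper.} The paper takes $q_\gamma^h(t)$ to be the $\mathcal B_R$-weighted average of $q_t$ over the channel set $\mathcal C_\gamma^h$, with no factor $ff_*$. Then $0\le q_\gamma^h\le\|q\|_\infty$ holds trivially. The cost is that inserting $\Pi_hf$ into the discrete skeleton leaves a collision residual $\frac12\int\mathcal B_R\,q\,(f_hf_{h,*}-ff_*)\,\Delta\mathbf 1_{E_\alpha^h}\,dc$. The paper bounds this residual by $\|f_h-f\|_\infty$ through the coefficient estimate \eqref{uniform-collision-coefficient-sum}.

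Your flux-matching choice instead makes the discrete channel flux at $\Pi_hf$ equal the exact continuum flux through $\mathcal C_\gamma$. The collision residual therefore vanishes identically and only the upwind error remains, which is the cleaner finite-volume construction. The price is twofold: boundedness of $q^h$ now uses $f\ge c>0$, and in the cost you carry the extra ratio $ff_*/(f_{ij}^hf_{ik}^h)$, which tends to $1$ uniformly. The remaining steps coincide with the paper's: the stability argument (discrete maximum principle, local Lipschitz bound, stopping time, Gronwall) and the rewriting of the cost as an integral over $\mathcal C_R$.

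\textbf{Points to repair.}

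\emph{The estimate on $q_\gamma^h-q$.} Your claim $|q_\gamma^h(t)-q_t(c)|\le C(h_x+h_v)$ uniformly on $\mathcal C_\gamma$ is false. The channel set localizes $x,v,v_*,v',v_*'$, but $\sigma=(v'-v_*')/|v-v_*|$ is then known only up to $Ch_v/|v-v_*|$. Near $v=v_*$, for instance on a channel with all four velocity cells equal, $\sigma$ ranges over the whole sphere, so for a general Lipschitz $q$ the oscillation is of order one. The correct bound is $C(h_x+h_v+h_v/|v-v_*|)$; this is why the paper splits off $\mathcal C_{R,\eta}$. It still gives $q^h_{\gamma(c)}(t)\to q_t(c)$ for a.e.\ $(t,c)$, since $\{v=v_*\}$ is null, and that is all your dominated-convergence argument in Step~3 needs.

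\emph{The gain terms in the collision Lipschitz bound.} The partition of $B_R$ controls only the loss contributions ($\alpha=a$ or $b$). For the gain contributions ($\alpha=c$ or $d$) you must bound $|V_\rho^h|^{-1}\int\mathcal B_R\mathbf 1_{V_\rho^h}(v')\,dv\,dv_*\,d\sigma$ uniformly in $h$. This needs an additional change of variables, for example the pre/post-collisional symmetry in the form \eqref{volume-weighted-collision-microreversibility} used in the paper.

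\emph{The upwind maximum principle.} Nonexpansiveness of the upwind operator in $\|\cdot\|_{\infty,h}$ rests on the equality of total incoming and total outgoing rates at each cell. This holds because $T_{ab}^h$ depends only on the velocity cell and the direction. State it explicitly: without it, the Gronwall constant would be of order $R/h_x$.
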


\begin{proof}
For the regular controlled pair $(f,q)$ and each channel
\begin{equation*}
 \gamma=((i,j),(i,k);(i,l),(i,m))\in\Gamma_h,
\end{equation*}
set
\begin{equation*}
 q_\gamma^h(t)=
 \frac{\displaystyle
 \int_{X_i^h\times V_j^h\times V_k^h\times\mathbb S^{d-1}}
 \mathcal B_R(v,v_*,\sigma)q(t,x,v,v_*,\sigma)
 \mathbf1_{V_l^h}(v')\mathbf1_{V_m^h}(v_*')
 \,dx\,dv\,dv_*\,d\sigma}
 {\displaystyle
 |X_i^h|\int_{V_j^h\times V_k^h\times\mathbb S^{d-1}}
 \mathcal B_R(v,v_*,\sigma)
 \mathbf1_{V_l^h}(v')\mathbf1_{V_m^h}(v_*')
 \,dv\,dv_*\,d\sigma}.
\end{equation*}
The denominator is strictly positive because
$\gamma\in\Gamma_h$ means $K_\gamma^h>0$.  Let $m^h$ be the solution of
the discrete controlled equation
\eqref{regular-controlled-discrete-skeleton}.

At a fixed time $t$, suppress the time dependence of $q^h(t)$ and define
the transport and controlled collision parts of the discrete drift by
\begin{align*}
 \mathcal T_h(m)
 &=\sum_{(a,b)\in\mathcal E_h}m_aT_{ab}^h(e_b-e_a),\\
 Q_h^{q^h}(m)
 &=\frac12\sum_{\gamma=(a,b;c,d)\in\Gamma_h}
 K_\gamma^hm_am_bq_\gamma^h(e_c+e_d-e_a-e_b).
\end{align*}
In the proof below, $Q_h^{q^h}$ at time $t$ denotes
$Q_h^{q^h(t)}$.  Define the error
\begin{equation*}
 w^h(t):=m^h(t)-\Pi_hf_t.
\end{equation*}
Subtracting the equation for $\Pi_hf$ from
\eqref{regular-controlled-discrete-skeleton} gives
\begin{align}
 \dot w^h(t)
 =\frac d{dt}\bigl(m^h(t)-\Pi_hf_t\bigr)
 &=\mathcal T_h\bigl(m^h(t)-\Pi_hf_t\bigr)
 +Q_h^{q^h(t)}\bigl(m^h(t)\bigr)
 -Q_h^{q^h(t)}(\Pi_hf_t)\notag\\
 &\quad-\left[
 \frac d{dt}\Pi_hf_t-\mathcal T_h(\Pi_hf_t)
 -Q_h^{q^h(t)}(\Pi_hf_t)
 \right].
 \label{regular-controlled-error-equation}
\end{align}
We estimate the three terms on the right-hand side separately: the
transport term, the difference of the collision terms, and the consistency
residual in square brackets.

\medskip
\noindent\textbf{Step 1: the transport term.}
We begin with the maximum principle for $\mathcal T_h$.  Given
$w\in\mathbb R^{\mathcal I_h}$, put $r_a=w_a/|E_a^h|$.  If
$a=(i,j)$ and $(a,b)\in\mathcal E_h$, then, for some
$r\in\{1,\ldots,d\}$,
\begin{equation*}
 b=(i+\mathbf e_r^{\,x},j)
 \qquad\text{or}\qquad
 b=(i-\mathbf e_r^{\,x},j).
\end{equation*}
Thus transport changes the spatial cell but leaves the velocity cell
$V_j^h$ unchanged.  Since all spatial cells have volume $h_x^d$,
\begin{equation*}
 |E_a^h|=|X_i^h||V_j^h|
 =|X_{i\pm\mathbf e_r^{\,x}}^h||V_j^h|=|E_b^h|.
\end{equation*}
The total outgoing and incoming transport rates can be compared directly.
For
$a=(i,j)$, the upwind definition gives
\begin{align*}
 \sum_{\substack{b\in\mathcal I_h\\(a,b)\in\mathcal E_h}}T_{ab}^h
 &=\frac1{h_x}\sum_{r=1}^d
 \bigl(\overline v_{j,r}^{h,+}+\overline v_{j,r}^{h,-}\bigr).
\end{align*}
The two incoming
edges in that direction come from these same two neighboring spatial cells:
\begin{align*}
 T_{(i-\mathbf e_r^{\,x},j),(i,j)}^h
 &=\frac{\overline v_{j,r}^{h,+}}{h_x},
 &
 T_{(i+\mathbf e_r^{\,x},j),(i,j)}^h
 &=\frac{\overline v_{j,r}^{h,-}}{h_x}.
\end{align*}
The periodic convention ensures that these neighbors also exist when
$X_i^h$ touches the boundary of the fundamental domain.  Summing over
$r=1,\ldots,d$ therefore yields
\begin{equation*}
 \sum_{\substack{b\in\mathcal I_h\\(a,b)\in\mathcal E_h}}T_{ab}^h
 =\sum_{\substack{b\in\mathcal I_h\\(b,a)\in\mathcal E_h}}T_{ba}^h.
\end{equation*}
It follows that, for every $a\in\mathcal I_h$,
\begin{align}
 \frac{[\mathcal T_h(w)]_a}{|E_a^h|}
 &=\frac1{|E_a^h|}
 \sum_{(c,d)\in\mathcal E_h}w_cT_{cd}^h
 \bigl(\mathbf1_{\{a=d\}}-\mathbf1_{\{a=c\}}\bigr)=\frac1{|E_a^h|}
 \left(
 \sum_{\substack{b\in\mathcal I_h\\(b,a)\in\mathcal E_h}}
 w_bT_{ba}^h
 -w_a\sum_{\substack{b\in\mathcal I_h\\(a,b)\in\mathcal E_h}}
 T_{ab}^h
 \right)\notag\\
 &=\sum_{\substack{b\in\mathcal I_h\\(b,a)\in\mathcal E_h}}
 T_{ba}^hr_b
 -r_a\sum_{\substack{b\in\mathcal I_h\\(a,b)\in\mathcal E_h}}
 T_{ab}^h=\sum_{\substack{b\in\mathcal I_h\\(b,a)\in\mathcal E_h}}
 T_{ba}^h(r_b-r_a).
 \label{upwind-discrete-maximum-principle}
\end{align}
Thus the right-hand side is nonpositive at an index where $r_a$ is
maximal, and nonnegative at an index where $r_a$ is minimal.   For a real-valued function $G$,
its upper right Dini derivative is
\begin{equation*}
 D^+G(t):=\limsup_{s\downarrow0}\frac{G(t+s)-G(t)}s.
\end{equation*}
Suppose that $\dot w=\mathcal T_h(w)$, and continue to write
$r_a(t)=w_a(t)/|E_a^h|$.  Recall from
\eqref{discrete-density-supnorm} that
\begin{equation*}
 \|w(t)\|_{\infty,h}
 =\max_{a\in\mathcal I_h}\frac{|w_a(t)|}{|E_a^h|}
 =\max_{a\in\mathcal I_h}|r_a(t)|.
\end{equation*}
Assume first that
$\|w(t)\|_{\infty,h}>0$.  Choose a sequence $s_n\downarrow0$ along which
the difference quotients converge to the upper limit in the definition of
$D^+\|w(t)\|_{\infty,h}$.  For each $n$, choose
$a_n\in\mathcal I_h$ such that
\begin{equation*}
 |r_{a_n}(t+s_n)|
 =\max_{b\in\mathcal I_h}|r_b(t+s_n)|.
\end{equation*}
The sequence $(a_n)_{n\geq1}$ takes values in the finite set
$\mathcal I_h$.  Hence there exist an index $a_*\in\mathcal I_h$ and a
subsequence $(n_k)_{k\geq1}$ such that $a_{n_k}=a_*$ for every $k$.
Replacing $(s_n,a_n)$ by this subsequence and relabeling $k$ as $n$, we
may assume that $a_n=a_*$ for every $n$.  Continuity of the components of
$r$ then gives
\begin{equation*}
 |r_{a_*}(t)|=\max_{b\in\mathcal I_h}|r_b(t)|.
\end{equation*}
Since $r_{a_*}(t)\neq0$, the function $|r_{a_*}|$ is differentiable at
$t$.  Therefore
\begin{align*}
 D^+\|w(t)\|_{\infty,h}
 &=\lim_{n\to\infty}
 \frac{|r_{a_*}(t+s_n)|-\max_{b\in\mathcal I_h}|r_b(t)|}{s_n}\\
 &\leq\lim_{n\to\infty}
 \frac{|r_{a_*}(t+s_n)|-|r_{a_*}(t)|}{s_n}\\
 &=\operatorname{sgn}(r_{a_*}(t))\dot r_{a_*}(t)\\
 &\leq
 \max_{\substack{a\in\mathcal I_h\\
 |r_a(t)|=\max_{b\in\mathcal I_h}|r_b(t)|}}
 \operatorname{sgn}(r_a(t))\dot r_a(t)\\
 &=\max_{\substack{a\in\mathcal I_h\\
 |r_a(t)|=\max_{b\in\mathcal I_h}|r_b(t)|}}
 \operatorname{sgn}(r_a(t))
 \frac{[\mathcal T_h(w(t))]_a}{|E_a^h|}.
\end{align*}
For any index $a$ occurring in these maxima, if $r_a(t)>0$, then $r_a(t)$
is a maximum of the components of $r(t)$, and
\eqref{upwind-discrete-maximum-principle} shows that
$\dot r_a(t)\leq0$.  If $r_a(t)<0$, then it is a minimum and
$\dot r_a(t)\geq0$, so again
$\operatorname{sgn}(r_a(t))\dot r_a(t)\leq0$.  If
$\|w(t)\|_{\infty,h}=0$, then $w(t)=0$ and
$\mathcal T_h(w(t))=0$.  Hence in every case
\begin{equation*}
 D^+\|w(t)\|_{\infty,h}\leq0.
\end{equation*}

We also need the corresponding estimate with a forcing term.  If
$w\in C^1([0,T];\mathbb R^{\mathcal I_h})$ satisfies
\begin{equation*}
 \dot w(t)=\mathcal T_h(w(t))+g(t)
\end{equation*}
for some $g\in C([0,T];\mathbb R^{\mathcal I_h})$, then, whenever
$\|w(t)\|_{\infty,h}>0$,
\begin{align*}
 D^+\|w(t)\|_{\infty,h}
 &\leq
 \max_{\substack{a\in\mathcal I_h\\
 |r_a(t)|=\max_{b\in\mathcal I_h}|r_b(t)|}}
 \operatorname{sgn}(r_a(t))
 \frac{[\mathcal T_h(w(t))]_a+g_a(t)}{|E_a^h|}\\
 &\leq \max_{a\in\mathcal I_h}\frac{|g_a(t)|}{|E_a^h|}
 =\|g(t)\|_{\infty,h}.
\end{align*}
The transport contribution is nonpositive, and the contribution of $g(t)$
is bounded by $\|g(t)\|_{\infty,h}$.  If
$\|w(t)\|_{\infty,h}=0$, then $w(t)=0$ and
$\mathcal T_h(w(t))=0$, and the same inequality follows directly from the
definition of the upper right Dini derivative.  We have therefore proved
\begin{equation*}
 D^+\|w(t)\|_{\infty,h}\leq\|g(t)\|_{\infty,h}.
\end{equation*}

\medskip
\noindent\textbf{Step 2: the collision term.}
We then estimate the collision part.  Its $\alpha$-th coordinate is
\begin{align*}
 [Q_h^{q^h}(m)]_\alpha
 =\frac12\sum_{\gamma=(a,b;c,d)\in\Gamma_h}
 K_\gamma^hm_am_bq_\gamma^h
 \bigl(\mathbf1_{\{\alpha=c\}}+\mathbf1_{\{\alpha=d\}}-\mathbf1_{\{\alpha=a\}}-\mathbf1_{\{\alpha=b\}}\bigr).
\end{align*}
We use the uniform coefficient bound
\begin{align}
 \sup_{\alpha\in\mathcal I_h}\frac1{|E_\alpha^h|}
 \sum_{\gamma=(a,b;c,d)\in\Gamma_h}
 K_\gamma^h|E_a^h||E_b^h|
 \bigl(&\mathbf1_{\{\alpha=a\}}+\mathbf1_{\{\alpha=b\}}
       +\mathbf1_{\{\alpha=c\}}+\mathbf1_{\{\alpha=d\}}\bigr)
 \leq C_R,
 \label{uniform-collision-coefficient-sum}
\end{align}
where $C_R$ is independent of $h$.  Fix
$\alpha=(i,\rho)\in\mathcal I_h$.  Since collisions are local in the
spatial variable, only channels whose four cells have spatial index $i$
contribute.  The contribution from channels for which $\alpha=a$ is
\begin{align*}
 &\frac1{|E_{i\rho}^h|}
 \sum_{k,l,m}K_{i;\rho k\to lm}^h
 |E_{i\rho}^h||E_{ik}^h|\\
 &\quad=\frac1{|V_\rho^h|}
 \sum_k\int_{V_\rho^h\times V_k^h\times\mathbb S^{d-1}}
 \mathcal B_R(v,v_*,\sigma)
 \left(\sum_l\mathbf1_{V_l^h}(v')\right)
 \left(\sum_m\mathbf1_{V_m^h}(v_*')\right)
 \,dv\,dv_*\,d\sigma\\
 &\quad=\frac1{|V_\rho^h|}
 \int_{V_\rho^h\times B_R\times\mathbb S^{d-1}}
 \mathcal B_R(v,v_*,\sigma)
 \left(\sum_l\mathbf1_{V_l^h}(v')\right)
 \left(\sum_m\mathbf1_{V_m^h}(v_*')\right)
 \,dv\,dv_*\,d\sigma\\
 &\quad\leq
 \|\mathcal B_R\|_\infty |B_R|\,|\mathbb S^{d-1}|.
\end{align*}
Here we used $|E_{ij}^h|=|X_i^h||V_j^h|$, summed first over the partition
$(V_k^h)_k$ of $B_R$, and used
$\sum_l\mathbf1_{V_l^h}\leq1$ and
$\sum_m\mathbf1_{V_m^h}\leq1$.  Similarly, the contribution from
$\alpha=b$ satisfies
\begin{align*}
 &\frac1{|E_{i\rho}^h|}
 \sum_{j,l,m}K_{i;j\rho\to lm}^h
 |E_{ij}^h||E_{i\rho}^h|\\
 &\quad=\frac1{|V_\rho^h|}
 \sum_j\int_{V_j^h\times V_\rho^h\times\mathbb S^{d-1}}
 \mathcal B_R(v,v_*,\sigma)
 \left(\sum_l\mathbf1_{V_l^h}(v')\right)
 \left(\sum_m\mathbf1_{V_m^h}(v_*')\right)
 \,dv\,dv_*\,d\sigma\\
 &\quad=\frac1{|V_\rho^h|}
 \int_{B_R\times V_\rho^h\times\mathbb S^{d-1}}
 \mathcal B_R(v,v_*,\sigma)
 \left(\sum_l\mathbf1_{V_l^h}(v')\right)
 \left(\sum_m\mathbf1_{V_m^h}(v_*')\right)
 \,dv\,dv_*\,d\sigma\\
 &\quad\leq
 \|\mathcal B_R\|_\infty |B_R|\,|\mathbb S^{d-1}|.
\end{align*}
For the two outgoing cells, we use
\eqref{volume-weighted-collision-microreversibility}.  If $\alpha=c$, then
\begin{align*}
 \frac1{|E_{i\rho}^h|}
 \sum_{j,k,m}K_{i;jk\to\rho m}^h
 |E_{ij}^h||E_{ik}^h|=\sum_m|E_{im}^h|\sum_{j,k}K_{i;\rho m\to jk}^h
 \leq \|\mathcal B_R\|_\infty |B_R|\,|\mathbb S^{d-1}|,
\end{align*}
where the expression after the equality is of the first incoming type
estimated above.  If $\alpha=d$, the same identity gives
\begin{align*}
 \frac1{|E_{i\rho}^h|}
 \sum_{j,k,l}K_{i;jk\to l\rho}^h
 |E_{ij}^h||E_{ik}^h|=\sum_l|E_{il}^h|\sum_{j,k}K_{i;l\rho\to jk}^h
 \leq \|\mathcal B_R\|_\infty |B_R|\,|\mathbb S^{d-1}|,
\end{align*}
which is of the second incoming type.  Adding these four estimates and
taking the supremum over $\alpha\in\mathcal I_h$ proves
\eqref{uniform-collision-coefficient-sum}, for instance with
$C_R=4\|\mathcal B_R\|_\infty|B_R|\,|\mathbb S^{d-1}|$.

Since $0\leq q_\gamma^h\leq\|q\|_\infty$ and
\begin{equation*}
 |m_am_b-\widetilde m_a\widetilde m_b|
 \leq |m_a|\,|m_b-\widetilde m_b|
      +|\widetilde m_b|\,|m_a-\widetilde m_a|,
\end{equation*}
inequality \eqref{uniform-collision-coefficient-sum} yields, for all
$m,\widetilde m\in\mathbb R_{\geq0}^{\mathcal I_h}$,
\begin{equation}
 \|Q_h^{q^h}(m)-Q_h^{q^h}(\widetilde m)\|_{\infty,h}
 \leq C_R\|q\|_\infty
 (\|m\|_{\infty,h}+\|\widetilde m\|_{\infty,h})
 \|m-\widetilde m\|_{\infty,h}
 \label{discrete-collision-linfty-lipschitz}
\end{equation}
with a constant independent of $h$.

\medskip
\noindent\textbf{Step 3: the consistency residual.}
It remains to estimate the consistency error obtained by inserting the projected
continuum solution $\Pi_hf$ into the discrete controlled equation.  Write
\begin{equation*}
 f_{i\rho}^h(t)=\frac{(\Pi_hf_t)_{(i,\rho)}}{|E_{i\rho}^h|},
 \qquad
 f_h(t)=\mathsf R_h\Pi_hf_t.
\end{equation*}
For the transport part, the definition of the upwind coefficients gives
\begin{align*}
 \frac{[\mathcal T_h(\Pi_hf_t)]_{(i,\rho)}}{|E_{i\rho}^h|}
 =\sum_{r=1}^d\bigg[&
 \overline v_{\rho,r}^{h,+}
 \frac{f_{i-\mathbf e_r^{\,x},\rho}^h(t)-f_{i\rho}^h(t)}{h_x}
 +\overline v_{\rho,r}^{h,-}
 \frac{f_{i+\mathbf e_r^{\,x},\rho}^h(t)-f_{i\rho}^h(t)}{h_x}
 \bigg].
\end{align*}
Taylor's formula in $x$, together with
$\operatorname{diam}(V_\rho^h)\leq h_v$, therefore yields
\begin{align*}
 \sup_{t\in[0,T]}
 \left\|
 \Pi_h(-v\cdot\nabla_xf_t)-\mathcal T_h(\Pi_hf_t)
 \right\|_{\infty,h}
 \leq C_{R,d}(h_x+h_v)
 \sup_{t\in[0,T]}\|f_t\|_{W^{2,\infty}(\mathbb D_R)}.
\end{align*}

For the collision part, since $f$ solves the controlled Boltzmann equation
\eqref{controlled-boltzmann-equation} with kernel $\mathcal B_R$ and
multiplier $q$, it satisfies
\begin{equation*}
 \partial_tf_t=-v\cdot\nabla_xf_t+Q^q(f_t,f_t).
\end{equation*}
Consequently,
\begin{equation*}
 \frac d{dt}\Pi_hf_t
 =\Pi_h(-v\cdot\nabla_xf_t)+\Pi_hQ^q(f_t,f_t).
\end{equation*}
Here the continuum controlled collision operator $Q^q$ is defined by
\begin{align*}
 \int_{\mathbb D_R}Q^q(f_t,f_t)(x,v)\varphi(x,v)\,dx\,dv
 =\frac12\int_{\mathcal C_R}
 \mathcal B_R(v,v_*,\sigma)f_t(x,v)f_t(x,v_*)
 q(t,x,v,v_*,\sigma)\Delta\varphi(c)\,dc
\end{align*}
for every $\varphi\in C^\infty(\mathbb D_R)$, where
\begin{equation*}
 \Delta\varphi(c)
 =\varphi(x,v')+\varphi(x,v_*')
 -\varphi(x,v)-\varphi(x,v_*).
\end{equation*}

The definition of $q_\gamma^h$ and the exact cell-integration formula
imply, for every $(i,\rho)\in\mathcal I_h$,
\begin{align*}
 [Q_h^{q^h}(\Pi_hf_t)]_{(i,\rho)}&=\frac12
 \int_{X_i^h\times B_R\times B_R\times\mathbb S^{d-1}}
 \mathcal B_R(v,v_*,\sigma)q(t,x,v,v_*,\sigma)
 f_h(t,x,v)f_h(t,x,v_*)\\
 &\qquad\qquad\times
 \bigl(
 \mathbf1_{V_\rho^h}(v')+\mathbf1_{V_\rho^h}(v_*')
 -\mathbf1_{V_\rho^h}(v)-\mathbf1_{V_\rho^h}(v_*)
 \bigr)\,dx\,dv\,dv_*\,d\sigma.
\end{align*}
On the other hand, the controlled Boltzmann equation
\eqref{controlled-boltzmann-equation}, applied by approximation to the
indicator of $E_{i\rho}^h$, gives the same integral with
$f_h(t,x,v)f_h(t,x,v_*)$ replaced by $f(t,x,v)f(t,x,v_*)$.  Since
\begin{equation*}
 \sup_{t\in[0,T]}\|f_h(t)-f(t)\|_{L^\infty(\mathbb D_R)}
 \leq C_d(h_x+h_v)
 \sup_{t\in[0,T]}\|f_t\|_{W^{1,\infty}(\mathbb D_R)},
\end{equation*}
partitioning the collision integral into channels and using
\eqref{uniform-collision-coefficient-sum} gives
\begin{align*}
 &\sup_{t\in[0,T]}
 \left\|
 \Pi_hQ^q(f_t,f_t)-Q_h^{q^h}(\Pi_hf_t)
 \right\|_{\infty,h}\notag\\
 &\qquad\leq C_R\|q\|_\infty
 \sup_{t\in[0,T]}
 \bigl(\|f_h(t)\|_\infty+\|f_t\|_\infty\bigr)
 \|f_h(t)-f_t\|_\infty
 \longrightarrow0.
\end{align*}
Define
\begin{equation*}
 \omega_h:=\sup_{t\in[0,T]}
 \left\|\frac d{dt}\Pi_hf_t-
 \mathcal T_h(\Pi_hf_t)-Q_h^{q^h}(\Pi_hf_t)\right\|_{\infty,h}.
\end{equation*}
The two preceding estimates show that $\omega_h\to0$ as
$h_x+h_v\downarrow0$.  In particular, for every
$t\in[0,T]$,
\begin{equation}\label{regular-controlled-residual}
 \left\|\frac d{dt}\Pi_hf-
 \mathcal T_h(\Pi_hf)-Q_h^{q^h}(\Pi_hf)\right\|_{\infty,h}
 \leq\omega_h.
\end{equation}

We combine the three estimates in
\eqref{regular-controlled-error-equation}.
Let
\begin{equation*}
 \tau_h:=\inf\left\{t\in[0,T]:
 \|m^h(t)-\Pi_hf_t\|_{\infty,h}\geq1\right\}\wedge T.
\end{equation*}
For $t\leq\tau_h$, apply the forced maximum-principle inequality above to
\eqref{regular-controlled-error-equation}, with
$w=w^h=m^h-\Pi_hf$.  The local Lipschitz estimate
\eqref{discrete-collision-linfty-lipschitz} and
\eqref{regular-controlled-residual} then yield
\begin{equation*}
 D^+\|m^h(t)-\Pi_hf_t\|_{\infty,h}
 \leq C_{R,f,q}\|m^h(t)-\Pi_hf_t\|_{\infty,h}+\omega_h.
\end{equation*}
Since $m^h(0)=\Pi_hf_0$, Gronwall's lemma gives
\begin{equation*}
 \sup_{t\leq\tau_h}\|m^h(t)-\Pi_hf_t\|_{\infty,h}
 \leq T e^{C_{R,f,q}T}\omega_h.
\end{equation*}
The right-hand side is smaller than one for all sufficiently fine meshes;
hence $\tau_h=T$. Since
$|\mathbb D_R|<\infty$, this proves
\eqref{regular-controlled-path-convergence}, because
\begin{equation*}
 \|\mathsf R_hm^h-f\|_{L^1}
 \leq |\mathbb D_R|\|m^h-\Pi_hf\|_{\infty,h}
 +\|\mathsf R_h\Pi_hf-f\|_{L^1}.
\end{equation*}

We conclude by proving convergence of the action.  The path estimate and
the uniform convergence of the cell averages give
\begin{align*}
 \sup_{t\in[0,T]}\|\mathsf R_hm^h(t)-f_t\|_{L^\infty(\mathbb D_R)}
 &\leq \sup_{t\in[0,T]}\|m^h(t)-\Pi_hf_t\|_{\infty,h}
 +\sup_{t\in[0,T]}
 \|\mathsf R_h\Pi_hf_t-f_t\|_{L^\infty(\mathbb D_R)}
 \longrightarrow0.
\end{align*}

For a channel
$\gamma=((i,j),(i,k);(i,l),(i,m))\in\Gamma_h$, let
\begin{align*}
 \mathcal C_\gamma^h:=\bigl\{(x,v,v_*,\sigma):{}
 x\in X_i^h,\ v\in V_j^h,\ v_*\in V_k^h,v'\in V_l^h,\ v_*'\in V_m^h\bigr\}.
\end{align*}
The definition of $q_\gamma^h$ can then be written as
\begin{equation*}
 q_\gamma^h(t)
 =\frac{\displaystyle\int_{\mathcal C_\gamma^h}
 \mathcal B_R(c)q(t,c)\,dc}
 {\displaystyle\int_{\mathcal C_\gamma^h}\mathcal B_R(c)\,dc}.
\end{equation*}
The denominator is positive for every $\gamma\in\Gamma_h$.  Moreover, apart
from collision configurations for which one of the incoming or outgoing
velocities lies on a cell boundary, every $c\in\mathcal C_R$ with
$\mathcal B_R(c)>0$ belongs to exactly one set $\mathcal C_\gamma^h$.  More
precisely,
\begin{equation*}
 \int_{\mathcal C_R\setminus
       \bigcup_{\gamma\in\Gamma_h}\mathcal C_\gamma^h}
 \mathcal B_R(c)\,dc=0,
 \qquad
 \int_{\mathcal C_\gamma^h\cap\mathcal C_{\widetilde\gamma}^h}
 \mathcal B_R(c)\,dc=0
 \quad\text{if }\gamma\neq\widetilde\gamma.
\end{equation*}
Thus the sets $(\mathcal C_\gamma^h)_{\gamma\in\Gamma_h}$ form a partition
of $\mathcal C_R$ modulo sets of zero $\mathcal B_R(c)\,dc$-measure.
Consequently, \eqref{riemann-collision-quadrature} gives
\begin{align*}
 &\frac12\int_0^T\sum_{\gamma=(a,b;c,d)\in\Gamma_h}
 K_\gamma^hm_a^h(t)m_b^h(t)\ell(q_\gamma^h(t))\,dt\\
 &\qquad=\frac12\int_0^T\sum_{\gamma\in\Gamma_h}
 \int_{\mathcal C_\gamma^h}\mathcal B_R(c)
 \bigl(\mathsf R_hm^h(t)\bigr)(x,v)
 \bigl(\mathsf R_hm^h(t)\bigr)(x,v_*)
 \ell(q_\gamma^h(t))\,dc\,dt.
\end{align*}

We compare each $q_\gamma^h$ with $q$ on the corresponding channel set.
Since $q_\gamma^h$ is a weighted average of $q$,
\begin{equation*}
 0\leq q_\gamma^h(t)\leq\|q\|_\infty,
 \qquad t\in[0,T],\quad \gamma\in\Gamma_h.
\end{equation*}
For $\eta>0$, write
\begin{equation*}
 \mathcal C_{R,\eta}:=
 \{c\in\mathcal C_R:|v-v_*|\leq\eta\}.
\end{equation*}
Because $\mathcal B_R$ is bounded and supported in the bounded velocity
domain,
\begin{align*}
 \int_0^T\int_{\mathcal C_{R,\eta}}\mathcal B_R(c)\,dc\,dt
 &\leq T\|\mathcal B_R\|_\infty
 |\mathbb T^d|\,|\mathbb S^{d-1}|\,|B_R|\,\omega_d\eta^d
 \longrightarrow0
 \qquad\text{as }\eta\downarrow0.
\end{align*}
Here $\omega_d$ is the volume of the unit ball in $\mathbb R^d$; for each
fixed $v$, the condition $|v-v_*|\leq\eta$ restricts $v_*$ to a set of
volume at most $\omega_d\eta^d$.
Since $\mathsf R_hm^h$, $f$, and
$\ell(q_\gamma^h)$ are uniformly bounded, the contributions of
$\mathcal C_{R,\eta}$ to both the discrete and limiting actions are
uniformly small as $\eta\downarrow0$.

Fix $\eta>0$ and take $h_v$ sufficiently small that
$C_dh_v<\eta/2$, where $C_d$ is the dimensional constant in the cell
diameter estimates below.  If two collision configurations
$c=(x,v,v_*,\sigma)$ and
$\widetilde c=(\widetilde x,\widetilde v,
\widetilde v_*,\widetilde\sigma)$ belong to the same channel set
$\mathcal C_\gamma^h$, where
$\gamma=((i,j),(i,k);(i,l),(i,m))$, then the definition of
$\mathcal C_\gamma^h$ gives
\begin{equation*}
 v,\widetilde v\in V_j^h,\qquad
 v_*,\widetilde v_*\in V_k^h,
 \qquad v',\widetilde v'\in V_l^h,
 \qquad v_*',\widetilde v_*'\in V_m^h.
\end{equation*}
Consequently, if $|v-v_*|>\eta$, the diameters of the spatial and velocity
cells give
\begin{equation*}
 d_{\mathbb T^d}(x,\widetilde x)\leq C_dh_x,\qquad
 |v-\widetilde v|+|v_*-\widetilde v_*|\leq C_dh_v,
 \qquad
 |v'-\widetilde v'|+|v_*'-\widetilde v_*'|\leq C_dh_v.
\end{equation*}
In particular, the reverse triangle inequality gives
\begin{align*}
 |\widetilde v-\widetilde v_*|
 &\geq |v-v_*|-|v-\widetilde v|-|v_*-\widetilde v_*|\\
 &>\eta-C_dh_v>\frac{\eta}{2}.
\end{align*}
Thus the second relative velocity is bounded below by $\eta/2$.  Using
\begin{equation*}
 \sigma=\frac{v'-v_*'}{|v-v_*|},\qquad
 \widetilde\sigma
 =\frac{\widetilde v'-\widetilde v_*'}
 {|\widetilde v-\widetilde v_*|},
\end{equation*}
and the preservation of relative speed by an elastic collision, we obtain
\begin{align*}
 |\sigma-\widetilde\sigma|
 \leq\frac{|(v'-v_*')-(\widetilde v'-\widetilde v_*')|}
 {|v-v_*|}
 +|\widetilde v'-\widetilde v_*'|
 \left|\frac1{|v-v_*|}
 -\frac1{|\widetilde v-\widetilde v_*|}\right|.
\end{align*}
Since
$|\widetilde v'-\widetilde v_*'|=|\widetilde v-\widetilde v_*|$, the
second term on the right is
\begin{align*}
 \frac{\bigl||v-v_*|-|\widetilde v-\widetilde v_*|\bigr|}
 {|v-v_*|}
 \leq\frac{|v-\widetilde v|+|v_*-\widetilde v_*|}{|v-v_*|}
 \leq C_d\frac{h_v}{\eta}.
\end{align*}
The first term satisfies the same bound by the estimate for the outgoing
velocities.  Hence $|\sigma-\widetilde\sigma|\leq C_dh_v/\eta$.
Let $L_q$ be a Lipschitz constant of $q$ with respect to
$(x,v,v_*,\sigma)$ on the truncated collision domain.  Fix
$t\in[0,T]$, $\gamma\in\Gamma_h$, and
$c=(x,v,v_*,\sigma)\in\mathcal C_\gamma^h$ with
$|v-v_*|>\eta$.  Writing
$\widetilde c=(\widetilde x,\widetilde v,
\widetilde v_*,\widetilde\sigma)$ for the integration variable, the
weighted-average definition of $q_\gamma^h$ gives
\begin{align*}
 |q_\gamma^h(t)-q(t,c)|
 &=\left|\frac{\displaystyle
    \int_{\mathcal C_\gamma^h}\mathcal B_R(\widetilde c)
       \bigl(q(t,\widetilde c)-q(t,c)\bigr)\,d\widetilde c}
   {\displaystyle
    \int_{\mathcal C_\gamma^h}\mathcal B_R(\widetilde c)\,d\widetilde c}
   \right|\\
 &\leq\frac{\displaystyle
    \int_{\mathcal C_\gamma^h}\mathcal B_R(\widetilde c)L_q
    \left(d_{\mathbb T^d}(x,\widetilde x)
    +|v-\widetilde v|+|v_*-\widetilde v_*|
    +|\sigma-\widetilde\sigma|\right)\,d\widetilde c}
   {\displaystyle
    \int_{\mathcal C_\gamma^h}\mathcal B_R(\widetilde c)\,d\widetilde c}\\
 &\leq C_dL_q\left(h_x+h_v+\frac{h_v}{\eta}\right).
\end{align*}
The denominator is positive because $\gamma\in\Gamma_h$.  Taking the
supremum over $t$, $\gamma$, and $c$ yields
\begin{align*}
 \sup_{\substack{t\in[0,T],\ \gamma\in\Gamma_h,\ c\in\mathcal C_\gamma^h\\
                  |v-v_*|>\eta}}
 |q_\gamma^h(t)-q(t,c)|
 \leq C_dL_q\left(h_x+h_v+\frac{h_v}{\eta}\right)
 \longrightarrow0
 \qquad\text{as }h_x+h_v\downarrow0.
\end{align*}
Since $\ell$ is continuous on the compact interval
$[0,\|q\|_\infty]$, it is uniformly continuous there.  The last estimate
therefore implies
\begin{align*}
 \sup_{\substack{t\in[0,T],\ \gamma\in\Gamma_h,\ c\in\mathcal C_\gamma^h\\
                  |v-v_*|>\eta}}
 |\ell(q_\gamma^h(t))-\ell(q(t,c))|
 \longrightarrow0
 \qquad\text{as }h_x+h_v\downarrow0.
\end{align*}

Combining the preceding estimates, we obtain
\begin{align*}
 &\left|\frac12\int_0^T
 \sum_{\gamma=(a,b;c,d)\in\Gamma_h}
 K_\gamma^hm_a^h(t)m_b^h(t)\ell(q_\gamma^h(t))\,dt
 -\frac12\int_0^T\int_{\mathcal C_R}
 \mathcal B_R(c)f(t,x,v)f(t,x,v_*)\ell(q(t,c))\,dc\,dt\right|\\
 &\quad\leq\frac12\int_0^T\sum_{\gamma\in\Gamma_h}
 \int_{\mathcal C_\gamma^h}\mathcal B_R(c)
 \left|\bigl(\mathsf R_hm^h(t)\bigr)(x,v)
 \bigl(\mathsf R_hm^h(t)\bigr)(x,v_*)
 -f(t,x,v)f(t,x,v_*)\right|
 \ell(q_\gamma^h(t))\,dc\,dt\\
 &\qquad+\frac12\int_0^T\sum_{\gamma\in\Gamma_h}
 \int_{\mathcal C_\gamma^h}\mathcal B_R(c)f(t,x,v)f(t,x,v_*)
 \left|\ell(q_\gamma^h(t))-\ell(q(t,c))\right|\,dc\,dt.
\end{align*}
The first term tends to zero by the uniform convergence of
$\mathsf R_hm^h$ to $f$.  For the second term, boundedness of $f$ gives
\begin{align*}
 &\frac12\int_0^T\sum_{\gamma\in\Gamma_h}
 \int_{\mathcal C_\gamma^h}\mathcal B_R(c)f(t,x,v)f(t,x,v_*)
 \left|\ell(q_\gamma^h(t))-\ell(q(t,c))\right|\,dc\,dt\\
 &\quad\leq C_f
 \sup_{\substack{t\in[0,T],\ \gamma\in\Gamma_h,\ c\in\mathcal C_\gamma^h\\
                  |v-v_*|>\eta}}
 |\ell(q_\gamma^h(t))-\ell(q(t,c))|
 \int_0^T\int_{\mathcal C_R}\mathcal B_R(c)\,dc\,dt\\
 &\qquad+2C_f\sup_{0\leq s\leq\|q\|_\infty}\ell(s)
 \int_0^T\int_{\mathcal C_{R,\eta}}\mathcal B_R(c)\,dc\,dt.
\end{align*}
For fixed $\eta$, the first term on the right tends to zero as
$h_x+h_v\downarrow0$.  The second term is independent of $h$ and tends to
zero as $\eta\downarrow0$.  Thus the second term also tends to zero.  This proves
\eqref{regular-controlled-action-convergence}.
\end{proof}

The preceding recovery statement can be combined with the dual
representation of the finite-mesh action to give convergence of the rate
functions themselves on the regular class.  The corresponding equivalence
between the Hamiltonian and collision-control representations of the
continuum rate is proved in
Lemma~\ref{lem:Hamiltonian-control-rate-comparison} of
Appendix~\ref{app:fluctuating-boltzmann-rate}.  At fixed $h$, the channel set
$\mathcal K_h$ is finite, and the same channelwise convex-duality argument
applies directly.  Thus, for
$m\in AC([0,T];\mathbb R_{\geq0}^{\mathcal I_h})$,
\begin{align}
 I_h(m)=\sup_{p\in C^1([0,T];\mathbb R^{\mathcal I_h})}
 \Bigg\{&\langle m(T),p(T)\rangle-\langle m(0),p(0)\rangle
 -\int_0^T\langle m(t),\dot p(t)\rangle\,dt\notag\\
 &-\int_0^T\mathcal H_h(m(t),p(t))\,dt\Bigg\}.
 \label{finite-mesh-hamiltonian-action-duality}
\end{align}
The pointwise conjugacy used here is, for every $\beta\geq0$,
\begin{equation*}
 \sup_{q\geq0}\{r\beta q-\beta\ell(q)\}
 =\beta(e^r-1),
\end{equation*}
where the maximizing multiplier is $q=e^r$.  Taking this conjugate subject
to
$\dot m=\sum_{\kappa\in\mathcal K_h}
\beta_\kappa(m)q_\kappa z_\kappa$ yields the Hamiltonian
\eqref{discrete-regular-hamiltonian}; integration in time and
Fenchel--Moreau duality give
\eqref{finite-mesh-hamiltonian-action-duality}.  The formula remains valid
with value $+\infty$ outside the absolutely continuous paths.

\begin{theorem}[Fixed-cutoff regular-path convergence of the rate functions]
\label{thm:regular-path-rate-convergence-rigorous}
Assume that $f_0$ is Lipschitz on $\mathbb D_R$ and satisfies
$0<c_0\leq f_0\leq C_0<\infty$.  Let $g$ be a
biased regular path, with regular control $q=e^{\Delta p}$, and take
$m_{*,a}^h=(\Pi_hf_0)_a$.  Let
\begin{equation*}
 \widetilde m^h\in
 D([0,T];\mathbb R_{\geq0}^{\mathcal I_h})
\end{equation*}
for each $h$, and suppose that
\begin{equation*}
 d_{\infty,R}^h(\widetilde m^h,g)
 =\sup_{t\in[0,T]}
 \|\mathsf R_h\widetilde m^h(t)-g_t\|_{L^\infty(\mathbb D_R)}
 \longrightarrow0.
\end{equation*}
Then
\begin{equation}\label{fixed-cutoff-rate-liminf}
 \liminf_{h_x+h_v\downarrow0}
 \mathcal I_h^{\rm Poi}(\widetilde m^h)
 \geq I_{\mathcal B_R}(g).
\end{equation}
There is also a sequence
$m^h\in AC([0,T];\mathbb R_{\geq0}^{\mathcal I_h})$ for which
$d_{\infty,R}^h(m^h,g)\to0$ and
\begin{equation}\label{fixed-cutoff-rate-recovery}
 \lim_{h_x+h_v\downarrow0}\mathcal I_h^{\rm Poi}(m^h)
 =I_{\mathcal B_R}(g).
\end{equation}
Thus the $\Gamma$-liminf and recovery properties hold at $g$ in the strong
reconstruction topology displayed above.
\end{theorem}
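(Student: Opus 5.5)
The plan splits into a liminf part, obtained through the Hamiltonian duality \eqref{finite-mesh-hamiltonian-action-duality}, and a recovery part, obtained from Lemma~\ref{lem:regular-controlled-path-cost-consistency} combined with Proposition~\ref{prop:fixed-cutoff-biased-duality}. Throughout, the initial and dynamical terms of $\mathcal I_h^{\rm Poi}$ are handled separately.

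\emph{Liminf.} We may assume the liminf is finite and pass to a subsequence along which it is a limit. Then each $\widetilde m^h$ is absolutely continuous, so \eqref{finite-mesh-hamiltonian-action-duality} applies.

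For the dynamical term, fix a test function $r\in C^1([0,T];W^{2,\infty}(\mathbb D_R))$ and insert $p^h(t)=\mathsf P_hr_t$ into \eqref{finite-mesh-hamiltonian-action-duality}. This gives a lower bound for $I_h(\widetilde m^h)$. Since $\langle m,\mathsf P_hr\rangle=\langle\mathsf R_hm,r\rangle$, the three linear terms converge to their continuum counterparts, by uniform convergence $\mathsf R_h\widetilde m^h\to g$ on the bounded domain.

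The Hamiltonian term requires more care. Lemma~\ref{lem:regular-hamiltonian-consistency} is stated for $\Pi_hf$ with $f$ Lipschitz, whereas here we have $\widetilde m^h$. I would write
\[
\mathcal H_h(\widetilde m^h,\mathsf P_hr)-\mathcal H_h(\Pi_hg,\mathsf P_hr).
\]
The transport part of this difference is linear in $m$. It is bounded by $\|\widetilde m^h-\Pi_hg\|_{\infty,h}$ times $\sum_aT_{ab}^h|e^{r_b-r_a}-1|\,|E_a^h|$, and the latter is $O(1)$ because $|r_b-r_a|\le Ch_x$ while $T_{ab}^h\lesssim R/h_x$. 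The collision part is controlled by the $h$-uniform coefficient bound \eqref{uniform-collision-coefficient-sum} together with the bilinear estimate used in \eqref{discrete-collision-linfty-lipschitz}, with $q$ replaced by $e^{\Delta r}$.

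Next, $\|\widetilde m^h-\Pi_hg\|_{\infty,h}\le d_{\infty,R}^h(\widetilde m^h,g)\to0$, since cell averaging is an $L^\infty$ contraction. Lemma~\ref{lem:regular-hamiltonian-consistency} applied to $g_t$, uniformly in $t$ because the regularity bounds are uniform in time, then gives pointwise convergence in time. Dominated convergence in $t$ passes to the limit in the time integral. Taking the liminf and then the supremum over $r$ yields
\[
\liminf I_h(\widetilde m^h)\ge\mathcal I^{\rm Ham}_{\mathcal B_R}(g).
\]

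For the initial term, $\sum_a m^h_{*,a}\ell(\widetilde m_a^h(0)/m^h_{*,a})$ is the integral of $\mathsf R_h\Pi_hf_0\,\ell\bigl(\mathsf R_h\widetilde m^h(0)/\mathsf R_h\Pi_hf_0\bigr)$. Both densities converge uniformly, the reference density is bounded below by $c_0$, and $g_0$ is bounded, so this converges to $I_{0,R}(g_0)$ by uniform continuity of $\ell$ on compacts. Adding the two terms gives \eqref{fixed-cutoff-rate-liminf}.

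\emph{Recovery.} Take $m^h$ and $q^h$ from Lemma~\ref{lem:regular-controlled-path-cost-consistency} for the pair $(g,q)$, with transport multipliers equal to one, which cost zero. Then $I_h(m^h)$ is at most the discrete collision cost, and by \eqref{regular-controlled-action-convergence} this converges to $\frac12\int\!\!\int\mathcal B_Rgg_*\ell(q)$. By Proposition~\ref{prop:fixed-cutoff-biased-duality} with $q=e^{\Delta p}$, that limit equals $\mathcal I^{\rm Ham}_{\mathcal B_R}(g)$. The proof of that lemma in fact gives $\sup_t\|\mathsf R_hm^h-g\|_{L^\infty}\to0$ (shown at its end), so $d^h_{\infty,R}(m^h,g)\to0$. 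Since $m^h(0)=\Pi_hg_0$, the initial term converges to $I_{0,R}(g_0)$ as above. This gives the limsup inequality, and the liminf part gives the matching lower bound, hence \eqref{fixed-cutoff-rate-recovery}.

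\emph{Main obstacle.} The hardest step is the uniform-in-$m$ control of the discrete Hamiltonian along arbitrary, non-projected paths $\widetilde m^h$. In particular, the transport term has rates of size $1/h_x$, and the estimate only closes because we test with $\mathsf P_hr$ for smooth $r$, which makes the exponential increments $O(h_x)$. The second delicate point is justifying \eqref{finite-mesh-hamiltonian-action-duality} at paths where some $\beta_\kappa$ vanish. Only the inequality $I_h\ge\sup\{\cdots\}$ is needed, and it follows from the pointwise Fenchel inequality for any admissible control, so no attainment issue arises.
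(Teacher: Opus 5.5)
Your proposal is correct and follows essentially the same route as the paper. For the liminf you use the weak Hamiltonian duality tested with $p^h=\mathsf P_h r$, compare $\mathcal H_h(\widetilde m^h,\cdot)$ with $\mathcal H_h(\Pi_h g,\cdot)$, invoke Lemma~\ref{lem:regular-hamiltonian-consistency}, and pass to the Riemann-sum limit of the initial entropy; for the recovery you use Lemma~\ref{lem:regular-controlled-path-cost-consistency} together with Proposition~\ref{prop:fixed-cutoff-biased-duality}, exactly as the paper does. Your small refinements are valid: you obtain $\|\widetilde m^h-\Pi_hg\|_{\infty,h}\to0$ directly from the $L^\infty$-contractivity of cell averaging, you note that only the weak-duality inequality $I_h\geq\sup\{\cdots\}$ is needed, and you observe that the biased-duality identity is not needed in the liminf part.
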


\begin{proof}
We begin with the lower bound.  Since $g$ is Lipschitz, convergence of the
reconstructed densities implies
\begin{equation}\label{regular-rate-strong-convergence}
 \sup_{t\in[0,T]}
 \|\widetilde m^h(t)-\Pi_hg_t\|_{\infty,h}\longrightarrow0.
\end{equation}
If the left-hand side of \eqref{fixed-cutoff-rate-liminf} is infinite, the
claim is immediate.  Otherwise, choose a sequence of meshes that realizes
the liminf.  After discarding finitely many terms, we may
assume that
\begin{equation*}
 \mathcal I_h^{\rm Poi}(\widetilde m^h)<\infty.
\end{equation*}
The initial entropy in \eqref{fixed-h-poisson-full-rate-rigorous} is
nonnegative, and hence $I_h(\widetilde m^h)<\infty$.  The definition
\eqref{fixed-h-free-initial-action} then implies
\begin{equation*}
 \widetilde m^h\in
 AC([0,T];\mathbb R_{\geq0}^{\mathcal I_h}).
\end{equation*}
For
$r\in C^1([0,T];W^{2,\infty}(\mathbb D_R))$, insert
$p^h(t)=\mathsf P_hr(t)$ in
\eqref{finite-mesh-hamiltonian-action-duality}.  The identity
\begin{equation*}
 \sum_{a\in\mathcal I_h}m_a(\mathsf P_hr)_a
 =\int_{\mathbb D_R}(\mathsf R_hm)(z)r(z)\,dz
\end{equation*}
and \eqref{regular-rate-strong-convergence} give
\begin{align*}
 \langle\widetilde m^h(T),\mathsf P_hr_T\rangle
 &\longrightarrow\langle g_T,r_T\rangle,
 &
 \langle\widetilde m^h(0),\mathsf P_hr_0\rangle
 &\longrightarrow\langle g_0,r_0\rangle,\\
 \int_0^T\langle\widetilde m^h(t),
                  \mathsf P_h\partial_tr_t\rangle\,dt
 &\longrightarrow\int_0^T\langle g_t,\partial_tr_t\rangle\,dt.
\end{align*}

We next replace $\Pi_hg_t$ by $\widetilde m^h(t)$ in the discrete
Hamiltonian.  For the transport part, the upwind definition and
$|e^s-1|\leq e^{|s|}|s|$ give
\begin{align*}
 \left|\sum_{(a,b)\in\mathcal E_h}
 \bigl(\widetilde m_a^h(t)-(\Pi_hg_t)_a\bigr)T_{ab}^h
 \left(e^{(\mathsf P_hr_t)_b-(\mathsf P_hr_t)_a}-1\right)\right|\leq C_{R,r}
 \|\mathsf R_h\widetilde m^h(t)-\mathsf R_h\Pi_hg_t
 \|_{L^\infty(\mathbb D_R)}.
\end{align*}
For the collision part, the cell-integrated definition of $K_\gamma^h$ gives 
\begin{align*}
 &\left|\frac12\sum_{\gamma=(a,b;c,d)\in\Gamma_h}
 K_\gamma^h
 \left(\widetilde m_a^h(t)\widetilde m_b^h(t)
       -(\Pi_hg_t)_a(\Pi_hg_t)_b\right)
 \left(e^{(\mathsf P_hr_t)_c+(\mathsf P_hr_t)_d
              -(\mathsf P_hr_t)_a-(\mathsf P_hr_t)_b}-1\right)\right|\\
 &\qquad\leq C_{R,r}
 \left(\|\mathsf R_h\widetilde m^h(t)\|_{L^\infty(\mathbb D_R)}
       +\|\mathsf R_h\Pi_hg_t\|_{L^\infty(\mathbb D_R)}\right)
 \|\mathsf R_h\widetilde m^h(t)-\mathsf R_h\Pi_hg_t
 \|_{L^\infty(\mathbb D_R)}.
\end{align*}
The two norms in parentheses are uniformly bounded.  Moreover,
\eqref{regular-rate-strong-convergence} and the uniform convergence of the
cell averages of $g$ imply
\begin{equation*}
 \sup_{t\in[0,T]}
 \|\mathsf R_h\widetilde m^h(t)-\mathsf R_h\Pi_hg_t
 \|_{L^\infty(\mathbb D_R)}\longrightarrow0.
\end{equation*}
It follows that
\begin{equation*}
 \sup_{t\in[0,T]}
 \left|\mathcal H_h(\widetilde m^h(t),\mathsf P_hr_t)
 -\mathcal H_h(\Pi_hg_t,\mathsf P_hr_t)\right|\longrightarrow0.
\end{equation*}
The estimates in the proof of
Lemma~\ref{lem:regular-hamiltonian-consistency}, applied uniformly to
$(g_t,r_t)_{t\in[0,T]}$, give
\begin{align*}
 \sup_{t\in[0,T]}\left|\mathcal H_h(\Pi_hg_t,\mathsf P_hr_t)
 -\langle g_t,v\cdot\nabla_xr_t\rangle
 -\frac12\int_{\mathcal C_R}\mathcal B_R(c)g_tg_{t,*}
       (e^{\Delta r_t}-1)\,dc\right|\longrightarrow0.
\end{align*}
Applying \eqref{finite-mesh-hamiltonian-action-duality} with
$p^h=\mathsf P_hr$ and using these convergences therefore yields
\begin{align*}
 \liminf_{h_x+h_v\downarrow0}I_h(\widetilde m^h)
 \geq{}&\langle g_T,r_T\rangle-\langle g_0,r_0\rangle
 -\int_0^T\langle g_t,\partial_tr_t+v\cdot\nabla_xr_t\rangle\,dt\\
 &-\frac12\int_0^T\int_{\mathcal C_R}
 \mathcal B_Rg_tg_{t,*}(e^{\Delta r_t}-1)\,dc\,dt.
\end{align*}
Taking the supremum over $r$ gives
\begin{equation}\label{regular-rate-dynamical-liminf}
 \liminf_{h_x+h_v\downarrow0}I_h(\widetilde m^h)
 \geq\mathcal I_{\mathcal B_R}^{\rm Ham}(g).
\end{equation}

Since $f_0$ and $g_0$ are positive and regular on the bounded domain,
\eqref{regular-rate-strong-convergence} and the continuity of
$(u,a)\mapsto a\ell(u/a)$ for $a>0$ imply
\begin{equation}\label{regular-initial-entropy-convergence}
 \sum_{a\in\mathcal I_h}
 (\Pi_hf_0)_a
 \ell\left(\frac{\widetilde m_a^h(0)}{(\Pi_hf_0)_a}\right)
 \longrightarrow I_{0,R}(g_0).
\end{equation}
Both cell masses contain the same cell-volume factor, so this sum is a
Riemann approximation of the integral defining $I_{0,R}$.  Since $g$ is a
biased regular path,
Proposition~\ref{prop:fixed-cutoff-biased-duality} gives
\begin{equation*}
 \mathcal I_{\mathcal B_R}^{\rm Ham}(g)
 =\frac12\int_0^T\int_{\mathcal C_R}
 \mathcal B_Rgg_*\ell(q)\,dc\,dt.
\end{equation*}
Combining this identity with
\eqref{regular-rate-dynamical-liminf} and
\eqref{regular-initial-entropy-convergence} proves the liminf inequality.

For the recovery bound, take the controlled solutions $m^h$ and controls
$q^h$ constructed in
Lemma~\ref{lem:regular-controlled-path-cost-consistency}, now with initial value
$\Pi_hg_0$.  The proof of that lemma gives the stronger convergence
\begin{equation*}
 \sup_{t\in[0,T]}\|m^h(t)-\Pi_hg_t\|_{\infty,h}\longrightarrow0.
\end{equation*}
In the control formulation of $I_h(m^h)$, choose $q_{(a,b)}^h=1$ on every
transport edge and use the multipliers $q_\gamma^h$ constructed above on
the collision channels.  Since $\ell(1)=0$, the transport contribution to
the cost vanishes, and hence
\begin{equation*}
 I_h(m^h)
 \leq\frac12\int_0^T\sum_{\gamma\in\Gamma_h}
 K_\gamma^hm_a^h(t)m_b^h(t)\ell(q_\gamma^h(t))\,dt.
\end{equation*}
Equations \eqref{regular-controlled-action-convergence} and
\eqref{regular-initial-entropy-convergence} therefore imply
\begin{equation*}
 \limsup_{h_x+h_v\downarrow0}\mathcal I_h^{\rm Poi}(m^h)
 \leq I_{0,R}(g_0)+\frac12\int_0^T\int_{\mathcal C_R}
 \mathcal B_Rgg_*\ell(q)\,dc\,dt
 =I_{\mathcal B_R}(g).
\end{equation*}
Applying the lower bound to this recovery sequence gives
\eqref{fixed-cutoff-rate-recovery}.
\end{proof}

\begin{remark}
The purpose of this section is to test whether the coarse-grained jump
model retains the large-deviation structure of the underlying Boltzmann
dynamics when the mesh is refined.  For every fixed cutoff $R$, the
$\Gamma$-liminf and recovery results above identify the limit of the
discrete rate functions on the biased regular class as
$I_{\mathcal B_R}$.  Thus the convergence concerns not only the
deterministic Boltzmann drift: it also recovers the initial entropy cost and
the nonlinear collision cost that govern rare deviations.

When the original collision kernel is the hard-sphere kernel and
$\mathcal B_R$ is its regularized microreversible truncation,
$I_{\mathcal B_R}$ is the corresponding truncated version of the
hard-sphere particle-system rate function recalled in
\eqref{hard-sphere-path-rate}.  Therefore, already at a fixed velocity
cutoff, the mesh limit shows that the Poissonian coarse-grained model
captures the correct continuum large-deviation action.  The cutoff
separates this identification from the additional problem of controlling
large velocities; removing it is a further step rather than part of the
mesh-consistency result proved here.
\end{remark}

\subsection*{Acknowledgements}
The author is grateful to Zihui He for helpful discussions and comments.

\appendix

\renewcommand{\theequation}{\Alph{section}.\arabic{equation}}

\section{Large-deviation framework for Poissonian SPDEs}
\label{app:poisson-spde-ldp-framework}

This appendix recalls the part of the Poisson-noise framework of Budhiraja,
Chen, and Dupuis needed here; see
\cite[(1.1), (2.1)--(2.4), and (3.10)]{BudhirajaChenDupuis}.  Let
$\mathbb X$ be a locally compact Polish mark space, let $\nu$ be a Borel
measure on $\mathbb X$ that is finite on compact sets, and put
\begin{equation*}
 \mathbb X_T=[0,T]\times\mathbb X,
 \qquad
 \nu_T(dt,d\eta)=dt\,\nu(d\eta).
\end{equation*}
Following the Poisson-random-measure convention of
Section~\ref{sec:coarse-setup-main}, for $\varepsilon>0$ let
$N^{\varepsilon^{-1}}$ be a Poisson random measure on $\mathbb X_T$ with
intensity $\varepsilon^{-1}\nu_T$, and define
\begin{equation*}
 \widetilde N^{\varepsilon^{-1}}(dt,d\eta)
 :=N^{\varepsilon^{-1}}(dt,d\eta)
   -\varepsilon^{-1}dt\,\nu(d\eta),
\end{equation*}
with the usual interpretation under predictable stochastic integrals.

Let $\mathsf S$ be a separable Banach space and set
\begin{equation*}
 \mathbb U_T=D([0,T];\mathsf S),
\end{equation*}
where $D([0,T];\mathsf S)$ denotes the space of $\mathsf S$-valued
right-continuous paths with left limits, equipped with the Skorokhod
topology.  The state space $\mathsf S$ depends on the SPDE under
consideration.  Since we use the abstract framework only to derive the
formal rate function, we assume that the original and controlled equations
define $\mathbb U_T$-valued processes without specifying $\mathsf S$
further.  A rigorous application also requires the
well-posedness, compactness, and controlled-convergence assumptions in
\cite{BudhirajaChenDupuis}.  For coefficients
\begin{equation*}
 A:[0,T]\times\mathsf S\to\mathsf S,
 \qquad
 G:[0,T]\times\mathsf S\times\mathbb X\to\mathsf S,
\end{equation*}
the abstract small-noise equation for $X^\varepsilon\in\mathbb U_T$ is
\begin{align}
 X_t^\varepsilon
 =X_0^\varepsilon
 +\int_0^t A(s,X_s^\varepsilon)\,ds
 +\varepsilon\int_0^t\int_{\mathbb X}
 G(s,X_{s-}^\varepsilon,\eta)\,
 \widetilde N^{\varepsilon^{-1}}(ds,d\eta).
 \label{BCD-abstract-Poisson-SPDE}
\end{align}
The jumps have size of order $\varepsilon$ and intensity of order
$\varepsilon^{-1}$.  In the variational representation, a
nonnegative predictable control changes the compensator to
$\varepsilon^{-1}h(t,\eta)dt\,\nu(d\eta)$.  To construct the controlled
measure, introduce an
auxiliary variable $\theta\in[0,\infty)$ and a Poisson random measure
$\overline N$ on
$[0,T]\times\mathbb X\times[0,\infty)$ with intensity
$dt\,\nu(d\eta)\,d\theta$.  For a nonnegative predictable $h$, define
\begin{align}
 N^{\varepsilon^{-1}h}((0,t]\times B)
 :=\int_{(0,t]\times B\times[0,\infty)}
 \boldsymbol1_{\{0\leq\theta\leq
          \varepsilon^{-1}h(s,\eta)\}}
 \,\overline N(ds,d\eta,d\theta),
 \qquad B\in\mathcal B(\mathbb X).
 \label{BCD-controlled-PRM}
\end{align}
The predictability of $h$ implies that the acceptance indicator
$(s,\eta,\theta)\mapsto
\boldsymbol1_{\{0\leq\theta\leq\varepsilon^{-1}h(s,\eta)\}}$ is a predictable
integrand for $\overline N$.  Since
\begin{equation*}
 \int_0^\infty
 \boldsymbol1_{\{0\leq\theta\leq\varepsilon^{-1}h(s,\eta)\}}\,d\theta
 =\varepsilon^{-1}h(s,\eta),
\end{equation*}
the compensation identity
\eqref{predictable-acceptance-compensation} shows that the
predictable compensator of $N^{\varepsilon^{-1}h}$ is
\begin{equation*}
 \varepsilon^{-1}h(s,\eta)\,ds\,\nu(d\eta).
\end{equation*}
For $h\equiv1$, this random measure has the same law as
$N^{\varepsilon^{-1}}$.  The rate function uses nonnegative measurable
controls $h$ with cost
\begin{equation}\label{BCD-control-cost}
 L_T(h)=\int_0^T\int_{\mathbb X}
 \ell(h(t,\eta))\,\nu(d\eta)\,dt,
 \qquad \ell(a)=a\log a-a+1,
\end{equation}
where $0\log0:=0$.  The variational representation uses random predictable
controls, while the skeleton and rate function below use deterministic
measurable controls of finite cost.  The continuum Boltzmann calculation
uses this drift--jump structure.  The functional-analytic assumptions of
\cite{BudhirajaChenDupuis} are verified for the finite-mesh model in
Section~\ref{sec:fixed-mesh-ldp-rigorous}.

We use the fixed-initial-condition case
$X_0^\varepsilon=X_0$ used in the large-deviation theorem of
\cite{BudhirajaChenDupuis}.  Replacing the original Poisson random measure
by \eqref{BCD-controlled-PRM} gives the controlled stochastic equation
\begin{align}
 X_t^{\varepsilon,h}
 ={}&X_0+\int_0^t A(s,X_s^{\varepsilon,h})\,ds\notag\\
 &+\int_0^t\int_{\mathbb X}
 G(s,X_{s-}^{\varepsilon,h},\eta)
 \left(\varepsilon
 N^{\varepsilon^{-1}h}(ds,d\eta)-ds\,\nu(d\eta)\right).
 \label{BCD-controlled-stochastic-equation}
\end{align}
For a deterministic control $h$, the corresponding zero-noise skeleton is
\begin{align}
 X_t^h
 =X_0
 +\int_0^t A(s,X_s^h)\,ds
 +\int_0^t\int_{\mathbb X}
 G(s,X_s^h,\eta)\bigl(h(s,\eta)-1\bigr)
 \,\nu(d\eta)\,ds.
 \label{BCD-abstract-controlled-skeleton}
\end{align}
The factor $h-1$ comes from the compensated form of
\eqref{BCD-abstract-Poisson-SPDE}.  More precisely,
\begin{align*}
 &\varepsilon N^{\varepsilon^{-1}h}(ds,d\eta)
   -ds\,\nu(d\eta)\\
 &\quad=\varepsilon\left(
 N^{\varepsilon^{-1}h}(ds,d\eta)
 -\varepsilon^{-1}h(s,\eta)ds\,\nu(d\eta)\right)
 +(h(s,\eta)-1)ds\,\nu(d\eta).
\end{align*}
Under the controlled-convergence assumptions of
\cite{BudhirajaChenDupuis}, the stochastic integral against the compensated
term in parentheses, multiplied by $\varepsilon$, converges to zero as
$\varepsilon\downarrow0$.

We parametrize the skeleton by the control and set
\begin{equation*}
 \mathcal G_0(h):=X^h,
\end{equation*}
where $X^h$ solves \eqref{BCD-abstract-controlled-skeleton}.  Equivalently,
$\mathcal G_0$ is the zero-noise solution map of
\cite{BudhirajaChenDupuis}, after identifying a control $h$ with the
controlled intensity measure $h(t,\eta)dt\,\nu(d\eta)$.  This gives the rate
function
\begin{equation}\label{BCD-abstract-rate-function}
 I_{X_0}(\phi)
 =\inf\left\{L_T(h):
 \phi=\mathcal G_0(h)\right\},
 \qquad \phi\in\mathbb U_T,
\end{equation}
with value $+\infty$ if no such control exists.  By
\cite{BudhirajaChenDupuis}, this is the large-deviation rate function once
the required compactness and convergence properties of the controlled
equations have been verified.

For \eqref{poisson-fluctuating-boltzmann-strong}, the terms in this framework
are identified as follows.  Recall the collision activity $a_g$ from
\eqref{Boltzmann-accepted-collision-activity}.
We continue to denote the acceptance mark by $u$, as in
\eqref{poisson-fluctuating-boltzmann-strong}, and define
$\Theta_\varepsilon(t,c,u)=(t,c,\varepsilon u)$.  More explicitly,
the push-forward random measure
\begin{equation*}
 N^{\varepsilon^{-1}}
 :=(\Theta_\varepsilon)_{\#}\mathfrak N
\end{equation*}
is defined, for every measurable
$A\subset[0,\infty)\times\mathcal C\times[0,\infty)$, by
\begin{align*}
 N^{\varepsilon^{-1}}(A)
 &:=\mathfrak N\bigl(\Theta_\varepsilon^{-1}(A)\bigr)\\
 &=\int_{[0,\infty)\times\mathcal C\times[0,\infty)}
 \boldsymbol1_A(t,c,\varepsilon u)\,
 \mathfrak N(dt,dc,du).
\end{align*}
Thus the map multiplies the third coordinate of every atom of $\mathfrak N$ by
$\varepsilon$, while its time and collision mark are unchanged.
Equivalently, for every nonnegative measurable function $F$,
\begin{equation*}
 \int F(t,c,u)\,N^{\varepsilon^{-1}}(dt,dc,du)
 =\int F(t,c,\varepsilon u)\,\mathfrak N(dt,dc,du).
\end{equation*}
Since $\mathfrak N$ has intensity $dt\,dc\,du$, the intensity of the
push-forward is obtained by scaling the third coordinate:
\begin{align*}
 \mathbb E N^{\varepsilon^{-1}}(A)
 &=\int\boldsymbol1_A(t,c,\varepsilon u)\,dt\,dc\,du\\
 &=\varepsilon^{-1}\int\boldsymbol1_A(t,c,u)\,dt\,dc\,du.
\end{align*}
Hence $N^{\varepsilon^{-1}}$ is a Poisson random measure on the variables
$(t,c,u)$ with intensity $\varepsilon^{-1}dt\,dc\,du$.  In the notation of
\cite{BudhirajaChenDupuis}, the correspondence is
\begin{align}
 &X^\varepsilon=f^\varepsilon,\qquad
 \eta=(c,u),\qquad
 \mathbb X=\mathcal C\times[0,\infty),\qquad
 \nu(d\eta)=dc\,du
 =dx\,dv\,dv_*\,d\sigma\,du,\notag\\
 &A(g)=-v\cdot\nabla_xg+Q(g,g),\qquad
 G(g;c,u)=J_c\boldsymbol1_{\{0\leq u\leq a_g(c)\}}.
\label{Boltzmann-BCD-mark-and-coefficients}
\end{align}
Here $u$ is part of the original mark $\eta$ and implements the
state-dependent collision acceptance rule.  It is distinct from the
auxiliary acceptance variable $\theta$ in \eqref{BCD-controlled-PRM}.  With
this notation, the noise in
\eqref{poisson-fluctuating-boltzmann-strong} can be written as
 \begin{align*}
 \varepsilon\int_{\mathcal C\times[0,\infty)}
 J_c\boldsymbol1_{\{u\leq\varepsilon^{-1}a_{f_{t-}^\varepsilon}(c)\}}
 \,\widetilde{\mathfrak N}(dt,dc,du)=
 \varepsilon\int_{\mathcal C\times[0,\infty)}
 G(f_{t-}^\varepsilon;c,u)\,
 \widetilde N^{\varepsilon^{-1}}(dt,dc,du).
\end{align*}
Moreover, for every
$\varphi\in C_c^\infty(\mathbb T^d\times\mathbb R^d)$,
\begin{equation*}
 \langle G(g;c,u),\varphi\rangle
 =\Delta\varphi(c)\boldsymbol1_{\{0\leq u\leq a_g(c)\}}.
\end{equation*}

For the deterministic skeleton, let $h=h(t,c,u)\geq0$ be a measurable
control of finite cost.  In
\eqref{BCD-abstract-controlled-skeleton}, take
\begin{equation*}
 X^h=g,\qquad \eta=(c,u),\qquad
 \nu(d\eta)=dc\,du,
\end{equation*}
and use the definitions of $A$ and $G$ in
\eqref{Boltzmann-BCD-mark-and-coefficients}.  Substitution into
\eqref{BCD-abstract-controlled-skeleton} gives the following identity in
the sense of distributions on $\mathbb T^d\times\mathbb R^d$:
\begin{align*}
 g_t
 =g_0+\int_0^t
 \bigl[-v\cdot\nabla_xg_s+Q(g_s,g_s)\bigr]\,ds+\int_0^t\int_{\mathcal C}\int_0^\infty
 J_c\boldsymbol1_{\{0\leq u\leq a_{g_s}(c)\}}
 \bigl(h(s,c,u)-1\bigr)\,du\,dc\,ds.
\end{align*}
The indicator restricts the $u$-integration to the accepted interval, so
the controlled term can equivalently be written as
\begin{equation*}
 \int_0^t\int_{\mathcal C}
 J_c\left[\int_0^{a_{g_s}(c)}
 \bigl(h(s,c,u)-1\bigr)\,du\right]dc\,ds.
\end{equation*}
Let
$\varphi\in C_c^\infty(\mathbb T^d\times\mathbb R^d)$.  The three
terms in the preceding distributional identity satisfy
\begin{align*}
 \langle-v\cdot\nabla_xg_s,\varphi\rangle
 &=\langle g_s,v\cdot\nabla_x\varphi\rangle,\\
 \langle Q(g_s,g_s),\varphi\rangle
 &=\int_{\mathcal C}a_{g_s}(c)\Delta\varphi(c)\,dc,\\
 \langle J_c,\varphi\rangle&=\Delta\varphi(c).
\end{align*}
Here the second identity follows from the weak form of the Boltzmann
collision operator and the definition
$a_g(c)=\frac12B(v-v_*,\sigma)g(x,v)g(x,v_*)$.  Pairing the distributional
identity with $\varphi$ and using these formulas gives
\begin{align*}
 \langle g_t,\varphi\rangle-\langle g_0,\varphi\rangle
 ={}&\int_0^t\langle g_s,v\cdot\nabla_x\varphi\rangle\,ds\\
 &+\int_0^t\int_{\mathcal C}
 \left[a_{g_s}(c)+\int_0^{a_{g_s}(c)}
       \bigl(h(s,c,u)-1\bigr)\,du\right]
 \Delta\varphi(c)\,dc\,ds.
\end{align*}
This is the controlled skeleton \eqref{BCD-Boltzmann-skeleton}.  Its control
cost in \eqref{BCD-abstract-rate-function} gives the fluctuating Boltzmann
rate function
\eqref{BCD-Boltzmann-rate-before-reduction}.

\section{The rate function of the fluctuating Boltzmann equation}
\label{app:fluctuating-boltzmann-rate}

Recall that the Poisson-control rate $I_{X_0}^{\rm FB}$, the
collision-control action $\mathcal I_B$, and the Hamiltonian action
$\mathcal I_{\rm Ham}$ are defined in
\eqref{BCD-Boltzmann-rate-before-reduction},
\eqref{boltzmann-path-rate-control}, and
\eqref{hard-sphere-hamiltonian-action}, respectively.  The initial rate
$I_0$ and the full hard-sphere rate $I_{\rm HS}$ are given in
\eqref{hard-sphere-initial-rate} and \eqref{hard-sphere-path-rate}.
We compare the three dynamical rate functions on the following common
effective domain.
Recall from \eqref{boltzmann-path-space} that
$\mathscr X_T=D([0,T];\mathcal M_+(\mathbb T^d\times\mathbb R^d))$, endowed
with the Skorokhod $J_1$ topology.

\begin{definition}[Common effective domain]
\label{def:common-effective-domain}
Let $\mathscr D_T\subset\mathscr X_T$ consist of paths $g$ such that, for
almost every $t\in[0,T]$, the measure $g_t$ has a nonnegative density,
denoted again by $g_t$, and
\begin{equation}
\label{common-effective-domain-collision-activity}
 \int_0^T\int_{\mathcal C}a_{g_t}(c)\,dc\,dt<\infty
\end{equation}
Moreover, for every
$\varphi\in C_c^\infty(\mathbb T^d\times\mathbb R^d)$, the map
$t\mapsto\langle g_t,\varphi\rangle$ is absolutely continuous.
\end{definition}

All three rate functions are assigned the value $+\infty$ outside their
stated effective domains.

\begin{lemma}
\label{lem:FB-control-rate-equivalence}
For every $g\in\mathscr D_T$ and every prescribed initial state $X_0$,
\begin{equation}\label{BCD-Boltzmann-fixed-initial-rate}
 I_{X_0}^{\rm FB}(g)=
 \begin{cases}
 \mathcal I_B(g),&g_0=X_0,\\
 +\infty,&g_0\neq X_0.
 \end{cases}
\end{equation}
\end{lemma}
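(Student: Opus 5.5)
The plan is to compare the two constrained minimization problems through the map that sends a mark control $h(t,c,u)$ to its averaged collision multiplier
\begin{equation*}
 q_t(c):=\frac{1}{a_{g_t}(c)}\int_0^{a_{g_t}(c)}h(t,c,u)\,du
 \qquad\text{on }\{a_{g_t}(c)>0\},
\end{equation*}
with $q_t(c):=1$ where $a_{g_t}(c)=0$.  If $g_0\neq X_0$, both definitions give $+\infty$: the infimum in \eqref{BCD-Boltzmann-rate-before-reduction} is over an empty set, and $\mathcal I_B$ is not applicable.  So I will assume $g_0=X_0$ and $g\in\mathscr D_T$.  Since the bracket in \eqref{BCD-Boltzmann-skeleton} equals
\begin{equation*}
 a_{g_s}(c)+\int_0^{a_{g_s}(c)}\bigl(h-1\bigr)\,du
 =\int_0^{a_{g_s}(c)}h\,du
 =a_{g_s}(c)\,q_s(c),
\end{equation*}
and $\frac12Bgg_*=a_g$, the skeleton \eqref{BCD-Boltzmann-skeleton} for $h$ is literally \eqref{controlled-boltzmann-equation} for $q$.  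Both problems therefore constrain the same effective flux $a_gq$.

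For ``$\geq$'', take any admissible $h$ with finite cost.  Its cost splits into the part with $u\leq a_{g_t}(c)$ and the part with $u>a_{g_t}(c)$.  The second part is nonnegative, so I will drop it.  On the first part, Jensen's inequality for the convex function $\ell$ on the interval $[0,a_{g_t}(c)]$, exactly as in \eqref{mark-control-jensen}, gives
\begin{equation*}
 \int_0^{a_{g_t}(c)}\ell(h)\,du\;\geq\;a_{g_t}(c)\,\ell(q_t(c)).
\end{equation*}
Integrating in $(t,c)$ yields $L_T(h)\geq\frac12\int_0^T\int_{\mathcal C}Bgg_*\ell(q)\,dc\,dt$.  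It remains to check that $q\in\mathcal Q(g)$.  Measurability follows from Fubini.  The integrability condition follows from the bound just obtained, which is finite.  The weak equation holds by the identity above.  Taking the infimum over $h$ gives $I_{X_0}^{\rm FB}(g)\geq\mathcal I_B(g)$.

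For ``$\leq$'', take $q\in\mathcal Q(g)$ and define
\begin{equation*}
 h(t,c,u):=q_t(c)\,\mathbf 1_{\{u\leq a_{g_t}(c)\}}+\mathbf 1_{\{u>a_{g_t}(c)\}}.
\end{equation*}
Then $\int_0^{a}h\,du=a\,q$, so $g$ satisfies \eqref{BCD-Boltzmann-skeleton}.  Because $\ell(1)=0$, the cost is exactly $\int_0^T\int_{\mathcal C} a_{g_t}\ell(q)\,dc\,dt$.  Taking the infimum over $q$ gives $I_{X_0}^{\rm FB}(g)\leq\mathcal I_B(g)$.

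The main point requiring care is integrability, so that the weak identities make sense when $h$ or $q$ is unbounded.  I expect this to be the only obstacle.  The Fenchel bound $sq\leq\ell(q)+e^s-1$, used in \eqref{fixed-h-flux-fenchel}, together with the finite collision activity \eqref{common-effective-domain-collision-activity}, shows that $a_gq\in L^1(dt\,dc)$.  Since $\Delta\varphi$ is bounded, every collision term in both equations is then absolutely integrable.  Degenerate points where $a_{g_t}(c)=0$ contribute nothing to either cost or either equation.
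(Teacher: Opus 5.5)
Your proposal is correct and follows the same route as the paper. Average the mark control over the acceptance interval $[0,a_{g_t}(c)]$ and apply Jensen for $I_{X_0}^{\rm FB}\geq\mathcal I_B$; extend a multiplier $q\in\mathcal Q(g)$ by $1$ off the acceptance interval, using $\ell(1)=0$, for the reverse inequality. Your explicit integrability check, that $a_gq\in L^1$ via $q\leq\ell(q)+e-1$ and finite collision activity, is a detail the paper leaves implicit.
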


\begin{proof}
For each $(t,c)$, the skeleton in \eqref{BCD-Boltzmann-skeleton} depends on
the function $u\mapsto h(t,c,u)$ only through
\begin{equation*}
 a_{g_t}(c)+\int_0^{a_{g_t}(c)}(h(t,c,u)-1)\,du
 =\int_0^{a_{g_t}(c)}h(t,c,u)\,du.
\end{equation*}
Whenever $a_{g_t}(c)>0$, define the average of the control over the accepted
marks by
\begin{equation*}
 q_t(c):=\frac1{a_{g_t}(c)}
 \int_0^{a_{g_t}(c)}h(t,c,u)\,du,
\end{equation*}
and set $q_t(c)=1$ when $a_{g_t}(c)=0$.  With this definition, the expression
in square brackets in \eqref{BCD-Boltzmann-skeleton} equals
$a_{g_s}(c)q_s(c)$.  Hence every control $h$ producing the path $g$ through
\eqref{BCD-Boltzmann-skeleton} produces a multiplier
$q\in\mathcal Q(g)$ in the sense of
\eqref{controlled-boltzmann-equation}.

We first prove one inequality between the costs.  The function
$\ell(z)=z\log z-z+1$ is nonnegative and convex on $[0,\infty)$ because
$\ell''(z)=z^{-1}>0$ for $z>0$, with the convex extension
$\ell(0)=1$.  For $a_{g_t}(c)>0$, apply Jensen's inequality on
$[0,a_{g_t}(c)]$ with respect to the probability measure
$a_{g_t}(c)^{-1}\,du$.  Since the average of $h(t,c,\cdot)$ under this
measure is $q_t(c)$, every control $h$ producing $g$ satisfies
\begin{align*}
 \int_0^T\int_{\mathcal C}\int_0^\infty
 \ell(h(t,c,u))\,du\,dc\,dt&\geq
 \int_0^T\int_{\mathcal C}\int_0^{a_{g_t}(c)}
 \ell(h(t,c,u))\,du\,dc\,dt\\
 &\geq
 \int_0^T\int_{\mathcal C}
 a_{g_t}(c)\ell(q_t(c))\,dc\,dt.
\end{align*}
When $a_{g_t}(c)=0$, the last pointwise inequality is trivial because both
sides vanish.
The last integral is the cost of $q$ in
\eqref{boltzmann-path-rate-control}, because
$a_g=\frac12Bgg_*$.  Taking the infimum over all controls $h$ in
\eqref{BCD-Boltzmann-rate-before-reduction} yields
\begin{equation*}
 \mathcal I_B(g)\leq I_{X_0}^{\rm FB}(g)
 \qquad\text{when }g_0=X_0.
\end{equation*}

Conversely, let $q\in\mathcal Q(g)$ and define explicitly
\begin{equation*}
 h^q(t,c,u):=
 \begin{cases}
 q_t(c),&0\leq u\leq a_{g_t}(c),\\
 1,&u>a_{g_t}(c).
 \end{cases}
\end{equation*}
For this choice, the expression in square brackets in
\eqref{BCD-Boltzmann-skeleton} can be computed explicitly:
\begin{align*}
 a_{g_t}(c)+\int_0^{a_{g_t}(c)}
 \bigl(h^q(t,c,u)-1\bigr)\,du=a_{g_t}(c)
 +\int_0^{a_{g_t}(c)}\bigl(q_t(c)-1\bigr)\,du
 =a_{g_t}(c)q_t(c).
\end{align*}
Therefore, the skeleton equation for $h^q$ is
\eqref{controlled-boltzmann-equation}, which is already satisfied by $g$
because $q\in\mathcal Q(g)$.  Thus $h^q$ is an admissible control producing
the path $g$ in \eqref{BCD-Boltzmann-rate-before-reduction}.

For its cost, split the $u$-integral at
$a_{g_t}(c)$.  Since $h^q=q_t(c)$ on the accepted interval,
$h^q=1$ outside it, and $\ell(1)=0$, we have
\begin{align*}
 \int_0^\infty\ell(h^q(t,c,u))\,du
 &=\int_0^{a_{g_t}(c)}\ell(q_t(c))\,du
   +\int_{a_{g_t}(c)}^\infty\ell(1)\,du=a_{g_t}(c)\ell(q_t(c)).
\end{align*}
Integrating this pointwise identity over $(t,c)$ gives
\begin{equation*}
 \int_0^T\int_{\mathcal C}\int_0^\infty
 \ell(h^q(t,c,u))\,du\,dc\,dt
 =\int_0^T\int_{\mathcal C}
 a_{g_t}(c)\ell(q_t(c))\,dc\,dt.
\end{equation*}
Taking the infimum over $q\in\mathcal Q(g)$ gives the reverse inequality.
If $g_0\neq X_0$, the admissible set in
\eqref{BCD-Boltzmann-rate-before-reduction} is empty.  This proves
\eqref{BCD-Boltzmann-fixed-initial-rate}.
\end{proof}

If the initial condition is random and satisfies an independent
large-deviation principle with rate $I_0$, Lemma~
\ref{lem:FB-control-rate-equivalence} gives the full rate
$I_0(g_0)+\mathcal I_B(g)$.

\begin{lemma}
\label{lem:Hamiltonian-control-rate-comparison}
For every $g\in\mathscr D_T$,
\begin{equation*}
 \mathcal I_{\rm Ham}(g)\leq\mathcal I_B(g).
\end{equation*}
If, in addition, $g\in\mathcal R$, then
\begin{equation*}
 \mathcal I_{\rm Ham}(g)=\mathcal I_B(g),
 \qquad
 I_{\rm HS}(g)=I_0(g_0)+\mathcal I_B(g).
\end{equation*}
\end{lemma}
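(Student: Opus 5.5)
The inequality $\mathcal I_{\rm Ham}(g)\leq\mathcal I_B(g)$ comes from a pointwise Fenchel inequality, exactly as in Proposition~\ref{prop:fixed-cutoff-biased-duality}. If $\mathcal I_B(g)=\infty$ there is nothing to prove, so fix $q\in\mathcal Q(g)$ with finite cost. Fix also $p\in C_c^1([0,T]\times\mathbb T^d\times\mathbb R^d)$. The first step is to extend the weak controlled equation \eqref{controlled-boltzmann-equation} from time-independent tests $\varphi$ to the time-dependent test $p$. We approximate $p$ by finite sums $\sum_k\eta_k(t)\varphi_k(z)$ with compact support in a fixed ball and use the product rule, which is legitimate because $t\mapsto\langle g_t,\varphi\rangle$ is absolutely continuous on $\mathscr D_T$. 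Passing to the limit is justified by dominated convergence. This requires $|\Delta p|\leq4\|p\|_\infty$ together with integrability of $\frac12Bgg_*q$. The latter follows from $q\leq\ell(q)+e-1$, the finite cost, and the finite collision activity \eqref{common-effective-domain-collision-activity}. The result is the identity
\begin{equation*}
\langle g_T,p_T\rangle-\langle g_0,p_0\rangle-\int_0^T\langle g_t,\partial_tp_t+v\cdot\nabla_xp_t\rangle\,dt=\frac12\int_0^T\int_{\mathcal C}Bgg_*q\,\Delta p\,dc\,dt.
\end{equation*}

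Next, subtract $\int_0^T\mathcal H_{\rm coll}(g_t,p_t)\,dt$. The functional inside the supremum in \eqref{hard-sphere-hamiltonian-action} then equals $\frac12\int\!\int Bgg_*[q\Delta p-(e^{\Delta p}-1)]$. The scalar inequality $qs-(e^s-1)\leq\ell(q)$ bounds this by the cost of $q$. Taking the supremum over $p$ and then the infimum over $q$ gives the first claim.

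For $g\in\mathcal R$, the definition of $\mathcal R$ from \cite{BodineauGallagherSaintRaymondSimonella} supplies a bias $p$ (Lipschitz and bounded, with the regularity stated there) such that $g$ solves the biased Boltzmann equation, i.e.\ $g$ solves \eqref{controlled-boltzmann-equation} with $q=e^{\Delta p}$. With this choice the Fenchel inequality is an equality pointwise, so the functional evaluated at this $p$ equals $\frac12\int Bgg_*\ell(e^{\Delta p})$. This gives $\mathcal I_{\rm Ham}(g)\geq\mathcal I_B(g)$, provided $p$ is an admissible test.

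The main obstacle is this admissibility: the bias in $\mathcal R$ is Lipschitz and typically not compactly supported in $v$, whereas the supremum runs over $C_c^1$. I would handle it by approximation. Take $p^n=\chi_n(v)\,(p*\rho_n)$, with $\chi_n$ a velocity cutoff and $\rho_n$ a mollifier, so that $p^n\to p$ and $\Delta p^n\to\Delta p$ pointwise, with uniform bounds on $p^n,\partial_tp^n,\nabla_xp^n$. Passing to the limit in the functional uses dominated convergence in each term. The transport term is controlled by the finite mass and second moment of $g$. The collision term is dominated by $Bgg_*(e^{4\|p\|_\infty}+1)$, which is integrable by \eqref{common-effective-domain-collision-activity}. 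Hence the functional at $p^n$ converges to the functional at $p$, so the supremum is at least $\mathcal I_B(g)$.

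The final identity $I_{\rm HS}(g)=I_0(g_0)+\mathcal I_B(g)$ then follows from \eqref{hard-sphere-path-rate}.
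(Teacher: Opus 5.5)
Your proposal is correct and follows the paper's proof essentially step by step. You test the controlled equation with a time-dependent $p$ and apply the pointwise Fenchel inequality $qs-(e^s-1)\leq\ell(q)$ to get $\mathcal I_{\rm Ham}\leq\mathcal I_B$. On $\mathcal R$ you use the equality case at the compatible tilt $q=e^{\Delta p}$, obtaining admissibility of the Lipschitz bias by a velocity cutoff plus mollification. The only differences are cosmetic: you use a single approximating sequence and justify the limit via the finite collision activity in $\mathscr D_T$ and the moment bounds available on $\mathcal R$. The paper instead takes the iterated limit $n\to\infty$, then $R\to\infty$, and cites the Gaussian velocity estimate of \cite{BodineauGallagherSaintRaymondSimonella}.
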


\begin{proof}
Recall the definition of $\mathcal I_{\rm Ham}$ in
\eqref{hard-sphere-hamiltonian-action}.  We begin with the pointwise convex
duality used in the proof.

For fixed $s\in\mathbb R$, differentiating
$q s-\ell(q)$ over $q>0$ gives
\begin{equation*}
 \frac{d}{dq}\bigl(qs-\ell(q)\bigr)=s-\log q,
 \qquad
 \frac{d^2}{dq^2}\bigl(qs-\ell(q)\bigr)=-\frac1q<0.
\end{equation*}
Its unique maximizer is therefore $q=e^s$, and substitution gives
\begin{equation*}
 \ell^*(s):=\sup_{q\geq0}\bigl(qs-\ell(q)\bigr)=e^s-1.
\end{equation*}
Equivalently, for all $q\geq0$ and $s\in\mathbb R$,
\begin{equation}\label{Boltzmann-Fenchel-inequality}
 qs-\bigl(e^s-1\bigr)\leq\ell(q),
\end{equation}
with equality for $q=e^s$.

We first prove $\mathcal I_{\rm Ham}(g)\leq\mathcal I_B(g)$.  Fix any
$q\in\mathcal Q(g)$ and any admissible $p$.  Applying weak integration by
parts in time to \eqref{controlled-boltzmann-equation} gives
\begin{align*}
 \langle g_T,p_T\rangle-\langle g_0,p_0\rangle
 -\int_0^T\langle g_t,\partial_tp_t+
 v\cdot\nabla_xp_t\rangle\,dt=\int_0^T\int_{\mathcal C}
 a_{g_t}(c)q_t(c)\Delta p_t(c)\,dc\,dt.
\end{align*}
Using the definition of $\mathcal H_{\rm coll}$, the expression inside
the supremum defining $\mathcal I_{\rm Ham}(g)$ is
\begin{align*}
 \int_0^T\int_{\mathcal C}a_{g_t}(c)
 \left[q_t(c)\Delta p_t(c)
 -\bigl(e^{\Delta p_t(c)}-1\bigr)\right]dc\,dt.
\end{align*}
Apply \eqref{Boltzmann-Fenchel-inequality} pointwise with
$s=\Delta p_t(c)$.  Since $a_{g_t}(c)\geq0$, this is bounded above by
\begin{equation*}
 \int_0^T\int_{\mathcal C}
 a_{g_t}(c)\ell(q_t(c))\,dc\,dt.
\end{equation*}
Taking first the supremum over $p$ and then the infimum over
$q\in\mathcal Q(g)$ proves
\begin{equation*}
 \mathcal I_{\rm Ham}(g)\leq\mathcal I_B(g).
\end{equation*}

For the reverse inequality, let $g\in\mathcal R$, where
$\mathcal R=\mathcal R_{r,T}$ is the exponentially biased class defined in
\cite[Theorem~3 and equation~(7.0.7)]{BodineauGallagherSaintRaymondSimonella}.
By the definition of this class, there exists an admissible Lipschitz field
$p$ such that $g$ has the regularity and initial datum specified in that
reference and satisfies the following weak form of the biased Boltzmann
equation: for every
$\varphi\in C_c^\infty(\mathbb T^d\times\mathbb R^d)$ and every
$t\in[0,T]$,
\begin{align*}
 \langle g_t,\varphi\rangle-\langle g_0,\varphi\rangle
 =\int_0^t\langle g_s,v\cdot\nabla_x\varphi\rangle\,ds+\int_0^t\int_{\mathcal C}
 a_{g_s}(c)e^{\Delta p_s(c)}
 \Delta\varphi(c)\,dc\,ds.
\end{align*}
Thus, in the control notation of the present paper,
\begin{equation*}
 q_p(t,c):=e^{\Delta p(t,c)}.
\end{equation*}
The displayed weak equation is \eqref{controlled-boltzmann-equation} with
$q=q_p$.  Moreover, the bounds on $p$ and the Gaussian velocity decay
required in the definition of $\mathcal R$ make the corresponding
collision cost finite.  Hence $q_p\in\mathcal Q(g)$.  For a
general path $g$ and a general field $p$, however, the multiplier
$e^{\Delta p}$ need not belong to $\mathcal Q(g)$ because $g$ need not solve
the corresponding controlled Boltzmann equation.  For the compatible pair
$(g,p)$ above,
$\Delta p(t,c)=\log q_p(t,c)$ and the Fenchel inequality is an equality:
\begin{align*}
 q_p(t,c)\Delta p(t,c)
 -\bigl(e^{\Delta p(t,c)}-1\bigr)
 =q_p(t,c)\log q_p(t,c)-q_p(t,c)+1=\ell(q_p(t,c)).
\end{align*}
The bias $p$ is only Lipschitz and need not belong to the test class in
\eqref{hard-sphere-hamiltonian-action}.  To justify its use in the dual
formula, let $\chi_R$ be a smooth velocity cutoff equal to one on
$\{|v|\leq R\}$ and supported in $\{|v|\leq2R\}$, and let $p^{R,n}$ be a
smooth mollification of $\chi_Rp$.  Then
$p^{R,n}\in C_c^1([0,T]\times\mathbb T^d\times\mathbb R^d)$.  Applying
the weak equation above with the time-dependent test function $p^{R,n}$,
and then sending first $n\to\infty$ and afterwards $R\to\infty$, gives
\begin{align*}
 &\lim_{R\to\infty}\lim_{n\to\infty}\Bigg\{
 \langle g_T,p_T^{R,n}\rangle-\langle g_0,p_0^{R,n}\rangle
 -\int_0^T\langle g_t,\partial_tp^{R,n}_t
       +v\cdot\nabla_xp^{R,n}_t\rangle\,dt\\
 &\hspace{42mm}
 -\int_0^T\mathcal H_{\rm coll}(g_t,p_t^{R,n})\,dt\Bigg\}
 =\int_0^T\int_{\mathcal C}a_{g_t}(c)\ell(q_p(t,c))\,dc\,dt.
\end{align*}
Here the passage to the limit follows from the Gaussian velocity estimate
for the biased Boltzmann solution in
\cite[Appendix~A.1, in particular equation~(A.1.4)]{BodineauGallagherSaintRaymondSimonella},
the uniform bounds inherited from $p\in W^{1,\infty}$, and dominated
convergence.  Since every $p^{R,n}$ is admissible in the supremum defining
$\mathcal I_{\rm Ham}(g)$ and $q_p\in\mathcal Q(g)$, we obtain
\begin{align*}
 \mathcal I_{\rm Ham}(g)\geq\int_0^T\int_{\mathcal C}
 a_{g_t}(c)\ell(q_p(t,c))\,dc\,dt\geq\mathcal I_B(g).
\end{align*}
Combining this with the opposite bound
$\mathcal I_{\rm Ham}(g)\leq\mathcal I_B(g)$ gives
\begin{equation*}
 \mathcal I_{\rm Ham}(g)=\mathcal I_B(g),
 \qquad
 I_{\rm HS}(g)=I_0(g_0)+\mathcal I_B(g).
\end{equation*}
The two inequalities also show that $q_p$ attains the infimum in the
control representation.  The reverse inequality relies on the compatible
exponential tilt $e^{\Delta p}\in\mathcal Q(g)$.  This compatibility removes
the duality gap on the regular class.  For general distribution-valued
paths, the argument still gives
$\mathcal I_{\rm Ham}\leq\mathcal I_B$, but it does not prove equality.
\end{proof}

The comparison above identifies the continuum rate functional on the
exponentially biased class $\mathcal R$.  Extending it to a rigorous
large-deviation principle for the continuum equation would require
well-posedness and compactness for the controlled measure-valued dynamics. 

\bibliographystyle{alpha}
\bibliography{reference}

\newcommand{\etalchar}[1]{$^{#1}$}
\begin{thebibliography}{BDSG{\etalchar{+}}15}

\bibitem[ADVW00]{AlexandreDesvillettesVillaniWennberg}
R.~Alexandre, L.~Desvillettes, C.~Villani, and B.~Wennberg.
\newblock Entropy dissipation and long-range interactions.
\newblock {\em Arch. Ration. Mech. Anal.}, 152:327--355, 2000.

\bibitem[AMU{\etalchar{+}}11]{AlexandreMorimotoUkaiXuYang}
R.~Alexandre, Y.~Morimoto, S.~Ukai, C.-J. Xu, and T.~Yang.
\newblock Global existence and full regularity of the {Boltzmann} equation
  without angular cutoff.
\newblock {\em Comm. Math. Phys.}, 304:513--581, 2011.

\bibitem[AV02]{AlexandreVillaniLongRange}
R.~Alexandre and C.~Villani.
\newblock On the {Boltzmann} equation for long-range interactions.
\newblock {\em Comm. Pure Appl. Math.}, 55:30--70, 2002.

\bibitem[BBBH25]{BasileBenedettoBertiniHeydecker}
G.~Basile, D.~Benedetto, L.~Bertini, and D.~Heydecker.
\newblock Large time analysis of the rate function associated to the
  {Boltzmann} equation: dynamical phase transitions.
\newblock {\em SIAM J. Math. Anal.}, 57:5746--5770, 2025.

\bibitem[BBO26]{BasileBenedettoOrrieri}
G.~Basile, D.~Benedetto, and C.~Orrieri.
\newblock Variational derivation of the homogeneous {Boltzmann} equation, 2026.
\newblock arXiv:2604.06919.

\bibitem[BCD13]{BudhirajaChenDupuis}
A.~Budhiraja, J.~Chen, and P.~Dupuis.
\newblock Large deviations for stochastic partial differential equations driven
  by a {Poisson} random measure.
\newblock {\em Stochastic Process. Appl.}, 123:523--560, 2013.

\bibitem[BD00]{BudhirajaDupuisBrownian}
A.~Budhiraja and P.~Dupuis.
\newblock A variational representation for positive functionals of infinite
  dimensional {Brownian} motion.
\newblock {\em Probab. Math. Statist.}, 20:39--61, 2000.

\bibitem[BDM11]{BudhirajaDupuisMaroulas}
A.~Budhiraja, P.~Dupuis, and V.~Maroulas.
\newblock Variational representations for continuous time processes.
\newblock {\em Ann. Inst. H. Poincar\'e Probab. Statist.}, 47:725--747, 2011.

\bibitem[BDSG{\etalchar{+}}15]{BertiniEtAlMFT}
L.~Bertini, A.~De~Sole, D.~Gabrielli, G.~Jona-Lasinio, and C.~Landim.
\newblock Macroscopic fluctuation theory.
\newblock {\em Rev. Modern Phys.}, 87:593--636, 2015.

\bibitem[BGL93]{BardosGolseLevermoreFluidLimits}
C.~Bardos, F.~Golse, and C.~D. Levermore.
\newblock Fluid dynamic limits of kinetic equations. {II}. convergence proofs
  for the {Boltzmann} equation.
\newblock {\em Comm. Pure Appl. Math.}, 46:667--753, 1993.

\bibitem[BGSRS23]{BodineauGallagherSaintRaymondSimonella}
T.~Bodineau, I.~Gallagher, L.~Saint-Raymond, and S.~Simonella.
\newblock Statistical dynamics of a hard sphere gas: fluctuating {Boltzmann}
  equation and large deviations.
\newblock {\em Ann. of Math.}, 198:1047--1201, 2023.

\bibitem[Bol72]{Boltzmann1872}
L.~Boltzmann.
\newblock Weitere studien \"uber das w\"armegleichgewicht unter gasmolek\"ulen.
\newblock {\em Sitzungsber. Kais. Akad. Wiss. Wien Math.-Naturwiss. Classe},
  66:275--370, 1872.

\bibitem[Bou20]{Bouchet}
F.~Bouchet.
\newblock Is the {Boltzmann} equation reversible? a large deviation perspective
  on the irreversibility paradox.
\newblock {\em J. Stat. Phys.}, 181:515--550, 2020.

\bibitem[BPZ23]{BrzezniakPengZhaiNavierStokes}
Z.~Brze\'zniak, X.~Peng, and J.~Zhai.
\newblock Well-posedness and large deviations for {2D} stochastic
  {Navier--Stokes} equations with jumps.
\newblock {\em J. Eur. Math. Soc.}, 25:3093--3176, 2023.

\bibitem[CF23]{CornalbaFischerStructure}
F.~Cornalba and J.~Fischer.
\newblock The {Dean--Kawasaki} equation and the structure of density
  fluctuations in systems of diffusing particles.
\newblock {\em Arch. Ration. Mech. Anal.}, 247:Paper No. 76, 2023.

\bibitem[CIP94]{CercignaniIllnerPulvirenti}
C.~Cercignani, R.~Illner, and M.~Pulvirenti.
\newblock {\em The Mathematical Theory of Dilute Gases}, volume 106 of {\em
  Applied Mathematical Sciences}.
\newblock Springer, New York, 1994.

\bibitem[CSZ19]{CornalbaShardlowZimmer}
F.~Cornalba, T.~Shardlow, and J.~Zimmer.
\newblock A regularized {Dean--Kawasaki} model: derivation and analysis.
\newblock {\em SIAM J. Math. Anal.}, 51:1137--1187, 2019.

\bibitem[DE97]{DupuisEllis}
P.~Dupuis and R.~S. Ellis.
\newblock {\em A Weak Convergence Approach to the Theory of Large Deviations}.
\newblock Wiley Series in Probability and Statistics. Wiley, New York, 1997.

\bibitem[Dea96]{Dean1996}
D.~S. Dean.
\newblock Langevin equation for the density of a system of interacting
  {Langevin} processes.
\newblock {\em J. Phys. A}, 29:L613--L617, 1996.

\bibitem[DFG26]{DirrFehrmanGessSSEP}
N.~Dirr, B.~Fehrman, and B.~Gess.
\newblock Conservative stochastic {PDE} and fluctuations of the symmetric
  simple exclusion process.
\newblock {\em Comm. Math. Phys.}, 407:Paper No. 74, 2026.

\bibitem[DGGG21]{DareiotisGessGnannGruen}
K.~Dareiotis, B.~Gess, M.~V. Gnann, and G.~Gr\"un.
\newblock Non-negative martingale solutions to the stochastic thin-film
  equation with nonlinear gradient noise.
\newblock {\em Arch. Ration. Mech. Anal.}, 242:179--234, 2021.

\bibitem[DHM24]{DengHaniMaLongTimeBoltzmann}
Y.~Deng, Z.~Hani, and X.~Ma.
\newblock Long time derivation of the {Boltzmann} equation from hard sphere
  dynamics.
\newblock Ann. of Math., to appear; arXiv:2408.07818, 2024.

\bibitem[DHW26]{DuongHeWuFluctuatingLandau}
M.~H. Duong, Z.~He, and Z.~Wu.
\newblock The homogeneous {Landau} equation with regularised thermal noise,
  2026.
\newblock arXiv:2607.22329.

\bibitem[DJP26]{DjurdjevacJiPerkowski}
A.~Djurdjevac, X.~Ji, and N.~Perkowski.
\newblock Weak error of {Dean--Kawasaki} equation with smooth mean-field
  interactions.
\newblock {\em J. Math. Pures Appl.}, 213:Paper No. 103937, 2026.

\bibitem[DKP24]{DjurdjevacKrempPerkowski}
A.~Djurdjevac, H.~Kremp, and N.~Perkowski.
\newblock Weak error analysis for a nonlinear {SPDE} approximation of the
  {Dean--Kawasaki} equation.
\newblock {\em Stoch. Partial Differ. Equ. Anal. Comput.}, 12:2330--2355, 2024.

\bibitem[DL89]{DiPernaLionsBoltzmann}
R.~J. DiPerna and P.-L. Lions.
\newblock On the {Cauchy} problem for {Boltzmann} equations: global existence
  and weak stability.
\newblock {\em Ann. of Math.}, 130:321--366, 1989.

\bibitem[DV05]{DesvillettesVillaniEquilibrium}
L.~Desvillettes and C.~Villani.
\newblock On the trend to global equilibrium for spatially inhomogeneous
  kinetic systems: the {Boltzmann} equation.
\newblock {\em Invent. Math.}, 159:245--316, 2005.

\bibitem[EK86]{EthierKurtz}
S.~N. Ethier and T.~G. Kurtz.
\newblock {\em Markov Processes: Characterization and Convergence}.
\newblock Wiley Series in Probability and Mathematical Statistics. John Wiley
  \& Sons, New York, 1986.

\bibitem[FG18]{FischerGruenThinFilm}
J.~Fischer and G.~Gr\"un.
\newblock Existence of positive solutions to stochastic thin-film equations.
\newblock {\em SIAM J. Math. Anal.}, 50:411--455, 2018.

\bibitem[FG23]{FehrmanGessNonEquilibrium}
B.~Fehrman and B.~Gess.
\newblock Non-equilibrium large deviations and parabolic-hyperbolic {PDE} with
  irregular drift.
\newblock {\em Invent. Math.}, 234:573--636, 2023.

\bibitem[FG24]{FehrmanGessDeanKawasaki}
B.~Fehrman and B.~Gess.
\newblock Well-posedness of the {Dean--Kawasaki} and the nonlinear
  {Dawson--Watanabe} equation with correlated noise.
\newblock {\em Arch. Ration. Mech. Anal.}, 248:Paper No. 20, 2024.

\bibitem[GG20]{GessGnannThinFilm}
B.~Gess and M.~V. Gnann.
\newblock The stochastic thin-film equation: existence of nonnegative
  martingale solutions.
\newblock {\em Stochastic Process. Appl.}, 130:7260--7302, 2020.

\bibitem[GHW23]{GessHeydeckerWuLLNS}
B.~Gess, D.~Heydecker, and Z.~Wu.
\newblock {Landau--Lifshitz--Navier--Stokes} equations: large deviations and
  relationship to the energy equality, 2023.
\newblock arXiv:2311.02223.

\bibitem[GMR06]{GrunMeckeRauscher}
G.~Gr\"un, K.~Mecke, and M.~Rauscher.
\newblock Thin-film flow influenced by thermal noise.
\newblock {\em J. Stat. Phys.}, 122:1261--1291, 2006.

\bibitem[GSR04]{GolseSaintRaymondNavierStokesLimit}
F.~Golse and L.~Saint-Raymond.
\newblock The {Navier--Stokes} limit of the {Boltzmann} equation for bounded
  collision kernels.
\newblock {\em Invent. Math.}, 155:81--161, 2004.

\bibitem[GSR09]{GolseSaintRaymondHardCutoff}
F.~Golse and L.~Saint-Raymond.
\newblock The incompressible {Navier--Stokes} limit of the {Boltzmann} equation
  for hard cutoff potentials.
\newblock {\em J. Math. Pures Appl.}, 91:508--552, 2009.

\bibitem[GSRT14]{GallagherSaintRaymondTexier}
I.~Gallagher, L.~Saint-Raymond, and B.~Texier.
\newblock {\em From {Newton} to {Boltzmann}: Hard Spheres and Short-Range
  Potentials}.
\newblock Z\"urich Lectures in Advanced Mathematics. European Mathematical
  Society, Z\"urich, 2014.

\bibitem[GSW26]{GessSauerbreyWuThermalNSF}
B.~Gess, M.~Sauerbrey, and Z.~Wu.
\newblock The incompressible {Navier--Stokes--Fourier} system with thermal
  noise, 2026.
\newblock arXiv:2603.26307.

\bibitem[Guo04]{GuoWholeSpace}
Y.~Guo.
\newblock The {Boltzmann} equation in the whole space.
\newblock {\em Indiana Univ. Math. J.}, 53:1081--1094, 2004.

\bibitem[Guo10]{GuoBoundedDomains}
Y.~Guo.
\newblock Decay and continuity of the {Boltzmann} equation in bounded domains.
\newblock {\em Arch. Ration. Mech. Anal.}, 197:713--809, 2010.

\bibitem[GWZ26]{GessWuZhangHigherOrder}
B.~Gess, Z.~Wu, and R.~Zhang.
\newblock Higher order fluctuation expansions for nonlinear stochastic heat
  equations in singular limits.
\newblock {\em Stochastic Process. Appl.}, 193:Paper No. 104847, 2026.

\bibitem[Hey23]{HeydeckerKacLDP}
D.~Heydecker.
\newblock Large deviations of {Kac}'s conservative particle system and energy
  nonconserving solutions to the {Boltzmann} equation: a counterexample to the
  predicted rate function.
\newblock {\em Ann. Appl. Probab.}, 33:1758--1826, 2023.

\bibitem[HWZ25]{HaoWuZimmerVFPDK}
Z.~Hao, Z.~Wu, and J.~Zimmer.
\newblock Kinetic theory with fluctuations: well-posedness of the
  {Vlasov--Fokker--Planck--Dean--Kawasaki} equations, 2025.
\newblock arXiv:2511.10194.

\bibitem[Kac56]{Kac1956}
M.~Kac.
\newblock Foundations of kinetic theory.
\newblock In {\em Proceedings of the Third Berkeley Symposium on Mathematical
  Statistics and Probability}, volume~3, pages 171--197. University of
  California Press, Berkeley, 1956.

\bibitem[KLvR19]{KonarovskyiLehmannRenesse}
V.~Konarovskyi, T.~Lehmann, and M.-K. von Renesse.
\newblock {Dean--Kawasaki} dynamics: ill-posedness vs. triviality.
\newblock {\em Electron. Commun. Probab.}, 24:Paper No. 8, 9 pp., 2019.

\bibitem[Lan75]{Lanford1975}
III Lanford, O.~E.
\newblock Time evolution of large classical systems.
\newblock In J.~Moser, editor, {\em Dynamical Systems, Theory and
  Applications}, volume~38 of {\em Lecture Notes in Physics}, pages 1--111.
  Springer, Berlin, 1975.

\bibitem[LL87]{LandauLifshitzFluidMechanics}
L.~D. Landau and E.~M. Lifshitz.
\newblock {\em Fluid Mechanics}, volume~6 of {\em Course of Theoretical
  Physics}.
\newblock Pergamon Press, Oxford, 2 edition, 1987.

\bibitem[LM01]{LionsMasmoudiFluidMechanicsII}
P.-L. Lions and N.~Masmoudi.
\newblock From the {Boltzmann} equations to the equations of incompressible
  fluid mechanics. {II}.
\newblock {\em Arch. Ration. Mech. Anal.}, 158:195--211, 2001.

\bibitem[LM10]{LevermoreMasmoudiNSF}
C.~D. Levermore and N.~Masmoudi.
\newblock From the {Boltzmann} equation to an incompressible
  {Navier--Stokes--Fourier} system.
\newblock {\em Arch. Ration. Mech. Anal.}, 196:753--809, 2010.

\bibitem[McK75]{McKeanFluctuations}
H.~P. McKean.
\newblock Fluctuations in the kinetic theory of gases.
\newblock {\em Comm. Pure Appl. Math.}, 28:435--455, 1975.

\bibitem[MG22]{MetzgerGruenThinFilm}
S.~Metzger and G.~Gr\"un.
\newblock Existence of nonnegative solutions to stochastic thin-film equations
  in two space dimensions.
\newblock {\em Interfaces Free Bound.}, 24:307--387, 2022.

\bibitem[MM13]{MischlerMouhotKac}
S.~Mischler and C.~Mouhot.
\newblock {Kac}'s program in kinetic theory.
\newblock {\em Invent. Math.}, 193:1--147, 2013.

\bibitem[MM25]{MartiniMayorcasKSDK}
A.~Martini and A.~Mayorcas.
\newblock An additive-noise approximation to {Keller--Segel--Dean--Kawasaki}
  dynamics: local well-posedness of paracontrolled solutions.
\newblock {\em Stoch. Partial Differ. Equ. Anal. Comput.}, 13:956--1033, 2025.

\bibitem[MN06]{MouhotNeumann}
C.~Mouhot and L.~Neumann.
\newblock Quantitative perturbative study of convergence to equilibrium for
  collisional kinetic models in the torus.
\newblock {\em Nonlinearity}, 19:969--998, 2006.

\bibitem[MvRZ25]{MullerVonRenesseZimmerVFPDK}
F.~M\"uller, M.~von Renesse, and J.~Zimmer.
\newblock Well-posedness for {Dean--Kawasaki} models of
  {Vlasov--Fokker--Planck} type.
\newblock {\em Proc. R. Soc. A}, 481:Paper No. 20250089, 2025.

\bibitem[Pop25]{PopatBoundedDomains}
S.~Popat.
\newblock Well-posedness of the generalised {Dean--Kawasaki} equation with
  correlated noise on bounded domains.
\newblock {\em Stochastic Process. Appl.}, 179:Paper No. 104503, 2025.

\bibitem[Rez98]{Rezakhanlou}
F.~Rezakhanlou.
\newblock Large deviations from a kinetic limit.
\newblock {\em Ann. Probab.}, 26:1259--1340, 1998.

\bibitem[Sau24]{SauerbreyThinFilm}
M.~Sauerbrey.
\newblock Martingale solutions to the stochastic thin-film equation in two
  dimensions.
\newblock {\em Ann. Inst. Henri Poincar\'e Probab. Stat.}, 60:373--412, 2024.

\bibitem[Uka74]{UkaiGlobalBoltzmann}
S.~Ukai.
\newblock On the existence of global solutions of mixed problem for non-linear
  {Boltzmann} equation.
\newblock {\em Proc. Japan Acad.}, 50:179--184, 1974.

\bibitem[Vil02]{VillaniCollisionalReview}
C.~Villani.
\newblock A review of mathematical topics in collisional kinetic theory.
\newblock In {\em Handbook of Mathematical Fluid Dynamics}, volume~1, pages
  71--305. North-Holland, Amsterdam, 2002.

\bibitem[Wu25]{WuNonlinearFluctuations}
Z.~Wu.
\newblock Fluctuating hydrodynamics of the {Ising--Kac--Kawasaki} model and
  nonlinear fluctuations near criticality, 2025.
\newblock arXiv:2511.15263.

\bibitem[WWZ26]{WangWuZhangSingular}
L.~Wang, Z.~Wu, and R.~Zhang.
\newblock Well-posedness of the {Dean--Kawasaki} equation with singular
  interactions.
\newblock {\em SIAM J. Math. Anal.}, 58:2738--2785, 2026.

\bibitem[WZ24]{WuZhaiPorousMedia}
W.~Wu and J.~Zhai.
\newblock Large deviations for stochastic generalized porous media equations
  driven by {L\'evy} noise.
\newblock {\em SIAM J. Math. Anal.}, 56:1--42, 2024.

\bibitem[XZ18]{XiongZhaiLocallyMonotone}
J.~Xiong and J.~Zhai.
\newblock Large deviations for locally monotone stochastic partial differential
  equations driven by {L\'evy} noise.
\newblock {\em Bernoulli}, 24:2842--2874, 2018.

\bibitem[YZZ15]{YangZhaiZhangJumpSPDE}
X.~Yang, J.~Zhai, and T.~Zhang.
\newblock Large deviations for {SPDEs} of jump type.
\newblock {\em Stoch. Dyn.}, 15:Paper No. 1550026, 2015.

\bibitem[ZZ15]{ZhaiZhangNavierStokesLevy}
J.~Zhai and T.~Zhang.
\newblock Large deviations for 2-d stochastic {Navier--Stokes} equations driven
  by multiplicative {L\'evy} noises.
\newblock {\em Bernoulli}, 21:2351--2392, 2015.

\end{thebibliography}

\end{document}